\numberwithin{equation}{section}
\newtheorem{theorem}[equation]{Theorem}
\newtheorem{definition}[equation]{Definition}
\newtheorem{proposition}[equation]{Proposition}
\newtheorem{corollary}[equation]{Corollary}
\newtheorem{lemma}[equation]{Lemma}
\newtheorem{remark}[equation]{Remark}
\begin{document}
\title[An Approach to Pointwise Ergodic Theorems]
{A Unified Approach to Two Pointwise Ergodic Theorems: Double Recurrence and Return Times}

\author{Ben Krause}
\address{
Department of Mathematics,
University of Bristol\\
Bristol BS8 1QU}
\email{ben.krause@bristol.ac.uk}
\date{\today}


\maketitle

\begin{abstract}
We present a unified approach to extensions of Bourgain's Double Recurrence Theorem and Bourgain's Return Times Theorem to integer parts of the Kronecker sequence, emphasizing stopping times and metric entropy. Specifically, we prove the following two results for each $\alpha \in \mathbb{R}$:
\begin{itemize}
    \item For each $\sigma$-finite measure-preserving system, $(X,\mu,T)$, and each $f,g \in L^{\infty}(X)$, for each $\gamma \in \mathbb{Q}$ the bilinear ergodic averages
    \[ \frac{1}{N} \sum_{n \leq N} T^{\lfloor \alpha n \rfloor } f \cdot T^{\lfloor \gamma n \rfloor} g \]
    converge $\mu$-a.e.; and
    \item For each aperiodic and countably generated measure-preserving system, $(Y,\nu,S)$, and each $g \in L^{\infty}(Y)$, there exists a subset $Y_{g} \subset Y$ with $\nu(Y_{g})= 1$ so that for all $\gamma \in \mathbb{Q}$ and $\omega \in Y_{g}$, for any auxiliary $\sigma$-finite measure-preserving system $(X,\mu,T)$, and any $f \in L^{\infty}(X)$, the ``return-times" averages
\[ \frac{1}{N} \sum_{n \leq N} T^{\lfloor \alpha n \rfloor} f \cdot S^{\lfloor \gamma n \rfloor } g(\omega) \]
converge $\mu$-a.e.
\end{itemize}
Moreover, in both cases the sets of convergence are identical for all $\gamma \in \mathbb{Q}$.
\end{abstract}

 \setcounter{tocdepth}{1}
\tableofcontents 

\section{Introduction}

The study of pointwise convergence of ergodic averages dates back to Birkhoff's Pointwise Ergodic Theorem \cite{BI}, with the field dramatically advanced by Bourgain in the late 80s-early 90s \cite{B0,B1,B00,B2,B3}, in the 2010s by Mirek, Stein, and collaborators \cite{MST1, MST2, MSZK}, and most recently by the author, in collaboration Mirek and Tao \cite{KMT}, respectively Mousavi, Ter\"{a}v\"{a}inen and Tao \cite{KMTT}, Ter\"{a}v\"{a}inen \cite{Joni}, and Kosz et.\ al.\ \cite{KMPW}. The goal of this paper is to provide a unified approach to two types of theorems of Bourgain, for which special cases already exist; one of which is proven using techniques from ``hard analysis," Theorem \ref{t:DR}, and the other of which, Theorem \ref{t:RT}, is proven using ``softer" methods. We will adapt the ``hard approach" of \cite{B3} to handle both results using a single argument.

\medskip

Here is the set-up:

\medskip

A \emph{measure-preserving system}, $(X,\mu,T)$ is a $\sigma$-finite measure space, equipped with an invertible measure-preserving transformation; we use the notation
\[ T f(x) := f(T x) \]
throughout.



Our focus will be two classes of pointwise ergodic theorems. 

\begin{theorem}[Bourgain, $\alpha \in \mathbb{Q}$ Case]\label{t:DR}
    Let $(X,\mu,T)$ be a measure-preserving system.
    Then for all $\alpha \in \mathbb{R}$, all $\gamma \in \mathbb{Q}$, and all $f,g \in L^{\infty}(X)$
    \begin{align}\label{e:avg} \frac{1}{N} \sum_{n \leq N } T^{\lfloor \alpha n \rfloor } f \cdot T^{\lfloor \gamma n \rfloor } g
    \end{align}
    converges $\mu$-a.e.
\end{theorem}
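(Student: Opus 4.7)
The plan is to adapt Bourgain's Fourier-analytic proof of the Double Recurrence Theorem \cite{B3} to accommodate the nonlinear integer-valued sequence $\lfloor \alpha n \rfloor$. First, I would exploit the rationality of $\gamma$ to simplify the structure of the averages. Writing $\gamma = p/q$ with $q > 0$ and decomposing $n = qm + r$ for $r \in \{0,\dots,q-1\}$, one computes $\lfloor \gamma n \rfloor = pm + \lfloor pr/q \rfloor$, so that setting $g_r := T^{\lfloor pr/q \rfloor} g$ yields $T^{\lfloor \gamma n\rfloor} g = T^{pm} g_r$. Splitting the average over residue classes, it suffices to prove $\mu$-a.e.\ convergence of
\[ \frac{1}{M} \sum_{m \leq M} T^{\lfloor \beta m + c \rfloor} f \cdot T^{pm} g_r, \]
where $\beta := \alpha q$, $c := \alpha r$, $p \in \Z$. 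This isolates the nonlinearity in a single factor while the companion factor carries an affine-linear exponent, which is the situation Bourgain's machinery is best suited to.

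If $\alpha \in \QQ$, then $\beta \in \QQ$, and a further residue-class decomposition reduces $\lfloor \beta m + c \rfloor$ to an affine function of $m$; the resulting bilinear average then falls under the original Double Recurrence Theorem. The main work is thus the irrational case $\alpha \in \RR \setminus \QQ$. Following the hard-analysis template, I would: (i) prove a uniform maximal inequality for the bilinear operators above; (ii) strengthen this to a uniform oscillation (or jump) inequality along arbitrary lacunary sequences $N_1 < N_2 < \dots$; and (iii) check a.e.\ convergence on a dense subclass of $L^\infty$ (e.g.\ eigenfunctions together with bounded coboundaries). Steps (i) and (ii) transfer via the Calder\'on principle to the corresponding inequalities on $\Z$ for the discrete bilinear operator
\[ B_M(F,G)(x) := \frac{1}{M} \sum_{m \leq M} F(x - \lfloor \beta m + c \rfloor) G(x - pm), \]
whose bilinear Fourier multiplier is controlled by the exponential sum $\frac{1}{M}\sum_{m \leq M} e^{2\pi i (\xi \lfloor \beta m + c \rfloor + \eta p m)}$, to which I would apply a Hardy--Littlewood circle-method decomposition in $(\xi,\eta)$.

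The hard part, I expect, will be the minor-arc analysis and the stopping-time / metric-entropy bookkeeping that organizes the oscillation estimates across scales. Because $\lfloor \beta m + c \rfloor = \beta m + c - \{\beta m + c\}$, the fractional parts $\{\beta m + c\}$ inject a nontrivial phase that must be tamed via Weyl / van der Corput estimates exploiting the equidistribution of $\beta m \pmod 1$, and simultaneously one must extract bilinear cancellation between the two factors as in \cite{B3}. The metric-entropy framework, realized through a stopping-time decomposition of the lacunary scales, then permits summing the oscillation contributions with only a logarithmic loss, to be balanced against the polynomial decay on minor arcs. On major arcs centered at rationals $a/q'$, the irrationality of $\beta$ makes $\lfloor \beta m + c \rfloor \pmod{q'}$ equidistributed, so a change of variable converts the multiplier into one structurally identical to Bourgain's classical integer-exponent multiplier, which can be handled by his methods. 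Assembling the major-arc and minor-arc bounds yields the required oscillation inequality on $\Z$, and hence, via a Banach-principle argument together with convergence on the dense class from (iii), Theorem \ref{t:DR}.
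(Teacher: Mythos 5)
Your reductions (residue classes to isolate $\gamma = -1$, Calder\'on transference to $\mathbb{Z}$, quantitative oscillation bookkeeping) are all sound and broadly parallel to what the paper does. But the proposal reaches for the wrong analytic engine in the two places where the theorem is actually hard, and these are genuine gaps rather than stylistic differences.

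The major-/minor-arc decomposition of the bilinear exponential sum is a misfit. The circle method extracts structure from arithmetic sequences such as $n^k$, where the linear exponential sum $\sum_n e(\xi n^k)$ concentrates on rationals with small denominator; for $\lfloor \alpha n\rfloor$ with $\alpha$ irrational there is no such concentration, and more fundamentally the bilinear multiplier $\frac{1}{N}\sum_n e(\xi(x-n))e(\eta(x+n))$ carries a joint modulation invariance (see \eqref{e:modinv0}) that forces the analysis to be organized around the \emph{large spectrum} $\mathrm{Spec}_\delta(g)$ of the second function, cf.\ \eqref{e:spec} — a data-dependent, scale-dependent frequency set with no a priori rational structure. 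The paper's argument accordingly replaces the circle method by a time–frequency decomposition: the multipliers $\Omega_{J,\delta}$ are organized into trees and branches indexed by these large spectra, a Bessel inequality (Proposition \ref{p:tfa}) bounds the number of trees, and the cross-scale cohesion is quantified via $\ell^2(\Lambda)$-metric entropy of the Fourier data of \emph{both} $f$ and $g$ (Corollary \ref{c:sep}, inequality \eqref{e:key}). A Weyl/van der Corput treatment of the fractional parts of $\beta m + c$ inside this bilinear structure would have to extract that cancellation simultaneously with the bilinear cancellation, and there is no mechanism in your sketch to do so; the paper avoids the issue entirely by Poisson summation (Lemma \ref{l:w}, \S\ref{s:floor}), which re-expresses the floor-function average as a rapidly convergent sum of modulated copies of the smooth model $\sum_n \phi_N(n) f_\xi(x - \alpha n) g_\xi(x+n)$.

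Step (iii) — proving a.e.\ convergence on a dense subclass such as eigenfunctions and coboundaries — is the other gap, and it is a familiar one: for bilinear ergodic averages there is no dense subclass for which convergence is easy, which is precisely why Bourgain's double recurrence theorem was difficult. Coboundary telescoping fails because the two shifts $T^{\lfloor\alpha n\rfloor}$ and $T^{\lfloor\gamma n\rfloor}$ are different, and while the Wiener–Wintner reduction to $g \in \mathcal{K}(T)^\perp$ (which is what Bourgain's original proof did) is a genuine move, it is a reduction to the hard case, not a disposal of it. The paper sidesteps the issue completely by quantifying convergence via the jump-counting quantities $C_{\tau,B,H}^{(X,\mu,T)}$ of Lemma \ref{l:convex} and Proposition \ref{p:QC}; these transfer directly to $\mathbb{Z}$ (\S\ref{s:transtoZ}) and require no distinguished subclass of functions at all.
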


In what follows, as per Bourgain, we focus on the case $\gamma = -1$; the only changes to handle the general case are notational, after splitting into residue classes and exploiting periodicity. As additional motivation for our work here, by a Rokhlin tower argument, one can reduce the issue of pointwise convergence of two commuting shifts
\begin{align}\label{e:commute} \frac{1}{N} \sum_{n \leq N} T^n f \cdot S^n g, \; \; \; f,g \in L^{\infty}(X)
\end{align}
to the case where $T$ generates a flow, and $S = T^{\alpha}$ for irrational $\alpha$, so one is interested in understanding
\begin{align}\label{e:commute} \frac{1}{N} \sum_{n \leq N} T^n f \cdot T^{ \alpha n} g, \; \; \; f,g \in L^{\infty}(X);
\end{align}
but this reduction does not seem to be viable at present.

Our second theorem is a result on return times:

\begin{theorem}[Bourgain, $\alpha \in \mathbb{Q}$ Case]\label{t:RT}
    Let $(Y,\nu,S)$ be a countably generated probability space equipped with an aperiodic measure-preserving transformation, and let $\alpha \in \mathbb{R}$. Then for for all $g \in L^{\infty}(Y)$ there exists $Y_{g} \subset Y$ with $\nu(Y_{g}) = 1$ so that the following holds for every $\gamma \in \mathbb{Q}$ and $\omega \in Y_{g}$: for every measure-preserving system, $(X,\mu,T)$, and every $f \in L^{\infty}(X)$
    \[\frac{1}{N} \sum_{ n\leq N }   S^{\lfloor \gamma n \rfloor}  g(\omega) \cdot T^{\lfloor \alpha n \rfloor} f \]
    converges $\mu$-a.e.

    In particular, if $g = \mathbf{1}_E$ for non-trivial $E \subset Y$ and
    \[ E_N(\omega) := \{ n \leq N : S^n \omega \in E \} ,\]
then whenever $S$ is ergodic with respect to $\nu$
    \[ \frac{1}{|E_N(\omega)|} \sum_{n \in E_N(\omega)}  T^{\lfloor \alpha n \rfloor} f \]
    converges $\mu$-a.e. for every measure-preserving system $(X,\mu,T)$, and every $f \in L^{\infty}(X)$.\footnote{By the pointwise ergodic theorem, $\omega$-almost surely $|E_N(\omega)| = N \nu(E) \cdot ( 1 + o_{N \to \infty;\omega}(1))$; see \S \ref{sss:O} below for a review of ``little-Oh" notation. }
\end{theorem}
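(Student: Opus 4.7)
Following the remark after Theorem~\ref{t:DR}, I would first reduce to the case $\gamma = -1$ by splitting into residue classes modulo the denominator of $\gamma$ and exploiting periodicity of $n \mapsto \lfloor \gamma n \rfloor$ on each class; the exceptional set $Y_g$ produced for $\gamma = -1$ automatically works for every $\gamma \in \QQ$ because the residue-class reduction is purely arithmetic in $n$ and does not interact with $\omega$. With $\gamma = -1$, I must produce a $\nu$-full-measure set of $\omega$'s so that for every auxiliary system $(X,\mu,T)$ and every $f \in L^\infty(X)$, the averages $\frac{1}{N} \sum_{n\leq N} g(S^{-n}\omega)\, T^{\lfloor \alpha n \rfloor} f$ converge $\mu$-a.e.

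The next step is a Calder\'{o}n-type transference to the canonical integer model, collapsing ``for all $(X,\mu,T,f)$" into ``for all $h \in \ell^2(\Z)$". Writing $w_n := g(S^{-n}\omega)$, it suffices to establish, for each good $\omega$, an $\ell^2(\Z)$ oscillation (or jump) inequality for the operators
\[ \Ca_N h(x) := \frac{1}{N} \sum_{n \leq N} w_n \, h\bigl(x + \lfloor \alpha n \rfloor\bigr), \]
of the form $\bigl\| O^2(\{\Ca_N h\}_{N}) \bigr\|_{\ell^2(\Z)} \leq C(w)\, \|h\|_{\ell^2(\Z)}$, where $C(w)$ depends only on finitely many Wiener--Wintner-type functionals of $w$. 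Combined with a.e.\ convergence on a dense class (coboundaries for the $\lfloor \alpha\,\cdot\,\rfloor$-shift plus constants), the Banach principle then delivers the theorem. The sequence $\lfloor \alpha n \rfloor$ enters here only through the Fourier/circle-method decomposition of its multiplier symbol, exactly as in Theorem~\ref{t:DR}; all $\omega$-dependence is quarantined inside $w$.

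To extract a single $Y_g$ of full measure, I would prove two $\nu$-a.e.\ facts about $w_n = g(S^{-n}\omega)$: a uniform-in-$\theta$ Wiener--Wintner maximal bound
\[ \Bigl\| \sup_{\theta,\,N} \Bigl| \tfrac{1}{N}\sum_{n\leq N} g(S^{-n}\omega)\, e^{2\pi i \theta n} \Bigr| \Bigr\|_{L^2(\nu)} < \infty, \]
and stopping-time / metric-entropy control of the frequency-localized pieces of the symbol of $\Ca_N$. The former is the uniform Wiener--Wintner estimate for aperiodic $S$; the latter is the substantive new ingredient, and it is the same mechanism that governs the unweighted bilinear problem of Theorem~\ref{t:DR}, which is the point of the paper's unified framework.

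The principal obstacle is precisely this last ingredient: one must show that the family of multiplier symbols attached to $\{\Ca_N\}$, indexed by $N$ together with the major-/minor-arc parameters coming from the circle-method expansion of $\lfloor \alpha n \rfloor$, has small enough metric entropy that a \emph{single} null set of $\omega$ suppresses oscillation uniformly across the entire family — and hence, by transference, uniformly in the auxiliary $(X,\mu,T,f)$. This is where the stopping-time apparatus of \cite{B3} (originally devised in the Return Times setting) must be run against the oscillation inequality for the $\lfloor \alpha n \rfloor$-averages that is already required for Theorem~\ref{t:DR}.
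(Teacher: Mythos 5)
Your scaffolding diverges from the paper's at the very first structural decision, and the divergence opens a gap the proposal does not close. You propose to (i) transfer to $\ell^2(\mathbb{Z})$, (ii) prove a genuine \emph{norm} oscillation/jump estimate $\| O^2(\{\mathcal{C}_N h\}_N)\|_{\ell^2(\mathbb{Z})} \leq C(w)\,\|h\|_{\ell^2(\mathbb{Z})}$, and (iii) combine it with a.e.\ convergence on a dense class via the Banach principle. The paper deliberately avoids every one of (ii) and (iii): in \S\ref{s:ex} it is stated flatly that ``Bourgain's approach is poorly suited to proving norm estimates,'' and the entire point of Lemma~\ref{l:convex} and Proposition~\ref{p:QC} is to replace the $L^2$-oscillation bound by the much weaker counting statement $C_{\tau,B,H}^{(X,\mu,T)}(\omega)\lesssim_{\tau,B}1$, which is all the entropy/stopping-time machinery actually yields. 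Moreover, your step (iii) is not merely unnecessary in the paper's framework, it is unavailable here: for irrational $\alpha$ the maps $T^{\lfloor\alpha n\rfloor}$ do not form a group action, so ``coboundaries for the $\lfloor\alpha\,\cdot\,\rfloor$-shift'' is not a well-defined dense class, and any candidate dense class would live in $L^2(X)$ and hence depend on the auxiliary system $(X,\mu,T)$ — exactly the dependence the Return Times statement forbids. The Calder\'on transfer you invoke does collapse systems for \emph{operator bounds}, but it does not hand you a universal dense convergence class, which is why a Banach-principle route needs a separate mechanism you have not supplied (one would, e.g., need a full $r$-variational bound in place of an oscillation bound to dispense with the dense class altogether).

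Where you \emph{are} in contact with the paper is at the level of themes: the reduction from $\gamma\in\mathbb{Q}$ to $\gamma=-1$, the passage to a discrete model, the floor function handled by Fourier expansion, and the metric-entropy / stopping-time control of the frequency data $\{\Omega_{J,\delta}\}$ are all real ingredients. But even there your plan to ``quarantine all $\omega$-dependence inside $w$'' via a Wiener--Wintner maximal functional is Bourgain's \emph{original} route, which the paper describes in \S\ref{s:ex} precisely in order to depart from it. The paper instead: quantifies convergence via the counting quantities $C_{\tau,B,H}^{(X,\mu,T)}(\omega)$, reduces to a single fixed ergodic $(X_0,\mu_0,T_0)$ (Lemma~\ref{l:II}, via Halmos's density theorem) and to ergodic $(Y,\nu,S)$ (Lemma~\ref{l:serg}), uses the Pointwise Ergodic Theorem to translate $\nu(V_L)$ into an average over shifts $a\in[N]$ of the discrete counting quantity $C_{\tau,B,H}(a)$ (Proposition~\ref{p:trans}), and then proves the two Fourier-analytic estimates of Propositions~\ref{p:max00} and~\ref{p:osc00}. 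The measurability of the bad $\omega$-set is itself nontrivial and occupies the Appendix; your sketch does not address it. In short, the proposal correctly identifies the relevant hard-analysis ingredients but erects them on a Banach-principle frame that the paper was specifically engineered to avoid, and that frame has two load-bearing gaps (the norm estimate, and the dense class) that are not filled.
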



\subsection{History}
Following Furstenberg's ergodic theoretic proof of Szemer\'{e}di's theorem \cite{Sz}, the issue of multiple recurrence of ergodic averages came into being, namely the limiting behavior of
\begin{align}\label{e:mult} \frac{1}{N} \sum_{n \leq N} T^n f_1 \cdot \dots \cdot T^{kn} f_k
\end{align}
for $f_i \in L^{\infty}(X)$; almost $30$ years after Szemer\'{e}di proved his theorem by establishing an appropriate lower bound on the $\liminf$ of \eqref{e:mult}, Host-Kra \cite{HK} proved that the multiple averages \eqref{e:mult} converged in $L^2(X)$-norm, with the $k=3$ case first appearing in \cite{HK0}.

The issue of pointwise convergence of \eqref{e:mult} is much more delicate, with only the $k=2$ case currently available \cite{B3}, although the case where $f,g$ are in more general $L^p(X)$-spaces
was later established by Lacey \cite{L}.

Prior to establishing his so-called ``double recurrence" result \cite{B3}, Bourgain began working on the issue of pointwise convergence of weighted ergodic averages
\[ \frac{1}{N} \sum_{n \leq N} w(n) \cdot T^n f\]
for certain classes of weights, $w$; this interest dates at least as far back as \cite{Bp}, in which case the role of the weight $w$ was played by the von Mangoldt function. But, this line of inquiry much precedes Bourgain, beginning with the Wiener-Wintner Theorem \cite{WW}, which dictates that for each $g \in L^1(X)$, $\nu$-a.e.\ the following averages converge for all $\theta \in \mathbb{T}$,
\[ \omega \mapsto \frac{1}{N} \sum_{n \leq N} g(S^n \omega) \cdot e^{2 \pi i n \theta}, \]
this result follows directly from Theorem \ref{t:RT} by specializing $(X,\mu,\mathbb{T})$ to the systems
\[ \{ (\mathbb{T},dx,x \mapsto x- \theta) \}_{\theta \in \mathbb{T}}\]
and choosing each time $f(x) = e^{2 \pi i x}$. In particular, Bourgain was interested in the case where the weights are given by
\[ w(n) = g(S^n\omega), \; \; \; g \in L^{\infty}(Y), \]
initially in the case where $g = \mathbf{1}_E$ and $S : Y \to Y$ is ergodic. In unpublished work, \cite{B00}, Bourgain first essentially established the $\alpha \in \mathbb{Q}$ case of Theorem \ref{t:RT} using methods of harmonic analysis, with a complete proof and subsequent simplification using ergodic-theoretic methods later appearing as joint work with Furstenberg-Katznelson-Ornstein in the appendix to \cite{B2}; a later proof using joinings is due to Rudolph \cite{Rud}. One feature of our hard analytic approach is that we extend all arguments involved to the case of real $\alpha \in \mathbb{R}$, with the case where $g$ is constant, namely, the linear averages
\[ \frac{1}{N} \sum_{n \leq N} T^{\lfloor \alpha n \rfloor} f, \]
first addressed in \cite{B2}. One theme of this paper is to emphasize the connection between Theorem \ref{t:DR} and Theorem \ref{t:RT}, in which the presence of the second function $g$ introduces a ``half-degree" of multi-linearity.

Our proof uses many of the ingredients of \cite{B00,B3}, which emphasize the role of harmonic analysis on $\mathbb{T}$ and entropic considerations deriving initially from martingale theory, and in particular allows us to \emph{quantify} convergence more precisely than in the treatments of \cite{B00,B3, Rud}; there is a direct analogy to Bourgain's work on polynomial ergodic theorems \cite{B2}, which will be partially explored in \S \ref{s:ex} below. We refer the reader to \cite{Tao} for a fuller discussion of these techniques in the work of Bourgain, and to \cite{BK} for a more focused discussion of their use in pointwise ergodic theory.

\subsection{High-Level Approach}
The mechanism for proving pointwise convergence without the presence of a dense subclass of examples derives from Bourgain's work, \cite{B0,B1,B2}, in which one seeks to show that a certain ``oscillation operator"
\[ O_K^2 := \frac{1}{K} \sum_{k \leq K} \max_{N_k \leq N \leq  N_{k+1}} |A_N - A_{N_k}|^2\]
satisfies
\[ \| O_K \|_{L^2(X)} = o_{K \to \infty}(1) \]
for any given increasing subsequence $\{ N_k\}$; above $\{ A_M \}$ are a sequence of $1$-bounded functions defined on a probability space, $(X,\mu)$.

But, establishing a norm estimate can itself be challenging, as one is forced to encounter the situation where 
\[ \max_{N_k \leq N \leq  N_{k+1}} |A_N(x) - A_{N_k}(x)| \]
is large for many $k \leq K$ at the same time, yielding large contributions to $O_K(x)$; we instead choose to quantify convergence as weakly as possible, using the following elementary lemma; see \S \ref{sss:O} for relevant notation.

\begin{lemma}\label{l:convex}
    Suppose that $\{ A_M \}$ are a sequence of $1$-bounded functions defined on a probability space, $(X,\mu)$, which satisfy the smoothness condition
    \[ \|A_M - A_N\|_{L^{\infty}(X)} \lesssim \frac{|N-M|}{N}.\]
Consider the quantity
\begin{align}  C_{\tau,B,H}^{(X,\mu)} &:= \sup \Big\{ K : \text{ there exists } M_0 < M_1 < \dots < M_K \leq H/100, \\
& \text{ so that } \mu(\{ x : \max_{M_{k-1} \leq M \leq M_k} |A_M - A_{M_k}| \gg \tau \}) \gg \tau^B \Big\}, \end{align}
and we restrict all times to be of the form $\{ 2^{n/B_\tau} \}, \ B_\tau = O(\tau^{-1})$.
If for each $\tau,B,H$
\[ C_{\tau,B,H}^{(X,\mu)} \lesssim_{\tau,B} 1,\]
with the implicit constant independent of $H$, then $\{ A_M \}$ converge $\mu$-a.e.
\end{lemma}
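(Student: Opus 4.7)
The plan is to argue by contraposition: if $\{A_M\}$ fails to converge $\mu$-almost everywhere, then $\sup_H C_{\tau,B,H}^{(X,\mu)} = \infty$ for some $\tau,B$, contradicting the hypothesis. Suppose non-convergence on a set of positive measure. The Cauchy criterion together with countable decomposition produces $\tau, \epsilon > 0$ and a set $F$ with $\mu(F) \geq 2\epsilon$ on which $\limsup_M A_M - \liminf_M A_M \geq 100\tau$. The smoothness hypothesis $\|A_M - A_N\|_{L^\infty(X)} \lesssim |M-N|/N$ ensures that consecutive points in the geometric sequence $\TT := \{2^{n/B_\tau}\}$ differ by $O(\tau)$ in sup norm, so sampling along $\TT$ loses at most $O(\tau)$ of the oscillation; in particular, on $F$ the oscillation of $A_M$ along $\TT$ remains $\geq 50\tau$.

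The core construction is a greedy stopping-time build of a chain $M_0 < M_1 < M_2 < \cdots$ in $\TT$. Fixing $B$ large enough that $\tau^B < \epsilon$, I would set $M_0 := 1$ and recursively define
\[ M_k := \min\Bigl\{ M \in \TT \cap (M_{k-1}, \infty) : \mu\bigl(\{x : \max_{N \in \TT \cap [M_{k-1}, M]} |A_N(x) - A_{M_{k-1}}(x)| \geq 5\tau\}\bigr) \geq \tau^B \Bigr\}. \]
On the selected event the oscillation of $A$ over $[M_{k-1}, M_k]$ is at least $5\tau$; since for real-valued sequences the maximal deviation from an endpoint and the full oscillation on an interval agree up to a factor of two, this forces $\max_{M_{k-1} \leq N \leq M_k} |A_N - A_{M_k}| \geq 2\tau$ on the same set, meeting the defining inequality of $C_{\tau,B,H}^{(X,\mu)}$ (with the implicit constants in $\gg$ absorbed). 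Consequently $M_0 < M_1 < \cdots < M_K$ witnesses $C_{\tau,B,H}^{(X,\mu)} \geq K$ for any $H \geq 100 M_K$, and sending $K \to \infty$ contradicts the assumed uniform bound in $H$.

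The main obstacle is verifying that the greedy never stalls, i.e., that the set $\{x : \sup_{N \in \TT,\, N \geq M_{k-1}} |A_N - A_{M_{k-1}}| \geq 5\tau\}$ has $\mu$-mass at least $\tau^B$ at every step $k$. For this I would show that, up to null sets, this set contains $F$: on $F$ the post-$M_{k-1}$ oscillation of $A_N$ along $\TT$ is $\geq 50\tau$, so the triangle inequality applied to the maximizer and minimizer of $A_N$ over $\TT \cap [M_{k-1}, \infty)$ yields some $N \geq M_{k-1}$ with $|A_N - A_{M_{k-1}}| \geq 25\tau$. Hence the nested union over $M \in \TT$ of the candidate oscillation sets has mass at least $\mu(F) \geq 2\epsilon > \tau^B$, and upward continuity of $\mu$ in $M$ produces a finite witness $M_k$. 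The smoothness hypothesis is essential throughout, as it is precisely what allows the entire analysis to be carried out on the countable lattice $\TT$ without changing oscillations by more than $O(\tau)$.
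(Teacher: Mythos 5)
Your proof is correct and takes essentially the same approach as the paper's: argue by contraposition, use the smoothness hypothesis to pass to the lacunary lattice $\{2^{n/B_\tau}\}$, extract an arbitrarily long witness chain $M_0 < \dots < M_K$, and then conclude $K \leq C_{\tau,B,H}^{(X,\mu)} \lesssim_{\tau,B} 1$ for a contradiction. The paper compresses the extraction step into the phrase ``by a brief measure-theoretic argument''; your greedy stopping-time construction, the verification that it never stalls via upward continuity of measure, and the factor-of-two comparison between deviation from the left endpoint and deviation from the right endpoint are exactly the details that phrase hides, so your proof is a faithful fleshing-out rather than a different route.
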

\begin{proof}
    The proof is by contradiction, where we begin by assuming that $\{ A_M \}$ fail to converge, and then violate the condition $C_{\tau,B,H}^{(X,\mu)} \lesssim_{\tau,B} 1$.

Since $\{ A_M \}$ fail to converge, there exists some $\tau > 0$
so that    \[ \mu(\{ x : \limsup_{M,N} |A_M - A_N| \gg \tau\}) \gg \tau. \]
In this case, by a brief measure-theoretic argument, we could extract a finite subsequence of arbitrary length $K$ so that for each $1 \leq k \leq K$
\[ \mu(\{ x : \max_{M_{k-1} \leq M \leq M_k} |A_M - A_{M_k}| \gg \tau\}) \gg \tau;\]
there is no loss of generality here in assuming that $M_k,M \in \{ 2^{n/B_\tau} \}$ by the smoothness condition on $\{ A_M\}$. So, let $K$ be an unbounded but fixed integer, and select $H = H(K) \gg M_K$; summing over $k \leq K$ yields a strict upper,
\[ K \tau \ll \sum_{k \leq K} \mu(\{ x : \max_{M_{k-1} \leq M \leq M_k} |A_M - A_{M_k}| \gg \tau\}) \leq C_{\tau,B,H}^{(X,\mu)} \lesssim_{\tau, B} 1, \]
for the desired contradiction.
\end{proof}

By applying Calder\'{o}n's transference principle \cite{C1} (and additional measure-theoretic arguments in the case of Return Times) one is able to pass, essentially, to a discrete system
\[ ([H],\frac{d\#}{H}, x \mapsto x-1), \; \; \; [H] := \{ 1,\dots,H\}, \]
where the task reduces to estimating the oscillation of discrete bilinear averaging operators on the integers. The analysis here derives from an appropriate understanding of the single-scale situation. Using martingale-based bilinear entropy techniques, whose linear counterparts were crucially used in \cite{B2}, one aggregates the bilinear averages into scales and locations where the single-scale example becomes typical; borrowing from the time-frequency literature, we refer to these collections as ``branches." The argument is concluded by showing that -- up to exceptional set considerations -- there are only a ``few" branches which present. This is accomplished by way of an energy pigeonholing argument adapted from time frequency techniques, largely deriving from \cite{D00}.

\medskip

The structure of the paper is as follows:
\begin{itemize}
\item We begin in \S \ref{ss:not} with some preliminary material;
\item In \S \ref{s:ex}, we present a modern take on Bourgain's approach, in the context of Double Recurrence, Theorem \ref{t:DR}, as this case admits a simpler transference argument to pass to the integers;
\item In the next section we quantify convergence, which allows us to pass to the measure-preserving system of $(\mathbb{Z},d\#,x \mapsto x-1)$; we largely accomplish these reductions in \S \ref{s:QC} and \S \ref{s:transtoZ}, with an additional measure-theoretic argument deferred to the Appendix, \S \ref{s:appendix};
  \item In \S \ref{ss:tools} we collect some tools from harmonic analysis that we will appeal to: estimates for trigonometric polynomials, martingale-based entropy estimates, orthogonality tools from time frequency analysis, and some dyadic grid techniques;
  \item In the next section, \S \ref{s:floor}, we ``resolve" the floor function, simplifying the analysis to follow;
    \item In \S \ref{ss:fol} we introduce a selection algorithm which will eventually allow us to reduce our analysis to (essentially) that considered in the examples of \S \ref{s:ex};
    \item In \S \ref{s:oscmax} we present our proof strategy;
    \item In the remaining sections, \S \ref{s:DRmax}, \S \ref{s:RTmax}, \S \ref{s:DRosc}, and \S \ref{s:RTosc}, we complete the program outlined in the previous \S \ref{s:oscmax}, thereby concluding the proof.
\end{itemize}

\subsection{Acknowledgements}
I would like to thank Tim Austin, Vitaly Bergelson, and M\'{a}t\'{e} Wierdl for their help with the Appendix, $\S \ref{s:appendix}$, Thomas Jordan for useful discussions on topological entropy, Joel Moreira for useful discussions about the Wiener-Wintner Theorem, and  Tim Austin and Joni Ter\"{a}v\"{a}inen for their helpful editorial comments  which improved the exposition throughout. I am especially grateful to Ciprian Demeter for generously sharing \cite{D00}, from which I first learned Theorem \ref{t:DR}, and for more than a decade of encouraging me to pursue the topic of pointwise ergodic theory.

\section{Preliminaries}\label{ss:not}
\subsection{Notation}
We use
\[ e(t) := e^{2 \pi i t} \]
throughout to denote the complex exponential, and for $\theta \in \mathbb{T}$, we use the abbreviation
\begin{align}\label{e:mod} \text{Mod}_\theta g(n) := e(\theta n) \cdot g(n).\end{align}

We let 
\[ B(r) := \{ |x| \leq r \} \subset \mathbb{R} \]
denote Euclidean balls, and use the Minkowski sum
\begin{align}\label{e:Minksum} \Lambda + B(r) \end{align}
to denote $\{ x : \text{dist}(x,\Lambda) \leq r \}$; we use the notation 
\[ [M] := \{1,\dots,M\}.\]
Throughout we use $\Lambda$ (sometimes with subscripts, etc.) to denote finite subsets of frequencies
\[ \Lambda \subset \mathbb{T}, \; \; \; |\Lambda| < \infty.\]

Sometimes, we will need to measure distance with respect to different Banach spaces: if ($X, \| \cdot \|_X$) is such a Banach space, we use
\begin{align}\label{e:normball} B_{\| \cdot \|_X}(r) := \{ \| v \|_X \leq r \} \subset X 
\end{align}
to denote the ball inside of $X$.

We use
\[ M_{\text{HL}}f(x) := \sup_{r > 0} \frac{1}{2r} \int_{|t| \leq r} |f(x-t)| \ dt, \; \; \; := \sup_{N \geq 0} \frac{1}{2N+1} \sum_{|n| \leq N} |f(x-n)|  \]
to denote the Hardy-Littlewood Maximal Function, with context determining whether we use the real-variable/integer formulation.

For intervals $I \subset \mathbb{R}$ (with length $\geq 1$), we use $|I|$ to denote side length, thus
\[ |I| = \# \{ n \in I \cap \mathbb{Z} \} + O(1)\]
so that whenever $\{ a_n \}$ are $1$-bounded
\[ \frac{1}{\#(I \cap \mathbb{Z})} \sum_{n \in I \cap \mathbb{Z}} a_n = \frac{1}{|I|} \sum_{n \in I \cap \mathbb{Z}} a_n + O(|I|^{-1}) =: \frac{1}{|I|} \sum_{n \in I} a_n + O(|I|^{-1}),\]
and thus from the perspective of pointwise convergence there is no loss of generality in using this normalizing convention; see Subsection \ref{sss:O} for a review of big-Oh notation. We will let
\[ C \cdot I \]
denote the interval that is concentric to $I$ but with $C$ times the length.

In \S \ref{s:ex} we will use the heuristic notation
\[ f \; \; \; ``= " \; \; \;  g \]
to denote moral equivalence: up to tolerable errors, $f$ and $g$ exhibit the same type of behavior.

Finally, throughout the remainder of the paper, $\alpha \in \mathbb{R}$ will be arbitrary but fixed, and we will allow all implicit constants to depend on $\alpha$. We will set $D = D_\alpha \in \mathbb{Z}$ satisfying
\begin{align}\label{e:Da} 1/20 \leq |\alpha/D| \leq 1/10, \; |\alpha| \geq 1/10, \; \; \; D =1 \text{ otherwise.}\end{align}

\subsection{Relevant Parameters}\label{ss:param}
Throughout the remainder of the paper, $0 < \tau, \delta \ll 1$ will denote small parameters, and $A_0 \gg 1$ will be a large constant which we are free to increase finitely many times throughout the argument; we set $\alpha_0 := A_0^{-1}$

Beginning in \S \ref{ss:fol} we will introduce two moderately large parameters $R \ll W = R^{10}$, with 
\begin{align}\label{e:RW} R = \begin{cases} 2^{\tau^{-A_0}} & \text{ if } \log^{-1}(1/\delta) \geq \tau^4 \\
\delta^{-\alpha_0} & \text{ otherwise.} \end{cases}  \end{align}
We will also need a somewhat small parameter
$t$, defined
\begin{align}\label{e:t} t = \begin{cases} \tau^{A_0} & \text{ if } \log^{-1}(1/\delta) \geq \tau^4 \\
\log^{-10}(1/\delta) & \text{ otherwise,} \end{cases} \end{align}
and finally set
\begin{align}\label{e:rho}
\rho = (\alpha_0 \cdot t)^2.
\end{align}

We will also select an odd integer $\Delta \in 2 \mathbb{N} + 1$ of size 
\begin{align}\label{e:Delta} \Delta \approx t^{-10}.\end{align}

\subsection{Asymptotic Notation}\label{sss:O}
We will make use of the modified Vinogradov notation. We use $X \lesssim Y$ or $Y \gtrsim X$ to denote
the estimate $X \leq CY$ for an absolute constant $C$ and $X, Y \geq 0.$  If we need $C$ to depend on a
parameter, we shall indicate this by subscripts, thus for instance $X \lesssim_p Y$ denotes the estimate $X \leq C_p Y$ for some $C_p$ depending on $p$. We use $X \approx Y$ as shorthand for $Y \lesssim X \lesssim Y$. We use the notation $X \ll Y$ or $Y \gg X$ to denote that the implicit constant in the $\lesssim$ notation is extremely large, and analogously $X \ll_p Y$ and $Y \gg_p X$.

We also make use of big-Oh and little-Oh notation: we let $O(Y)$  denote a quantity that is $\lesssim Y$ , and similarly
$O_p(Y )$ will denote a quantity that is $\lesssim_p Y$; we let $o_{t \to a}(Y)$
denote a quantity whose quotient with $Y$ tends to zero as $t \to a$ (possibly $\infty$), and
$o_{t \to a;p}(Y)$
denote a quantity whose quotient with $Y$ tends to zero as $t \to a$ at a rate depending on $p$.

\subsection{Entropic Considerations}\label{ss:ent}
Below we develop three similar -- and linked -- notions of the size of a set, used in the study of topological entropy; a nice reference for the below is \cite[\S 7.2]{W}, see also \cite[\S 1.15]{Tbook}.

Given a finite collection of vectors inside of some (complete) normed vector space 
\[ V:=\{ \vec{v_i} \} \subset X \]
we define
\begin{align}\label{e:Ent}
    \text{Ent}(V; \epsilon)
\end{align}
to be the cardinality of the maximal $\epsilon$-separated subset of $\{ \vec{v_i}\} \subset V$, namely the cardinality of an \emph{$\epsilon$-net} inside $V$; this quantity is bounded by
\begin{align}\label{e:jump} N_\epsilon(V),
\end{align}
the \emph{$\epsilon$-jump-counting number of $V$}, the maximal $D$ so that there exists $\{ \vec{v_k} : 0 \leq k \leq D\}$ so that
\[ \| \vec{v_k} - \vec{v_{k-1}} \|_X > \epsilon, \; \; \; 1 \leq k \leq D.\]

We need two further notions of entropy:

First, the external $\epsilon$-covering number,
\begin{align}\label{e:net}
    \text{Ext}(V;\epsilon) := \min\{ D : V \subset \bigcup_{i \leq D}  \vec{w_i}  + B_{\| \cdot \|_X}(\epsilon) \},
\end{align}
where we use the Minkowski sum, see \eqref{e:normball}, and we do not insist that $\{ \vec{w_i} \} \subset V$; this quantity is bounded above by the internal $\epsilon$-covering number
\begin{align}\label{e:c-net}
    \text{Int}(V;\epsilon) := \min\{ D : V \subset \bigcup_{i \leq D}  \vec{v_i}  + B_{\| \cdot \|_X}(\epsilon) \},
\end{align}
where we here restrict the centers of our balls to actually live inside $V$. But, we have the useful comparison:
\begin{align}\label{e:comp}
\text{Int}(V;2\epsilon) \leq \text{Ext}(V;\epsilon); \; \; \;\text{Ent}(V;\epsilon) \leq \text{Int}(V;\epsilon/2); \; \;
\text{Int}(V;\epsilon) \leq \text{Ent}(V;\epsilon).
\end{align}


\section{Bourgain's Approach}\label{s:ex}
Bougain's argument begins at level of dynamical systems, by employing the \emph{uniform Wiener-Wintner Theorem}, to reduce to the case where $g \in \mathcal{K}(T)^{\perp}$ for $T$ ergodic
\begin{align}
    \sup_{\beta} |\frac{1}{N} \sum_{n \leq N}    T^n g \cdot e(-n \beta) | \to 0, \; \; \; \mu-\text{a.e.};
\end{align}
this can be exploited to reduce to the studying the integer model of the problem where $g$ satisfies
\[ \sup_{\beta, \text{ ``almost all" }|I| \geq N} \big| \frac{1}{|I|} \sum_{n \in I} g(n) \cdot e(-n \beta) \big| = o_{N \to \infty}(1);\]
see \cite[\S 2]{D00} for a rigorous formulation. 

In the more modern language of time frequency analysis, see \cite{L, D0, DOP}, this allows one to transfer to a problem in Euclidean space where the ``wave packet coefficients" are very small,
\begin{align}\label{e:wavepacket}
\langle \psi_s, g \rangle = o(|I_s|^{1/2}), \; \; \; s = I_s \times \omega_s
\end{align}
whenever $\psi_s$ is an $L^2$-normalized wave-packet adapted to the ``tile" $s := I_s \times \omega_s \subset \mathbb{R}^2$, see \S \ref{ss:tools} below. On the other hand, the time frequency approach exploits Carleson estimates on the collections
\[ \{ \langle \psi_s, g \rangle : s \} \]
via Bessel-type orthogonality estimates, rather than pointwise bounds as in \eqref{e:wavepacket}. The time-frequency approach is incredibly powerful, and allows one to prove extremely strong norm estimates: one is able to control oscillation in tandem with symmetry, by passing to a (maximally truncated) singular integral model, see \cite{L}. Bourgain's approach is poorly suited to proving norm estimates; on the other hand, it is able to reduce proving convergence of bilinear ergodic averages to proving a weak maximal estimate whenever $g \in \mathcal{K}(T)^{\perp}$; the passage to the integer model is straightforward in this case, where the modulation invariance
\begin{align}\label{e:modinv0} \big| \frac{1}{N} \sum_{n \leq N} (\text{Mod}_{\theta} f)(x-n) \cdot (\text{Mod}_{\theta} g)(x+n) \big| \; \; \; \text{ is independent of $\theta$},   \end{align}
see \eqref{e:mod},
dictates that the class of examples must be rich enough to accommodate 
\[ \{ \text{Mod}_\theta g : \theta \in \mathbb{T} \} \]
uniformly in $\theta$.

Taken together, we see that we are interested in studying
\begin{align} B_N(f,g)(x) := \frac{1}{N} \sum_{n \leq N} f(x-n) g(x+n), \; \; \; g = \sum_{\theta \in \Lambda} c_\theta \cdot  e(\theta \cdot) \cdot \varphi(\cdot/N)
\end{align}
with the aim to extract gain in
\[ \| c_\theta \|_{\ell^{\infty}(\Lambda)}, \]
as per the additive combinatorial inequality
\[ \| \mathbb{E}_{n \in \mathbb{Z}/N} f(x-n) g(x+n) \|_{\ell^1(\mathbb{Z}/N)} \leq \| g\|_{U^2(\mathbb{Z}/N)} \leq \| \hat{g} \|_{\ell^\infty}^{1/2}: \]
the only obstruction to the average-case estimate
\[ |\mathbb{E}_{n \in \mathbb{Z}/N} f(x-n) g(x+n)| \ll \delta \]
is if there exists some $\beta \in \mathbb{Z}/N$ so that $|\hat{g}(\beta)| \gtrsim \delta^2$.

But, for $g$ of the above form, one computes
\begin{align}\label{e:approx1}
    B_N(f,g)(x) \; \; \; ``=" \; \; \; \sum_{\theta \in \Lambda} c_\theta \cdot e(2 \theta x) \cdot \eta_N * (\text{Mod}_{-\theta} f)(x),
\end{align} 
where $\eta_N := N^{-1} \eta( \cdot /N)$ for smooth $\eta$, so that 
$\widehat{\eta_N}$ is essentially a smooth approximation to $\mathbf{1}_{B(1/N)}$. Above, we think of 
\[ \min_{\theta \neq \theta' \in \Lambda} |\theta - \theta'| \gg 1/N, \]
as 
\[ \| \theta - \theta' \|_{\mathbb{T}} \ll 1/N, \; \;\;|x| \leq N \; \; \; \Rightarrow e(\theta x) = e(\theta' x) + o(1). \]
We continue
\begin{align}
    &\eqref{e:approx1} \; \; \; ``=" \; \; \; \sum_{\theta \in \Lambda} c_\theta \cdot e(2 \theta x) \cdot \eta_N * f_\theta(x), \; \; \;  f_\theta := \phi_N * (\text{Mod}_{-\theta} f) \end{align}
for 
\[ \mathbf{1}_{|t| \leq 1/4N} \leq \widehat{ \phi_N} \leq \mathbf{1}_{|t| \leq 1/2N} \]
like $\eta_N$, so that the following vector-valued inequalities persist, see \eqref{l:rsum} for the second point
\begin{align}
    \| (\sum_{\theta \in \Lambda} |f_\theta|^2)^{1/2} \|_{\ell^2(\mathbb{Z})}^2 \lesssim \| f \|_{\ell^2(\mathbb{Z})}^2, \; \; \; 
    \| (\sum_{\theta \in \Lambda} |f_\theta|^2)^{1/2} \|_{\ell^\infty(\mathbb{Z})}^2 \lesssim \| f \|_{\ell^{\infty}(\mathbb{Z})}^2.
  \end{align}
This vector valued approach allows us to bound 
\begin{align*}
    \| B_N(f,g) \|_{\ell^2(|x| \leq N)}^2 & ``\lesssim" \; \; \;  N \cdot \min_{|x_0| \leq N} \ \sum_{\theta \in \Lambda} |c_\theta|^2 \cdot |\eta_N*f_\theta(x_0)|^2  \\
    & \qquad \leq \| c_\theta \|_{\ell^{\infty}(\Lambda)}^2 \cdot N \cdot \min_{|x_0| \leq N} \sum_{\theta \in \Lambda} |\eta_N*f_\theta(x_0)|^2 \\
    & \qquad \qquad \lesssim \| c_\theta \|_{\ell^{\infty}(\Lambda)}^2 \cdot N \cdot \| f \|_{\ell^{\infty}(\mathbb{Z})}^2.
\end{align*} 
In particular,
whenever $|f|,|g| \leq 1$,
\[ \| B_N(f,g) \|_{\ell^2(|x| \leq N)}^2 \lesssim \| c_\theta \|_{\ell^{\infty}(\Lambda)}^2 \cdot N.\]
A preliminary take-away from this computation is that we should be organizing 
\[ \{ c_\theta : \theta \in \Lambda\} \]
into level-sets
\begin{align}\label{e:level} \{ c_\theta : \theta \in \Lambda, |c_\theta| \approx \delta \} , \end{align}
and seeking gain in $\delta$, which suggests to us that the type of $\Lambda$ we expect are $1/N$-nets inside the \emph{large spectra} of our secondary functions, $g$, appropriately localized and normalized:
\begin{align}\label{e:spec}
 \text{Spec}_{\delta}(g_I) := \{ \beta : |\frac{1}{|I|} \sum_{n \in I} g(n) e(-n \beta)| \geq \delta \};   
\end{align}
in what follows, we will implicitly assume that all $c_\theta$ are of this magnitude. Note further that if
\[ \Lambda_{\delta}(I) \subset \text{Spec}_{\delta}(g_I)\]
is a $1/N$-net, then we have an absolute bound
\begin{align}\label{e:specsize}
|\Lambda_{\delta}(I)| \lesssim \delta^{-2}
\end{align}
by Lemma \ref{l:rsum}.

Moreover, to the extent that multi-frequency issues as in the polynomial ergodic theorem present, we expect the issue of combining scales to be entropy-driven, using the tools from $\ell^2(\Lambda)$-based entropy and variation, summarized for our purposes in \S \ref{ss:tools} below.

At the level of proof strategy, if we expect the analogy between the pointwise ergodic theorem along polynomial orbits \cite{B0,B1,B2} to persist, we further expect a splitting of the problem into an oscillation-based estimate (according to whether the parameter $\delta$ is ``large") and a maximal estimate (when $\delta$ is ``small"), with the aim of using orthogonality-based techniques in the former case, and gain in the parameter $\delta$ in the latter, see \S \ref{s:oscmax} below. The maximal estimate is the more fundamental, and so we accordingly focus our attention.

\subsection{Combining Scales}
With the single-scale case understood, we develop a tool kit that will allow us to enjoy multi-scale control of the averages in certain model cases; the argument proper will then derive from trying to reduce to these cases.

\subsubsection{Simple Case: One Parental Scale}
Suppose that $g$ is as above, and we are interested in showing the preliminary bound on $I_0$
\begin{align*}
    \sup_{\delta N \leq M \leq N} |B_M(f,g)(x)| = o_{\delta \to 0}(1)
\end{align*}
in a quantifiable sense.

At this point, we see that we need to control
\[ \sup_{\delta N \leq M \leq N} \big| \sum_{\theta \in \Lambda} e(2 \theta x) \cdot \eta_M*(c_\theta \cdot f_\theta)(x) \big|, \]
where we will impose the non-concentration condition
\[ \min_{\theta \neq \theta' \in \Lambda} |\theta - \theta'| \geq \frac{1}{\delta N} \]
to be able to maintain our vector-valued perspective. 

By Bourgain's multi-frequency maximal inequality \cite{B2}, used crucially in his polynomial ergodic theorem, we know that
\begin{align*}
    \| \sup_{\delta N \leq M \leq N} \big| \sum_{\theta \in \Lambda} e(2 \theta x) \eta_M*(c_\theta \cdot f_\theta)(x) \big| \|_{\ell^2(\mathbb{Z})} &\lesssim \log^2 |\Lambda| \cdot \| \big( \sum_{\theta \in \Lambda} |c_\theta \cdot f_\theta|^2 \big)^{1/2} \|_{\ell^2(\mathbb{Z})} \\
    & \qquad \lesssim \delta \log^2 |\Lambda| \cdot \| f \|_{\ell^2(\mathbb{Z})}.
\end{align*}

On the other hand, specializing our second function $g$ to be of this simplified form removes a fundamental layer of generality, in that $g$'s behavior is essentially uniform. In particular, whenever $|I| = |J| = M$ are both contained in $I_0$
\[ \text{Spec}_\delta( g \cdot \mathbf{1}_I) = \text{Spec}_{\delta}(g \cdot \mathbf{1}_J),\]
see \eqref{e:spec},
which leads to the case of constant coefficients $\{ c_\theta \}$, when in fact we expect
\begin{align}
B_M(f,g)(x)
\end{align}
to be determined by the large spectrum of $g$ at the appropriate scale and location. In particular, generalizing
\[ c_\theta \longrightarrow \frac{1}{|J|} \sum_{ n \in J } g(n) e(-n \theta) \]
according to scale and location, we have the heuristic that when $x \in I, \ |I| = M$,
\begin{align*}
     B_M(f,g)(x)  
\; \; \; ``&=" \; \; \; \sum_{\theta \in \Lambda_\delta(I)} e(2\theta x) \cdot \big( \frac{1}{|I|} \sum_{n \in I} g(n) e(-n \theta)  \big) \cdot  \eta_M*(\text{Mod}_{-\theta} f)(x) \\
    & \qquad ``=" \; \; \; \sum_{\theta \in \Lambda_\delta(I)} e(2\theta x) \cdot \big( \frac{1}{|I|} \sum_{n \in I} g(n) e(-n \theta)  \big) \cdot  \big( \frac{1}{|I|} \sum_{n \in I} f(n) e(-n \theta) \big) \\
    & \qquad \qquad ``=" \; \; \; 
    \sum_{\theta \in \Lambda_\delta(I)} e(2\theta x) \cdot \eta_M*(\text{Mod}_{-\theta}g)(x) \cdot  \eta_M*(\text{Mod}_{-\theta} f)(x)
\end{align*}
where, as above, $\Lambda_\delta(I)$ is a $1/|I|$-net inside $\text{Spec}_{\delta}(g_I)$; at this point the necessity of a bilinear perspective on entropy becomes clear.

\subsubsection{Combining Scales, Take Two}
To facilitate this argument, it is helpful to make use of the tree-structure of the class of dyadic intervals; henceforth, every interval considered will be dyadic, and we will rely on harmonic-analytic technicality to reduce to this case, see \S \ref{ss:tools} below. Regarding $\delta > 0$ as fixed, we adopt the temporary definition
\begin{align*}
B_I(f,g)(x) := \sum_{\theta \in \Lambda_\delta(I)} e(2\theta x) \cdot \big( \mathbb{E}_I \text{Mod}_{-\theta} g \big) \cdot \big( \mathbb{E}_I \text{Mod}_{-\theta} f \big) \cdot \mathbf{1}_I(x)
\end{align*}
where $\mathbb{E}_I g := \frac{1}{|I|} \sum_{n \in I} g(n),$
and seek to study the related maximal operator
\begin{align*}
    \sup_{I} |B_I(f,g)|; 
\end{align*}
the goal will be to exhibit a collection of scales and locations
\[  \mathcal{B} := \{ J \} \]
so that we can show that
\begin{align}\label{e:BB}
\sup_{I \in \mathcal{B}} |B_I(f,g)| = o_{\delta \to 0}(1)
\end{align}
on all but an $o_{\delta \to 0}(1)$ percentage of $I_0$. 

The first issue which immediately presents is that of aggregating the sets
\[ \{ \Lambda_\delta(I) : I \} \]
efficiently. To do so, let us say that two collections of functions 
\[ \{ A_I,B_I : I \} \]
are \emph{close}, if
\[ |\bigcup_{I \subset I_0} \{ I : |A_I-B_I| \geq o_{\delta \to 0}(1)\}| = o_{\delta \to 0}(|I_0|). \]
In order to proceed, we must assume that there exists a single collection $\Lambda = \Lambda_{\delta}(\mathcal{B})$, so that
\begin{align}\label{e:AI} A_I(f,g)(x) := \sum_{\theta \in \Lambda} 
e(2\theta x) \cdot \big( \mathbb{E}_I \text{Mod}_{-\theta} g \big) \cdot \big( \mathbb{E}_I \text{Mod}_{-\theta} f \big) \cdot \mathbf{1}_I(x)
\end{align}
are close to $B_I$ for each $I \in \mathcal{B}$, while maintaining the normalizing bound
\[ |\mathbb{E}_I \text{Mod}_{-\theta} g| \approx \delta; \]
we of course need to maintain the separation condition: 
\[ \min_{\theta \neq \theta' \in \Lambda} |\theta - \theta'| \geq L^{-1} \gg \max_{J \in \mathcal{B}} |J|^{-1}, \; \; \; L \in 2^{\mathbb{N}}\]
to ensure that our vector-valued perspective is valid, as
\[ \sup_{|g| \leq 1, \ |I| \geq L}  (\sum_{\theta \in \Lambda} |\mathbb{E}_I \text{Mod}_{-\theta} g|^2 )^{1/2}  \lesssim 1 \]
by Lemma \ref{l:rsum} below. Finally, we hope that $\Lambda$ is not too much larger than any individual $|\Lambda_\delta(I)| \lesssim \delta^{-2}$, see \eqref{e:specsize}.

With these moves in mind, we work locally on (dyadic) intervals of length $L$ and seek to control
\begin{align}
    \sup_{I \in \mathcal{B}} |A_I(f,g)(x)| \cdot \mathbf{1}_P, \; \; \; |P| = L
\end{align}
Motivated as above, we apply the entropic method: we view 
\[ X(\mathcal{B}) := \{ \{ (\mathbb{E}_I \text{Mod}_{-\theta} g ) \cdot (\mathbb{E}_I \text{Mod}_{-\theta} f ) : \theta \in \Lambda \} : P \subset I \in \mathcal{B} \} \subset \ell^1({\Lambda}) \]
as a subset of $\ell^1(\Lambda)$, 
and let
\[ \mathcal{B}_{\epsilon} := \{ I_i : 1 \leq i \leq D_\epsilon(P) \} \]
be such that
\[ X(\mathcal{B}_\epsilon) \subset \ell^1(\Lambda)\]
is minimal subject to the constraint that
\[ X(\mathcal{B}) \subset X(\mathcal{B}_\epsilon) + B_{\ell^1(\Lambda)}(\epsilon)\]
where 
\[ B_{\ell^1(\Lambda)}(\epsilon) := \{ \{ v(\theta) : \theta \in \Lambda \} : \| v \|_{\ell^1(\Lambda)} < \epsilon \} \]
is with respect to the $\ell^1(\Lambda)$-norm, see \eqref{e:normball}; in other words
\[ D_\epsilon(P) = \text{Int}(X(\mathcal{B});\epsilon)\]
with respect to the $\ell^1(\Lambda)$-norm, see \eqref{e:c-net}.

We bound 
\[ D_\epsilon(P) \leq D_{\epsilon/2}'(P) := \text{Ext}( X(\mathcal{B});\epsilon/2), \]
see \eqref{e:net}, 
and bound
\begin{align}\label{e:ent00}
    \sup_{P \subset I \in \mathcal{B}} |A_I(f,g)| \leq O(\epsilon) + (\sum_{\mathcal{B}_\epsilon} |A_I(f,g)|^2)^{1/2} 
\end{align}
so that
\begin{align}\label{e:ent01}
    \| \sup_{P \subset I \in \mathcal{B}} |A_I(f,g)| \|_{\ell^2(P)}^2 &\lesssim \epsilon^2  \cdot |P| + D_\epsilon(P) \cdot \max_{I \in X(\mathcal{B})} \| A_I(f,g) \|_{\ell^2(P)}^2 \notag \\
    & \qquad \lesssim \epsilon^2 \cdot |P| + D_\epsilon(P) \cdot \delta^2 \cdot |P| \notag \\
    & \qquad \qquad \leq\epsilon^2 \cdot  |P| + D_{\epsilon/2}'(P) \cdot \delta^2 \cdot  |P| \end{align}
as 
\[ \sum_{\theta \in \Lambda} |\mathbb{E}_I g(n) e(-n\theta)|^2 \cdot |\mathbb{E}_I \text{Mod}_{-\theta} f(n) |^2 \lesssim \delta^2 \cdot \sum_{\theta \in \Lambda} |\mathbb{E}_I \text{Mod}_{-\theta} f(n) |^2 \lesssim \delta^2 \]
by Lemma \ref{l:rsum} below.

But, if we let 
\begin{align*}
\mathcal{A}_\epsilon(P) &\subset \big\{ \{ \mathbb{E}_I \text{Mod}_{-\theta} f(n) : \theta \in \Lambda\} : P \subset I \in \mathcal{B} \big\}, \; \; \; \text{ and } \\
\mathcal{C}_\epsilon(P) &\subset \big\{ \{ \mathbb{E}_I \text{Mod}_{-\theta} g(n)  : \theta \in \Lambda\} : P \subset I \in \mathcal{B} \big\} \end{align*}
denote minimal subsets so that
\[ \big\{ \{ \mathbb{E}_I \text{Mod}_{-\theta}f(n)  : \theta \in \Lambda\} : P \subset I \in \mathcal{B} \big\} \subset \mathcal{A}_\epsilon(P) + B_{\ell^2(\Lambda)}(\epsilon) \]
and 
\[ \big\{ \{ \mathbb{E}_I \text{Mod}_{-\theta}g(n) : \theta \in \Lambda\} : P \subset I \in \mathcal{B} \big\} \subset \mathcal{C}_\epsilon(P) + B_{\ell^2(\Lambda)}(\epsilon), \]
namely the the sets which realize $\text{Int}(\cdot;\epsilon)$ with respect to the $\ell^2(\Lambda)$-balls, see \eqref{e:normball},
then
\begin{align}\label{e:cover}
    D_{\epsilon/2}'(P) &\leq |\mathcal{A}_{c_0 \epsilon}(P)| \cdot |\mathcal{C}_{c_0 \epsilon}(P)| \notag \\
    & \qquad \leq N_{c_1 \epsilon}( \{ \mathbb{E}_I \text{Mod}_{-\theta} f(n)  : \theta \in \Lambda\} : I \supset P) \cdot N_{c_1 \epsilon}( \{ \mathbb{E}_I \text{Mod}_{-\theta} g(n)  : \theta \in \Lambda\} : I \supset P) \\
    & \qquad \qquad =: N_\epsilon^1(P) \cdot N_\epsilon^2(P) \notag
\end{align}
for some absolute $0 <c_1 := c_0/10 < c_0 < 1$; the first inequality follow from Cauchy-Schwartz estimates, using the bounds
\[ \sum_{\theta \in \Lambda} |\mathbb{E}_I g(n) e(-n\theta)|^2, \ \sum_{\theta \in \Lambda} |\mathbb{E}_I f(n) e(-n \theta)|^2 \lesssim 1, \]
by Lemma \ref{l:rsum} below.

In particular
\begin{align}\label{e:key}
\| \sup_{P \subset I \in \mathcal{B}} |A_I(f,g)| \|_{\ell^2(P)}^2 \lesssim \epsilon^2  \cdot |P| + \delta^2 \cdot  N_{\epsilon}^1(P) \cdot N_{\epsilon}^2(P) \cdot |P|.
\end{align}
By Corollary \ref{c:sep}, we (essentially) have
\[ \sum_P N_{\epsilon}^i(P) \cdot  |P| \lesssim \epsilon^{-2} \cdot |I_0|, \; \; \; i=1,2
\]
so that, morally speaking
\begin{align}
\| \sup_{x \in I \in \mathcal{B}} |A_I(f,g)| \|_{\ell^2(I_0)}^2 = \sum_{P \subset I_0} \| \sup_{P \subset I \in \mathcal{B}} |A_I(f,g)| \|_{\ell^2(P)}^2 \lesssim (\epsilon^2 + \delta^2  \cdot \epsilon^{-4}) \cdot |I_0| = O(\delta^{2/3} \cdot |I_0|),
\end{align}
after optimizing $\epsilon = \delta^{1/3}$.

\subsection{Conclusion}
After implementing the steps described in the introduction above, we apply tools from harmonic analysis to apply the analysis introduced here to establish Theorem \ref{t:DR}, with the inequality \eqref{e:key} proving fundamental; Theorem \ref{t:RT} follows using similar techniques.

Accordingly, the major analytical effort of this paper will be in selecting appropriate collections of intervals (``branches") $\mathcal{B}$, and successfully approximating \[ B_I \longrightarrow A_I \]
by an appropriate ``close" sequence of operators.

This approximation is quite involved, and begins by efficiently grouping $\{ J \}$ into sub-collections known as \emph{trees}, $\{ \mathcal{D}_j \}$, which resemble branches in that one may find a single $\Lambda \subset \mathbb{T}$ which will approximates
\[ \{ \Lambda_{\delta}(I) : I \in \mathcal{D}_j \} \]
but possibly does \emph{not} enjoy the non-concentration condition
\[ \min_{\theta \neq \theta' \in \Lambda} |\theta - \theta'| \gg \max_{J \in \mathcal{B}} |J|^{-1};\]
up to some exceptional set considerations, the trees are subsequently decomposed into the appropriate branches. This approximation of the $\{ \Lambda_\delta(I) \}$ -- and in particular, showing that only an appropriately small number of trees (a ``forest") presents -- follows from orthogonality considerations by way of an energy pigeon-holing argument.

With these ideas in mind, we turn to the details.

\section{Quantifying Convergence}\label{s:QC}
The goal of this section is to reduce Theorems \ref{t:DR} and \ref{t:RT} to certain quantitative statements; in the following section, we will reduce these statements to certain Fourier-analytic estimates involving appropriate families of Fourier multipliers on the torus, $\mathbb{T}$.

We begin with some notation.

For $x \in X$ and $\omega \in Y$, define the averages
\[ \Phi_M(f,g)(x) := \Phi^T_M(f,g)(x) := \frac{1}{M} \sum_{m \in [M]} T^{\lfloor \alpha m \rfloor} f(x) \cdot T^{-m} g(x)\]
and
\[ \Phi_M(f,g)(x;\omega) := \Phi^{T,S}_M(f,g)(x;\omega) := \frac{1}{M} \sum_{m \in [M]} T^{\lfloor \alpha m \rfloor} f(x) \cdot S^{m} g(\omega) \]
where we are restricting to $f,g$ to be $1$-bounded. Below, each function introduce will be implicitly assumed to be $1$ bounded on the relevant domain.

For each $\tau \ll 1$ sufficiently small, choose
\begin{align}\label{e:Btau}
\mathbb{N} \ni B_\tau \approx A_0 \cdot \tau^{-1}
\end{align}
and two much larger parameters, $B \ll H$, satisfying (say)
\[ B_\tau^{10} \ll B \ll B^{10} \ll H.\]

\begin{definition}[Quantifying Oscillation]
We introduce the following quantities, designed to precisely quantify the oscillation of the averages $\{ \Phi_M, \ \Phi_M(\cdot;\omega) \}$; throughout this definition we restrict all times $M_0,M_1,M,\dots$ to be of the form
\[ \{ 2^{n/B_\tau} \}.\]
\begin{align}
C_{\tau,B,H}^{(X,\mu,T)} &:= \sup \Big\{ K : \text{ there exists $1$-bounded } f,g , \ M_0 < M_1 < \dots < M_K \leq H/100, \\
& \text{ so that } \mu(\{ x : \max_{M_{k-1} \leq M \leq M_k} |\Phi_M(f,g)(x) - \Phi_{M_k}(f,g)(x)| \geq \tau \}) \geq \tau^B \Big\} \notag
\end{align} 
and, regarding $g : Y \to \{|z| \leq 1\}$ (and $S:Y \to Y$) as fixed, define
\begin{align}
&    C_{\tau,B,H}^{(X,\mu,T)}(\omega) := \sup \Big\{ K : \text{there exists $1$-bounded } f , \ M_0(\omega) < M_1(\omega) < \dots < M_K(\omega) \leq H/100, \\
&  \qquad \text{ so that } \mu(\{ x : \max_{M_{k-1}(\omega) \leq M \leq M_k(\omega)} |\Phi_M(f,g)(x;\omega) - \Phi_{M_k}(f,g)(x;\omega)| \geq \tau \}) \geq \tau^B \Big\}. \notag
\end{align} 
\end{definition}

The main effort of this paper will go into proving the following estimates.

\begin{proposition}[Quantifying Convergence]\label{p:QC}
    For each $\tau,B,H$, 
    \[ C_{\tau,B,H}^{(X,\mu,T)} \lesssim_{\tau,B} 1 \]
uniformly in $(X,\mu,T)$ and $H$. And,
\[ \nu(\{ \omega \in Y : \sup_{(X,\mu,T)} C_{\tau,B,H}^{(X,\mu,T)}(\omega) = +\infty \}) = 0 \]
uniformly in $H$.
\end{proposition}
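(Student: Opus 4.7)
I would attack Proposition \ref{p:QC} by (i) transferring the estimate to the integer model $(\mathbb{Z}, d\#, x \mapsto x-1)$, and then (ii) proving a single quantitative bilinear oscillation estimate on $\mathbb{Z}$ from which both assertions follow. The return-times assertion carries the additional burden of decoupling the conull set from the auxiliary system $(X,\mu,T)$, which requires a supplementary measure-theoretic reduction.

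For the first assertion, Calder\'{o}n's transference principle should allow me to replace $(X,\mu,T)$ with $\mathbb{Z}$ equipped with counting measure: after cutting the averages $\Phi_M$ off at scale $\leq H/100$, the task becomes to bound, uniformly in $H$ and in the choice of $1$-bounded $f,g : \mathbb{Z} \to \mathbb{C}$ supported in a window of size $O(H)$, the analogue of $C^{(X,\mu,T)}_{\tau,B,H}$ for the bilinear integer averages
\[
B_M(f,g)(x) = \frac{1}{M} \sum_{m \leq M} f(x-\lfloor \alpha m \rfloor)\,g(x+m),
\]
with all scales $M$ restricted to $\{2^{n/B_\tau}\}$. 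For the second assertion the outer sup over $(X,\mu,T)$ must be swapped for something $\omega$-measurable; here I would invoke the measure-theoretic reduction deferred to \S \ref{s:appendix} to replace that sup by a single universal integer model, and then use a Fubini and first-moment argument to extract a $\nu$-conull set of $\omega$'s on which the $\omega$-dependent bilinear averages $M^{-1}\sum_{m\leq M} f(x-\lfloor \alpha m\rfloor) g(S^m\omega)$ oscillate boundedly, uniformly in the auxiliary system and in $H$.

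The heart of the argument is the integer-model oscillation bound, for which I would implement the scheme described in \S \ref{s:ex}. Having fixed an oscillation sequence $M_0<\cdots<M_K$ and a resolution parameter $\delta \approx \tau^C$, I would restrict attention to the ``structured'' part of $g$, whose local large spectra $\text{Spec}_\delta(g_I)$ are carried by $1/|I|$-nets $\Lambda_\delta(I)$, and organize the dyadic intervals $I$ arising from the scales $\{M_k\}$ into \emph{trees} on which a single set $\Lambda \subset \mathbb{T}$ simultaneously approximates all $\Lambda_\delta(I)$ (though $\Lambda$ may fail non-concentration). Each tree would then be split into \emph{branches} satisfying the non-concentration condition $\min_{\theta\neq\theta'\in\Lambda}|\theta-\theta'|\gg \max_{J}|J|^{-1}$, and on each branch the key inequality \eqref{e:key}, combined with the covering estimate \eqref{e:cover} and Corollary \ref{c:sep}, would yield an $L^2$ oscillation bound gaining $\delta^{2/3}$. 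An energy-pigeonholing argument in the spirit of \cite{D00} should then show that only $O_{\tau,B}(1)$ branches can register simultaneous $\tau$-oscillation on a set of measure $\geq \tau^B$; Chebyshev and dyadic summation then deliver the claimed bound on $C_{\tau,B,H}$.

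The main obstacle will be the tree-to-branch decomposition together with the energy pigeonholing bound on the number of active branches: this is where the parameter choices $R, W, t, \rho, \Delta$ of \S \ref{ss:param} are forced, and where the bilinear character of the problem -- both $f$ and $g$ must be resolved through the same frequency net $\Lambda$, and the $\Lambda_\delta(I)$ are genuinely scale-dependent -- creates difficulties absent from Bourgain's linear polynomial ergodic theorem \cite{B2}. Handling the $\omega$-dependent second function in the return-times case amounts to rerunning the same forest/branch machinery with $g(S^{\,\cdot\,}\omega)$ in place of $g$, and the extra sup over $(X,\mu,T)$ is harmless once everything has been transferred to the universal integer model.
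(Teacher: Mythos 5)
Your high-level outline matches the paper's strategy in its broad reductions: Calder\'{o}n transference to $\mathbb{Z}$ (Proposition \ref{p:trans}), the measurability reductions deferred to \S \ref{s:appendix} (Lemmas \ref{l:I}, \ref{l:II}, \ref{l:serg}, plus the Pointwise Ergodic Theorem applied to $\mathbf{1}_{V_L}$ for the return-times part), and the forest/branch/entropy framework of \S\S \ref{s:ex}--\ref{ss:fol}. But the final step of your plan contains a genuine gap that the forest and branch machinery alone cannot fill.

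You propose to conclude by showing, via energy pigeonholing, that ``only $O_{\tau,B}(1)$ branches can register simultaneous $\tau$-oscillation,'' and then sum. This does not bound $C_{\tau,B,H}$. Energy pigeonholing (via the Bessel inequality, Proposition \ref{p:tfa}) is what controls the number of \emph{trees} $V$ in the Forest Lemma, and the tree-to-branch split gives $W$ branches per tree; but a single branch $\mathcal{B}_s(I_j)$ can contain intervals at unboundedly many scales, and therefore a single branch can support arbitrarily many of the oscillation indices $k \leq K$. Bounding the number of active branches does not bound $K$. What the paper actually does is split the levels $\delta$ into two regimes at $\log^{-1}(1/\delta) \approx \tau^4$. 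For small $\delta$ the maximal estimate (Proposition \ref{p:max00}) gives a super-level-set bound $\lesssim \delta^{\alpha_0} H$ that, summed over small $\delta$, is $\ll \tau^B H$, so these levels contribute to no good index $k$ at all. For large $\delta$ the bound on $K$ comes from an entirely different mechanism, absent from your plan: the $r$-variational inequality along each branch (Lemma \ref{l:var}, built from the vector-valued jump-counting estimates in Corollaries \ref{c:sep} and \ref{c:jump0}), which shows that the total oscillation square-sum $\sum_{k}\sup_{J \in \mathcal{Q}_k}|\hat{\mathcal{F}}^3_\delta(J;\cdot)|^2$ grows like $K^{9/10}$, strictly sublinearly in $K$. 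Since each active scale contributes $\gtrsim \tau^{B+2}$ to that square-sum, this sublinearity forces $K = O_{\tau,B}(1)$. Without that variational/sublinear-growth step your plan has no way to terminate the sequence $M_0 < \cdots < M_K$, and the claim that Chebyshev plus dyadic summation ``deliver the claimed bound'' would fail.

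A secondary inaccuracy: you fix a single resolution $\delta \approx \tau^C$ and restrict $g$ to its ``structured'' part. The paper does not make a structure/pseudorandomness split; it decomposes $\Omega_J$ by smooth level sets $\Omega_{J,\delta}$ for all dyadic $\delta$ and treats each level separately, which is essential because the $\delta$-threshold at which the maximal gain beats $\tau^B$ depends on the target precision and because the large-$\delta$ levels must be handled by oscillation, not size. Fixing one $\delta$ at the outset would leave the remaining levels uncontrolled.
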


Before proceeding with the argument, let us see how Proposition \ref{p:QC} implies Theorems \ref{t:DR} and \ref{t:RT}.

\begin{proof}[Reduction to Proposition \ref{p:QC}]
 Note that the above hypotheses extend to
    \[ C_{\tau,B}^{(X,\mu,T)} := C_{\tau,B,\infty}^{(X,\mu,T)}, \; \; \; C_{\tau,B}^{(X,\mu,T)}(\omega) := C_{\tau,B,\infty}^{(X,\mu,T)}(\omega) \]
by a monotone convergence argument, so we will concern ourselves with the infinitary statements. 

Both proofs are by contradiction; we begin with the case of $C_{\tau,B}^{(X,\mu,T)}$.

So, suppose that for some measure-preserving system, $(X,\mu,T)$, and some functions $f, g: X \to \{|z| \leq 1\},$
\[ \mu(\{ x : \limsup_{M,N} |\Phi_M(f,g)(x) - \Phi_N(f,g)(x)| \gg \tau\}) \gg \tau \]
for some $1 \gg \tau > 0$. In this case, by a brief measure-theoretic argument, we could extract a finite subsequence of arbitrary length $K$ so that for each $1 \leq k \leq K$
\[ \mu(\{ x : \max_{M_{k-1} \leq M \leq M_k} |\Phi_M(f,g)(x) - \Phi_{M_k}(f,g)(x)| \gg \tau\}) \gg \tau;\]
there is no loss of generality here in assuming that 
\[ M_k,M \in \{ 2^{n/B_\tau} \}, \]
as whenever
\[ M = 2^{n/B_\tau} \leq M' < 2^{(n+1)/B_\tau} \]
are close,
\[ \Phi_M(f,g)(x) = \Phi_{M'}(f,g)(x) + O(B_\tau^{-1}) = \Phi_{M'}(f,g)(x) + o_{A_0 \to 0}(\tau), \]
see \eqref{e:Btau}; we will implicitly restrict all times to this lacunary sequence for the rest of the proof.

Summing over $k \leq K$, we bound
\[ K \tau \ll \sum_{k \leq K} \mu(\{ x : \max_{M_{k-1} \leq M \leq M_k} |\Phi_M(f,g)(x) - \Phi_{M_k}(f,g)(x)| \gg \tau\}) \leq C_{\tau,B}^{(X,\mu,T)};\]
this handles Theorem \ref{t:DR}.

The second statement is similar; we have already shown that the statement that
\[ C_{\tau,B}^{(X,\mu,T)}(\omega) < \infty \]
for each $B$ implies pointwise convergence of $\{ \Phi_M(f,g)(\cdot \, ;\omega) \}$, so the statement that
\[ \sup_{(X,\mu,T)} C_{\tau,B}^X(\omega) < \infty \]
implies pointwise convergence of $\{ \Phi_M(f,g)(\cdot \, ;\omega)  \}$ uniformly in all measure-preserving systems $(X,\mu,T)$, which happens $\nu$-almost surely.
\end{proof}

So far, we have not addressed the $\nu$-measurability of the sets
\begin{align}\label{e:bad} \{ \omega \in Y : \sup_{(X,\mu,T)} C_{\tau,B,H}^{(X,\mu,T)}(\omega) = + \infty \} = \bigcap_{L \geq 1} \{ \omega \in Y : \sup_{(X,\mu,T)} C_{\tau,B,H}^{(X,\mu,T)}(\omega) \geq L \} 
\end{align}
this is slightly delicate, so we defer the proof to our below Appendix, \S \ref{s:appendix}, along with a reduction to the case where $S$ is \emph{ergodic}; we will proceed under this assumption of ergodicity for the rest of the argument.

\section{Transference to The Integers}\label{s:transtoZ}
The goal of this section is to reduce Proposition \ref{p:QC} to a pair of discrete-harmonic-analytic estimates; we begin with notation.

For $x, a \in \mathbb{Z}$, define
\[ {A}_M(f,g)(x;a) := \frac{1}{M} \sum_{m \in [M]} f(x-{\lfloor \alpha m \rfloor}) \cdot g(a+m)\]
and set
\[ {A}_M(f,g)(x) := {A}_M(f,g)(x;x).\]

With $\tau,B,H$ as above, maintaining the constraint that all times $M_0,M_1,M,\dots$ are of the from $\{ 2^{n/B_\tau} \}$ with $B_\tau$ as above, see \eqref{e:Btau}, define

\begin{align}
    {C}_{\tau,B,H} &:= \sup\Big\{ K : \text{ there exist $1$-bounded } f,g , M_0 < M_1 < \dots < M_K \leq H/100,  \\
&  \text{ so that } |\{ x \in [H] : \max_{M_{k-1} \leq M \leq M_k} |{A}_M(f,g)(x) - {A}_{M_k}(f,g)(x)| \geq \tau \}| \geq \tau^B H \Big\} \notag
\end{align} 
and for a $1$-bounded function $g$
\begin{align}
    {C}_{\tau,B,H}(a) &:= \sup\Big\{ K : \text{ there exist a $1$-bounded } f, \ M_0 < M_1 < \dots < M_K \leq H/100  \\
&  \text{ so that } |\{ x \in [H] : \max_{M_{k-1} \leq M \leq M_k} |{A}_M(f,g)(x;a) - {A}_{M_k}(f,g)(x;a)| \geq \tau \}| \geq \tau^B H \Big\}. \notag
\end{align} 

It will be convenient to smooth out our averages: choose a smooth
\begin{align}\label{e:phitau} \phi = \phi_\tau \end{align}
so that
\[ \mathbf{1}_{[\tau^{A_0},1-\tau^{A_0}]} \leq \phi \leq \mathbf{1}_{[0,1]} \]
so that
\[ |\partial^{j} \phi| \lesssim \tau^{- A_0 j} \]
for sufficiently many $j$; set
\[ \phi_M(m) := 1/M \phi(m/M),\]
and
for $x, a \in \mathbb{Z}$, define
\[ \tilde{A}_M(f,g)(x;a) := \sum_{m} \phi_M(m) f(x-{\lfloor \alpha m \rfloor}) \cdot g(a+m)\]
and set
\[ \tilde{A}_M(f,g)(x) := \tilde{A}_M(f,g)(x;x).\]

Maintaining the constraint that all times $M_0,M_1,M,\dots$ are of the from $\{ 2^{n/B_\tau} \}$ with $B_\tau$ as above, see \eqref{e:Btau}, we introduce 

\begin{align}\label{e:tilde}
 \tilde{C}_{\tau,B,H} &:= \sup\Big\{ K : \text{ there exist $1$-bounded } f,g , \ M_0 < M_1 < \dots < M_K \\ 
&  \leq H/100,  \text{ so that } |\{ x \in [H] : \max_{M_{k-1} \leq M \leq M_k} |\tilde{A}_M(f,g)(x) - \tilde{A}_{M_k}(f,g)(x)| \geq \tau \}| \geq \tau^B H \Big\} \notag
\end{align} 
and for a $1$-bounded $g$
\begin{align}\label{e:tilde0}
    \tilde{C}_{\tau,B,H}(a) &:= \sup\Big\{ K : \text{ there exist a $1$-bounded } f, \ M_0 < M_1 < \dots < M_K \leq H/100  \\
&  \text{ so that } |\{ x \in [H] : \max_{M_{k-1} \leq M \leq M_k} |\tilde{A}_M(f,g)(x;a) - \tilde{A}_{M_k}(f,g)(x;a)| \geq \tau \}| \geq \tau^B H \Big\}. \notag
\end{align} 
Note that
\[ {C}_{\tau,B,H} \leq \tilde{C}_{\tau/2,B,H} \leq {C}_{\tau/4 ,B,H} \]
and similarly
\[ {C}_{\tau,B,H}(a) \leq \tilde{C}_{\tau/2,B,H}(a) \leq {C}_{\tau/4,B,H}(a);\]
in what follows, we will therefore make use of the ability to pass back between smooth and rough cut-offs as we see fit, and will use the notation $A_M$ to refer to either (though typically we will use the smooth cut-off).

\begin{remark}
    In what follows, we will often find it convenient to sparsify our set of times into subsets of the form $\{ 2^{(n+kA)/B_\tau} : k \} $ where $0 \leq n <A $ for $A \gg_\tau 1$ very large; this move only effects the final bound on $C_{\tau,B,H}, \ C_{\tau,B,H}(a)$ by at most a  multiplicative factor depending on $\tau$, to which the argument is indifferent.
\end{remark}

The relationship between the quantities introduced above and those in the previous section are captured by the following Proposition.

\begin{proposition}[Transference to $\mathbb{Z}$, I]\label{p:trans}
For each $\tau,B,H$, 
\[ C_{\tau,B,H}^X \lesssim \tau^{-B} \cdot C_{\tau,B+1,H};\]
and, if for each $N \gg 1$ large,
    \[ |\{ a \in [N] : C_{\tau,B,H}(a) \geq L \}|  = o_{L \to \infty;\tau,B}(N), \]
    independent of $H$, then the second statement in Proposition \ref{p:QC} holds.
\end{proposition}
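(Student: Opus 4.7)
The plan is to carry out a Calder\'on-style transference for both claims, supplementing the return-times implication with the pointwise ergodic theorem to convert $a$-averages in the integer model into $\nu$-measure on $Y$.

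For the first inequality, suppose $C^{(X,\mu,T)}_{\tau,B,H} = K'$, witnessed by $1$-bounded $f,g$ on $X$ and times $M_0 < \dots < M_{K'}$ whose oscillation bad sets $E_k \subset X$ satisfy $\mu(E_k) \geq \tau^B$; by a routine $\sigma$-finite truncation we may assume $\mu$ is a probability measure. For each $x_0 \in X$, define $F_{x_0}(n) := f(T^{-n}x_0)$ and $G_{x_0}(n) := g(T^{-n}x_0)$ on $\mathbb{Z}$; a direct unwinding of the definitions gives the orbit identity $\Phi_M(f,g)(T^{-n}x_0) = A_M(F_{x_0},G_{x_0})(n)$, so $T^{-n}x_0 \in E_k$ if and only if $n$ lies in a corresponding discrete bad set $\widetilde{E}_k(x_0) \subset [H]$. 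By Fubini and $T$-invariance, $\EE_{x_0} |\widetilde{E}_k(x_0)| = H \mu(E_k) \geq H\tau^B$, so a simple Markov estimate yields $\mu(x_0 : |\widetilde{E}_k(x_0)| \geq H\tau^{B+1}) \geq \tau^B/2$ for each $k$. Summing over $k$ and pigeonholing on the random variable $\sum_k \mathbf{1}\{|\widetilde{E}_k(x_0)| \geq H\tau^{B+1}\}$ produces a single $x_0$ for which this lower bound is attained by at least $K'\tau^B/2$ indices $k$; the corresponding subsequence of times together with $(F_{x_0},G_{x_0})$ certifies $C_{\tau,B+1,H} \gtrsim \tau^B \cdot C^{(X,\mu,T)}_{\tau,B,H}$, as required.

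For the return-times implication, the identical orbit identification with the single-variable weight $g_\omega(n) := g(S^n\omega)$ yields $\Phi_M(f,g)(T^{-n}x_0;\omega) = A_M(F_{x_0},g_\omega)(n;0)$, so the Calder\'on pigeonhole above delivers, uniformly in the auxiliary system $(X,\mu,T)$, the pointwise bound
\[ \sup_{(X,\mu,T)} C^{(X,\mu,T)}_{\tau,B,H}(\omega) \;\lesssim\; \tau^{-B} \cdot C_{\tau,B+1,H}(0)[g_\omega]. \]
The bridge between the $a$-parameter and the $\omega$-parameter is the translation identity $g_\omega(a+m) = g_{S^a\omega}(m)$, which immediately gives $C_{\tau,B+1,H}(a)[g_\omega] = C_{\tau,B+1,H}(0)[g_{S^a\omega}]$. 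Defining $B_L := \{\omega \in Y : C_{\tau,B+1,H}(0)[g_\omega] \geq L\}$ (whose $\nu$-measurability is deferred to the Appendix), this identity rewrites the hypothesis set as
\[ \{a \in [N] : C_{\tau,B+1,H}(a)[g_\omega] \geq L\} \;=\; \{a \in [N] : S^a\omega \in B_L\}. \]

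By the hypothesis, the left-hand density is $o_{L \to \infty; \tau, B}(1)$ uniformly in $H$ and $\omega$, while the pointwise ergodic theorem applied to $\mathbf{1}_{B_L}$ (using the reduction to ergodic $S$) shows that for $\nu$-a.e.\ $\omega$ the right-hand density converges to $\nu(B_L)$ as $N \to \infty$. Combining these facts yields $\nu(B_L) = o_{L \to \infty; \tau, B}(1)$ uniformly in $H$, so $\nu(\bigcap_L B_L) = 0$; together with the Calder\'on bound this forces $\nu(\{\omega : \sup_{(X,\mu,T)} C^{(X,\mu,T)}_{\tau,B,H}(\omega) = +\infty\}) = 0$ uniformly in $H$. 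The main technical obstacle is the Fubini-plus-Markov pigeonhole in the first step: one must extract a single orbit $\{T^{-n}x_0\}_{n \in [H]}$ whose trace simultaneously intersects a positive fraction of the $K'$ bad sets $E_1,\ldots,E_{K'}$, and the $\tau^B$ loss in passing from $B$ to $B+1$ is precisely the cost of arranging this simultaneous hit; the more delicate, but secondary, issue is the $\nu$-measurability of $B_L$, handled in the Appendix via an approximation argument exploiting that each $C^{(X,\mu,T)}_{\tau,B,H}(\omega)$ is controlled by a countable supremum over models.
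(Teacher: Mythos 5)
Your proof is correct, and it does essentially what the paper does, but the two executions differ enough to be worth comparing. In the first half, the paper averages the sum $\sum_k \mathbf{1}_{U_k}(T^r x)$ over both $r$ and $x$ and then dominates the integrand \emph{pointwise} via a good/bad dichotomy on the indices $k$ (an index is ``bad'' if $|Z_k| \geq \tau^{B+1}H$), yielding $\tau^B K \lesssim \tau^{B+1}K + C_{\tau,B+1,H}$. You instead fix $k$, use Fubini and a reverse-Markov estimate to show most $x_0$ have $|\widetilde{E}_k(x_0)| \geq H\tau^{B+1}$, and then take a first moment over $k$ to pigeonhole a single orbit $\{T^{-n}x_0\}$ intersecting a $\tau^B$-fraction of the bad sets. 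These are dual formulations of the same Calder\'on transference, with identical cost; the paper's version avoids extracting a distinguished $x_0$, yours avoids the ``bad index'' bookkeeping. Do make sure the selected subsequence of times $\{M_{k_j}\}$ genuinely certifies $C_{\tau,B+1,H}$ — the relevant oscillation intervals $[M_{k_{j-1}}, M_{k_j}]$ contain $[M_{k_j-1},M_{k_j}]$ and share the right endpoint, so the discrete bad sets can only grow, which is what you need; state this explicitly.

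In the second half, your route is genuinely leaner in one respect. The paper first invokes Lemma~\ref{l:II} (a Halmos weak-density argument) to collapse the supremum over $(X,\mu,T)$ to a single ergodic system $(X_0,\mu_0,T_0)$, defines $V_L$ via that fixed system, applies the pointwise ergodic theorem, and then for each $a$ with $S^a\omega \in V_L$ re-runs the Calder\'on argument to reach $C_{\tau,B+1,H}(a)$. You observe instead that the first-half transference already produces a bound $C^{(X,\mu,T)}_{\tau,B,H}(\omega) \lesssim \tau^{-B}\, C_{\tau,B+1,H}(0)[g_\omega]$ whose right-hand side is $(X,\mu,T)$-independent, so taking the supremum is free, and Lemma~\ref{l:II} becomes unnecessary. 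You then make the translation identity $g_\omega(a+m) = g_{S^a\omega}(m)$, hence $C_{\tau,B+1,H}(a)[g_\omega] = C_{\tau,B+1,H}(0)[g_{S^a\omega}]$, the explicit bridge between the $a$-density in the hypothesis and the $\nu$-measure via the pointwise ergodic theorem (still needing the paper's Lemma~\ref{l:serg} to reduce to ergodic $S$). This is cleaner, and as a bonus, the measurability of your $B_L$ is more transparent than that of $V_L$, since $C_{\tau,B+1,H}(0)[g_\omega]$ is a supremum over finitely supported data and can be realized over a countable family, so the $\omega$-dependence factors through the measurable map $\omega \mapsto (g(S^n\omega))_{n \leq H}$. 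You defer this to the Appendix, but it would strengthen the writeup to note that your formulation sidesteps the harder measurability question the paper's $V_L$ raises.

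Two small cautions. First, your ``$\sigma$-finite truncation'' remark is casual: the condition $\mu(E_k) \geq \tau^B$ is not scale-invariant, so you should say a word about why restricting to a finite-measure exhaustion set preserves the witnessing configuration (the paper is equally terse here, so this is a shared issue, not a defect of your argument relative to the paper). Second, your Fubini step uses $n \in [H]$, but the orbit identity only matches the dynamical average exactly when $n$ sits in the bulk $[H/100, H - H/100]$ so that the $[H]$-truncations of $F_{x_0}, G_{x_0}$ do not interfere with the inner sums (the paper restricts $r$ to $I = [H/100, H - H/100]$ for exactly this reason); this is notational but should be stated.
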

\begin{proof}
    We begin with the first statement. By assumption, for any $|f|,|g| \leq 1$, and any $M_0<M_1< \dots < M_K$ if we set
    \begin{align}
        Z_k &:= Z_k(f,g,M_0,\dots,M_K,\tau,B) \\
        & \qquad := \{ r \in [H] : \max_{M_{k-1} \leq M \leq M_k} | {A}_M(f,g)(r) - {A}_{M_k}(f,g)(r)| \geq \tau\} \subset [H], \notag
    \end{align} 
    where all times $M \in \{  2^{n/B_\tau}  \}$,
    then
    \begin{align}\label{e:Zptwise}
        \sum_{k \leq K} \frac{1}{H} \sum_{r \in [H]} \mathbf{1}_{Z_k}(r) &\leq \sum_{k \leq K \text{ good}} \frac{1}{H} \sum_{r \in [H]} \mathbf{1}_{Z_k}(r) + \sum_{k \leq K \text{ bad}} \frac{1}{H} \sum_{r \in [H]} \mathbf{1}_{Z_k}(r) \\
        & \qquad \leq \tau^{B+1} K + C_{\tau,B+1,H}; \notag
    \end{align}
above an index is \emph{bad} if 
\begin{align}\label{e:badcrit} |Z_k| \geq \tau^{B+1} H \end{align}
and \emph{good} otherwise.
    
Let $C_{\tau,B,H}^X \lesssim_H 1$ be as above; our job is to bound \[ C_{\tau,B,H}^X \lesssim \tau^{-B} \cdot C_{\tau,B+1,H}\]
independent of $H$. If we set, for an appropriate  $f,g : X \to \{0,1\}$,
\begin{align}
    U_k &:= U_k(f,g,M_0,\dots,M_K, \tau,B) \\
    & \qquad := \{ x : \max_{M_{k-1} \leq M \leq M_k} |\Phi_M(f,g)(x) - \Phi_{M_k}(f,g)(x)| \geq \tau \}, \; \; \; \mu(U_k) \geq \tau^B \notag
\end{align}  
for an appropriate realization of $C_{\tau,B,H}^X = K \lesssim_H 1$, then using the measure-preserving nature of $T$, we may bound
    \begin{align}\label{e:restI}
        \tau^B \cdot C_{\tau,B,H}^X = \tau^B K \leq \int_X \big( \sum_{k \leq K} \frac{1}{|I|} \sum_{r \in I} \mathbf{1}_{U_k}(T^r x) \big) \ d\mu(x), \; \; \; I := [H/100,H - H/100].
    \end{align}
We claim that $\mu$-a.e., we may dominate the integrand by a constant multiple of
\[ 
\tau^{B+1} K + C_{\tau,B+1,H}
\]
which leads to the bound
\[ C_{\tau,B,H}^X \lesssim \tau^{-B} \cdot C_{\tau,B+1,H}.\]
To see this, let $x \in X$ be arbitrary, and define
\[ F(n) := T^n f(x) \cdot \mathbf{1}_{n \in [H]} \]
and
\[ G(n) := T^n g(x) \cdot \mathbf{1}_{n \in [H]}.\]
The key observation is that for all $r \in I$ and $M \leq M_K \leq H/100$,
\[ \Phi_M(f,g)(T^r x) = \frac{1}{M} \sum_{m \in [M]} T^{r+m} f( x) \cdot T^{r-m} g( x) \]
is precisely given by
\[ A_M(f,g)(r) = \frac{1}{M} \sum_{m \in [M]} F(r-m) \cdot G(r+m),\]
so we may pointwise bound
\[ \sum_{k\leq K} \frac{1}{|I|} \sum_{r \in I} \mathbf{1}_{U_k}(T^r x) \lesssim \sum_{k \leq K} \frac{1}{H} \sum_{r \in [H]} \mathbf{1}_{Z_k}(r) \leq \tau^{B+1} K + C_{\tau,B+1,H}, \]
for an appropriate choice of $\{ Z_k : k \leq K\}$. This establishes the first statement.

For the second, it suffices to restrict attention to a fixed ergodic measurable system, $(X_0,\mu_0,T_0)$ by Lemma \ref{l:II}, and to assume that $(Y,\nu,S)$ is ergodic by Lemma \ref{l:serg}; set
\[ V_L := V_{L,\tau,B,H} := \{ \omega \in Y : C_{\tau,B,H}^{(X_0,\mu_0,T_0)}(\omega) \geq L \}) \subset Y \]
and apply the Pointwise Ergodic Theorem. In particular, for any $\epsilon > 0$, for $\nu$-a.e.\ $\omega \in Y$, there exists $N(\omega,\epsilon) \gg H$ so large that for all $N \geq N(\omega,\epsilon)$,
\[ \nu(V_L) = \frac{1}{N} \sum_{ a\in [N]} \mathbf{1}_{V_L}(S^a \omega) + o(\epsilon); \]
fix such an $\omega$, and note that the statement that $S^a \omega \in V_L$ is equivalent to the statement that
\[ C_{\tau,B,H}^{(X_0,\mu_0,T_0)}(S^a \omega) \geq L.\]
So, with $N \geq N(\omega,\epsilon) \gg H$, set 
\[ G(n) := g(S^n \omega) \cdot \mathbf{1}_{n \in [N]}\]
so that the condition
\[  C_{\tau,B,H}^{(X_0,\mu_0,T_0)}(S^a \omega) \geq L \]
means that there exists an indicator $f_a : X_0 \to \{0,1\}$, and a sequence
\[ M_0(a)<M_1(a) < \dots < M_{L}(a) \leq H/100,\]
so that for each $k \leq L$, if we set
\begin{align*}
    U_k(a) &:= U_k(a;\tau,B) \\
    &  \qquad := \Big\{ x \in X_0 : \max_{M_{k-1}(a) \leq M \leq M_k(a)} |\frac{1}{|M|} \sum_{m \in [M]} G(a+m) T_0^n f_a(x) \notag \\
    & \qquad \qquad \qquad \qquad \qquad \qquad  - \frac{1}{M_k(a)} \sum_{m \in [M_k(a)]} G(a+m) T_0^n f_a(x)| \geq \tau \Big\} \notag 
\end{align*} 
then $\mu_0(U_k(a)) \geq \tau^B$ for each $k \leq L$. But now, for each such $a$, set $I= [H/100,H-H/100]$, see \eqref{e:restI}, and bound
\begin{align*}
    \tau^B L &\leq \sum_{k \leq L} \int_{X_0} \frac{1}{|I|} \sum_{r \in I} \mathbf{1}_{U_k(a)}(T_0^r x) \ d\mu_0(x) \\
    & \qquad = \int_{X_0} \big( \sum_{k \leq L
} \frac{1}{|I|} \sum_{r \in I} \mathbf{1}_{U_k(a)}(T_0^r x) \big) \ d\mu_0(x).
\end{align*} 
We once again bound the integrand pointwise, fixing $x \in X_0$, and defining 
\[ F_a(r) := T_0^r f_a(x) \cdot \mathbf{1}_{r \in [H]}.\]
Then, for
\begin{align*}
        Z_k^a &:= Z_k^a(\tau,B) \\
        & \qquad := \{ r \in [H] : \max_{M_{k-1}(a) \leq M \leq M_k(a)} | {A}_M(F_a,G)(r;a) - {A}_{M_k(a)}(F_a,G)(r;a)| \geq \tau\} \subset [H],
    \end{align*}
    using the good/bad dichotomy of \eqref{e:badcrit},
we may bound
\begin{align*}
    \sum_{k \leq L} \frac{1}{|I|} \sum_{r \in I} \mathbf{1}_{U_k(a)}(T_0^r x) \lesssim \sum_{k \leq L} \frac{1}{|I|} \sum_{r \in I} \mathbf{1}_{Z_k^a}(r) \lesssim \tau^{B+1} L + C_{\tau,B+1,H}(a)
\end{align*} 
to obtain the string of inequalities
\[ \tau^B L \lesssim C_{\tau,B+1,H}(a). \]
But, the set of all $a \in [N]$ so that
\[ \tau^B L \lesssim C_{\tau,B+1,H}(a)\]
has size $o_{L \to \infty;\tau,B}(N)$, so that
\[ \nu(V_L) = \frac{1}{N} \sum_{ a\in [N]} \mathbf{1}_{V_L}(S^a \omega) +o(\epsilon) = o_{L \to \infty;\tau,B}(1) + o(\epsilon) \]
for $\nu$-a.e. $\omega \in Y$.
\end{proof}

\section{Tools from Harmonic Analysis}\label{ss:tools}
The goal of this section is to collect some tools from harmonic analysis which we will appeal to in the main argument below.
\subsection{Fourier Analysis on the Torus}
  \begin{lemma}[Fourier Statistics Control Pointwise Data: $H^{1/2}(\mathbb{T})$-Sobolev Embedding]\label{l:H12}
Suppose $|f| \leq 1$, and $m : [0,1] \to \mathbb{C}$ is a $\mathcal{C}^1(\mathbb{T})$-multiplier. Then for any $k \in \mathbb{Z}$, we may bound
\[ \aligned |(\hat{f} m)^{\vee}| \lesssim  \| m \|_{L^2(\mathbb{T})} + \| m \|_{L^2(\mathbb{T})}^{1/2} \cdot \| \partial_\beta \big( e(k \cdot) m \big) \|_{L^2(\mathbb{T})}^{1/2}  
\endaligned \]
and for intervals $J \subset \mathbb{R}$,
\begin{align}\label{e:notJ} \sum_{n \notin 3J} |m^{\vee}(n)| \lesssim |J|^{-1/2} \cdot \| \partial_\beta \big( e(x_J \beta) m(\beta) \big) \|_{L^2(\mathbb{T})}, \; \; \; x_J \in J. 
\end{align}
\end{lemma}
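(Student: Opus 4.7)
The plan is to reduce the first bound to a modulated $H^{1/2}(\mathbb{T})$ Sobolev embedding for the sequence $\{m^{\vee}(n)\}_{n \in \mathbb{Z}}$, and then observe that the second bound is essentially the high-frequency tail of the same argument. Since $|f|\leq 1$, the convolution identity
\[ (\hat{f}\,m)^{\vee}(n) = \sum_{l \in \mathbb{Z}} f(l)\, m^{\vee}(n-l) \]
yields the uniform pointwise bound $|(\hat{f}m)^{\vee}(n)| \leq \|m^{\vee}\|_{\ell^1(\mathbb{Z})}$, so it suffices to prove, for each $k \in \mathbb{Z}$, that
\[ \|m^{\vee}\|_{\ell^1(\mathbb{Z})} \lesssim \|m\|_{L^2(\mathbb{T})} + \|m\|_{L^2(\mathbb{T})}^{1/2} \|\partial_\beta(e(k\cdot)m)\|_{L^2(\mathbb{T})}^{1/2}. \]

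The key observation is that $\partial_\beta(e(k\beta)m(\beta))$ measures how much of the $\ell^2$-mass of $\{m^{\vee}(n)\}$ lies away from the integer $k$: Plancherel combined with the Fourier-side shift $(e(k\cdot)m)^{\vee}(n) = m^{\vee}(n-k)$ (with the appropriate sign convention) gives
\[ \|\partial_\beta(e(k\cdot)m)\|_{L^2(\mathbb{T})}^2 \approx \sum_{n \in \mathbb{Z}} (n-k)^2 \,|m^{\vee}(n)|^2. \]
I would then perform a standard low/high split at a parameter $N \geq 1$: Cauchy--Schwarz applied to $\sum_{|n-k|\leq N}|m^{\vee}(n)|$ yields $(2N+1)^{1/2}\|m\|_{L^2(\mathbb{T})}$, while inserting the weight $|n-k|^{-1}$ gives
\[ \sum_{|n-k|>N}|m^{\vee}(n)| \leq \Big(\sum_{|n-k|>N}|n-k|^{-2}\Big)^{1/2}\Big(\sum_n(n-k)^2|m^{\vee}(n)|^2\Big)^{1/2} \lesssim N^{-1/2}\|\partial_\beta(e(k\cdot)m)\|_{L^2(\mathbb{T})}. \]
Optimizing $N \approx \|\partial_\beta(e(k\cdot)m)\|_{L^2}/\|m\|_{L^2}$ balances these and produces the claimed geometric mean; the additive $\|m\|_{L^2(\mathbb{T})}$ term on the right absorbs the degenerate case $N < 1$.

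For the second statement \eqref{e:notJ}, only the high-frequency piece of the split is needed. Since $x_J \in J$, any integer $n \notin 3J$ satisfies $|n - x_J| \geq |J|$, so the same Cauchy--Schwarz with weight $|n-x_J|^{-1}$, combined with the Parseval identity for $g(\beta):=e(x_J\beta)m(\beta)$ (whose $L^2$-derivative satisfies $\|\partial_\beta g\|_{L^2}^2 \approx \sum_n (n-x_J)^2|m^{\vee}(n)|^2$, valid even for non-integer $x_J$ since $e(x_J\cdot)$ remains a unimodular factor on $\mathbb{T}$), produces the $|J|^{-1/2}$ decay. No essential obstacle is anticipated; the main points to track are the Fourier sign convention so that the modulation $e(k\cdot)$, respectively $e(x_J\cdot)$, genuinely re-centers the Fourier weight on $k$, respectively $x_J$, and the cosmetic issue of accommodating non-integer $x_J$ in the Plancherel step of \eqref{e:notJ}.
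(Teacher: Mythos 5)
Your argument is correct. The paper states this lemma without proof, treating it as a standard $H^{1/2}(\mathbb{T})\hookrightarrow A(\mathbb{T})$ Wiener-algebra embedding, and the low/high frequency split you give with Cauchy--Schwarz on each piece and optimization of the cutoff $N$ is exactly the standard argument the title of the lemma is signaling. Two small precision points, both of which you already anticipate correctly: for the first bound, when the optimal $N=\|\partial_\beta(e(k\cdot)m)\|_{L^2}/\|m\|_{L^2}$ falls below $1$, one simply takes $N=1$ and the bound degenerates to $\|m^\vee\|_{\ell^1}\lesssim \|m\|_{L^2}$, which the additive $\|m\|_{L^2(\mathbb{T})}$ term on the right absorbs. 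For \eqref{e:notJ}, although $e(x_J\beta)m(\beta)$ is not itself $1$-periodic when $x_J\notin\mathbb{Z}$, the product rule gives $\partial_\beta\big(e(x_J\beta)m(\beta)\big)=e(x_J\beta)\big(2\pi i x_J\,m(\beta)+m'(\beta)\big)$; the bracket is $1$-periodic with Fourier coefficients proportional to $(x_J-n)m^\vee(n)$, and the unimodular prefactor $e(x_J\beta)$ does not change the $L^2$-norm, so Parseval does yield $\|\partial_\beta(e(x_J\cdot)m)\|_{L^2(\mathbb{T})}^2\approx\sum_n (n-x_J)^2|m^\vee(n)|^2$ as you use. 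With $x_J\in J$ and $n\notin 3J$ forcing $|n-x_J|\gtrsim|J|$, the weighted Cauchy--Schwarz then gives $|J|^{-1/2}$ as claimed.
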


Our next inequality will allow us to control the square-sum of low-degree polynomials over widely-separated frequencies by its $L^2(\mathbb{T})$-norm.

\begin{lemma}[Sampling Low-Degree Polynomials]\label{l:rsum}
    Suppose that $P(\beta)$ is a polynomial of degree $N$. Then
    \[ \sum_{\theta \in \Lambda} |P(\theta)|^2  \lesssim N \cdot \| P \|_{L^2(\mathbb{T})}^2 \]
        whenever $\Lambda \subset \mathbb{T}$ is a collection of frequencies satisfying the separation condition
    \[ \min_{\theta \neq \theta' \in \Lambda} |\theta - \theta'| \geq N^{-1}. \]
\end{lemma}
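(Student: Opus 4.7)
My plan is to use a reproducing-kernel argument, exploiting the fact that a polynomial of degree $N$ is Fourier-supported on an interval of $\approx N$ integers and is therefore ``resolved'' at spatial scale $1/N$, which is exactly the scale on which the separation hypothesis forces $\Lambda$ to be sparse.

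First I would pick a Schwartz bump $\psi \in C_c^{\infty}(\mathbb{R})$ with $\psi \equiv 1$ on the (rescaled) integer interval supporting $\widehat{P}$, and define a kernel $K_N$ on $\mathbb{T}$ by $\widehat{K_N}(n) := \psi(n/N)$. By Poisson summation, $K_N$ inherits the Schwartz-type localization
\[ |K_N(\beta)| \lesssim_M \frac{N}{(1+N\|\beta\|_{\mathbb{T}})^{M}} \]
for every $M \geq 1$, so in particular $\|K_N\|_{L^{1}(\mathbb{T})} \lesssim 1$. Because $\widehat{K_N} \equiv 1$ on the Fourier support of $P$, the reproducing identity $P = K_N \ast P$ holds exactly.

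Next I would apply Cauchy--Schwarz to this identity to obtain the pointwise bound
\[ |P(\theta)|^{2} \; \leq \; \|K_N\|_{L^{1}(\mathbb{T})} \cdot \int_{\mathbb{T}} |K_N(\theta - \beta)| \cdot |P(\beta)|^{2} \, d\beta \; \lesssim \; \int_{\mathbb{T}} |K_N(\theta - \beta)| \cdot |P(\beta)|^{2} \, d\beta. \]
Summing over $\theta \in \Lambda$ and exchanging the sum with the integral, the claim reduces to the uniform dual estimate
\[ \sup_{\beta \in \mathbb{T}} \; \sum_{\theta \in \Lambda} |K_N(\theta - \beta)| \; \lesssim \; N. \]
Here the separation hypothesis enters decisively: since any arc of length $1/N$ contains at most one element of $\Lambda$, for each $k \geq 0$ there are $O(1)$ elements of $\Lambda$ lying at distance $\in [k/N, (k+1)/N]$ from the fixed $\beta$. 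Combining this count with the decay of $K_N$ yields $\sum_{\theta} |K_N(\theta - \beta)| \lesssim \sum_{k \geq 0} N(1+k)^{-M} \lesssim N$, and the desired inequality $\sum_{\theta \in \Lambda} |P(\theta)|^{2} \lesssim N \|P\|_{L^{2}(\mathbb{T})}^{2}$ follows immediately.

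No step presents a genuine obstacle; the only mild subtlety is arranging the cutoff $\psi$ so that $\widehat{K_N}$ genuinely equals $1$ (rather than merely $\approx 1$) on the Fourier support of $P$, which is what makes the reproducing identity exact rather than approximate. If one preferred to avoid Poisson summation, one could instead argue by duality, bounding the adjoint map $\{c_{\theta}\}_{\theta \in \Lambda} \mapsto \sum_{\theta} c_{\theta} e(n\theta) \mathbf{1}_{n \in \mathrm{supp}(\widehat{P})}$ via Schur's test applied to a smoothed Dirichlet kernel of width $N$; the $N^{-1}$-separation of $\Lambda$ plays the analogous role in controlling the off-diagonal contributions.
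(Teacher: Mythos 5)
The paper states Lemma \ref{l:rsum} without proof, treating it as a known tool, so there is no internal argument to compare against; your proposal supplies one. Your reproducing-kernel argument is correct and is the standard route to this Plancherel--P\'{o}lya / Marcinkiewicz--Zygmund type sampling estimate. With $\widehat{K_N}(n) = \psi(n/N)$ chosen to equal $1$ on the Fourier support of $P$, the identity $P = K_N * P$ is exact; $\|K_N\|_{L^1(\mathbb{T})} \lesssim 1$ and $|K_N(\beta)| \lesssim_M N(1+N\|\beta\|_{\mathbb{T}})^{-M}$ follow from the smoothness of $\psi$; Cauchy--Schwarz then gives the pointwise bound $|P(\theta)|^2 \lesssim \int_{\mathbb{T}} |K_N(\theta-\beta)|\,|P(\beta)|^2\,d\beta$; and the Schur estimate $\sup_\beta \sum_{\theta \in \Lambda}|K_N(\theta-\beta)|\lesssim N$ follows because the $N^{-1}$-separation limits $\Lambda$ to $O(1)$ points in each annulus at distance $\in[k/N,(k+1)/N)$ from $\beta$, so the sum telescopes to $\sum_{k\ge 0} N(1+k)^{-M} \lesssim N$. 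You also correctly flag the one subtlety, namely that the cutoff must be chosen so that $\widehat{K_N}$ is \emph{exactly} $1$ on the Fourier support of $P$, making the reproducing identity exact rather than approximate. The duality alternative you sketch (Schur's test applied to the adjoint via a smoothed Dirichlet kernel) is equally valid and essentially the same estimate in transposed form.
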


\subsection{Orthogonality in Phase Space: Time Frequency Analysis}
We state an orthogonality result of the Bessel type; we begin by introducing the language of \emph{tiles}. Throughout the remainder of this section, we introduce a non-negative, sufficiently smooth $\phi \in \mathcal{C}_c([0,1])$ satisfying
\[ 0 \leq \phi \lesssim \mathbf{1}_{[0,1]}. \]
Suppose we are given a family of intervals
\[ \mathcal{D} := \{ I \} \subset \mathbb{R} \]
so that $|I| \in \lambda 2^{\mathbb{N}}$ for some $1 \leq \lambda = 2^{n/B} < 2, B = O_t(1)$, satisfying the grid property, 
\[ I \cap J \in \{ \emptyset, I,J\}\]
for $I,J \in \mathcal{D}$; grids will be further explored below. For each subset of $\mathbb{T}$, let $\{ \omega \} \subset \mathcal{D}'$ denote dyadic sub-intervals of $\mathbb{T}$, a ``dual grid," so that $|\omega| |I| = \lambda$ for some $I \in \mathcal{D}$. For a collection of $\xi_\omega \in \omega$, define
\[ \psi_s(n) := \frac{1}{|I|^{1/2}} \phi(\frac{n-c_I}{|I|}) e( n \xi_\omega), \; \; \; s := I \times \omega\]
where $c_I$ is the left end-point of $I$.

There is a natural partial ordering on the set of ``tiles" 
\[ \{ s = I \times \omega \in \mathcal{D} \times \mathcal{D}'\} \]
where we say that
\[ s = I \times \omega \prec s' = I' \times \omega' \iff I \subset I', \; \omega' \subset \omega.\]
We will refer to $I$ as the ``time" interval, and $\omega$ as the ``frequency" interval associated to $s = I \times \omega$.

The collection $\{ \psi_s \}$ satisfy a weak Bessel inequality, which we will appeal to below.

\begin{proposition}\label{p:tfa}
    Suppose that $\mathbf{S} := \{ s = I_s \times \omega_s\}$ are a set of pairwise incomparable tiles, whose times intervals are contained in a parental interval $I_{\mathbf{S}}$, with $|\omega_s| \ll 1$ sufficiently small. Suppose that $|g| \leq 1$, and that for some $\delta > 0$,
\[ |\langle \psi_s, g \rangle_{\mathbb{Z}}| \approx \delta |I_s|^{1/2}\]
Then, for any $\epsilon > 0$,
\[ \sum_{s \in \mathbf{S}} |\langle \psi_s,g\rangle|^2 \lesssim_\epsilon \delta^{-\epsilon} \| g \mathbf{1}_{I_{\mathbf{S}}} \|_{\ell^2}^2 \]
and in particular
\[ \sum_{s \in \mathbf{S}, \ I_s \subset I_0} |I_s| \lesssim_\epsilon \delta^{-2-\epsilon} |I_0|. \]
\end{proposition}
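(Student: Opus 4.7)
The statement is a Bessel-type inequality for a ``Carleson'' collection of wave packets, and the second claim is an easy consequence of the first: taking $g = \mathbf{1}_{I_{\mathbf{S}}}$ and applying the coefficient hypothesis gives $\delta^{2} \sum_{s}|I_{s}| \approx \sum_s |\langle \psi_s,g\rangle|^2 \lesssim_\epsilon \delta^{-\epsilon}|I_{\mathbf{S}}|$, which rearranges to $\sum_s |I_s| \lesssim_\epsilon \delta^{-2-\epsilon}|I_{\mathbf{S}}|$. My plan is therefore to focus on the first inequality via a standard $TT^*$/Schur's test strategy.

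Defining $T : \ell^2(\mathbb{Z}) \to \ell^2(\mathbf{S})$ by $(Tg)(s) := \langle \psi_s, g\rangle$, the target bound is $\|T\|^2 \lesssim_\epsilon \delta^{-\epsilon}$, equivalently $\|TT^*\|\lesssim_\epsilon \delta^{-\epsilon}$. Since $\phi$ is supported in $[0,1]$, $\psi_s$ is supported on $I_s$, so $\langle \psi_s,\psi_{s'}\rangle$ vanishes when $I_s\cap I_{s'}=\emptyset$. Otherwise, pairwise incomparability in the dyadic grid forces (WLOG) $I_s \subsetneq I_{s'}$ and $\omega_s \cap \omega_{s'} = \emptyset$, with $|\omega_{s'}|<|\omega_s|$. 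Repeated integration by parts on $I_s$ against the oscillating factor $e(n(\xi_{\omega_s}-\xi_{\omega_{s'}}))$, exploiting the $|I_{s'}|^{-1}$-scale smoothness of the larger cutoff, should then yield
\[
|\langle \psi_s,\psi_{s'}\rangle|\lesssim_N \Bigl(\frac{|I_s|}{|I_{s'}|}\Bigr)^{1/2}\bigl(1+|\xi_{\omega_s}-\xi_{\omega_{s'}}|\cdot|I_s|\bigr)^{-N}.
\]

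Feeding this into Schur's test, a scale-by-scale organization of $\mathbf{S}$ is natural. At a common scale $|I_s| = \lambda 2^{j}$, incomparability reduces to disjointness in phase space, and the standard ``one-scale'' orthogonality of modulated cutoffs gives $\sum_{s \in \mathbf{S}_j}|\langle \psi_s,g\rangle|^2\lesssim \|g\,\mathbf{1}_{I_{\mathbf{S}}}\|_2^2$. Aggregating across scales using the decay estimate above -- together with the fact that in a dyadic grid each $I_s$ has $O(1)$ ancestors at every larger scale -- produces a total bound with a loss proportional to the number of active scales. The $\delta^{-\epsilon}$ factor should then emerge from controlling this scale count: the coefficient hypothesis $|\langle\psi_s,g\rangle|\approx\delta|I_s|^{1/2}$ effectively limits how many scales can simultaneously support tiles in $\mathbf{S}$ against a bounded $g$, and one can interpolate between a crude uniform-in-scale bound and the per-scale Bessel to trade a logarithmic loss for a $\delta^{-\epsilon}$ loss.

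\emph{Main obstacle.} The delicacy of the final step is where the real work lies: securing only a $\delta^{-\epsilon}$ loss (rather than a logarithmic-in-$|I_{\mathbf{S}}|$ or $\log(1/\delta)$ loss) requires invoking the coefficient hypothesis structurally, not just as a summation input. The cleanest route is probably an energy--size dichotomy of Lacey--Thiele flavor, in line with the reference \cite{D00}: extract maximal subcollections of a given ``size'' (sup of normalized coefficients), bound each by a sharp one-tree Carleson estimate, and iterate across size levels. This uses the incomparability of $\mathbf{S}$ more substantively than a naive Schur's test, but is the route most consistent with the $\delta^{-\epsilon}$ loss recorded in the statement.
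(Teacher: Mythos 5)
Your $TT^*$ framework is right, and the reduction of the second claim to the first is essentially correct once one notes $\langle\psi_s,g\rangle=\langle\psi_s,g\mathbf{1}_{I_0}\rangle$ for $I_s\subset I_0$ and bounds $\|g\mathbf{1}_{I_0}\|_2^2\leq|I_0|$ (rather than replacing $g$ by $\mathbf{1}_{I_{\mathbf{S}}}$, which would destroy the coefficient hypothesis). But the step you flag as the ``main obstacle'' is a genuine gap, and neither of the fixes you sketch is what the paper does. The scale-by-scale Schur sum as you set it up does not close: for fixed $s$, at ancestor scale $|I_{s'}|=2^k|I_s|$ the spatial interval $I_{s'}$ is unique, but there can be up to $\sim 2^k$ admissible frequency intervals $\omega_{s'}$ disjoint from $\omega_s$, indexed by their dyadic separation $m$; your integration-by-parts estimate $(1+m\lambda 2^{-k})^{-N}$ only becomes effective for $m\gtrsim 2^k$, so the $m$-sum contributes $\sim 2^k$ and $\sum_k 2^{-k/2}\cdot 2^k$ diverges. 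Capping the $m$-sum at $O(\delta^{-2})$ via the coefficient hypothesis and a per-scale Bessel still leaves a loss of roughly $\delta^{-1}$ in the Bessel inequality, hence $\delta^{-3}$ in the tile count, not $\delta^{-2-\epsilon}$. The coefficient hypothesis does not ``limit the number of active scales'' in the way you suggest, and an energy--size extraction is beside the point for a Bessel estimate on an antichain.

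The paper attributes the $\delta^{-\epsilon}$ to a completely different -- and simpler -- source: a $\delta$-calibrated \emph{band-limiting of the bump $\phi$}. One writes $\phi=\phi_\delta+\mathcal{E}_\delta$ with $\text{supp}\,\widehat{\phi_\delta}\subset[-\delta^{-o(1)},\delta^{-o(1)}]$ and $\sum_{j\le 10}|\partial^j\mathcal{E}_\delta(x)|\lesssim\delta^{10}(1+|x|)^{-10}$; the error contributes $O(\delta^{10}|I_s|^{1/2})$ to each coefficient and is negligible against the $\delta$ lower bound, so it may be discarded. The truncated wave packets $\Psi_s$ built from $\phi_\delta$ then have Fourier transforms supported in $\delta^{-o(1)}$-dilates $L\omega_s$ of the frequency intervals, so $\langle\Psi_s,\Psi_{s'}\rangle$ vanishes \emph{exactly} unless $L\omega_s$ and $L\omega_{s'}$ overlap; after a further sparsification making $\{L\omega_s\}$ a grid, overlap forces frequency nesting $\omega_s\subsetneq\omega_{s'}$, and incomparability then forces the corresponding $\{I_{s'}\}$ (for fixed $s$) to be pairwise disjoint. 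That disjointness is precisely what makes the Cotlar--Stein sum for $\|TT^*\|$ converge, with total loss $L^2=\delta^{-o(1)}$ -- this is Thiele's Lemma 5.1 run on the truncated packets. The missing idea in your proposal is therefore the Fourier-space truncation of $\phi$ calibrated to $\delta$, not a finer scale count.
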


The proof of Proposition \ref{p:tfa} is identical to \cite[Lemma 5.1]{THIELE}, after decomposing the function
\[ \phi = \phi_\delta + \mathcal{E}_\delta \]
where 
\[ \delta^{o(1)} \sum_{j \leq 10} |\partial^j \phi_\delta(x)| + \delta^{-10} \sum_{j \leq 10} |\partial^j \mathcal{E}_j(x)| \lesssim (1 + |x|)^{-10} \] and $\phi_\delta$ has small Fourier support
\[ \text{supp }\widehat{\phi_{\delta}} \subset [-\delta^{-o(1)},\delta^{-o(1)}];\]
$\mathcal{E}_\delta$ is an error term.

\subsection{Tools: Jump-Counting Estimates}\label{s:jump}
In this section, we recall necessary jump-counting estimates in this section for (finite dimensional) Hilbert-space-valued functions: throughout, $\mathcal{H}$ will denote a finite-dimensional Hilbert space.

For a sequence of functions 
\[ \{ \vec{f_k} \} \subset \ell^2(\mathcal{H}) \]
define
\[ 
\vec{N}_\epsilon( \vec{f_k}(x) : k )
\]
to be the jump-counting function at altitude $\epsilon > 0$ with respect to $\mathcal{H}$: 

the supremum over all $K$ so that there exists $k_0 < k_1 < \dots < k_K$ so that
\[ \| \vec{f_{k_i}}(x) - \vec{f_{k_{i-1}}}(x) \|_{\mathcal{H}} > \epsilon \]
for all $1 \leq i \leq K$; see Subsection \ref{ss:ent} for a discussion of the relationship between $\vec{N}_\epsilon$ and other measurements of $\epsilon$-entropy.

We will make frequent use of the dyadic (reverse) martingale for Hilbert-valued functions:

We set
    \[ \{ \mathbb{E}_k \vec{f} := \sum_{|I| = 2^k \text{ dyadic}} \mathbb{E}_I \vec{f} \cdot \mathbf{1}_I : k \geq 0 \} \]
    where
\begin{align}\label{e:EI}
    \mathbb{E}_I \vec{f} := \frac{1}{|I|} \sum_{n \in I} \vec{f}(n)
    \end{align}
denotes the average value of $\vec{f} \in \ell^2(\mathcal{H})$ over $I$; we will for the most part be interested in the case where $I$ is a dyadic interval.

We recall classical \emph{jump-counting} estimates for the dyadic martingale; for a similar proof in the scalar-valued setting, see \cite[Theorem 3.10]{BOOK}.

\begin{lemma}\label{l:martjump}
 The following estimate holds, independent of $\mathcal{H}$:
Then
\begin{align*}
    \sup_{\epsilon > 0}\| \epsilon \cdot \vec{N}_\epsilon( \mathbb{E}_k \vec{f}(x) : k )^{1/2} \|_{\ell^2(\mathbb{Z})} \lesssim \| \vec{f} \|_{\ell^2(\mathcal{H})}.
\end{align*}    
\end{lemma}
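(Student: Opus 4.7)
The plan is to adapt the classical scalar Lépingle-type jump-counting argument for the dyadic martingale (as in \cite[Theorem 3.10]{BOOK}), noting that every ingredient goes through coordinatewise in $\mathcal{H}$, with no dependence on $\dim \mathcal{H}$. The two inputs required are (i) the Pythagorean identity
$$\|\vec{f}\|^2_{\ell^2(\mathcal{H})} = \sum_{k \geq 0} \|\mathbb{E}_k \vec{f} - \mathbb{E}_{k+1}\vec{f}\|^2_{\ell^2(\mathcal{H})}$$
for dyadic martingale differences, which follows from the tower property $\mathbb{E}_j \mathbb{E}_k = \mathbb{E}_{\max(j,k)}$ applied componentwise to each $\mathcal{H}$-coordinate, and (ii) the analogous orthogonality for \emph{stopped} martingale differences, which is the optional stopping theorem acting coordinate by coordinate.

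With these in hand, I would fix $\epsilon > 0$ and, for each $x \in \mathbb{Z}$, construct a greedy stopping sequence: set $\sigma_0(x) := 0$ and
$$\sigma_{i+1}(x) := \min\bigl\{k > \sigma_i(x) : \|\mathbb{E}_k \vec{f}(x) - \mathbb{E}_{\sigma_i(x)}\vec{f}(x)\|_\mathcal{H} > \epsilon\bigr\},$$
with the convention $\sigma_{i+1}(x) = \infty$ if the set is empty. A short exchange argument shows that the count $\#\{i : \sigma_i(x) < \infty\}$ equals $\vec{N}_\epsilon(\mathbb{E}_k \vec{f}(x) : k)$: any maximal sequence realizing $\vec{N}_\epsilon$ can be reshuffled into the greedy sequence by replacing each chosen index with the earliest valid one, without loss of length. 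Each $\sigma_i$ is measurable with respect to the reverse dyadic filtration $\mathcal{F}_k := \sigma(\{I : |I| = 2^k\})$, which is what is needed to invoke optional stopping below.

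Applying the stopped orthogonality gives
$$\sum_i \|\mathbb{E}_{\sigma_i}\vec{f} - \mathbb{E}_{\sigma_{i-1}}\vec{f}\|^2_{\ell^2(\mathcal{H})} \leq \|\vec{f}\|^2_{\ell^2(\mathcal{H})},$$
while by construction, whenever $\sigma_i(x) < \infty$ one has $\|\mathbb{E}_{\sigma_i(x)}\vec{f}(x) - \mathbb{E}_{\sigma_{i-1}(x)}\vec{f}(x)\|_\mathcal{H}^2 > \epsilon^2$, yielding the pointwise lower bound
$$\epsilon^2 \cdot \vec{N}_\epsilon(\mathbb{E}_k \vec{f}(x) : k) \leq \sum_{i : \sigma_i(x) < \infty} \|\mathbb{E}_{\sigma_i(x)}\vec{f}(x) - \mathbb{E}_{\sigma_{i-1}(x)}\vec{f}(x)\|_\mathcal{H}^2.$$
Summing over $x \in \mathbb{Z}$ and taking the supremum over $\epsilon > 0$ delivers the claim. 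The one step past the scalar case that deserves care is the $\ell^2(\mathcal{H})$-orthogonality of stopped differences; this reduces via bilinearity of the inner product to scalar orthogonality applied in each $\mathcal{H}$-coordinate, but is the only place the argument \emph{could} go wrong. The other point to pause on is the greedy-count identity $\#\{\sigma_i < \infty\} = \vec{N}_\epsilon$, which is a finite combinatorial verification rather than anything deep, yet is the step that prevents the greedy selection from being merely a lower bound.
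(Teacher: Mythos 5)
You have the right skeleton — greedy stopping, stopped orthogonality of martingale differences, coordinatewise extension to $\mathcal{H}$ — and this is essentially how the scalar result in \cite[Theorem 3.10]{BOOK} goes. But the two steps you flag as ``the places the argument could go wrong'' do in fact go wrong as written.

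The greedy-count identity $\#\{i : \sigma_i(x) < \infty\} = \vec{N}_\epsilon(\mathbb{E}_k\vec{f}(x):k)$ is false, and the exchange argument has a hole. Greedy from $\sigma_0 = 0$ undercounts: for the scalar sequence $0,\ 0.9\epsilon,\ -0.2\epsilon,\ -0.2\epsilon,\dots$ one has $\vec{N}_\epsilon = 1$ (take $k_0=1$, $k_1=2$), but the greedy anchored at $0$ never fires, since every term lies within $\epsilon$ of $0$. The exchange fails because replacing $k_0$ by the ``earliest valid'' index moves the \emph{anchor}, which can destroy the later jumps. The standard fix is to run the greedy at threshold $\epsilon/2$: between two consecutive greedy stops every value is within $\epsilon/2$ of the anchor, so any two differ by at most $\epsilon$; hence no greedy block contains two consecutive points of an $\epsilon$-jump chain, and $\vec{N}_\epsilon \le$ (greedy count at level $\epsilon/2$) $+\,1$. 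This costs only an absolute constant, which is harmless.

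Second, the direction of your greedy is incompatible with the filtration. The $\mathcal{F}_k := \sigma(\{I : |I| = 2^k\})$ \emph{decrease} in $k$ and $(\mathbb{E}_k\vec{f})_k$ is a reverse martingale. Your $\sigma_i$ march forward in $k$, so $\{\sigma_i = k\}$ depends on $\mathbb{E}_j\vec{f}$ for $j \le k$, which is $\mathcal{F}_0$- but not $\mathcal{F}_k$-measurable; these are not stopping times in the sense needed for optional sampling, and the cross terms $\langle \mathbf{1}[\sigma_{i-1}\le j < j' < \sigma_i]\, d_j,\, d_{j'}\rangle$ (where $d_j := \mathbb{E}_{j+1}\vec{f} - \mathbb{E}_j\vec{f}$) do not vanish, since neither the indicator nor $d_j$ is measurable with respect to the $\sigma$-algebra on which $d_{j'}$ is conditionally centered. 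The remedy is to truncate at a large scale $m$, reparametrize $l \mapsto M_l := \mathbb{E}_{m-l}\vec{f}$ so that $(M_l)_{0\le l\le m}$ is a forward martingale for the increasing filtration $\mathcal{G}_l := \mathcal{F}_{m-l}$, run the greedy forward in $l$ (i.e., from coarse to fine scales), and send $m \to \infty$ at the end (legitimate, by monotone convergence, since $\vec{N}_\epsilon$ restricted to $k\le m$ increases to $\vec{N}_\epsilon$). With that reorientation, the two ingredients you identify — the stopped Pythagorean identity and its $\mathcal{H}$-valued extension by coordinatewise orthogonality — hold exactly as you describe, and the rest of your argument is correct.
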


The following relatively straightforward corollary will suffice for most purposes.

\begin{corollary}\label{c:sep}
    Suppose that $\Lambda = \{ \theta \}$ are $\kappa-$separated, 
    \[ \min_{\theta \neq \theta' \in \Lambda} \|\theta - \theta'\|_{\mathbb{T}} \geq \kappa,\]
    and that we are in the regime where $2^k \geq A/\kappa$.

For dyadic intervals, define
\[ A_I^\theta f(x) := \mathbb{E}_I \text{Mod}_{-\theta}f  \cdot \mathbf{1}_I, \]
see \eqref{e:EI}
and let
\begin{align*}
\vec{N}_\epsilon( A_I \vec{f}(x) : I )
\end{align*}
be the jump-counting function at altitude $\epsilon > 0$ with respect to the norm $\ell^2(\Lambda)$. Then
\begin{align*}
\| \vec{N}_\epsilon( A_I \vec{f}) \|_{\ell^2(\mathbb{Z})} \lesssim (1 + \frac{|\Lambda|^{1/2}}{A}) \cdot \| f\|_{\ell^2(\mathbb{Z})}.
\end{align*}
\end{corollary}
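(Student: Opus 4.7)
My plan is to identify the family $\{A_I^\theta f\}_{\theta \in \Lambda}$ with a single vector-valued dyadic martingale and then apply Lemma \ref{l:martjump} in tandem with Lemma \ref{l:rsum}. Concretely, I form $\vec{f}(n) := (\text{Mod}_{-\theta} f(n))_{\theta \in \Lambda}$, viewed in the Hilbert space $\mathcal{H} := \ell^2(\Lambda)$. At each dyadic scale $|I| = 2^k$ the vector $(A_I^\theta f(x))_{\theta \in \Lambda}$ is precisely the reverse-martingale projection $\mathbb{E}_I \vec{f} \cdot \mathbf{1}_I(x)$, so the hypothesis $2^k \geq A/\kappa$ restricts attention to the tail of this reverse martingale starting at $k_0 := \lceil \log_2(A/\kappa) \rceil$. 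The standard reverse-martingale identity $\sum_{k \geq k_0} \|\mathbb{E}_{k+1} \vec{f} - \mathbb{E}_k \vec{f}\|_{\ell^2(\mathcal{H})}^2 \leq \|\mathbb{E}_{k_0} \vec{f}\|_{\ell^2(\mathcal{H})}^2$ combined with Lemma \ref{l:martjump} then reduces the desired jump-counting bound to the single estimate $\|\mathbb{E}_{k_0} \vec{f}\|_{\ell^2(\mathcal{H})} \lesssim (1 + |\Lambda|^{1/2}/A) \|f\|_{\ell^2(\mathbb{Z})}$.

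The main term ``$1$'' is accounted for by $\|\mathbb{E}_{k_0} \vec{f}\|_{\ell^2(\mathcal{H})}^2 \lesssim \|f\|_{\ell^2}^2$, which I would prove by a direct application of Lemma \ref{l:rsum}. For each dyadic $I$ of length $2^{k_0}$, the map $\theta \mapsto P_I(\theta) := \mathbb{E}_I \text{Mod}_{-\theta} f = \frac{1}{|I|}\sum_{n \in I} f(n) e(-n\theta)$ is a trigonometric polynomial of degree $\lesssim |I|$, and $\Lambda$ is $\kappa$-separated with $\kappa \geq A/|I| \geq 1/|I|$, so Lemma \ref{l:rsum} gives $\sum_\theta |P_I(\theta)|^2 \lesssim |I| \cdot \|P_I\|_{L^2(\mathbb{T})}^2 = \frac{1}{|I|}\|f \mathbf{1}_I\|_{\ell^2}^2$ via Plancherel. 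Since $\|\mathbb{E}_{k_0} \vec{f}\|_{\ell^2(\mathcal{H})}^2 = \sum_I |I| \cdot \sum_\theta |P_I(\theta)|^2$, this telescopes to $\sum_I \|f\mathbf{1}_I\|_{\ell^2}^2 = \|f\|_{\ell^2}^2$.

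The correction $|\Lambda|^{1/2}/A$ is what I expect to be the principal obstacle. Its source is the slow decay of the Dirichlet-type kernel $\hat{m}_I(\xi) \sim 1/(|I| \xi)$ underlying the rough cutoff $\mathbf{1}_I/|I|$: in the regime $|I| \geq A/\kappa$, cross-interference between distinct $\theta, \theta' \in \Lambda$ is of size $\lesssim 1/(|I|\kappa) \lesssim 1/A$ per pair, and there can be up to $|\Lambda|$ such interfering frequencies. I would handle this by first proving the estimate for a smooth dyadic substitute $\widetilde{A}_I$ built from a mollifier $\psi_I$ with $\widehat{\psi_I}$ rapidly decaying on scale $1/|I|$, for which the argument of the previous paragraph goes through with no loss, and then bounding the defect $(A_I - \widetilde{A}_I)\vec{f}$ trivially in $\ell^2(\Lambda)$ by Cauchy--Schwarz: this costs a factor $|\Lambda|^{1/2}$ but recovers a factor $1/A$ from the pointwise smallness of $\hat{m}_I - \widehat{\psi}_I$ on the relevant frequency scale, producing the additional $|\Lambda|^{1/2}/A \cdot \|f\|_{\ell^2}$ contribution.
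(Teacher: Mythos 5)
Your first two paragraphs give a clean argument that is genuinely different from, and in fact a bit sharper than, the paper's proof. The paper proceeds by replacing $A_I^\theta f$ with a smoothed proxy $B_I^\theta f$ obtained by pre-convolving $\text{Mod}_{-\theta}f$ with a kernel $\varphi_\kappa$ (Fourier-localized to scale $\kappa$), applying the convolution jump-counting Lemma \ref{l:jump0} to the smoothed family, and then bounding the defect $A_I^\theta f - B_I^\theta f$ in a square-function norm by $\frac{|\Lambda|}{A^{10}}\|f\|^2$ via the rapid decay of the mollifier; this is the source of the $(1+|\Lambda|^{1/2}/A)$ factor. You instead observe that $(A_I^\theta f)_\theta$ is exactly the dyadic reverse martingale of $\vec{f} = (\text{Mod}_{-\theta}f)_\theta$ restricted to levels $k\geq k_0$, so it coincides with the full martingale of $\vec{g}:=\mathbb{E}_{k_0}\vec{f}$ (since $\mathbb{E}_k\vec{g}=\vec{g}$ for $k\leq k_0$ and $\mathbb{E}_k\vec{g}=\mathbb{E}_k\vec{f}$ for $k\geq k_0$). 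Applying Lemma \ref{l:martjump} to $\vec{g}$ rather than $\vec{f}$ is the decisive move — applied to $\vec{f}$ it gives the useless $|\Lambda|^{1/2}\|f\|$ — and then Lemma \ref{l:rsum} together with Parseval yields $\|\vec{g}\|_{\ell^2(\mathcal{H})}^2 = \sum_{|I|=2^{k_0}}|I|\sum_\theta|\mathbb{E}_I\text{Mod}_{-\theta}f|^2 \lesssim \sum_I \|f\mathbf{1}_I\|_{\ell^2}^2 = \|f\|_{\ell^2}^2$, with no loss at all. In other words your route proves the Corollary with constant $1$, dispensing with the $|\Lambda|^{1/2}/A$ term entirely. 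Your third paragraph is therefore moot: once you apply the martingale lemma at the coarse level $k_0$, there is no Dirichlet-kernel interference to compensate for, because Lemma \ref{l:rsum} already absorbs the cross-frequency interaction via the $\kappa$-separation, and the jumps at finer levels never enter. The mollification you sketch there is essentially the paper's own argument and is only needed if one insists on applying the jump lemma to the raw $\vec{f}$ and must therefore repair the oversized $\ell^2(\mathcal{H})$ norm by smoothing. One small imprecision to fix: the ``reverse-martingale identity'' you quote is the square-function bound, but the jump-counting estimate of Lemma \ref{l:martjump} is not derived from it in the form stated — you should simply apply Lemma \ref{l:martjump} directly to $\vec{g}$ and drop the mention of the square-function identity.
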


\begin{proof}
    If we replace $A_I^\theta f$ with
    \begin{align*}
        B_I^\theta f(x) &:= \mathbb{E}_I \big( \sum \varphi_\kappa(x-u) f(u) e(-\theta u) \big) \\
        & \qquad = \sum_{u \in I} \big( \mathbb{E}_{x \in I} \varphi(x-u) \big) \cdot f(u) e(-\theta u) \big) \\
        & \qquad \qquad + \sum_{k \geq 1} \Big( \sum_{u \in 2^k I \smallsetminus 2^{k-1} I}  \big( \mathbb{E}_{x \in I} \varphi(x-u) \big) \cdot f(u) e(-\theta u) \Big)
        \end{align*} 
     for  $\widehat{ \varphi}$ which smoothly approximates $\mathbf{1}_{[-\kappa/2,\kappa/2]}$, then the result follows from vector-valued jump-counting estimates with respect to the norm $\ell^2(\Lambda)$, namely Lemma \ref{l:jump0}. But with $u \in I$, using that $\widehat{\varphi}(0) = 1$, we have the asymptotic
\begin{align*}
    \mathbb{E}_I \varphi(x-u) = \frac{1}{|I|} (1 + O( (\kappa |I|)^{-5} ))
\end{align*}
(say), so we can express
\begin{align*}
    \sum_{u \in I} \big( \mathbb{E}_{x \in I} \varphi(x-u) \big) f(u) e(-\theta u) \big) &= \frac{1}{|I|} \sum_{x \in I} \text{Mod}_{-\theta} f(x) \cdot (1 + O((\kappa |I|)^{-5})) \\
    & \qquad = A_I^\theta f(x) + O((\kappa |I|)^{-5}) \cdot M_{\text{HL}} f(x) 
\end{align*}
and just bound
\[ \sum_{u \in 2^k I \smallsetminus 2^{k-1} I}  \big| \big(\mathbb{E}_{x \in I} \varphi(x-u) \big) \cdot \text{Mod}_{-\theta} f(u) \big| \lesssim (2^k \kappa |I|)^{-5} \cdot M_{\text{HL}} f(x) \]
so that
\begin{align*}
    \vec{N}_\epsilon(A_I^\theta f) \leq  \big( \sum_k \sum_{\theta \in \Lambda} |A_I^\theta f - B_I^\theta f|^2 \big)^{1/2} + \vec{N}_\epsilon(B_I^\theta f)
\end{align*}
and
\begin{align*}
    \| \vec{N}_\epsilon(B_I^\theta f) \|_{L^2(\mathbb{R})} \lesssim \| f\|_{L^2(\mathbb{R})}
\end{align*}
by Lemma \ref{l:jump0}. But
\begin{align*}
    \| \big( \sum_{2^k \geq A/\kappa} \sum_{\theta \in \Lambda} \sum_{|I| = 2^k} |A_I^\theta f - B_I^\theta f|^2 \big)^{1/2} \|_{\ell^2(\mathbb{Z})}^2 &\leq \sum_{\theta \in \Lambda} \sum_{2^k \geq A/\kappa} (\kappa 2^k)^{-10} \| M_{\text{HL}} f \|_{\ell^2(\mathbb{Z})}^2 \\
    & \qquad \lesssim \frac{|\Lambda|}{A^{10}} \cdot \| f \|_{\ell^2(\mathbb{Z})}^2
\end{align*}
from which the result follows.
\end{proof}

We will need the following convolution-based modification, see \cite{J}.
\begin{lemma}\label{l:jump0}
Let 
\begin{align}\label{e:bumps} \mathcal{O} := \{ \phi \in \mathcal{C}^{1}(\mathbb{R}) : (1 + |t|)^{10} \cdot \sum_{j \leq 10} |\partial^{(j)} \phi(t)|  \leq A_0, \ \int_{\mathbb{R}} \phi = 1 \},
\end{align}
and for $\phi \in \mathcal{O}$, let
\begin{align*}
\vec{N}_\epsilon( \phi_k * \vec{f} : k ), \; \; \;
\end{align*}
 be the jump-counting function at altitude $\epsilon > 0$ with respect to $\mathcal{H}$, where $\phi_k := 2^{-k} \cdot \phi(2^{-k} \cdot)$ are the usual $L^1(\mathbb{R})$-normalized dilations.

Then there exists an absolute constant, $C(A_0) < \infty$,  independent of $\phi, \ \mathcal{H} $, so that
\[ \sup_{\epsilon > 0} \| \epsilon \cdot \vec{N}_\epsilon(\phi_k* \vec{f} : k)^{1/2} \|_{\ell^2(\mathbb{Z})} \leq C(A_0) \cdot \| \vec{f} \|_{\ell^2(\mathcal{H})}.\]
\end{lemma}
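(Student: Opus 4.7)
The plan is to reduce to the dyadic martingale jump inequality of Lemma \ref{l:martjump} by comparing $\phi_k * \vec{f}$ with the conditional expectation $\mathbb{E}_k \vec{f}$ and controlling the error via a Littlewood--Paley square function. Set $D_k \vec{f} := \phi_k * \vec{f} - \mathbb{E}_k \vec{f}$. The first step is the pointwise observation that every $\epsilon$-jump of $(\phi_k * \vec{f})_k$ in $\mathcal{H}$-norm either arises from an $\epsilon/3$-jump of $(\mathbb{E}_k \vec{f})_k$ or forces $\max\{\|D_k \vec{f}\|_{\mathcal{H}}, \|D_{k-1}\vec{f}\|_{\mathcal{H}}\}$ to exceed $\epsilon/3$, yielding
\begin{align*}
\epsilon^2 \cdot \vec{N}_\epsilon(\phi_k * \vec{f} : k) \lesssim \epsilon^2 \cdot \vec{N}_{\epsilon/3}(\mathbb{E}_k \vec{f} : k) + \sum_k \|D_k \vec{f}\|_{\mathcal{H}}^2.
\end{align*}
Taking $\ell^2(\mathbb{Z})$-norms, the martingale contribution is $\lesssim \|\vec{f}\|_{\ell^2(\mathcal{H})}^2$ by Lemma \ref{l:martjump}, whose implicit constant is independent of $\mathcal{H}$.

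It remains to establish the Hilbert-valued square function estimate $\| (\sum_k \|D_k \vec{f}\|_{\mathcal{H}}^2)^{1/2} \|_{\ell^2(\mathbb{Z})} \leq C(A_0) \cdot \|\vec{f}\|_{\ell^2(\mathcal{H})}$. Since $D_k$ acts coordinate-wise in any orthonormal basis $\{e_\alpha\}$ of $\mathcal{H}$, Pythagoras and Fubini identify $\sum_x \sum_k \|D_k \vec{f}(x)\|_{\mathcal{H}}^2$ with $\sum_\alpha \sum_k \|D_k f_\alpha\|_{\ell^2(\mathbb{Z})}^2$, reducing the problem to the scalar bound $\sum_k \|D_k f\|_{\ell^2(\mathbb{Z})}^2 \lesssim_{A_0} \|f\|_{\ell^2(\mathbb{Z})}^2$. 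For the scalar estimate I would decompose $D_k f = (\phi - \psi)_k * f + (\psi_k * f - \mathbb{E}_k f)$ with $\psi := \mathbf{1}_{[0,1]}$. The first piece is translation invariant; since $\int (\phi - \psi) = 0$ and the $\mathcal{O}$-regularity of $\phi$ supplies rapid Fourier decay of $\widehat{(\phi - \psi)}$ at high frequencies while $\widehat{(\phi - \psi)}(0) = 0$ gives low-frequency vanishing, Plancherel on $\mathbb{T}$ gives $\sum_k |\widehat{(\phi - \psi)}(2^k\xi)|^2 \leq C(A_0)$ uniformly in $\xi$, and the square function bound follows.

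The main obstacle is the second piece $\psi_k * f - \mathbb{E}_k f$, which is \emph{not} translation invariant: both summands are averages of $f$ over intervals of length $2^k$ containing $x$ with matching integrals, but differ according to the position of $x$ within its dyadic parent. A Cotlar--Stein almost-orthogonality argument at scale $2^k$, exploiting the matching integrals, the sharp localization of the two kernels, and the nesting of the dyadic grid, should produce geometric cross-norm bounds of the form $\|(\psi_k * - \mathbb{E}_k)(\psi_l * - \mathbb{E}_l)^*\|_{\ell^2 \to \ell^2} \lesssim 2^{-c|k-l|}$, which sum to close the scalar square function estimate. This is the classical content of the convolution-operator jump inequality cited as \cite{J}, and is where the bulk of the technical work lies.
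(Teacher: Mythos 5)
The paper states Lemma~\ref{l:jump0} without any proof, citing Jones--Kaufman--Rosenblatt--Wierdl~\cite{J}; there is therefore no ``paper's own proof'' to compare against, so I will evaluate your proposal on its own terms.

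Your overall strategy is the standard one and the reductions are sound. The pointwise transfer of jumps is fine up to constants: the paper's definition of $\vec{N}_\epsilon$ counts a single chain $k_0<\dots<k_K$ with all consecutive increments $>\epsilon$, whereas your classification of each jump as ``martingale-type'' or ``error-type'' naturally produces a family of \emph{disjoint pairs} with large $\mathbb{E}$-increments; passing from disjoint pairs back to a chain costs another constant and a further halving of $\epsilon$ (greedy selection of chain elements shows no pair can sit strictly between two consecutive selected indices, so one gets a chain of length comparable to the number of pairs). The error-type count is correctly bounded by $\lesssim\epsilon^{-2}\sum_k\|D_k\vec f\|_{\mathcal H}^2$ because each index appears as a jump endpoint at most twice. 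The reduction of the Hilbert-valued square function to the scalar one by expanding in an orthonormal basis of $\mathcal H$ and applying Fubini is correct and is exactly why the constant is $\mathcal H$-independent. The Littlewood--Paley bound for the translation-invariant piece $(\phi-\psi)_k*$ is also correct, provided one compares the discrete multiplier $\sum_n(\phi-\psi)_k(n)e(-n\xi)$ to $\widehat{\phi-\psi}(2^k\xi)$ via Poisson summation and uses that $\widehat{\phi-\psi}(0)=0$ and $\widehat{\phi-\psi}(\eta)=O(|\eta|^{-1})$.

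The genuine gap is the scalar square-function estimate $\sum_k\|(\psi_k*-\mathbb{E}_k)f\|_{\ell^2(\mathbb Z)}^2\lesssim\|f\|_{\ell^2}^2$, which you flag yourself as where the work lies but do not carry out. Asserting a Cotlar--Stein decay $\|T_kT_l^*\|_{\ell^2\to\ell^2}\lesssim 2^{-c|k-l|}$ for $T_k=\psi_k*-\mathbb{E}_k$ is not automatic: both kernels of $T_k$ are step functions at scale $2^k$, so the usual ``use cancellation in one variable and the Lipschitz regularity of the other kernel'' scheme does not apply directly, and the fact that $T_k\mathbf 1=T_k^*\mathbf 1=0$ alone does not produce geometric decay via a naive Schur test. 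The clean route (and, as far as I can tell, the one in~\cite{J}) is not raw Cotlar--Stein but a comparison of $\psi_k*f$ with the shifted dyadic conditional expectations $\mathbb{E}_k^{(s)}f$, followed by averaging over shifts $s$; the shifted martingale square function is controlled for each fixed $s$, and averaging recovers the convolution. Without such an argument, your proposal reduces the lemma to essentially the same ingredient the paper already outsources to~\cite{J}, so while the decomposition is a useful and correct reorganization, the proof is not self-contained.
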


The following has a similar proof to Corollary \ref{c:sep}.

\begin{corollary}\label{c:jump0}
In the setting of Corollary \ref{c:sep}, one may replace
\[ A_k^\theta f(x) \longrightarrow A_k^{\phi,\theta} f(x) := \sum_n 2^{-k} \cdot \phi_k(n/2^k) \cdot f(x-n) \cdot e(-n \cdot \theta), \]
for $\phi \in \mathcal{O}$. Then, with
\begin{align*}
\vec{N}_\epsilon( A_k^{\phi} \vec{f}(x) : k )
\end{align*}
the analogous jump counting function with respect to $\ell^2(\Lambda)$, there exists an absolute constant, $C$, independent of $\Lambda, \phi$, so that
\[ \sup_{\epsilon > 0} \| \epsilon \cdot \vec{N}_\epsilon(A_k^{\phi } \vec{f} : k )^{1/2} \|_{\ell^2(\mathbb{Z})} \leq C \cdot (1 + \frac{|\Lambda|^{1/2}}{A}) \cdot \| f\|_{\ell^2(\mathbb{Z})}.\]
\end{corollary}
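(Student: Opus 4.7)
The proof will run parallel to that of Corollary \ref{c:sep}, replacing the dyadic martingale $\mathbb{E}_I$ by the convolution $\phi_k*$ and invoking Lemma \ref{l:jump0} in place of Lemma \ref{l:martjump}. First, a change of summation variable gives
\[ A_k^{\phi,\theta} f(x) = e(-\theta x) \cdot (\phi_k * \text{Mod}_\theta f)(x), \]
which shows the $\ell^2(\Lambda)$-norm of differences is invariant under the overall phase $e(-\theta x)$, so it suffices to jump-count $\phi_k * \text{Mod}_\theta f$. Next, introduce an auxiliary Schwartz bump $\eta_\kappa$ with $\widehat{\eta_\kappa}$ supported in $[-\kappa/2,\kappa/2]$, identically $1$ on $[-\kappa/10,\kappa/10]$, and satisfying $|\widehat{\eta_\kappa}| \leq 1$; split
\[ \phi_k * \text{Mod}_\theta f \;=\; \phi_k * \eta_\kappa * \text{Mod}_\theta f \;+\; \phi_k*(\delta - \eta_\kappa) * \text{Mod}_\theta f \;=:\; B_k^\theta f + Y_k^\theta f. \]

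For the main term, set $h_\theta := \eta_\kappa * \text{Mod}_\theta f$, so that $B_k^\theta f = \phi_k * h_\theta$. A Plancherel computation, after changing variables $\eta = \xi - \theta$, gives
\[ \sum_{\theta \in \Lambda} \|h_\theta\|_{\ell^2(\mathbb{Z})}^2 \;=\; \int \Big(\sum_{\theta \in \Lambda}|\widehat{\eta_\kappa}(\eta+\theta)|^2\Big) |\hat{f}(\eta)|^2 \, d\eta \;\leq\; \|f\|_{\ell^2(\mathbb{Z})}^2, \]
where in the last step we use that $\widehat{\eta_\kappa}(\eta+\theta)$ is supported in $\theta \in [-\eta-\kappa/2,-\eta+\kappa/2]$, an interval of length $\kappa$, inside which the $\kappa$-separated set $\Lambda$ contains at most one point. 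Applying Lemma \ref{l:jump0} with Hilbert space $\mathcal{H} = \ell^2(\Lambda)$ to the vector $\vec{h} = \{h_\theta\}_{\theta\in\Lambda}$ therefore yields
\[ \sup_{\epsilon > 0} \|\epsilon \cdot \vec{N}_\epsilon(\phi_k*\vec h : k)^{1/2}\|_{\ell^2(\mathbb{Z})} \;\leq\; C(A_0)\,\|f\|_{\ell^2(\mathbb{Z})}. \]

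For the error term, Plancherel gives
\[ \|Y_k^\theta f\|_{\ell^2(\mathbb{Z})}^2 \;=\; \int |1-\widehat{\eta_\kappa}(\xi)|^2 \, |\widehat{\phi}(2^k\xi)|^2 \, |\hat{f}(\xi-\theta)|^2 \, d\xi, \]
and the integrand vanishes for $|\xi| < \kappa/10$; using $|\widehat{\phi}(2^k\xi)| \lesssim (1+2^k|\xi|)^{-10}$ (from $\phi \in \mathcal{O}$) together with $2^k \geq A/\kappa$, every $\xi$ in the support satisfies $2^k|\xi| \geq A/10$, so a geometric summation in $k$ gives $\sum_k \|Y_k^\theta f\|_{\ell^2}^2 \lesssim A^{-20}\|f\|_{\ell^2}^2$, and then summing in $\theta$ yields $\sum_{k,\theta}\|Y_k^\theta f\|_{\ell^2}^2 \lesssim |\Lambda|\,A^{-20}\,\|f\|_{\ell^2}^2$. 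A standard triangle-inequality bound for jump counts,
\[ \vec{N}_\epsilon(B + Y)(x) \;\leq\; \vec{N}_{\epsilon/2}(B)(x) + 2 \, \#\{k : \|Y_k(x)\|_{\ell^2(\Lambda)} \geq \epsilon/4\}, \]
combined with Chebyshev ($\epsilon^2 \mathbf{1}_{\|Y_k\|\geq \epsilon/4} \leq 16\|Y_k\|_{\ell^2(\Lambda)}^2$) and integrated in $x$, then produces
\[ \|\epsilon \cdot \vec{N}_\epsilon(A_k^{\phi,\theta}f)^{1/2}\|_{\ell^2(\mathbb{Z})}^2 \;\lesssim\; \|f\|_{\ell^2}^2 + |\Lambda|\,A^{-20}\,\|f\|_{\ell^2}^2, \]
from which the claimed bound $(1 + |\Lambda|^{1/2}/A)\,\|f\|_{\ell^2(\mathbb{Z})}$ follows on taking square roots. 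The main obstacle — and really the only step requiring any thought — is the orthogonality computation exploiting the $\kappa$-separation of $\Lambda$ against the Fourier support of $\widehat{\eta_\kappa}$; all the other ingredients are a direct transcription of the proof of Corollary \ref{c:sep}.
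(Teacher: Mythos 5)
Your overall strategy matches what the paper has in mind when it says the proof is ``similar to Corollary~\ref{c:sep}'': peel off the phase $e(-\theta x)$, split $\phi_k*\text{Mod}_\theta f$ into a $\kappa$-band-limited main piece to which Lemma~\ref{l:jump0} applies directly, and kill the remainder using the scale restriction $2^k\geq A/\kappa$. The Plancherel-plus-separation computation for $\sum_\theta\|h_\theta\|_{\ell^2}^2$ and the geometric summation for $\sum_{k,\theta}\|Y_k^\theta f\|_{\ell^2}^2$ are both sound (the separation bound should be $\lesssim\|f\|_2^2$ rather than $\leq$, since up to two points of a $\kappa$-separated set can lie in a closed window of length $\kappa$; this is harmless).

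The step that needs repair is the jump-count triangle inequality
\[ \vec N_\epsilon(B+Y)\ \leq\ \vec N_{\epsilon/2}(B)+2\,\#\{k:\|Y_k\|\geq\epsilon/4\}. \]
For the \emph{chain} count $\vec N_\epsilon$ as defined in \S\ref{s:jump} (longest increasing $k_0<\cdots<k_K$ with all consecutive differences $>\epsilon$) this is false: deleting from an $A$-chain the indices with $\|Y_k\|\geq\epsilon/4$ leaves several \emph{disjoint} $B$-chains, and the sum of their lengths can exceed $\vec N_{\epsilon/2}(B)$, which only records the longest single chain. A scalar counterexample with $\epsilon=1$: take $B=(0,\,0.6,\,0.5,\,0.3,\,0.9)$ and $Y=(-0.24,\,0.24,\,2,\,-0.24,\,0.24)$, so $A=B+Y=(-0.24,\,0.84,\,2.5,\,0.06,\,1.14)$ has all four consecutive jumps exceeding $1$, giving $\vec N_1(A)=4$; but $\vec N_{1/2}(B)=1$ (no chain of length two exists) and $\#\{k:|Y_k|\geq 1/4\}=1$, so your right-hand side is $3<4$. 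The fix is to work with the \emph{disjoint-pairs} jump count (the largest $K$ admitting $s_1<t_1\leq s_2<t_2\leq\cdots\leq s_K<t_K$ with $\|f_{t_i}-f_{s_i}\|>\epsilon$), which \emph{is} subadditive in exactly your sense: for each pair either $\|B_{t_i}-B_{s_i}\|>\epsilon/2$ or $\max(\|Y_{s_i}\|,\|Y_{t_i}\|)>\epsilon/4$, and disjointness ensures each index is charged at most once. Lemma~\ref{l:jump0} (via \cite{J}) holds for the disjoint-pairs count, and the chain count is dominated by it, so the stated Corollary follows. In fairness, the paper's own proof of Corollary~\ref{c:sep} contains the same imprecision in the dimensionally inconsistent line ``$\vec N_\epsilon(A_I^\theta f)\leq(\sum\cdots)^{1/2}+\vec N_\epsilon(B_I^\theta f)$''; the argument is morally right, but the jump count must be the disjoint-pairs one for the subadditivity step to be literally true.
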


We will also need to make use of certain \emph{variational estimates}: for a sequence of vectors $\{ \vec{a_n}\} \subset \mathcal{H}$, define the \emph{$r$-variation}
\[ \mathcal{V}^r(\vec{a_n}) := \sup \big( \sum_i \|\vec{a_{n_i}} - \vec{a_{n_{i+1}}}\|_{\mathcal{H}}^r \big)^{1/r} + \sup \|\vec{a_n} \|_{\mathcal{H}} \]
where the supremum is over all finite increasing subsequences; note the elementary inequalities
\begin{align}\label{e:prod} \mathcal{V}^r(\vec{a_n} \cdot \vec{b_n}) \lesssim \mathcal{V}^r(\vec{a_n}) \cdot \mathcal{V}^r(\vec{b_n}),\end{align}
where $\vec{a_n} \cdot \vec{b_n}$ is given by
\[ \vec{a_n} \cdot \vec{b_n} = \{ a_n(i) \cdot b_n(i) : i \}, \; \; \; \vec{a_n} = (a_n(1),a_n(2),\dots), \ \vec{b_n} = (b_n(1),b_n(2),\dots)\]
and
\[ \mathcal{V}^r(\vec{a_n})^r \leq \sum_k 2^{-kr} \cdot \vec{N}_{2^{-k}}(\vec{a_n}) + \sup \| \vec{a_n} \|_{\mathcal{H}},\]
so that whenever $\sup_n \| \vec{a_n} \|_{\mathcal{H}} \leq 1/10$
\begin{align}
    \mathcal{V}^r(\vec{a_n})^r \leq \sum_{k \geq 0} 2^{-kr} \cdot \vec{N}_{2^{-k}}(\vec{a_n}) + \sup \| \vec{a_n} \|_{\mathcal{H}}.
\end{align}

Although significantly more is true, the following cheap estimate will suffice for our purposes.

\begin{lemma}\label{l:var}
Let $(X,\mu)$ be a measure space, and $2 < r < \infty$. Suppose that
\[ \sup_{\epsilon > 0} \| \epsilon \cdot \vec{N}_\epsilon(\vec{f_k})^{1/2} \|_{L^2(X)} +  \| \sup_k \|\vec{f_k}\|_{\mathcal{H}} \|_{L^2(X)} \leq A. \]
Then
\[ \| \mathcal{V}^r(\vec{f_k}) \|_{L^{2,\infty}(X)} \lesssim \frac{r}{r-2} \cdot A.
\]
\end{lemma}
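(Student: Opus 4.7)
The plan is to combine the level-set decomposition of the $r$-variation stated just above the lemma with a two-scale Chebyshev argument. I would first use as a black box the deterministic pointwise inequality
\[ \mathcal{V}^r(\vec{f_k})(x)^r \lesssim \sum_{j \in \mathbb{Z}} 2^{-jr}\, \vec{N}_{2^{-j}}(\vec{f_k}(x)) + \sup_k \|\vec{f_k}(x)\|_{\mathcal{H}}^r, \]
which follows from partitioning the increments $\Delta_i := \|\vec{f_{n_{i+1}}} - \vec{f_{n_i}}\|_{\mathcal{H}}$ by dyadic scale and invoking the stopping-time bound $\#\{i: \Delta_i > \epsilon\} \leq \vec{N}_{\epsilon/2}(\vec{f_k})$, plus the identity $\sum_i \Delta_i^r = \int_0^\infty r s^{r-1}\,\#\{i: \Delta_i > s\}\,ds$ discretised to powers of two.

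Next, fix $\lambda > 0$ and set the threshold scale $j_0 := \lceil \log_2(100/\lambda) \rceil$ so that $2^{-j_0} \asymp \lambda/100$. From the pointwise bound I would split
\[ \mu(\mathcal{V}^r(\vec{f_k}) > \lambda) \leq \mu\Big( \sup_k \|\vec{f_k}\|_{\mathcal{H}} > c\lambda \Big) + \mu\Big( \sum_{j \geq j_0} 2^{-jr} \vec{N}_{2^{-j}}(\vec{f_k}) > c'\lambda^r \Big) + \mu\Big( \sum_{j < j_0} 2^{-jr} \vec{N}_{2^{-j}}(\vec{f_k}) > c'\lambda^r \Big), \]
for suitable absolute constants $c, c' > 0$. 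The first term is $\lesssim A^2/\lambda^2$ directly by Chebyshev and the hypothesis $\|\sup_k \|\vec{f_k}\|\|_{L^2} \leq A$.

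For the \emph{small-jump} tail ($j \geq j_0$), recast the hypothesis as $\|\vec{N}_{2^{-j}}\|_{L^1(X)} \leq A^2 \cdot 4^j$ and estimate
\[ \int_X \sum_{j \geq j_0} 2^{-jr} \vec{N}_{2^{-j}}(\vec{f_k})\, d\mu \leq A^2 \sum_{j \geq j_0} 2^{-j(r-2)} \lesssim \frac{A^2 \lambda^{r-2}}{1 - 2^{-(r-2)}} \lesssim \frac{A^2 \lambda^{r-2}}{r-2}, \]
where the last step uses the elementary asymptotic $1 - 2^{-(r-2)} \gtrsim (r-2)/r$ valid for all $r > 2$. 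Chebyshev then contributes $\lesssim A^2/((r-2)\lambda^2)$ to the weak-type bound. This is the only step that deserves care, and it is what produces the blow-up as $r \to 2^+$.

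For the \emph{large-jump} tail ($j < j_0$), the key observation is that on the event $\{\sup_k \|\vec{f_k}\|_{\mathcal{H}} \leq \lambda/200\}$ every increment is bounded by $\lambda/100 \leq 2^{-j_0} < 2^{-j}$, forcing $\vec{N}_{2^{-j}}(\vec{f_k}) = 0$ for each $j < j_0$; hence this tail is supported on $\{\sup_k \|\vec{f_k}\|_{\mathcal{H}} > \lambda/200\}$, whose measure is $\lesssim A^2/\lambda^2$. Combining the three estimates yields $\lambda^2 \mu(\mathcal{V}^r(\vec{f_k}) > \lambda) \lesssim A^2/(r-2)$, hence $\|\mathcal{V}^r(\vec{f_k})\|_{L^{2,\infty}(X)} \lesssim A/\sqrt{r-2} \leq (r/(r-2))\cdot A$, the final bound following from the elementary inequality $(r-2)^{-1/2} \leq r/(r-2)$ for $r > 2$. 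All remaining steps are routine Chebyshev calculations.
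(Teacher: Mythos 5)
Your proof is correct and follows the same decomposition as the paper's: both begin from the pointwise bound of $\mathcal{V}^r(\vec{f_k})^r$ by a weighted sum of jump-counting functions, both excise the set where $\sup_k\|\vec{f_k}\|_{\mathcal{H}}\gtrsim\lambda$ (noting, as you do, that the coarse scales $2^{-j}\gtrsim\lambda$ contribute nothing off that set), and both estimate the remaining fine-scale contribution through Chebyshev and a geometric series in $r-2$. The one place you diverge is how that series is converted into a measure bound. The paper runs a weighted union bound over scales, choosing $\epsilon_r=(r-2)/10$ and applying $L^1$-Chebyshev separately to each event $\{2^{-2k}\vec{N}_{2^{-k}}\gtrsim(r-2)2^{\frac{r-2}{2}k}\}$; this costs one power of $(r-2)^{-1}$ from Chebyshev and another from summing the resulting geometric series, giving $\mu(\{\mathcal{V}^r\geq 1\})\lesssim(1+(r-2)^{-2})A^2$. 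You instead bound $\int_X\sum_{j\geq j_0}2^{-jr}\vec{N}_{2^{-j}}\,d\mu$ directly by Tonelli and apply Chebyshev once, yielding $\mu(\{\mathcal{V}^r>\lambda\})\lesssim(1+(r-2)^{-1})A^2/\lambda^2$, hence the slightly sharper constant $A/\sqrt{r-2}$ rather than the paper's $\approx A/(r-2)$. Both comfortably meet the target $\frac{r}{r-2}A$; your variant avoids the auxiliary parameter $\epsilon_r$ and the extra loss, at no real cost in length.
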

\begin{proof}
    By homogeneity, it suffices to prove that
    \[ \mu( \{ \mathcal{V}^r(\vec{f_k}) \geq 1 \}) \lesssim (1 + (r-2)^{-2}) \cdot A^2.\]
We bound the left-hand side by
\[ \mu( \{ \mathcal{V}^r(\vec{f_k})^r \geq 1, \ \sup_k \| \vec{f_k} \|_{\mathcal{H}} \leq 1/10 \}) + O(A^2) \]
and then, with $\epsilon = \epsilon_r := \frac{r-2}{10}$
\begin{align*}
    &\mu( \{ \mathcal{V}^r(\vec{f_k})^r \geq 1, \ \sup_k \| \vec{f_k} \|_{\mathcal{H}} \leq 1/10 \}) \leq \mu( \{ \sum_{k \geq 0} 2^{-kr} \vec{N}_{2^{-k}}(\vec{f_k}) \geq 1/2 \}) \\
    & \qquad \leq \sum_{k \geq 0} \mu( \{ 2^{-rk} \vec{N}_{2^{-k}}(\vec{f_k}) \gtrsim \epsilon 2^{-\epsilon k} \}) \\
    & \qquad \qquad = \sum_{k \geq 0} \mu( \{ 2^{-2k} \vec{N}_{2^{-k}}(\vec{f_k}) \gtrsim (r-2) \cdot 2^{\frac{r-2}{2} \cdot k} \}) \\
    & \qquad \qquad \qquad \leq (r-2)^{-1} \sum_{k \geq 0} 2^{\frac{2-r}{2}  \cdot k} A^2 \lesssim (r-2)^{-2} A^2,
\end{align*} 
as desired.
\end{proof}


\subsection{Tools: Dyadic Multiplier Theory}\label{s:DG}
In this section introduce and use a useful tool in harmonic analysis: shifted dyadic grids. Below, every interval we introduce will have side length $\geq 1$.

A grid is a collection of intervals $\{ Q : Q \in \mathcal{Q} \}$ so that whenever $P,Q \in \mathcal{Q}$ 
\[ P \cap Q \in \{ P,Q,\emptyset\}, \]
(up to null sets). There will be no loss of generality in assuming that all side-lengths are of the form $\lambda 2^{\mathbb{N}}$ for $\leq \lambda = 2^{n/B} < 2 $ for some rational $0 \leq n/B < 1$, so we will restrict to this case.

The standard example, which will be the most important for us, is the usual dyadic grid
\[ \{ 2^k \cdot \big( n + [0,1) \big): k \geq 0, \ n \in \mathbb{Z} \}.\]

Throughout the argument, we will need the flexibility to work with many different dyadic grids, so we address the more general construction; below $\Delta \in 2\mathbb{N} + 1$ will be an odd integer which we will eventually specialize to 
\[ \Delta = t^{-10} + O(1),\]
see \eqref{e:Delta}.

With $t^{-1} \ll B_t \lesssim t^{-1}$ an integer, 
and $0 \leq b \leq B_t-1$, and define the \emph{shifted dyadic grids} to be
\[ \mathcal{D}_k^{\Delta,L,b} := \{ 2^{b/B_t} \cdot 2^k \cdot \big( n + L/\Delta + [0,1) \big)  : n\in \mathbb{Z}, \ L \in [\Delta]\} \]
and
\begin{align}\label{e:shiftgrid} \mathcal{D}_U^{\Delta,L,b} := \bigcup_{k \equiv U \mod \Delta-1} \mathcal{D}_k^{\Delta,L;b}, \end{align}
noting that $2^{\Delta-1} \equiv 1 \mod \Delta$ by Fermat's Little Theorem.

For each $L \in [\Delta]$, define
\begin{align}\label{e:grid0} 
\overline{\mathcal{D}_U^{\Delta,L,b}} &:= \bigcup_{k \equiv U \mod \Delta-1} 
\overline{\mathcal{D}_k^{\Delta,L,b}} \notag \\
& \qquad := \bigcup_{k \equiv U \mod \Delta-1} \{ 2^{b/B_t} \cdot 2^k \cdot \big( n + L/\Delta + [0,1/\Delta) \big)  : n \in \mathbb{Z}\}.
\end{align}

Note that for 
\[ x \in \overline{\mathcal{D}_U^{\Delta,L,b}}, \ x \in I \in \mathcal{D}_U^{\Delta,L,b} \] 
we have the smoothness property:
\[ \frac{|\{ I \triangle \big( x + [0, |I|) \big) \}|}{|I|} \lesssim \Delta^{-1}, \]
where we use $\triangle$ to denote symmetric difference
\[ A \triangle B := (A \smallsetminus B) \cup (B \smallsetminus A).\]

In what follows, this smoothness property will be crucial.
For notational ease, we will work with (a sparse subset of) the standard dyadic grid
\begin{align}\label{e:DD}
\mathcal{D} := \mathcal{D}_0^{\Delta,{0},0}, \; \; \; \overline{\mathcal{D}} := \overline{\mathcal{D}_0^{\Delta,{0},0}}; 
\end{align}
often, we will root the grid inside of a parental interval, $P \subset I_0$, thus
\[ \mathcal{D}(P) := \{ I \in \mathcal{D} : I \subset P \}, \; \; \; \overline{\mathcal{D}}(P) := \{ \overline{I} := I \cap \overline{\mathcal{D}} \} \cap P. \]
Our estimates, however, will be uniform in each grid, so our arguments be flexible enough to obtain a union bound at the close; we will indicate how this is done.

The central object of consideration in this paper are Fourier multipliers which are indexed by elements of $\mathcal{D}(I_0)$,
\begin{align}\label{e:OM}
    \Omega_J(\beta) := \frac{1}{|J|} \sum_{n \in J} \phi((n-c_J)/|J|) g(n) e(-n \beta) 
\end{align}
where $J = [c_J,c_J + |J|)$ is dyadic, $\phi \leq \mathbf{1}_{[0,1]}$ is a smooth approximate to the indicator of the unit interval, and $g$ is a fixed $1$-bounded function; note that for any $x_J \in J$, and any function
\[ \mathbf{1}_{[-3N,3N]} \leq \varphi_N^{\vee}(n), \; \; \; |J| = N\]
(say)
\begin{align}\label{e:rep} e(x_J \beta) \Omega_J(\beta) = \varphi_N * \big( e(x_J \beta) \Omega_J(\beta) \big).
\end{align}

With $\psi(t)$ as smooth approximation to $\mathbf{1}_{t \approx 1}$ satisfying
\[ \mathbf{1}_{(0,\delta_0]} \leq \sum_{0 < \delta \leq \delta_0} \psi(\delta^{-1} t) \leq \mathbf{1}_{(0,2\delta_0]} \]
define
\begin{align}\label{e:Psi} \Psi_\delta(t) := t \cdot \psi(\delta^{-1} |t|)
\end{align}
so that
\begin{align}\label{e:C1} \sup_\delta \| \Psi_\delta \|_{\mathcal{C}^1(\mathbb{R})} \lesssim 1 \end{align}
are uniformly $\mathcal{C}^1$ (and in particular, uniformly Lipschitz); set
\begin{align}\label{e:OM0} \Omega_{J,\delta} := \Psi_\delta( \Omega_J) \end{align}
to be the smooth restriction of $\Omega_J$ to its $\delta$-level set,
so that
\[ e(a \beta) \Omega_{J,\delta}(\beta) = \Psi_\delta\big( e(a \beta) \Omega_J(\beta) \big) \]
and thus
\[ \| \partial_\beta \big( e(a \beta) \Omega_{J,\delta}(\beta) \big) \|_{L^2(\mathbb{T})} \lesssim \| \partial_\beta \big( e(a \beta) \Omega_{J}(\beta) \big) \|_{L^2(\mathbb{T})} \lesssim N^{1/2}
\]
by \eqref{e:C1}, provided $a \in J$. 

The following approximation property will be used repeatedly.

\begin{lemma}\label{l:mult1}
    Suppose that $a \in \overline{\mathcal{D}}(I_0)$ and that 
    \[ J(a) = (a, a+N].\]
    If $0 \leq \eta \leq 1$ with $\| \eta' \|_{L^{\infty}(\mathbb{T})} \leq U N$, and $a \in J \in \mathcal{D}$ with $|J| = N$, then
    \[ \sup_\delta \| \big( \Omega_{J(a),\delta} - \Omega_{J,\delta} \big)  \eta \|_{A(\mathbb{T})} \lesssim (U/\sqrt{\Delta})^{1/2}.\]
And
\[ \sup_\delta \| (\Omega_{J(a),\delta} \eta)^{\vee} \|_{\ell^1( (VJ)^c)} + \sup_\delta \| (\Omega_{J,\delta} \eta)^{\vee} \|_{\ell^1( (VJ)^c)} \lesssim (U/\sqrt{V}).\]  
\end{lemma}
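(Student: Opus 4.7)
The proof couples the Sobolev-type embedding
\[\|F\|_{A(\mathbb T)}\ \lesssim\ \|F\|_{L^2(\mathbb T)}^{1/2}\,\|\partial_\beta(e(a\beta)F)\|_{L^2(\mathbb T)}^{1/2}\ +\ \|F\|_{L^2(\mathbb T)}\]
(the $\ell^1$-variant of Lemma \ref{l:H12}, obtained by Cauchy--Schwarz against $(1+|n|/K)^{-2}$ after modulating by $e(a\beta)$ and optimizing $K$) with the tail bound \eqref{e:notJ}. The decisive structural observation is the \emph{radial identity}
\[e(a\beta)\,\Omega_{J,\delta}(\beta)\ =\ \Psi_\delta\bigl(e(a\beta)\,\Omega_J(\beta)\bigr),\]
and its analogue for $J(a)$, valid because $\Psi_\delta(z)=z\psi(\delta^{-1}|z|)$ depends only on $|z|$. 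Combined with the uniform $\mathcal C^1$ bound \eqref{e:C1}, this transfers the derivative estimates $\|\partial_\beta(e(a\beta)\Omega_J)\|_{L^2},\ \|\partial_\beta(e(a\beta)\Omega_{J(a)})\|_{L^2}\lesssim N^{1/2}$ (valid since $|a-n|\lesssim N$ on the support of $\phi$) to the nonlinearly truncated objects $\Omega_{J,\delta},\Omega_{J(a),\delta}$ at universal cost.

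For the first inequality, I would first Parseval-compute
\[\|\Omega_J-\Omega_{J(a)}\|_{L^2(\mathbb T)}^2\ =\ \tfrac{1}{N^2}\sum_n \bigl|\phi\bigl(\tfrac{n-c_J}{N}\bigr)-\phi\bigl(\tfrac{n-a}{N}\bigr)\bigr|^2|g(n)|^2\ \lesssim\ \tfrac{1}{N\Delta^2},\]
using that $|c_J-a|\lesssim N/\Delta$ by the smoothness property of $\overline{\mathcal D}$, together with Lipschitz continuity of $\phi$ and the $O(N)$-sized support. Pointwise Lipschitz of $\Psi_\delta$, uniform in $\delta$, upgrades this to $\|(\Omega_{J,\delta}-\Omega_{J(a),\delta})\eta\|_{L^2}\lesssim 1/(\sqrt N\,\Delta)$. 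For the modulated $H^1$ side, I would Leibniz-expand $\partial_\beta(e(a\beta)F)$ into a differentiated piece (bounded by $N^{1/2}$ via the radial identity and the triangle inequality) and an $\eta'$-piece (bounded by $\Delta^{-1}(UN)^{1/2}$ using the $L^\infty$-analogue $\|\Omega_{J,\delta}-\Omega_{J(a),\delta}\|_\infty\lesssim \Delta^{-1}$ and the total-variation estimate $\|\eta'\|_{L^2}^2\le \|\eta'\|_\infty\|\eta'\|_{L^1}\lesssim UN$). Plugging both inputs into the Sobolev embedding produces $\|F\|_{A(\mathbb T)}\lesssim \Delta^{-1/2}+U^{1/4}/\Delta$, which is $\lesssim (U/\sqrt\Delta)^{1/2}$.

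For the second inequality, I would apply \eqref{e:notJ} with the interval $VJ$ playing the role of $3J$ (absorbing the factor of $3$ into constants) and reference point $x_{VJ}=a\in J\subset VJ$. The task reduces to bounding $(VN)^{-1/2}\,\|\partial_\beta(e(a\beta)\Omega_{J(a),\delta}\eta)\|_{L^2}$. Leibniz-expanding: the differentiated piece is $\lesssim N^{1/2}$ via the radial identity, while the $\eta'$-piece is $\lesssim \|\Omega_{J(a),\delta}\|_\infty\,\|\eta'\|_{L^2}\lesssim (UN)^{1/2}$ (using $\|\Omega_{J(a),\delta}\|_\infty\le 1$ together with the same total-variation estimate). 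Combining with the prefactor $(VN)^{-1/2}$ yields $V^{-1/2}+(U/V)^{1/2}\lesssim U/\sqrt V$; the bound for $\Omega_J$ in place of $\Omega_{J(a)}$ is identical, exploiting $a\in J$ to preserve the $N^{1/2}$ derivative estimate.

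The main obstacle is maintaining control uniformly in $\delta$: naive chain-rule manipulations with $\Psi_\delta$ spawn factors of $\|\Psi_\delta''\|_\infty\approx\delta^{-1}$ that are catastrophic as $\delta\to 0$. The radial identity sidesteps this by keeping the modulation inside the argument of $\Psi_\delta$, so one differentiates through $\Psi_\delta$ at most once and only via its bounded first derivative.
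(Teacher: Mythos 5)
Your overall strategy coincides with the paper's: both proofs hinge on the Sobolev-type embedding of Lemma~\ref{l:H12}, the ``radial identity'' $e(a\beta)\Omega_{J,\delta}(\beta)=\Psi_\delta(e(a\beta)\Omega_J(\beta))$, and the uniform $\mathcal{C}^1$ bound \eqref{e:C1} to transfer derivative control through the nonlinear truncation at bounded cost. That much is exactly right, and your observation that the radial identity ``keeps the modulation inside the argument of $\Psi_\delta$'' is precisely the point.

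However, your treatment of the $\eta'$-term in the Leibniz expansion has a genuine gap. You pair Cauchy--Schwarz as $\|(\Omega_{J(a),\delta}-\Omega_{J,\delta})\,\eta'\|_{L^2}\le\|\Omega_{J(a),\delta}-\Omega_{J,\delta}\|_{L^\infty}\|\eta'\|_{L^2}$ and then bound $\|\eta'\|_{L^2}^2\le\|\eta'\|_{L^\infty}\|\eta'\|_{L^1}\lesssim UN$. That last step silently uses $\|\eta'\|_{L^1}\lesssim 1$, i.e.\ bounded total variation, which is \emph{not} among the hypotheses: the lemma only assumes $0\le\eta\le 1$ and $\|\eta'\|_{L^\infty}\le UN$, and in the applications (Corollary~\ref{c:F2F3}, where $\eta=\Xi_J^0(\alpha\cdot)-\Xi_J(\alpha\cdot)$) the cut-off $\eta$ is a sum of $|\Lambda|\lesssim V\delta^{-2}$ separated bumps, so its total variation is far from $O(1)$. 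The same issue recurs in your argument for the second inequality, where you again invoke ``the same total-variation estimate.'' The paper avoids this entirely by pairing Cauchy--Schwarz the other way,
\[
\|(\Omega_{J(a),\delta}-\Omega_{J,\delta})\,\eta'\|_{L^2}\ \le\ \|\eta'\|_{L^\infty}\bigl(\|\Omega_{J(a),\delta}\|_{L^2}+\|\Omega_{J,\delta}\|_{L^2}\bigr)\ \lesssim\ UN\cdot N^{-1/2}=UN^{1/2},
\]
using the free Plancherel bound $\|\Omega_{J}\|_{L^2}\lesssim N^{-1/2}$ (and that $\Psi_\delta$ contracts $L^2$ since $|\Psi_\delta(z)|\le|z|$). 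With this pairing the derivative estimate $\lesssim UN^{1/2}$ and the $L^2$ estimate $\lesssim(\Delta N)^{-1/2}$ combine in the Sobolev embedding to give exactly $(U/\sqrt\Delta)^{1/2}+(\Delta N)^{-1/2}$, and the second inequality follows identically after multiplying by the prefactor $(VN)^{-1/2}$ from \eqref{e:notJ}. So the fix is local and does not disturb the rest of your argument, but as written the proof does not establish the lemma under the stated hypotheses.
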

\begin{proof}
    Both points follow from Lemma \ref{l:H12}. Since 
    \[ \sup_\delta \| \Psi_\delta \|_{\mathcal{C}^1(\mathbb{R})} \lesssim 1,\]
    we may replace all instances of $\Omega_{J,\delta}$ with $\Omega_J$, see \eqref{e:OM}.
    
    Indeed
    \[ \|  \Omega_{J(a),\delta} - \Omega_{J,\delta} \|_{L^2(\mathbb{T})} \lesssim (\Delta N)^{-1/2}
        \]
as the composition with $\Psi_\delta$ decreases $L^2(\mathbb{T})$-norm
and
\begin{align*} \| \partial_\beta \Big( e(a \beta) \big( \Omega_{J(a),\delta}(\beta) - \Omega_{J,\delta}(\beta) \big)  \cdot \eta(\beta)\Big) \|_{L^2(\mathbb{T})} &\lesssim N^{1/2} + \big( \| \eta' \|_{L^{\infty}(\mathbb{T})} \cdot( \| \Omega_{J(a)} \|_{L^2(\mathbb{T})} + \| \Omega_{J} \|_{L^2(\mathbb{T})} ) \big) \\
& \qquad \lesssim U N^{1/2}
\end{align*}
as 
\[ \partial_\beta ( e(a \beta) \big( \Omega_{J(a),\delta} - \Omega_{J,\delta}(\beta) \big) ) = \partial_\beta \big( \Psi_\delta( e(a \beta) \Omega_{J(a)}(\beta) - \Psi_\delta( e(a \beta) \Omega_{J}(\beta) \big) \]
where composition again decreases the norm by \eqref{e:C1}.

The second point follows from Lemma \ref{l:H12}, namely \eqref{e:notJ}.     
\end{proof}

We finish this section by recording the elementary containment, which directly follows from Lemma \ref{l:mult1}.

\begin{lemma}\label{l:cont}
Suppose that $|I| = M$ is a fairly large (dyadic) interval, and that $N \leq M/2$. For each $a \in \overline{\mathcal{D}}(I_0)$, set $J_N(a) := (a, a + N]$ and choose $J(a) \in {\mathcal{D}}$ with $|J(a)| = N$ so that $a \in J(a)$.

If $\eta$ is as above, $|f| \leq 1$ with $f_P := f \cdot \mathbf{1}_P$ for $P$ intervals, and $\Delta \gg t^{-4} U^2$, then for any $0 < \delta \leq 1$
\begin{align*}
    &\{ x \in I : |\int \Omega_{J_N(x),\delta}(\alpha \beta) \eta(\alpha \beta) \hat{f}(\beta) e( \beta ( x  + \alpha x ) ) \ d\beta | \geq t \} \\
    & \qquad \subset 
\{ x \in I : |\int \Omega_{J(x),\delta}(\alpha \beta) \eta(\alpha \beta) \cdot \widehat{f_{J}}(\beta) e( \beta ( x + \alpha x) ) \ d\beta | \geq t/2  \}, \; \; \; J \supset D^2  t^{-4} U^2 \cdot J(x),
\end{align*}
see \eqref{e:Da}.
\end{lemma}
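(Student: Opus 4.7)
The plan is to establish the set containment by two successive approximations, each incurring a pointwise error strictly smaller than $t/4$, so that the $t$-threshold on the left is comfortably downgraded to $t/2$ on the right via the triangle inequality.

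\textbf{Step 1: Replace $J_N(x)$ by $J(x)$.} Since $x \in \overline{\mathcal{D}}(I_0)$ and $J(x) \in \mathcal{D}$ is the dyadic interval of length $N = |J_N(x)|$ containing $x$, I apply the first bound of Lemma \ref{l:mult1} with $a = x$ and $J = J(x)$ to obtain
\[
\sup_\delta \big\|(\Omega_{J_N(x),\delta} - \Omega_{J(x),\delta})\eta\big\|_{A(\mathbb{T})} \lesssim (U/\sqrt{\Delta})^{1/2} \ll t,
\]
invoking the hypothesis $\Delta \gg t^{-4}U^2$. Since the $A(\mathbb{T})$-norm dominates the $\ell^\infty \to \ell^\infty$ operator norm of the corresponding convolution, and since the dilation $\beta \mapsto \alpha\beta$ does not inflate this bound (up to an $O(1)$ factor absorbed into the constants), substituting $\Omega_{J(x),\delta}$ for $\Omega_{J_N(x),\delta}$ in the integrand introduces a pointwise error $O(t)$.

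\textbf{Step 2: Truncate $f$ to $f_J$.} Here I invoke the second bound of Lemma \ref{l:mult1},
\[
\sup_\delta \big\|(\Omega_{J(x),\delta}\eta)^\vee\big\|_{\ell^1((VJ(x))^c)} \lesssim U/\sqrt{V},
\]
and transfer it through the substitution $\beta \mapsto \alpha\beta$. The kernel of $\Omega_{J(x),\delta}(\alpha\cdot)\eta(\alpha\cdot)$ lives on the dilated scale $|\alpha|\cdot|J(x)| \asymp D|J(x)|$ (see \eqref{e:Da}), so its $\ell^1$-tail outside any interval of length $V\cdot D|J(x)|$ is bounded by $U/\sqrt{V}$. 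Taking $V\asymp Dt^{-4}U^2$ renders this tail at most $t^2/\sqrt{D} \ll t$ while forcing $V\cdot D|J(x)| \gtrsim D^2 t^{-4}U^2 |J(x)|$; hence truncating $\hat{f}$ to $\widehat{f_J}$ for any $J \supset D^2 t^{-4} U^2 \cdot J(x)$ introduces pointwise error $O(t)$, using $|f|\le 1$.

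\textbf{Conclusion.} By the triangle inequality, if the original integrand exceeds $t$ in modulus at a point $x \in I$, then after both replacements the modified integrand still exceeds $t - O(t) \geq t/2$, yielding the claimed set containment. The main obstacle, and where care is required, is the bookkeeping of the $\alpha$-dilation in both steps: one must verify that the $A(\mathbb{T})$-norm control in Step 1 and the $\ell^1$-tail control in Step 2 survive the change of variables $\beta \mapsto \alpha\beta$ without incurring catastrophic constants. The factor $D^2$ in the truncation threshold encodes precisely this cost—one power of $D$ for the spreading of the kernel's spatial support under $\alpha$-dilation, and a second power to force $U/\sqrt{V}$ safely below $t$—while the parameter choice $\Delta \gg t^{-4}U^2$ is calibrated to give Step 1 the needed margin.
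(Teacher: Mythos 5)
Your proposal correctly reduces the containment to the two bounds of Lemma \ref{l:mult1}, with the threshold $\Delta \gg t^{-4}U^2$ handling Step 1 and the factor $D^2$ split as one power for the kernel's spread to scale $|\alpha|\,|J(x)| \asymp D\,|J(x)|$ and one for the margin $U/\sqrt{V} \ll t$; this is precisely the route the paper indicates when it says the lemma ``directly follows'' from Lemma \ref{l:mult1}. The one point to be more careful about is the claim that the $A(\mathbb{T})$-norm ``does not inflate'' under $\beta \mapsto \alpha\beta$ up to $O(1)$: an irrational dilation scrambles the integer Fourier support, so the Wiener norm of $\Omega_{J,\delta}(\alpha\cdot)$ is \emph{not} comparable to that of $\Omega_{J,\delta}$ in general. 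The conclusion still holds, but for a more structural reason: once the $\widehat{\psi_D}$ cutoff implicit in the Fourier support of $\widehat{f}$ (see \eqref{e:psiD}) is inserted, the $H^{1/2}$-Sobolev bound of Lemma \ref{l:H12} survives the dilation because the $L^2(\mathbb{T})$-norm of the dilated multiplier contracts by a factor $|\alpha|^{-1/2}$ while the $L^2(\mathbb{T})$-norm of its $\beta$-derivative inflates by $|\alpha|^{1/2}$, so the geometric mean appearing in that bound is unchanged; the integer modulation $e(k\beta)$ should be taken at $k = \lfloor \alpha x \rfloor$, with the $O(1)$ discrepancy $k - \alpha x$ contributing only a lower-order term.
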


\section{Addressing the Floor Function}\label{s:floor}

To resolve the floor function, our first order of business is to smoothly localize our multipliers by restricting the Fourier support of our initial data.

To do so, let 
\[ \mathbf{1}_{[-1/4,1/4]} \leq \psi \leq \mathbf{1}_{[-1,1]}\] be a smooth bump function satisfying
\[ \sum_{l \in \mathbb{Z}} \psi(\xi - l) \equiv 1\]
for $\xi \in \mathbb{R}$, and let
\[ \mathbf{F}(x) := \big( \frac{\sin(\pi x)}{\pi x} \big)^2\]
denote the Fej\'{e}r kernel, so
\[ \widehat{\mathbf{F}}(\beta) = (1 - |\beta|)_+.\]
\begin{lemma}
For $D \in \mathbb{Z}$, see \eqref{e:Da}, define
    \[ \widehat{\mathbf{F}_d}(\beta) := \psi(D\beta -d) \cdot \widehat{\mathbf{F}}(\beta). \]
Then for any (compactly supported) $f \in \ell^{2}(\mathbb{Z})$
\[ f(n) = \sum_{|d| \leq 10 D} \mathbf{F}_d*f(n) = \sum_{|d| \leq 10 D} \Big( \sum_k f(n-k) \mathbf{F}_d(k) \Big).\]    
\end{lemma}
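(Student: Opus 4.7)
The plan is to verify the reconstruction identity at the level of the Fourier transform on $\mathbb{R}$, then exploit the fact that the Fej\'{e}r kernel $\mathbf{F}$ restricted to $\mathbb{Z}$ is the Kronecker delta to pass to the discrete convolution that appears in the statement.

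First I would observe the elementary but essential fact that $\mathbf{F}(k) = 0$ for every $k \in \mathbb{Z} \setminus \{0\}$ (since $\sin(\pi k) = 0$) while $\mathbf{F}(0) = 1$. Consequently, for any sequence $f \in \ell^2(\mathbb{Z})$ with compact support, the \emph{discrete} convolution $\sum_{k \in \mathbb{Z}} f(n-k) \mathbf{F}(k) = f(n)$, i.e.\ $\mathbf{F}$ acts as the identity on $\ell^2(\mathbb{Z})$ under discrete convolution.

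Next, I would combine the partition of unity hypothesis $\sum_{l \in \mathbb{Z}} \psi(\xi - l) \equiv 1$ with the definition $\widehat{\mathbf{F}_d}(\beta) = \psi(D\beta - d) \widehat{\mathbf{F}}(\beta)$ to obtain, as \emph{functions on $\mathbb{R}$},
\[
\sum_{d \in \mathbb{Z}} \widehat{\mathbf{F}_d}(\beta) \;=\; \widehat{\mathbf{F}}(\beta) \cdot \sum_{d \in \mathbb{Z}} \psi(D\beta - d) \;=\; \widehat{\mathbf{F}}(\beta),
\]
and therefore, after taking inverse Fourier transforms on $\mathbb{R}$, $\sum_{d \in \mathbb{Z}} \mathbf{F}_d(x) = \mathbf{F}(x)$ for all $x \in \mathbb{R}$; in particular this identity persists after restriction to $x = k \in \mathbb{Z}$. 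The truncation to $|d| \leq 10 D$ is then automatic: since $\widehat{\mathbf{F}}$ is supported in $[-1,1]$ and $\psi(D\beta - d)$ is supported in $|D\beta - d| \leq 1$, the product $\widehat{\mathbf{F}_d}$ vanishes identically unless $|d| \leq D + 1$, which in turn is $\leq 10 D$ for the admissible values of $D$, see \eqref{e:Da}.

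Putting these together,
\[
\sum_{|d| \leq 10 D} \mathbf{F}_d * f(n) \;=\; \sum_{k \in \mathbb{Z}} f(n - k) \Big( \sum_{|d| \leq 10 D} \mathbf{F}_d(k) \Big) \;=\; \sum_{k \in \mathbb{Z}} f(n-k) \mathbf{F}(k) \;=\; f(n),
\]
with the interchange of summation justified by the compact support of $f$ and the rapid decay of each $\mathbf{F}_d$. The only subtle point is keeping straight which Fourier transform is in play: the definition of $\widehat{\mathbf{F}_d}$ uses the Fourier transform on $\mathbb{R}$, but thanks to the vanishing of $\mathbf{F}$ at nonzero integers one never needs to invoke Poisson summation or compute the $\mathbb{T}$-side Fourier transform of the restricted sequence $\{ \mathbf{F}_d(k) \}_{k \in \mathbb{Z}}$. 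This is where I expect most of the care (rather than difficulty) to lie.
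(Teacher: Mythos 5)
Your proof is correct and takes essentially the same approach as the paper's, which reduces by linearity to $f = \delta_{\{0\}}$ and then invokes the reproducing identity $\sum_{|d|\le 10D}\psi(D\beta-d)\widehat{\mathbf{F}}(\beta) = \widehat{\mathbf{F}}(\beta)$ together with $\mathbf{F}\cdot\mathbf{1}_{\mathbb{Z}} = \delta_{\{0\}}$. Your version just makes explicit the same two ingredients (the partition of unity forcing $\sum_d \mathbf{F}_d = \mathbf{F}$ on $\mathbb{R}$, and the Fej\'{e}r kernel sampling to the Kronecker delta on $\mathbb{Z}$) before applying them to a general $f$ by discrete convolution, and correctly notes the support computation that justifies truncating to $|d|\le 10D$.
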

\begin{proof}
    By linearity, it suffices to specialize $f$ to be the pointmass at $0$, $f \longrightarrow \delta_{\{0\}}$. But, the result then just follows from the reproducing identity and Fourier inversion,
    \[ \sum_{|d| \leq 10 D} \psi(D \beta -d) \cdot \widehat{\mathbf{F}}(\beta) \equiv \widehat{\mathbf{F}}(\beta),\]
noting that
\[ \mathbf{F} \cdot \mathbf{1}_{\mathbb{Z}} \equiv \delta_{\{0\}}.\]
\end{proof}

\begin{lemma}\label{l:w}
    Suppose that $w \in \mathcal{C}_c^{\infty}([0,1])$ with
    \[ |\partial^{\gamma} w| \leq L^\gamma \mathbf{1}_{[0,1]} 
    \]
for all $\gamma \geq 0$.
Then for each $|f|, |g| \leq 1$
\begin{align}
    &\frac{1}{N} \sum_{n \leq N} \sum_m w(\alpha n - m) f(x-m) g(x+n) \\
    & \qquad = \sum_{\xi \in \mathbb{Z}} e(-\alpha x \xi) \cdot \int_{\mathbb{T}} \widehat{f_\xi}(\beta) \Big(\frac{1}{N} \sum_{n \leq N} g_\xi(x+n) e(-n \alpha \beta) \Big) e( \beta x ) \ d\beta \\
    & \qquad \qquad = \sum_{|\xi| \leq L/\tau} e(-\alpha x \xi) \cdot \int_{\mathbb{T}} \widehat{f_\xi}(\beta) \Big(\frac{1}{N} \sum_{n \leq N} g_\xi(x+n) e(-n \alpha \beta) \Big) e( \beta x ) \ d\beta + O_A(\tau^{A}),
\end{align}
where
\[ \widehat{f_\xi}(\beta) := \hat{f}(\beta) \cdot \hat{w}(\xi-\beta)\]
and
\[ g_\xi(n) := e(\alpha n \xi) \cdot g(n)\]
satisfy
\[ \| f_\xi \|_{\ell^{\infty}(\mathbb{Z})} \lesssim_A (1 + |\xi|/L)^{-A}, \; \; \; \|g_\xi\|_{\ell^{\infty}(\mathbb{Z})} \leq 1. \]

And, similarly with the rough cut-offs replaced by smooth ones, 
\[ \frac{1}{N} \mathbf{1}_{[1,N]} \longrightarrow \phi_N\]
see \eqref{e:tilde} and \eqref{e:tilde0}.
        \end{lemma}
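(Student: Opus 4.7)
The plan is to derive the Fourier decomposition by applying Poisson summation to the smoothed floor-function weight $w(\alpha n - m)$ in the variable $m$, then to regroup the resulting double sum into the $\xi$-indexed series stated in the lemma. First I would use Fourier inversion on $\mathbb{Z}$ to replace $f(x-m) = \int_{\mathbb{T}} \hat{f}(\beta) e((x-m)\beta) \, d\beta$, reducing the inner $m$-sum to $e(x\beta) \sum_m w(\alpha n - m) e(-m\beta)$. Reindexing $m \mapsto -k$ and applying Poisson summation to $F(t) = w(t) e(t\beta)$, whose continuous Fourier transform is $\hat{F}(\xi) = \hat{w}(\xi-\beta)$, yields
\[
\sum_k w(\alpha n + k) e(k\beta) \;=\; \sum_{\xi \in \mathbb{Z}} \hat{w}(\xi - \beta)\, e\!\left(\alpha n (\xi - \beta)\right).
\]
Splitting $e(\alpha n(\xi-\beta)) = e(\alpha n \xi)\, e(-\alpha n \beta)$, and using $g(x+n)\, e(\alpha n \xi) = e(-\alpha x \xi)\, g_\xi(x+n)$ by definition of $g_\xi$, the various factors reassemble into $\widehat{f_\xi}(\beta) = \hat{f}(\beta)\hat{w}(\xi-\beta)$ together with $g_\xi(x+n)\, e(-\alpha n \beta)$. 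Interchanging the orders of the $\beta$-integral, the $\xi$-sum, and the $n$-average (justified by absolute convergence, as only finitely many $n$ enter) produces the first claimed identity.

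For the truncation to $|\xi| \leq L/\tau$, the essential point is the rapid decay of $\hat{w}$: since $w \in \mathcal{C}_c^{\infty}([0,1])$ satisfies $|\partial^{\gamma} w| \leq L^\gamma$, repeated integration by parts yields $|\hat{w}^{(j)}(\zeta)| \lesssim_{A,j} (1 + |\zeta|/L)^{-A}$ for all $A, j \geq 0$. Writing $f_\xi = k_\xi *_{\mathbb{Z}} f$ with $k_\xi(k) := \int_\mathbb{T} \hat{w}(\xi-\beta) e(k\beta) \, d\beta$, a further integration by parts in $\beta$ (and control of the boundary contributions $\hat{w}^{(j)}(\xi), \hat{w}^{(j)}(\xi - 1)$, which themselves enjoy the same $(1+|\xi|/L)^{-A}$ decay) yields $|k_\xi(k)| \lesssim_A (1+|\xi|/L)^{-A}(1+k^2)^{-1}$, whence
\[
\|k_\xi\|_{\ell^1(\mathbb{Z})} \;\lesssim_A\; (1 + |\xi|/L)^{-A}.
\]
This immediately gives $\|f_\xi\|_{\ell^\infty} \leq \|f\|_\infty \|k_\xi\|_{\ell^1} \lesssim_A (1+|\xi|/L)^{-A}$ as claimed, while $\|g_\xi\|_\infty \leq 1$ is immediate. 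Summing the tail over $|\xi| > L/\tau$ then contributes at most $\sum_{|\xi| > L/\tau} (1 + |\xi|/L)^{-A} \lesssim_A \tau^{A-1}$, which produces the $O_A(\tau^A)$ error after choosing $A$ sufficiently large (and relabeling).

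The variant with the smooth cutoff $\frac{1}{N}\mathbf{1}_{[1,N]} \mapsto \phi_N$ is automatic: the Poisson-summation step acts only on the $m$-sum and is entirely insensitive to how the $n$-sum is weighted, so the identical computation goes through with $\phi_N(n)$ in place of $N^{-1} \mathbf{1}_{[1,N]}(n)$ throughout. I expect the main technical obstacle to be the careful tracking needed to obtain the uniform bound $\|f_\xi\|_{\ell^\infty} \lesssim_A (1+|\xi|/L)^{-A}$ without a loss depending on the support size of $f$: this is what forces the convolution-kernel viewpoint together with the $\ell^1(\mathbb{Z})$ estimate on $k_\xi$, rather than the cruder Plancherel/$L^2(\mathbb{T})$ estimate that would leave behind a spurious factor of $\|f\|_{\ell^2}$. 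Once this kernel bound is in hand, the remainder of the argument is essentially formal.
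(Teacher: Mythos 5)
Your argument is correct and tracks the paper's own proof very closely: the same Fourier inversion in $f$, the same Poisson summation applied to $t \mapsto w(t)e(t\beta)$ to produce the $\xi$-sum, and the same reassembly into $\widehat{f_\xi}, g_\xi$. The only methodological divergence is in how the key bound $\|f_\xi\|_{\ell^\infty} \lesssim_A (1+|\xi|/L)^{-A}$ is obtained. You estimate $\|k_\xi\|_{\ell^1(\mathbb{Z})}$ directly by integration by parts in $\beta$ on $[0,1]$ (treating the boundary contributions $\hat w^{(j)}(\xi), \hat w^{(j)}(\xi-1)$ separately), whereas the paper instead bounds the Wiener-algebra norm $\|\hat w(\xi-\cdot)\|_{A_\beta(\mathbb{T})}$ by citing its $H^{1/2}(\mathbb{T})$ Sobolev embedding, Lemma \ref{l:H12}. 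Since $\|k_\xi\|_{\ell^1(\mathbb{Z})}$ is precisely that Wiener norm, these are two routes to the same estimate; your hands-on integration by parts is slightly more elementary and self-contained, the paper's version reuses a lemma already in its toolkit. Both correctly localize the computation so that $\|f\|_{\ell^\infty}$ (rather than $\|f\|_{\ell^2}$) appears, which as you note is the point.
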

\begin{proof}
    The argument is by Poisson summation. We express
    \begin{align} &\frac{1}{N} \sum_{n \leq N} \sum_m w(\alpha n - m) f(x-m) g(x+n) \\
    & \qquad = \int_{\mathbb{T}} \hat{f}(\beta) \frac{1}{N} \sum_{n \leq N} g(x+n) \Big( \sum_m w(\alpha n - m) e(-m \beta) \Big) e(\beta x) \ d\beta \\
    & \qquad \qquad = \sum_{\xi \in \mathbb{Z}} \Big( \int_{\mathbb{T}} \big( \hat{f}(\beta) w^{\vee}(\xi + \beta) \big) \frac{1}{N} \sum_{n \leq N} g(x+n) e(- \alpha n (\beta + \xi)) e(\beta x) \ d\beta \Big) \\
    & \qquad \qquad \qquad = \sum_{\xi \in \mathbb{Z}} e(-\alpha x \xi) \Big( \int_{\mathbb{T}} \widehat{f_\xi}(\beta) \frac{1}{N} \sum_{n \leq N} g_\xi(x+n) e(- \alpha n \beta) e(\beta x) \ d\beta \Big);
    \end{align}
by convexity, we may bound
\[ \| \Big( \int \widehat{f_\xi}(\beta) \frac{1}{N} \sum_{n \leq N} g_\xi(x+n) e(- \alpha n \beta) e(\beta x) \ d\beta \Big) \|_{\ell^{\infty}} \lesssim \| f_\xi \|_{\ell^{\infty}} \| g_\xi \|_{\ell^{\infty}},\]
    from which the result follows, upon using Lemma \ref{l:H12} to bound
    \begin{align}
        \| \hat{w}(\xi - \beta) \|_{A_\beta(\mathbb{T})} &\lesssim \| \hat{w}(\xi - \beta)\|_{L^2_\beta(\mathbb{T})} +  \| \hat{w}(\xi - \beta)\|_{L^2_\beta(\mathbb{T})}^{1/2} \| \partial_\beta \hat{w}(\xi - \beta)\|_{L^2_\beta(\mathbb{T})}^{1/2}\\ & \qquad \lesssim_A (1 + |\xi|/L)^{-A}. 
    \end{align} 
\end{proof}

In what follows, we will use reproducing to decompose each 
\[ f = \sum_{|d| \leq 10D} f*\mathbf{F}_d, \]
and, after conceding a constant factor, will assume that each $f$ has Fourier support in an interval of length $ \leq 1/D$, possibly after slightly decreasing $D$; by modulating $f$ and $g$ appropriately, there is no loss of generality in assuming $\hat{f}$ is supported in $[0,1/D]$, so we will use without comment the reproducing identity
\begin{align}\label{e:psiD} f \equiv f*\psi_D, \; \; \; \mathbf{1}_{[0,1/D]} \leq \widehat{\psi_D} \equiv \hat{\psi}(D \cdot) \leq \mathbf{1}_{[-2/D,2/D]}.\end{align}
In particular, this model no longer distinguishes between
\[ \sum_m \phi_M(m) f(x - \lfloor \alpha m \rfloor) g(x+m) \]
and the case (formally) connected to pointwise convergence of two commuting transformations,
\[ \sum_m \phi_M(m) f(x - \alpha m ) g(x+m), \]
see \eqref{e:commute}.

\section{Organizing Fourier Multipliers}\label{ss:fol}
Recalling the notation $\{ \Omega_{J,\delta} \}$, see \eqref{e:OM0}, we implement a selection procedure to organize our multipliers into sub-collections which can be efficiently studied, cf.\ \eqref{e:BB}.

The basic mechanism is to efficiently extract not-too-many subsets
\[ \{ \Lambda \} \subset \mathbb{T} \]
of size not much larger than $\delta^{-2}$, see \eqref{e:specsize}, so that for each $J$, there is some $\Lambda$ so that, essentially,
\[  \| \Omega_{J,\delta} \cdot \mathbf{1}_{\big(\Lambda + B(10/|J|)\big)^c} \|_{A(\mathbb{T})} \ll t;
\]
the task reduces to studying multipliers roughly of the form  
\[ \Big\{ \Omega_{J,\delta} \cdot \mathbf{1}_{\Lambda + B(10/|J|)} : \| \Omega_{J,\delta} \cdot \mathbf{1}_{\big( \Lambda + B(10/|J|) \big)^c} \|_{A(\mathbb{T})} \ll t \Big\}_\Lambda \]
as $\Lambda$ varies; but these types of multipliers will be essentially of the form \eqref{e:AI}.

In the below section, we will regard $t > 0$ as fixed.

\subsection{Separation into Trees}
With 
\[ M_0 \ll M_1 \ll \dots \ll M_L \]
a rapidly increasing, lacunary sequence, we collect
\[ \mathcal{Q}_l := \{ I \in \mathcal{D}(I_0) : M_{l-1} \leq |I| < M_{l} \} \] and choose arbitrary selectors
\[ \mathcal{I}_l : I_0 \to \mathcal{D}(I_0) \cap \mathcal{Q}_l. \]

The structures we introduce will depend on our sequence and selectors, but all estimates will be uniform (and, in particular, independent of $L$).

We will exhibit a partition of $\mathcal{D}(I_0)$ into $V$
many subsets, ``trees,"
\[ \mathcal{D}_j(I_0), \ 0 \leq j < V \]
and an exceptional collection of intervals, $\mathcal{D}_V(I_0)$, so that we can express
\[ \bigcup_{\mathcal{D}_j(I_0)} I = \bigcup_{T_j^{\text{max}}} I_j \]
as a disjoint union of maximal dyadic intervals, ``tree tops," so that for each $I_j \in T_j^{\text{max}}$ there exists a unique $I_{j-1} \in T_{j-1}^{\text{max}}$ with $I_j \subset I_{j-1}$; $I_0$ anchors the inductive construction. We use the notation
\[ T_j^{\text{max}}(I_{j-1}) := \{ I \in T_j^{\text{max}} : I \subset I_{j-1} \}. \]

The remaining intervals will be subsumed in 
\[ \mathcal{D}_V(I_0), \]
and will be very localized,
\begin{align}\label{e:loc} |\bigcup_{\mathcal{D}_V(I_0)} I| = \sum_{T_V^{\text{max}}} |I_V| \lesssim t^{-4} \delta^{-o(1)} V^{-1} |I_0|; 
\end{align}
 intervals contained in $\mathcal{D}_V(I_0)$ will only effect the argument minimally.

For finite $\Lambda \subset \mathbb{T}$ and $J \in \mathcal{D}(I_0)$ we will make use of a family of bump functions:
\[ \big\{ \chi_{\Lambda,J} : \Lambda \subset \mathbb{T} \text{ finite, } J \in \mathcal{D}(I_0) \big\} \]
satisfying
\begin{align}\label{e:chi0} |\chi_{\Lambda,J}| \leq 1, \; |\chi_{\Lambda,J}'| \lesssim |J| \end{align}
and
\begin{align}\label{e:chi} \mathbf{1}_{\big( \Lambda + B(25/J) \big)^c} \leq \chi_{\Lambda,J} \leq \mathbf{1}_{\big( \Lambda + B(20/J) \big)^c}.\end{align}

\begin{definition}
    An $(L,V)$-\emph{forest} consists of disjoint collections of intervals $\{ \mathcal{D}_j(I_0) : 0 \leq j\leq V\}$, equipped with nested ``tree tops" $T_j^{\text{max}} \subset \mathcal{D}_j(I_0)$ as above, so that each $I_j \in T_j^{\text{max}}$ comes equipped with a finite set of frequencies, $\Lambda_j(I_j) \subset \mathbb{T}$, satisfying the following properties:
    \begin{itemize}
    \item The forest exhausts most of $I_0$: \eqref{e:loc} holds;
    \item There are not too many frequencies involved: $|\Lambda_0(I_0)| \lesssim \delta^{-2}$, and in general
      \[ \sup_{j < V} \sup_{I_j \in T_{j}^{\text{max}}} |\Lambda_j(I_j)| \lesssim V \delta^{-2}; \]
      \item The frequencies respect nesting: if $I_j \in T_j^{\text{max}}(I_{j-1})$ then $\Lambda_{j-1}(I_{j-1}) \subset \Lambda_j(I_j)$;
    \item We may localize each $\{ \Omega_{J,\delta} : J \in \mathcal{D}_j(I_j), \ I_j \in T_j^{\text{max}} \}$ to a small neighborhood of $\Lambda_j(I_j)$
\[ \sup_{J \in \mathcal{D}_j(I_j), \ I_j \in T_j^{\text{max}}} \| \Omega_{J,\delta} \cdot \chi_{\Lambda_j(I_j),J} \|_{A(\mathbb{T})} \ll t, \]
see \eqref{e:chi}.
    \end{itemize}
    We refer to a $(1,V)$-forest as just a $V$-forest.
    
    \end{definition}
  
\begin{lemma}[Forest Lemma]\label{l:forest}
For each $L$, $V \leq R^{O(1)}$, we may organize $\mathcal{D}(I_0)$ into an $(L,V)$-forest.   \end{lemma}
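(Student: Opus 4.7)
The plan is to build the forest by an iterated greedy extraction followed by a level-pigeonhole truncation. I would begin at level $j=0$ by setting $T_0^{\text{max}} := \{I_0\}$ and taking $\Lambda_0(I_0)$ to be a maximal $1/|I_0|$-separated subset of $\{\beta : |\Omega_{I_0,\delta}(\beta)| \gtrsim \delta\}$, of size $\lesssim \delta^{-2}$ by Lemma \ref{l:rsum}. Inductively, given $I_{j-1} \in T_{j-1}^{\text{max}}$ with frequency set $\Lambda_{j-1}(I_{j-1})$, I would flag each subinterval $J \subset I_{j-1}$ for which the localization fails,
\[ \|\Omega_{J,\delta} \cdot \chi_{\Lambda_{j-1}(I_{j-1}), J}\|_{A(\mathbb{T})} \geq t, \]
and let $T_j^{\text{max}}(I_{j-1})$ consist of the maximal dyadic $I_j \subset I_{j-1}$ containing some flagged $J$; the new frequency set is $\Lambda_j(I_j) := \Lambda_{j-1}(I_{j-1}) \cup \Lambda'(I_j)$, where $\Lambda'(I_j)$ is a $1/|I_j|$-net in $\{\beta : |\Omega_{I_j,\delta}(\beta)| \gtrsim \delta\}$, again of size $\lesssim \delta^{-2}$. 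Dyadic $J \subset I_{j-1}$ for which the level-$(j-1)$ localization succeeds are placed in $\mathcal{D}_{j-1}(I_0)$. The nesting $\Lambda_{j-1}(I_{j-1}) \subset \Lambda_j(I_j)$, the size bound $|\Lambda_j(I_j)| \lesssim (j+1)\delta^{-2} \leq V\delta^{-2}$, and the per-level localization condition are all built into this definition.

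The technical core is the total energy estimate
\[ \sum_{j \geq 1} \sum_{I_j \in T_j^{\text{max}}} |I_j| \lesssim t^{-4} \delta^{-o(1)} |I_0|. \]
For each flagged $I_j$ with $j \geq 1$, the failure of $A(\mathbb{T})$-localization, together with Lemma \ref{l:H12} and the uniform Lipschitz bound \eqref{e:C1} on $\Psi_\delta$, produces a witness frequency $\beta_j \notin \Lambda_{j-1}(I_{j-1}) + B(20/|I_j|)$ at which $|\Omega_{I_j,\delta}(\beta_j)| \gtrsim t^{O(1)} \delta^{o(1)}$; equivalently, the tile $s_j := I_j \times \omega_j$ with $\omega_j$ a frequency interval of length $\sim 1/|I_j|$ centered at $\beta_j$ realizes the wave-packet bound $|\langle \psi_{s_j}, g\rangle| \gtrsim t^{O(1)} \delta^{o(1)} |I_j|^{1/2}$. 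Maximality of tree tops within each level and the placement of $\beta_j$ outside the current $\Lambda$-neighborhood together force the $\{s_j\}$ to be pairwise incomparable, so Proposition \ref{p:tfa}, applied at effective scale $\sim t^{O(1)}$, delivers the claimed bound, the exponent $t^{-4}$ absorbing a Cauchy--Schwarz loss needed to pass from the $A(\mathbb{T})$-mass statement to the single wave-packet coefficient hypothesis of Proposition \ref{p:tfa}.

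With the energy estimate in hand, a direct pigeonhole over the first $V$ natural levels would produce some $k^* \in \{0,\dots,V-1\}$ with $\sum_{I \in T_{k^*}^{\text{max}}} |I| \lesssim V^{-1} t^{-4} \delta^{-o(1)} |I_0|$; I would then let $\mathcal{D}_j$ be the natural level-$j$ collection for $j < k^*$, pad with empty collections up to $j = V-1$, and take $\mathcal{D}_V(I_0)$ to consist of all dyadic intervals contained in the level-$k^*$ tree tops, giving $|\bigcup_{\mathcal{D}_V(I_0)} I| \lesssim V^{-1} t^{-4} \delta^{-o(1)} |I_0|$ as required. The main obstacle is the energy estimate itself: converting an $A(\mathbb{T})$-mass failure into a single well-localized tile with large wave-packet coefficient requires careful use of Lemma \ref{l:H12} together with the smooth structure of $\chi_{\Lambda,J}$, and a subsidiary technicality is verifying pairwise incomparability of the witness tiles across vastly different scales --- this relies on the lacunary separation of $M_0 \ll \cdots \ll M_L$ fixed above so that frequency intervals at deeper levels cannot nest inside those of shallower ones.
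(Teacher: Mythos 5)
The core of your proposal—iteratively growing frequency sets, extracting tree tops where localization fails, and packing witness tiles against Proposition \ref{p:tfa}—is the right skeleton, and the pigeonhole truncation at the end is a legitimate alternative to the paper's direct volume-packing inequality. But there is a genuine gap in the energy estimate, and it is the heart of the lemma.

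You claim that $A(\mathbb{T})$-failure $\|\Omega_{J,\delta}\chi_{\Lambda,J}\|_{A(\mathbb{T})}\geq t$ produces a \emph{single} witness frequency $\beta_j$ with $|\Omega_{I_j,\delta}(\beta_j)|\gtrsim t^{O(1)}\delta^{o(1)}$, hence one tile $s_j$ with $|\langle\psi_{s_j},g\rangle|\gtrsim t^{O(1)}\delta^{o(1)}|I_j|^{1/2}$. This is impossible: by construction $\Omega_{J,\delta}=\Psi_\delta(\Omega_J)$ is supported on the $\delta$-level set and $|\Omega_{J,\delta}|\lesssim\delta$ pointwise, so every tile coefficient satisfies $|\langle\psi_s,g\rangle|\approx\delta|I_s|^{1/2}$ and nothing more. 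In the regime $\log^{-1}(1/\delta)<\tau^4$, $\delta$ is super-exponentially small in $\tau$ while $t=\log^{-10}(1/\delta)$, so $\delta\lll t^{O(1)}$, and the claimed lower bound on a single coefficient fails outright. If you instead take a single tile per tree top with its true coefficient $\approx\delta|I_s|^{1/2}$, Proposition \ref{p:tfa} delivers $\sum_{v}\sum_{I_v}|I_v|\lesssim\delta^{-2-\epsilon}|I_0|$, and after the pigeonhole the exceptional set is $\lesssim V^{-1}\delta^{-2-\epsilon}|I_0|$. Since $V\leq R^{O(1)}=\delta^{-O(\alpha_0)}$ in the relevant regime and $\alpha_0=A_0^{-1}\ll 1$, this is much larger than the target $t^{-4}\delta^{-o(1)}V^{-1}|I_0|$ of \eqref{e:loc}, and the error is not absorbable downstream.

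What is actually available from $A(\mathbb{T})$-failure, via the contrapositive of Lemma \ref{l:H12} (using $\|\partial_\beta(e(c_J\beta)\Omega_{J,\delta}\chi)\|_{L^2}\lesssim|J|^{1/2}$ from \eqref{e:C1} and \eqref{e:chi0}), is the $L^2$-diffuseness
\[
\sum_{|\omega||J|=1,\ \omega\cap\Lambda=\emptyset}\|P_{J,\delta}\|_{L^2(\omega)}^2 \gtrsim t^4|J|\gtrsim\rho^2|J|,
\]
i.e.\ a \emph{collection} $S(I_j)$ of $\gtrsim\rho^2/\delta^2$ incomparable tiles, each with coefficient $\approx\delta|I_j|^{1/2}$, whose total squared mass is $\gtrsim\rho^2|I_j|$. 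It is this collective mass, not one tile, that must be fed into Proposition \ref{p:tfa}: one then gets $\rho^2\sum_v\sum_{I_v}|I_v|\lesssim\sum_{s}|\langle\psi_s,g\rangle|^2\lesssim\delta^{-o(1)}|I_0|$, which is where the factor $\rho^{-2}\approx t^{-4}$ comes from, not from Cauchy--Schwarz. The incomparability of $\bigcup_v\bigcup_{I_v}S(I_v)$ follows as you suggest from the fact that, by nesting, the frequencies added at level $w$ land inside $\Lambda_{v-1}(I_{v-1})$ at any deeper level $v>w$, whereas $S(I_v)$ by definition avoids $\Lambda_{v-1}(I_{v-1})$; this does not require the lacunary separation of the $M_l$ (that plays a different role).

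A secondary issue: your extraction never refers to the selectors $\mathcal{I}_l$ or the scale windows $\mathcal{Q}_l$ determined by $M_0\ll\cdots\ll M_L$, so at best it produces a $(1,V)$-forest rather than an $(L,V)$-forest; the latter's scale-adapted structure (the index $l(I_v)$) is used in Sections \ref{s:DRosc} and \ref{s:RTosc}, so the generality should be preserved by running the ``first time diffuse'' test over the selected intervals $\mathcal{I}_k(r)$ as the paper does.
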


Before turning to the proof, we begin with some notation; we first set
\[ P_I = |I| \cdot \Omega_I, \; \; \; P_{I,\delta} := |I| \cdot \Omega_{I,\delta} \]
for $I \in \mathcal{D}(I_0)$.

For each dyadic $\omega$ with $|\omega| |I| = 1$ so that
\[ \| P_I \|_{L^{\infty}(\omega)} \approx \delta |I|,\]
let $c_\omega$ denote the left end-point, and collect 
\[ \Sigma_I := \{ c_\omega : \| P_I \|_{L^{\infty}(\omega)}   \approx \delta |I| \}; \]
we let $\xi_\omega \in \omega$ denote the frequencies that realizes the supremum
\[ \| P_I \|_{L^{\infty}(\omega)} = |P_I(\xi_\omega)| \approx \delta |I|.\]
Note that
\[ (\delta |I|)^2 \cdot |\Sigma_I| \lesssim \sum_{\xi_\omega} |P(\xi_\omega)|^2 \lesssim |I| \cdot \| P \|_{L^2(\mathbb{T})}^2 \lesssim |I|^2 \]
by Lemma \ref{l:rsum}, so that
\[ |\Sigma_I| \lesssim \delta^{-2},\]
uniformly in $I$, see \eqref{e:specsize}.

An interval, $J$, will be said to be \emph{localized} (or $\rho$-localized, see \eqref{e:rho}) with respect to a collection of frequencies, $\Lambda$, if
\[ \sum_{|\omega| = |J|^{-1}, \ \omega \cap \Lambda = \emptyset} \| P_{J,\delta} \|_{L^2(\omega)}^2 \leq \rho^2 \cdot |J| \ll t^4 \cdot |J|;\]
 otherwise, $J$ will be called \emph{diffuse} with respect to $\Lambda$. Note that by Lemma \ref{l:H12}, if $J$ is localized to $\Lambda$, then for any $\chi_{\Lambda,J}$ satisfying \eqref{e:chi0} and \eqref{e:chi}, we may bound
\[ \| \Omega_{J,\delta} \cdot \chi_{\Lambda,J} \|_{A(\mathbb{T})} \lesssim \rho^{1/2} \ll t, \]
see \eqref{e:rho}.

So, initiate $\Lambda_0(I_0) := \Sigma_{I_0}$, and define
\[ X_1 := \{ r : \mathcal{I}_k(r) \text{ is diffuse with respect to } \Lambda_0(I_0), \ k \leq L \text{ is maximal} \} \]
 and let $T_1^{\text{max},l}$ denote the set of maximal dyadic $I \in \mathcal{D}(I_0) \cap \mathcal{Q}_l$ inside $X_1$, consolidating
 \[ T_1^{\text{max}} := \bigcup_{l \leq L} T_1^{\text{max},l},\]
so that
 \begin{itemize}
     \item We may uniquely decompose $X_1 = \bigcup_{T_1^{\text{max}}} I_1$; and
     \item If $r \in I_1 \in T_1^{\text{max},l_1}$, then for all $l_1 < k \leq L$, $\mathcal{I}_k(r)$ is localized with respect to $\Lambda_0(I_0)$, and for $k \leq l_1$, $\mathcal{I}_k(r) \subset I_1.$
 \end{itemize}

Now, for each $I_1 \in T_1^{\text{max}}$, set 
 \[ \Lambda_1(I_1) := \Lambda_0(I_0) \cup \Sigma_{I_1}.\]

We next localize the construction to each individual $I_1 \in T_1^{\text{max}}$ and iterate. Specifically, for
\[ I_1 \in T_1^{\text{max},l_1}, \; \; \; 1 \leq l_1 \leq L\]
we set  
  \[ X_2 \cap I_1:= \{ r \in I_1 : \mathcal{I}_k(r) \text{ is diffuse with respect to } \Lambda_1(I_1), \ k \leq l_1 \text{ is maximal} \} \]
 and
 \[ X_2 := \bigcup_{l \leq L} \bigcup_{I_1 \in T_1^{\text{max},l}} (X_2 \cap I_1). \]

We similarly define
\begin{align}
    T_2^{\text{max}} = \bigcup_{l\leq L} (T_2^{\text{max}} \cap \mathcal{Q}_l) := \bigcup_{l \leq L} T_2^{\text{max},l}.
\end{align}
where $T_2^{\text{max}}$ denotes the set of maximal dyadic $I \in \mathcal{D}(I_1)$, so that
 \begin{itemize}
     \item We may uniquely decompose $X_2 \cap I_1 = \bigcup_{T_2^{\text{max}}(I_1)} I_2$;
     \item If $I_1 \in T_1^{\text{max},l_1}$, then $I_2 \in \bigcup_{l \leq l_1} \mathcal{Q}_l$; 
     and
     \item If $r \in I_2 \in T_2^{\text{max}}(I_1) \cap \mathcal{Q}_{l_2}$, then for all $l_2 < k \leq l_1$, $\mathcal{I}_k(r)$ is localized with respect to $\Lambda_1(I_1)$, and for all $k \leq l_2$, $\mathcal{I}_k(r) \subset I_2$.
 \end{itemize}
Consolidate
\[ T_2^{\text{max},l} = \bigcup_{I_1 \in T_1^{\text{max}}} T_2^{\text{max},l}(I_1) \]
and
\[ T_2^{\text{max}} := \bigcup_{l \leq L} T_2^{\text{max},l}.\]
Continuing in this fashion, we will foliate
 \[ I_0 = \bigcup_{1 \leq j < V} (X_{j} \smallsetminus X_{j+1}) \cup X_V, \]
and let
\[ \mathcal{D}_j(I_0) := \bigcup_{l \leq L} \bigcup_{I_j \in T_j^{\text{max},l}} \{ \mathcal{I}_k(r) \subset I_j \text{ localized, } k \leq l \},  \]
with
\[ \mathcal{D}_j(I_j) = \mathcal{D}_j(I_0) \cap I_j \]
for $I_j \in T_j^{\text{max}}$.
 
Thus to establish Lemma \ref{l:forest}, it suffices to prove the exceptional set estimate,
\begin{align}\label{e:except} |X_V| := |\bigcup_{I \in \mathcal{D}_V(I_0)} I | \lesssim t^{-4} \delta^{-o(1)} V^{-1} |I_0|. \end{align}

\begin{proof}[Proof of Lemma \ref{l:forest}]
    Our job is to verify \eqref{e:except}; we accomplish this by using volume packing. So, suppose that for all $v \leq V$,
    \[ |\bigcup_{I \in \mathcal{D}_v(I_0)} I| = \sum_{T_v^{\text{max}}} |I_v| \geq L_0^{-1} |I_0|, \; \; \; L_0 \lesssim R^{O(1)}; \]
our goal will be to bound $V \lesssim t^{-4} \delta^{-o(1)} L_0$. 

But since $\{ I_v \}$ are diffuse
\begin{align}
    \sum_{T_{v-1}^{\text{max}}} \sum_{I_v \in T_{v}^{\text{max}}(I_{v-1})} |I_v| & \lesssim \rho^{-2} \sum_{T_{v-1}^{\text{max}}} \; \sum_{|\omega| |I_v| = 1,\ \omega \cap \Lambda_{v-1}(I_{v-1}) = \emptyset } \| P_{I_v} \|_{L^2(\omega)}^2 \\
    & \qquad \lesssim \rho^{-2} \sum_{T_{v-1}^{\text{max}}} \; \sum_{|\omega| |I_v| = 1, \ \omega \cap \Lambda_{v-1}(I_{v-1}) = \emptyset } \| P_{I_v} \|_{L^\infty(\omega)}^2 /|I_v| \\
    & \qquad \qquad \lesssim \rho^{-2} \sum_{T_{v-1}^{\text{max}}} \; \sum_{|\omega| |I_v| = 1,\ \omega \cap \Lambda_{v-1}(I_{v-1}) = \emptyset } |\langle \psi_{I_v \times \omega},g\rangle|^2 \\
    & \qquad  \qquad \qquad =: \rho^{-2} \sum_{I_v \in T_v^{\text{max}}} \sum_{s \in S(I_v)} |\langle \psi_s,g\rangle|^2,
\end{align} 
where
\[ \psi_{s}(n) := \frac{1}{|I_v|^{1/2}} \phi(\frac{n-c_I}{|I|}) e(n \xi_\omega), \; \; \; s = I_v \times \omega\]
so that
\[ |\langle \psi_{I_v \times \omega}, g \rangle | \approx \delta |I_v|^{1/2}.\]

The key claim is that
\[ \bigcup_{v \leq V} \bigcup_{I_v \in T_v^{\text{max}}} S(I_v) \]
is a disjoint collection of tiles: if 
\[ S(I_v) \cap S(J_w) \neq \emptyset\]
with $I_v \in T_{v}^{\text{max}}, \ J_w \in T_w^{\text{max}}$, $v  > w$, then necessarily $I_v \subsetneq J_w$, and no frequency interval in $S(I_v)$ intersects $\Lambda_{v-1}(I_{v-1})$, while the set of frequency intervals in $S(J_w)$ are contained in $\{ |\omega| = |J|^{-1} : \omega \cap \Lambda_{v-1}(I_{v-1}) \neq \emptyset \}$.

In particular, summing over all $v$ so that
\[ \sum_{T_v^{\text{max}}} |I_v| \geq L_0^{-1} |I_0|\]
we bound
\begin{align}
    V L_0^{-1} |I_0| &\lesssim \rho^{-2} \sum_{v \leq V} \sum_{I_v \in T_v^{\text{max}}} \sum_{s \in S(I_v)} |\langle \psi_s,g\rangle|^2 \\
    & \qquad \lesssim \rho^{-2} \delta^{-o(1)} |I_0|
\end{align} 
by Proposition \ref{p:tfa}, so there are at most 
\[ V \lesssim \rho^{-2} \delta^{-o(1)} L_0 \approx t^{-4} \delta^{-o(1)} L_0\]
many sets $\{ \mathcal{D}_j(I_0)\}$.
\end{proof}

\subsection{From Trees to Branches}
In this section, we ``prune" each tree in our $(L,V)$-forest into ``branches," which have the additional feature that the frequencies linked to each branch are ``widely separated" relative to scale. These ``branches" will be as in \eqref{e:BB}.

We begin with the following definition.

\begin{definition}
Regarding an $(L,V)$ forest as fixed, a $U$-\emph{branch} consists of a (possibly empty) disjoint collection of intervals $\mathcal{B}_s(I_j) \subset \mathcal{D}_j(I_j), \ I_j \in T_j^{\text{max}}, \ s \leq U$ and finite collections of frequencies $\Lambda_{j,s}(I_j) \subset \mathbb{T}$, so that
    \begin{itemize}
        \item There are not too many frequencies involved:
        \[ \Lambda_{j,s}(I_j) \subset \Lambda_j(I_j); \]
        \item The frequencies are widely separated relative to scale:
        \[  
\min_{\theta \neq \theta' \in \Lambda_{j,s}(I_j)} |\theta - \theta'| \geq 2^R \cdot \max_{J \in \mathcal{B}_s(I_j)} |J|^{-1}; \]
\item We may localize each $\{ \Omega_{J,\delta} : J \in \mathcal{B}_s(I_j) \}$ to a small neighborhood of $\Lambda_{j,s}(I_j)$
\[ \sup_{J \in \mathcal{B}_s(I_j)} \| \Omega_{J,\delta} \cdot \chi_{\Lambda_{j,s}(I_j),J}  \|_{A(\mathbb{T})} \ll t. \]
    \end{itemize}
\end{definition}

We isolate the main structural decomposition we will achieve in the following Lemma.

\begin{lemma}[Pruning a Tree]\label{l:branches}
    For each $U \gg \rho^{-2}$, for each $I_j \in T_j^{\text{max}}, \ j < V$, we may decompose
    \[ \mathcal{D}_j(I_j) = \bigcup_{s \leq U} \mathcal{B}_s(I_j) \cup \mathcal{B}_{\infty}(I_j) \]
where each $\mathcal{B}_s(I_j)$ is a (possibly empty) ``branch," and there are only a few scales represented in $\mathcal{B}_\infty(I_j)$, the ``boundary" branch:
\[ |\{ |I| : I \in \mathcal{B}_\infty(I_j)\}| \lesssim RU.\]
\end{lemma}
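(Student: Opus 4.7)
The plan is to greedily peel off $U$ branches from $\mathcal{D}_j(I_j)$, reserving the boundary branch for intervals whose scale forces too many clusters of the frequency set $\Lambda := \Lambda_j(I_j)$ (where $|\Lambda|\lesssim V\delta^{-2}$) to coexist. The starting observation is that the already-established tree localization $\|\Omega_{J,\delta}\cdot \chi_{\Lambda,J}\|_{A(\mathbb{T})}\lesssim \rho^{1/2}\ll t$ for $J\in \mathcal{D}_j(I_j)$ means the only obstruction to forming a branch is the wide-separation condition
\[
\min_{\theta\neq\theta'\in\Lambda_{j,s}}|\theta-\theta'|\ge 2^R\cdot\max_{J\in\mathcal{B}_s}|J|^{-1}
\]
on the branch's frequency subset.

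For each dyadic scale $N$ occurring as $|J|$ for some $J\in\mathcal{D}_j(I_j)$, I would cluster $\Lambda$ at resolution $2^R/N$ and let $\chi(N)$ be the chromatic number of the associated interval graph on $\Lambda$ --- equivalently, the maximum multiplicity of $\Lambda$ in any arc of $\mathbb{T}$ of length $2^R/N$. Declare $N$ \emph{good} if $\chi(N)\le U$ and \emph{bad} otherwise. For good $N$, color $\Lambda$ into $\le U$ widely-separated classes $\Lambda^{(N)}_1,\dots,\Lambda^{(N)}_U$, and assign each $J$ with $|J|=N$ to the branch $\mathcal{B}_s$ whose class contains representatives of the clusters carrying the essential $A(\mathbb{T})$-mass of $\Omega_{J,\delta}$. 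The updated localization $\|\Omega_{J,\delta}\cdot \chi_{\Lambda_{j,s},J}\|_{A(\mathbb{T})}\ll t$ is inherited from the tree bound via Lemma~\ref{l:H12} and the uniform Lipschitz property \eqref{e:C1} of $\Psi_\delta$, since discarded frequencies lie within $2^R/|J|$ of retained ones and contribute negligibly after smoothing by $\Psi_\delta$. Coherence of $\Lambda_{j,s}$ across scales is arranged by processing scales from largest to smallest, freezing the class assignment once committed, and allowing $\Lambda_{j,s}$ to grow monotonically while retaining $2^R/N_s^{\min}$-separation, where $N_s^{\min}$ is the smallest scale ever assigned to branch $s$. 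Intervals at bad scales go to $\mathcal{B}_\infty(I_j)$.

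The main obstacle is the boundary estimate $|\{|I|: I\in\mathcal{B}_\infty(I_j)\}|\lesssim RU$. A bad scale $N$ witnesses an arc $\omega_N\subset\mathbb{T}$ of length $2^R/N$ with $|\Lambda\cap\omega_N|>U$; regarding each such $\omega_N$ as the frequency interval of a tile $s=I_j\times\omega_N$, the family of all these tiles satisfies a Bessel-type bound via Proposition~\ref{p:tfa} applied to the $\delta$-large spectra $\Sigma_I\supset \Lambda$ from the forest construction. Summing $|\omega_N|\cdot |I_j|\cdot(U+1)\lesssim \|\Lambda\cap\omega_N\|$ over all bad $\omega_N$ produces a packing bound of $O(|\Lambda|\cdot |I_j|/U)\lesssim O(V\delta^{-2}|I_j|/U)$ on the total ``tile mass,'' and translating this into a count of distinct dyadic scales costs an extra factor of $R\approx \log_2(2^R)$, since the chromatic number $\chi(N)$ can persist at a fixed level $> U$ across at most $R$ consecutive dyadic scales before some cluster splits. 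This yields at most $O(RU)$ bad dyadic scales, with the extra constants absorbed using $U\gg \rho^{-2}$. The genuinely delicate point is the dyadic chaining in this final count: one must group bad scales into $R$-long intervals and use a single tile to represent each chain, so as not to overcount clusters of $\Lambda$ that survive unchanged across many adjacent scales.
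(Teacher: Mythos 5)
Your boundary-scale count contains a fatal monotonicity error. Your criterion for declaring a scale $N$ ``bad'' is that some arc of length $2^R/N$ contains more than $U$ points of $\Lambda$, i.e.\ $\chi(N)>U$. But $\chi(N)$ is \emph{non-increasing} in $N$: as $N$ decreases, the test arc $2^R/N$ lengthens, and any point configuration present in a short arc persists in a longer one. Hence the set of bad scales is an initial segment of the scale axis --- all scales finer than some cutoff --- and its cardinality is in no way bounded by $RU$. In fact, once $2^R/N$ exceeds the minimum gap in $\Lambda$ (and then, say, $(U+1)$ times that gap), \emph{every} finer scale is bad, and a single tree $\mathcal{D}_j(I_j)$ can easily span more than $RU$ dyadic scales below this cutoff. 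The chaining heuristic you invoke (``$\chi(N)$ can persist at a fixed value $>U$ for at most $R$ consecutive scales before some cluster splits'') is simply false: the maximal multiplicity of a finite point set in an arc can plateau at any integer $\ge 2$ for arbitrarily many dyadic doublings of the arc length.

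What the paper does instead is pigeonhole on the \emph{total drop} of the packing function $\mathrm{Ent}(\Lambda_0,2^m)$, not on a pointwise threshold. The breakpoints $m_0<m_1<\cdots<m_{U'}$ are the scales at which $\mathrm{Ent}$ has fallen by a fixed quantum $\delta^{-2}/U$; since the total drop is bounded by $|\Lambda_0|$, the number of breakpoints is controlled, and $\mathcal{B}_\infty$ is an $O(R)$-wide buffer around each breakpoint, giving $O(RU)$ scales total. Your per-scale threshold $\chi(N)\le U$ plays a completely different (and wrong) role. Moreover, the localization for the surviving branches is preserved in the paper by a \emph{cardinality} bound: $|\Lambda_{s-1}\setminus\Lambda_s|\le \delta^{-2}/U$, with each discarded frequency contributing $\approx(\delta|J|)^2\cdot|J|^{-1}$ to the escaping $L^2$ mass of $P_{J,\delta}$, for a total $\lesssim U^{-1}|J|\ll t^4|J|$. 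Your proposed mechanism --- that discarded frequencies ``lie within $2^R/|J|$ of retained ones and contribute negligibly after smoothing by $\Psi_\delta$'' --- does not work: a discarded $\theta'$ at distance $\sim 2^R/|J|\gg 1/|J|$ from a retained $\theta$ is \emph{not} captured by the cutoff $\chi_{\Lambda_{j,s},J}$, which retains only an $O(1/|J|)$-neighborhood of $\Lambda_{j,s}$, and nothing about $\Psi_\delta$ (a radial level-set restriction) saves the peak of $\Omega_J$ at $\theta'$. The bound is genuinely about \emph{how many} frequencies are dropped, not about where they sit relative to retained ones.

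Finally, the attempted use of Proposition~\ref{p:tfa} is misplaced. The tiles $s=I_j\times\omega_N$ you propose share the same time interval $I_j$; for nested frequency arcs these tiles are \emph{comparable}, violating the incomparability hypothesis. And the hypothesis $|\langle\psi_s,g\rangle|\approx\delta|I_s|^{1/2}$ concerns wave-packet coefficients of $g$, which have no relation to the combinatorial multiplicity of the fixed finite set $\Lambda$ in an arc; the arcs $\omega_N$ in your argument are not chosen to carry $\delta$-large coefficients. The pruning lemma is a purely deterministic statement about the metric structure of $\Lambda_j(I_j)$ and requires no time-frequency input --- the Bessel inequality is used earlier, in the forest lemma, to bound the number of \emph{trees}, not here.
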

\begin{proof}
For each $\epsilon > 0$, let
\[ \text{Ent}(\Lambda_0,\epsilon) \]
denote the cardinality of the maximal $\epsilon$-separated subset of $\Lambda_0$, see \eqref{e:Ent}, where we abbreviate $\Lambda_0 := \Lambda_j(I_j)$. We select an increasing sequence of integers 
\[ m_0 < m_1 < \dots < m_{U'} \leq 0, \; \; \; U' \leq U\] 
so that $|I_j|^{-1} = 2^{m_0}$, and for each $s \geq 1$, $m_s$ is the maximal integer satisfying
\[ \text{Ent}(\Lambda_0,2^{m_s}) \geq \text{Ent}(\Lambda_0,2^{m_{s-1}}) - \delta^{-2}/U. \]
We now choose $\Lambda_1 \subset \Lambda_0$ a maximal $2^{m_1}$-separated subset, so that
\[ \Lambda_0 \subset \Lambda_1 + B(2^{m_1}),\]
and inductively select
\[ \Lambda_s \subset \Lambda_{s-1}\]
a maximal $2^{m_s}$-separated subset, so that
\[ \Lambda_{s-1} \subset \Lambda_s + B(2^{m_s}).\]

We collect
\[ \mathcal{B}_{\infty}(I_j) := \{ J \in \mathcal{D}_j(I_j) : |J|^{-1} = 2^{m_s \pm O(R)} \text{ for some } s \leq U \} \]
and
\[ \mathcal{B}_s(I_j) := \{ I \in \mathcal{D}_j(I_j) \smallsetminus \mathcal{B}_\infty(I_j) : 2^{m_{s-1}} \leq |I|^{-1} \leq 2^{m_s} \}. \]

By Lemma \ref{l:H12}, it remains to show that for all $J \in \mathcal{B}_s(I_j)$
\[ \| P_{J,\delta} \cdot \chi_{\Lambda_{j,s}(I_j);J} \|_{L^2(\mathbb{T})}^2 \ll t^4 \cdot |J|.
\]
Abbreviating 
\[ \Lambda_0 \supset \Lambda_{s-1} := \Lambda_{j,s-1}(I_j) \supset \Lambda_s := \Lambda_{j,s}(I_j), \]
we bound, for 
\begin{align*}
    \int_{\mathbb{T}} |\chi_{\Lambda_s;J}(\beta)|^2 \cdot |P_{J,\delta}(\beta)|^2 \ d\beta &\leq \int_{\big(\Lambda_0 + B(20/|J|) \big)^c} |P_{J,\delta}(\beta)|^2 \ d\beta \\
    & \qquad + \int_{ \big(\Lambda_0 + B(10/|J|) \big) \smallsetminus \big( \Lambda_s + B(20/|J|) \big)} |P_{J,\delta}(\beta)|^2 \ d\beta;
\end{align*}
since $J$ is localized with respect to $\Lambda_0$, we may bound
\[  \int_{\big(\Lambda_0 + B(10/|J|) \big)^c} |\Omega_{J,\delta}(\beta)|^2 \ d\beta  \leq \rho^2 \cdot |J| \ll t^4 \cdot |J|;\]
the second term is of a simpler nature: since $1/|J| \gg 2^{m_{s-1}}$, we may  contain
\begin{align*}
&\big(\Lambda_0 + B(10/|J|) \big) \smallsetminus \big( \Lambda_s + B(20/|J|) \big) \\
& \qquad \subset \big( \Lambda_{s-1} + B(2^{m_{s-1}}) + B(10/|J|) \big) \smallsetminus \big( \Lambda_s + B(20/|J|) \big) \\
& \qquad \qquad \subset \big( \Lambda_{s-1} + B(20/|J|) \big) \smallsetminus \big( \Lambda_s + B(20/|J|) \big) \\
& \qquad \qquad \qquad = \big( \Lambda_{s-1} \smallsetminus \Lambda_s \big) + B(20/|J|) , \end{align*}
which leads to the estimate
\begin{align}\label{e:aboutdelta}
    \int_{ \big(\Lambda_0 + B(10/|J|) \big) \smallsetminus \big( \Lambda_s + B(20/|J|) \big)} |P_{J,\delta}(\beta)|^2 \ d\beta &\lesssim (\delta |J|)^2 \cdot |\Lambda_{s-1} \smallsetminus \Lambda_s| \cdot 1/|J| \\
    & \qquad \leq U^{-1} \cdot |J| \ll t^4 |J| \notag;
\end{align}
we used the bound $|P_{J,\delta}(\beta)| \approx \delta |J|$ in the inequality \eqref{e:aboutdelta} above.
\end{proof}

Motivated by the tree/branch construction, we introduce the following classes of cut-off functions, which will be used to construct \emph{close} approximates to the bilinear averages
\[ \{ \frac{1}{N} \sum_{n \leq N} f(x-\lfloor \alpha n \rfloor ) g(a+n) : N \} \]
at each scale and location.
\begin{itemize}
    \item For $I \in \mathcal{B}_{s}(I_j)$ we let
    \begin{align}\label{e:Xi0} \Xi^0_I(\beta) := \sum_{\theta \in \Lambda_{j,s}(I_j)} \varphi(|I|(\beta - \theta))\end{align}
    where 
    \[ \mathbf{1}_{B(10)} \leq \varphi \leq \mathbf{1}_{B(20)} \] is smooth; note that since $I \notin \mathcal{B}_{\infty}(I_j)$, $|\Xi^0_I|, \ |\Xi^0_I(\alpha \cdot)| \leq 1$ as the bump functions are (very) disjointly supported; and
    \item For $I$ as above, we set
     \begin{align}\label{e:Xi} \Xi_I(\beta) := \sum_{\theta \in \Lambda_{j,s}(I_j)} \varphi(R |I|(\beta - \theta)),\end{align}
     cut-offs like $\{ \Xi_I^0 \}$ but much more Fourier localized. 
\end{itemize}

With these preliminaries in mind, we are prepared to turn to the proof proper.

\section{Oscillation and Maximal Inequalities: Setting the Stage}\label{s:oscmax}
Let $M_0 \ll M_1 \ll \dots \ll M_K$ be temporarily fixed. Let $I_M(a) := (a,a+M]$ so that, with $w \in \mathcal{C}_c^{\infty}([0,1])$ satisfying
\[ \mathbf{1}_{[1/L,1 -1/L]} \leq w \leq \mathbf{1}_{[0,1]} \]
as in Lemma \ref{l:w} with $L = \tau^{-O(1)}$, using the irrationality of $\alpha$ and equidistribution\footnote{The argument in the case where $\alpha \in \mathbb{Q}$ is simpler, and reduces to the case where $\alpha \in \mathbb{Z}$ by periodicity, at which point the floor function can be ignored}, we express
\begin{align}
    &\frac{1}{M} \sum_{m \leq M} f(x-\lfloor \alpha m \rfloor ) g(a+m) \\
    & \qquad = \frac{1}{M} \sum_{m \leq M} \sum_k w(\alpha m -k) f(x- k ) g(a+m) + O(\tau^A) \\
    & \qquad = \sum_{|\xi| \leq L/\tau} e(-\alpha x \xi) \cdot \int_{\mathbb{T}} \widehat{f_\xi}(\beta) \Big(\frac{1}{N} \sum_{n \leq N} g_\xi(a+n) e(-n \alpha \beta) \Big) e( \beta x ) \ d\beta + O(\tau^{A}) \\
    &  \qquad = \sum_{|\xi| \leq L/\tau} e(-\alpha x \xi) \cdot \int_{\mathbb{T}} \widehat{f_\xi}(\beta) \Big(\sum_{n} \phi_N(n) g_\xi(a+n) e(-n \alpha \beta) \Big) e( \beta x ) \ d\beta + O(\tau^{A}) \\
    & \qquad =\sum_{|\xi| \leq L/\tau} e(-\alpha x \xi)  \int_{\mathbb{T}} \widehat{f_\xi}(\beta) \Big(\sum_{n} \phi_N(n-a) g_\xi(n) e(-n \alpha \beta) \Big) e( \beta x + \alpha a \beta ) \ d\beta + O(\tau^A)
    \end{align} 
where $f_\xi,g_\xi$ are as in Lemma \ref{l:w}, and $\phi$ is as in \eqref{e:phitau}, with 
\[ \phi_N(t):= 1/N \cdot \phi(t/N).\] Since all of our arguments will allow us to concede polynomial factors in $\tau^{-O(1)}$, possibly after replacing $C_{\tau,B,H}$ with $C_{\tau^{B},2B,H}$ etc.\ we will focus on the case of a single $\xi$, and will accordingly suppress the subscripts. Further, we emphasize that every function $f$ introduced will have Fourier support inside $[0,1/D]$, see \eqref{e:Da}, so satisfies \eqref{e:psiD}. 

Thus, we are interested in understanding sets of the form
\begin{align*}
    &\{[H]: \max_{M_{k-1} \leq M \leq M_k} | \int \big(\Omega_{I_M(a)}(\alpha \beta) - \Omega_{I_{M_k}(a)}(\alpha \beta) \big) \hat{f}(\beta) e(\beta(x+\alpha a)) \ d\beta | \geq \tau \} \\
    & \qquad \subset 
\{[H]: \max_{M_{k-1} \leq M \leq M_k, \ (*)} | \int \big(\Omega_{I_M(a)}(\alpha \beta) - \Omega_{I_{M_K}(a)}(\alpha \beta) \big) \hat{f}(\beta) e( \beta (x+\alpha a)) \ d\beta | \geq \tau/2 \}
\end{align*}  
where $(*)$ denotes that we restrict only to times of the form
\[ M \in \{ 2^{b / B_\tau} : b \geq 1 \}, \; \; \; B_\tau = A_0 \cdot \tau^{-1}. \]
We further sparsify to $B_\tau \Delta = O(\tau^{-1} \Delta)$ many subsets of times of the form
\[ M \in \{ 2^{b/B_\tau + U + k \Delta} : k \} =: \mathcal{J}_{b,U} \]
so that if for each $1 \leq k \leq K$ with $K \lesssim_H 1$ maximal
\begin{align*}
    |\{[H]: \max_{M_{k-1} \leq M \leq M_k} | \int_{\mathbb{T}} \big(\Omega_{I_M(a)}(\alpha \beta)- \Omega_{I_{M_k}(a)}(\beta) \big) \hat{f}( \beta) e(\beta (x+\alpha a)) \ d\beta | \geq \tau \}| \geq \tau^B H 
    \end{align*}
then necessarily there exists some $\mathcal{J}_{b,U}$ so that for all $k \in O_{b,U,K} \subset [K]$, a subset with size $\geq \tau K/\Delta$,
\begin{align*}
    &|\{[H]: \max_{M_{k-1} \leq M \leq M_k, \ M \in \mathcal{J}_{b,U}} | \int_{\mathbb{T}} \big(\Omega_{I_M(a)}(\alpha \beta) - \Omega_{I_{M_k}(a)}(\alpha \beta) \big) \hat{f}(\beta) e(\beta (x+\alpha a)) \ d\beta | \geq \tau/2 \}| \\
    & \qquad \geq \tau^{B+1}/\Delta \cdot H;
    \end{align*}
before proceeding, we emphasize the reproducing identity
\begin{align}
&\int_{\mathbb{T}} \big(\Omega_{I_M(a)}(\alpha \beta) - \Omega_{I_{M_k}(a)}(\alpha \beta) \big) \hat{f}(\beta) e(\beta (x+\alpha a)) \ d\beta \\
& \qquad \equiv \int_{\mathbb{T}} \big(\Omega_{I_M(a)}(\alpha \beta) - \Omega_{I_{M_k}(a)}(\alpha \beta) \big) \widehat{\psi_D}(\beta) \cdot \hat{f}(\beta) e(\beta (x+  \alpha a )) \ d\beta
\end{align}
where $\psi_D$ is as in \eqref{e:psiD}; for notational ease, for intervals $I$ we set
\[ \Omega'_I(\alpha \beta) := \Omega_I(\alpha \beta) \widehat{\psi_D}(\beta) = \Omega_I(\alpha \beta) \hat{\psi}(D \beta). \]

In what follows, therefore, we will view $b,U$ as fixed, and will henceforth suppress the dependence on this set of times; for notational ease, we will restrict to $b, U =0$ as this will allow us to make use of the standard grid $\mathcal{D}$, but other choices of $b,U$ can be treated by working with other grids \[ \{ \mathcal{D}_U^{\Delta,L,b} : L\}.\] In particular, we will implicitly restrict all times $\{ M \} \subset \{ 2^{n \Delta} : n \geq 1 \}$

We now smoothly restrict to the level-sets of $\Omega'$, as per \eqref{e:level},
\[ \Omega_{I,\delta}'(\alpha \beta) := \Psi_\delta \big( \Omega_I'(\alpha \beta) \big) \equiv \Psi_\delta(\Omega_I(\alpha \beta)) \widehat{\psi_D}(\beta), \; \; \; \beta \in \text{supp } \hat{f} \]
and bound
\begin{align*}
    &|\{[H]: \max_{M_{k-1} \leq M \leq M_k} | \int  \big(\Omega_{I_M(a)}'(\alpha \beta) - \Omega_{I_{M_k}(a)}'(\alpha \beta) \big) \hat{f}(\beta) e(\beta (x+ \alpha a)) \ d\beta | \geq \tau/2 \}| \notag \\
    &  \leq \sum_{\log^{-1}(1/\delta) \geq \tau^4} |\{[H]: \max_{M_{k-1} \leq M \leq M_k} | \int \big(\Omega_{I_M(a),\delta}'(\alpha \beta) - \Omega_{I_{M_k}(a),\delta}'(\alpha \beta) \big) \hat{f}(\beta) e(\beta (x+ \alpha a)) \ d\beta | \gtrsim \tau^5 \}| \\
    & + \sum_{\log^{-1}(1/\delta) < \tau^4} |\{ [H] : \sup_M |\int \Omega_{I_M(a),\delta}'(\alpha \beta) \hat{f}(\beta) e( \beta (x+ \alpha a)) \ d\beta | \gtrsim \log^{-10}(1/\delta) \}|
\end{align*}

We will handle the contribution of each $\delta$-level set individually. Thus, we will prove the following propositions. Recall
\[
     2 \mathbb{N} + 1 \ni \Delta \approx t^{-10},
\]
see \eqref{e:Delta}.
\begin{proposition}\label{p:max00}[Maximal Estimates]
    The following estimates holds for $\log^{-1}(1/\delta) < \tau^4$ 
    and each $\mathcal{D}_U^{\Delta,L,b}$ (of which there are only $\log^{O(1)}(1/\delta)$ many choices):
\begin{align}
|\{  \overline{\mathcal{D}_U^{\Delta,L,b}} \cap [H] : \sup_M |\int \Omega_{I_M(x),\delta}'(\alpha \beta) \hat{f}(\beta) e( \beta (x +  \alpha x)) \ d\beta | \gtrsim \log^{-10}(1/\delta) \}| \lesssim \delta^{\alpha_0} \cdot H.
\end{align}
And, away from a set $E_\delta \subset \overline{\mathcal{D}_U^{\Delta,L,b}} \cap [N] $ of size $|E_\delta| = o_{N \to \infty}(\delta^{\alpha_0} N)$
\begin{align}
|\{ [H] : \sup_M |\int \Omega_{I_M(a),\delta}'(\alpha \beta) \hat{f}(\beta) e( \beta (x+ \alpha a)) \ d\beta | \gtrsim \log^{-10}(1/\delta) \}| \lesssim \delta^{\alpha_0} \cdot H.
\end{align}
\end{proposition}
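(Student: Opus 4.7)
The plan is to reduce both statements to the dyadic entropic framework of Section \ref{s:ex} via Lemma \ref{l:cont}, apply the Forest Lemma \ref{l:forest} and Pruning Lemma \ref{l:branches} to decompose the multipliers into a tractable tree/branch structure, and then estimate the branch maximal function using the key inequality \eqref{e:key}. Because $x$ (respectively $a$) lies in $\overline{\mathcal{D}_U^{\Delta,L,b}}$, the smoothness of the shifted grid combined with Lemma \ref{l:cont} allows us to replace the sliding multiplier $\Omega'_{I_M(x),\delta}$ (resp.\ $\Omega'_{I_M(a),\delta}$) by the dyadic multiplier $\Omega'_{J,\delta}$ with $J \in \mathcal{D}$ the dyadic interval of scale $\asymp M$ containing $x$ (resp.\ $a$), at the cost of a factor $\lesssim D^2 t^{-4}$, absorbed by our choice of $\Delta$ in \eqref{e:Delta}. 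The two maximal functions then become dyadic maximal functions of the form $\sup_{J \in \mathcal{D}(I_0),\, J \ni x}|\cdots|$ and $\sup_{J \in \mathcal{D}(I_0),\, J \ni a}|\cdots|$, which is precisely the setting for which the forest/branch machinery is built.

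Next, apply the Forest Lemma with $V \approx \delta^{-\alpha_0/2}$, producing $V$ trees plus an exceptional collection covering at most $t^{-4}\delta^{-o(1)} V^{-1} H \ll \delta^{\alpha_0} H$ (recalling $t = \log^{-10}(1/\delta)$ and $R = \delta^{-\alpha_0}$ in the regime $\log^{-1}(1/\delta) < \tau^4$). Then apply the Pruning Lemma to each tree with $U \approx \rho^{-2}$, producing $VU$ proper branches and $V$ boundary branches. On each boundary branch $\mathcal{B}_\infty(I_j)$, only $\lesssim RU$ distinct scales appear; since each $\Omega'_{J,\delta}$ has $L^\infty(\mathbb{T})$-norm $\lesssim \delta$, a straightforward $\ell^2$-sum plus Plancherel bounds the associated maximal function by $\lesssim \delta \sqrt{RU}$ in $L^2([H])$-average, and Chebyshev at threshold $\log^{-10}(1/\delta)$ controls its exceptional set by $\lesssim \delta^{2-O(\alpha_0)} \log^{20}(1/\delta) H \ll \delta^{\alpha_0} H$.

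On each proper branch $\mathcal{B}_s(I_j)$, the frequency separation $\min_{\theta \neq \theta' \in \Lambda_{j,s}(I_j)}|\theta-\theta'| \geq 2^R \max_J |J|^{-1}$ enables the vector-valued perspective of Lemma \ref{l:rsum}, while the $A(\mathbb{T})$-localization $\|\Omega_{J,\delta}\chi_{\Lambda_{j,s}(I_j),J}\|_{A(\mathbb{T})} \ll t$ reduces the branch maximal, up to errors of size $o(t)$, to the model expression \eqref{e:AI} analyzed in \S\ref{s:ex}. Invoking the entropic key inequality \eqref{e:key} with $\epsilon = \delta^{1/3}$ and Corollary \ref{c:sep} to control the $\epsilon$-covering numbers yields $\|\sup_{J \ni x,\, J \in \mathcal{B}_s(I_j)}|A_J(f,g)|\|_{\ell^2(I_j)}^2 \lesssim \delta^{2/3} |I_j|$, and summing over the $VU \lesssim \delta^{-O(\alpha_0)}$ branches with Chebyshev at threshold $\log^{-10}(1/\delta)$ gives the desired total exceptional set of size $\ll \delta^{\alpha_0} H$. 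For the return-times statement, the same framework applies pointwise in $a$, but the forest/branch structure is now built using the local spectrum of $g$ near $a$, and averaging over $a \in [N]$ with Markov's inequality yields a bad set $E_\delta$ of size $o_{N\to\infty}(\delta^{\alpha_0} N)$. The main technical obstacle is bookkeeping the interplay of $R, V, U, t, \rho$ so that the localization error $\rho^{1/2} \ll t$, the branch count $VU$, the per-branch $L^2$-bound, and the threshold $\log^{-10}(1/\delta)$ all cooperate---in particular, verifying that the entropic bound \eqref{e:key} on each branch survives summation over the $\delta^{-O(\alpha_0)}$ branches without eating up more than $\delta^{\alpha_0}$ of the measure.
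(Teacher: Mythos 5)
Your high-level plan --- pass to dyadic intervals via Lemma \ref{l:cont}, foliate into a forest (Lemma \ref{l:forest}) and then into branches (Lemma \ref{l:branches}), dispose of the boundary branch by a single-scale $\ell^2$ estimate, and control each proper branch entropically via \eqref{e:key} and Corollary \ref{c:sep} --- is exactly the paper's strategy for Proposition \ref{p:max0}. However, two parameter choices are wrong and one essential step is glossed over.

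First, your choice $V \approx \delta^{-\alpha_0/2}$ makes the forest's exceptional collection too \emph{large}, not too small: Lemma \ref{l:forest} gives the bound $t^{-4}\delta^{-o(1)}V^{-1}|I_0|$, and with $V = \delta^{-\alpha_0/2}$ this is $\approx \log^{O(1)}(1/\delta)\,\delta^{\alpha_0/2}|I_0|$, which dominates the target $\delta^{\alpha_0}|I_0|$ since $\alpha_0/2 < \alpha_0$. You need $V$ to be a \emph{larger} power of $\delta^{-1}$; the paper takes $V = \delta^{-2\alpha_0}$. Similarly, $U \approx \rho^{-2}$ is exactly at the boundary of the hypothesis $U \gg \rho^{-2}$ in Lemma \ref{l:branches}; the paper uses $U = W = R^{10}$ to have comfortable margins (which you then also need to track through the boundary-branch and per-branch sums).

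Second, the reduction of $\mathcal{F}_\delta^1$ to the model form \eqref{e:AI} is where most of the work lives, and the $A(\mathbb{T})$-localization alone does not accomplish it. That localization (Corollary \ref{c:bdrybranch}) only lets you replace $\Omega_{J,\delta}$ by $\Omega_{J,\delta}\Xi_J^0$, i.e.\ pass from $\mathcal{F}_\delta^1$ to $\mathcal{F}_\delta^2$. Getting from there to a genuinely \emph{discrete} expression $\sum_{\theta \in \Lambda} e(\theta(x+\alpha x))\,\rho_J * f_\theta(x)\cdot \Omega_{J,\delta}(\theta)$ requires two additional steps that the paper carries out explicitly: (i) passing from the bubble width $R^{-1}$ to $R$ via Corollary \ref{c:F2F3} (i.e.\ $\mathcal{F}_\delta^2 \to \mathcal{F}_\delta^3$), an $L^2$ almost-orthogonality estimate that costs $O(R^3\delta^2|I_j|)$ per branch; and (ii) the Bernstein-type estimate of \S\ref{ss:replace}, which uses the $2^R$ frequency separation in the branch definition to show that $\Omega_{J,\delta}(\alpha\beta)e(\alpha\beta x)$ can be replaced by its value at $\beta = \theta$ on the support of each bubble $\varphi(R|J|(\cdot-\theta))$. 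Only after this does the $\ell^1(\Lambda)$-entropy argument, in the precise form the paper uses with the two $\ell^2(\Lambda)$-covering numbers $N_\epsilon^1, N_\epsilon^2$ and the excision of the jump-counting bad set $Y_\epsilon$, become available. Finally, for the return-times statement your description of $E_\delta$ is too vague to check: in the paper $E_\delta$ is defined concretely as the union over $(v,I_v,s)$ of the sets where the $\ell^2(\Lambda_{v,s}(I_v))$-jump count of $\big(\mathbb{E}_Q\mathrm{Mod}_\theta g\big)_\theta$ at $a$ exceeds $\delta^{-1/2}\epsilon^{-2}$, and its smallness follows from Corollary \ref{c:sep}; this is the analogue (in the $a$-variable) of $Y_\epsilon$ in the Double Recurrence case, not an ad hoc Markov bound.
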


To state our second proposition, designed for $\log^{-1}(1/\delta) \geq \tau^4$, we introduce the following notation

\begin{align*}
    C^{\Delta,L,U,b}_{\tau,\delta, B,H} &:= \sup \Big\{ K : \text{ there exist $1$-bounded } f,g , M_0 < M_1 < \dots < M_K \leq H/100, \text{ so that} \\
&  |\{ \overline{\mathcal{D}^{\Delta,L,b}_U} \cap [H]: \max_{M_{k-1} \leq M \leq M_k} | \int \big(\Omega_{I_M(x),\delta}'(\alpha \beta) - \Omega_{I_{M_k}(x),\delta}'(\alpha \beta) \big) \hat{f}(\beta) e(\beta x + \alpha \beta x) \ d\beta | \gtrsim \tau^5 \}| \\
& \qquad  \geq \tau^B H \Big\}
\end{align*} 
and for a $1$-bounded $g$ -- which we regard as fixed --
\begin{align*}
    C_{\tau,\delta,B,H}(a) &:= \sup \Big\{ K : \text{ there exist  $1$-bounded } f, \ M_0 < M_1 < \dots < M_K \leq H/100  \text{ so that }\\
&   |\{[H]: \max_{M_{k-1} \leq M \leq M_k} | \int \big(\Omega_{I_M(a),\delta}'(\alpha \beta) - \Omega_{I_{M_k}(a),\delta}'(\alpha \beta) \big) \hat{f}(\beta) e(\beta x+\alpha a \beta ) \ d\beta | \gtrsim \tau^5 \}| \\
& \qquad \geq \tau^B H \Big\}.
\end{align*}

\begin{proposition}\label{p:osc00}[Oscillation Estimates]
    The following estimates holds for $\log^{-1}(1/\delta) \geq \tau^4$ and each $\mathcal{D}_U^{\Delta,L,b}$
\begin{align}\label{e:oscDR}
C^{\Delta,L,U,b}_{\tau,\delta, B,H} \lesssim 2^{O_B(\tau^{-A_0})}
\end{align}
and away from a set $E_\tau \subset \overline{\mathcal{D}_U^{\Delta,L,b}} \cap [N]$ of size $|E_\tau| = o_{N \to \infty}(2^{-\tau^{-O(A_0)}} N)$
\begin{align}\label{e:oscRT}
C_{\tau,\delta,B,H}(a) \lesssim 2^{O_B(\tau^{-A_0})}.
\end{align}
\end{proposition}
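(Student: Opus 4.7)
The plan is to establish both oscillation bounds \eqref{e:oscDR} and \eqref{e:oscRT} by a unified argument that combines the forest-and-branch decomposition of Section \ref{ss:fol} with the entropic inequality \eqref{e:key} from the model analysis of Section \ref{s:ex}. First I would reformulate $C^{\Delta,L,U,b}_{\tau,\delta,B,H}$ as a level-set estimate on the jump-counting function: writing $T_M f(x) := \int \Omega'_{I_M(x),\delta}(\alpha\beta)\,\widehat{f}(\beta)\,e\big(\beta(x+\alpha x)\big)\,d\beta$ for $M$ in the sparse dyadic set $\{2^{n\Delta}\}$, the elementary bound $K\tau^B H \leq \sum_k |U_k| \leq \int_{[H]} \vec{N}_{\tau^5/C}\big(T_M f(x) : M\big)\,dx$ reduces the task to proving an $L^1([H])$ bound of size $2^{O_B(\tau^{-A_0})} \cdot H$ on the $\tau^5/C$-jumps of $T_M f$, uniformly in $1$-bounded $f$. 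Lemma \ref{l:cont} then transfers from the $x$-dependent interval $I_M(x)$ to the dyadic $J \in \mathcal{D}$ containing $x$ with $|J|=M$, at the price of an exceptional set of measure $\ll \tau^{10B}|I_0|$.

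Next I would apply Lemma \ref{l:forest} with $V = R^{O(1)}$ so that the residual $\mathcal{D}_V(I_0)$ has measure $\lesssim t^{-4}\delta^{-o(1)} V^{-1}|I_0| \ll \tau^{10B}|I_0|$, and on each tree invoke Lemma \ref{l:branches} with $U = R^{O(1)}$. The boundary branch $\mathcal{B}_\infty(I_j)$ involves only $\lesssim RU = 2^{O(\tau^{-A_0})}$ distinct scales and hence contributes at most that many jumps pointwise. On a genuine branch $\mathcal{B}_s(I_j)$, the $2^R\cdot\max_{J\in\mathcal{B}_s(I_j)}|J|^{-1}$-separation of $\Lambda_{j,s}(I_j)$ combined with Lemma \ref{l:mult1} and the cut-offs $\Xi^0_I$ of \eqref{e:Xi0} allows one to approximate $T_M$ on each such $J$, up to pointwise error $O(t) = o(\tau^5)$, by the close operator $A_J(f,g)(x) = \sum_{\theta \in \Lambda_{j,s}(I_j)} e(2\theta x)\cdot\big(\mathbb{E}_J \text{Mod}_{-\theta} g\big)\cdot \big(\mathbb{E}_J \text{Mod}_{-\theta} f\big)\cdot\mathbf{1}_J$ of \eqref{e:AI}. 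The key inequality \eqref{e:key}, paired with the $\ell^2(\Lambda_{j,s}(I_j))$-valued jump-counting estimate of Corollary \ref{c:jump0} applied to $\{\mathbb{E}_J \text{Mod}_{-\theta} f\}_\theta$ and the uniform bound $|\mathbb{E}_J \text{Mod}_{-\theta} g| \approx \delta$, then controls the $L^1([I_j])$-norm of jumps of $A_J(f,g)$ over $J \in \mathcal{B}_s(I_j)$ by $\lesssim \tau^{-O(1)} |I_j|$.

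Summing over $V$ trees and $U$ branches per tree yields the overall bound $V\cdot U \cdot \tau^{-O(1)}\cdot H = 2^{O_B(\tau^{-A_0})}\cdot H$, which establishes \eqref{e:oscDR}. For the Return Times statement \eqref{e:oscRT}, the forest is anchored at the fixed $g$ with spectra $\Sigma_{I_M(a)}$ depending on $a$, and the bad locations — those $a$ at which the forest step produces more than $V$ trees, or the branch separation fails — must form an exceptional set $E_\tau$ of measure $o_{N \to \infty}(2^{-\tau^{-O(A_0)}}N)$; this follows by applying Proposition \ref{p:tfa} to the tile-coefficients $\{\langle \psi_s,g\rangle\}$ with an additional averaging over the translate parameter $a$, since the Bessel-type packing is translation-uniform in the second argument. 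The main obstacle will be bookkeeping: ensuring that the cumulative approximation error (from Lemma \ref{l:cont}, the forest residual, Lemma \ref{l:mult1}, and the transition $B_I \to A_I$) stays below $\tau^5$ while the aggregate count of branches and entropy levels remains polynomial in $R = 2^{\tau^{-A_0}}$; and, in the Return Times setting, verifying that the Bessel-type orthogonality propagates uniformly across translates $a$ so as to preserve the exponentially small measure of $E_\tau$.
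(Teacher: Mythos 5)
Your reduction to an $L^1$ bound on the jump-counting function $\vec{N}_{\tau^5/C}\big(T_M f(x) : M\big)$ is a natural reformulation, and the opening moves --- sparsify the times, pass to dyadic intervals via Lemma~\ref{l:cont}, extract a forest via Lemma~\ref{l:forest}, prune into branches via Lemma~\ref{l:branches}, bound the boundary branch by the number of scales, and approximate $\mathcal{F}^1_\delta \to \mathcal{F}^3_\delta$ --- do agree in spirit with what the paper does in \S\ref{s:DRosc} and \S\ref{s:RTosc}. But the central step is missing.

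The inequality \eqref{e:key} controls the $\ell^2$-norm of the \emph{maximal} operator $\sup_{P\subset I\in\mathcal{B}}|A_I(f,g)|$, i.e.\ it records a single large value; it does not bound how many times $A_J(f,g)$ jumps as $J$ varies. To bound the count of jumps of the bilinear quantity you would need something like ``$N_\epsilon$ of a product is controlled by the product of $N_\epsilon$'s,'' and no such rule exists: the jump-counting number $\vec{N}_\epsilon$ is not submultiplicative. What \emph{is} submultiplicative is the $r$-variation, via \eqref{e:prod},
\[ \mathcal{V}^r(\vec{a_n}\cdot\vec{b_n}) \lesssim \mathcal{V}^r(\vec{a_n})\cdot\mathcal{V}^r(\vec{b_n}), \]
and this is precisely why the paper departs from the pure entropy framework of \S\ref{s:ex} for the oscillation estimate. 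Concretely, the paper bounds
\[ \Big(\sum_{l} \sup_{x\in J\in\mathcal{Q}_l\cap\mathcal{B}_s(I_v)}|\hat{\mathcal{F}}_\delta^3(J;x)|^2\Big)^{1/2} \lesssim K^{1/2-1/r}\sum_{\theta\in\Lambda} \mathcal{V}^r\big(\varphi_{R|J|}*f_\theta\big)\cdot\mathcal{V}^r\big(\Omega_J(\alpha\theta)\big), \]
then uses Lemma~\ref{l:var} to excise a set $X_K$ of measure $\lesssim K^{-1/5}|I_v|$ where either variation is large, and on the complement picks up $K^{1/2-1/r}$ decay; interpolating with the trivial bound $WK$ on $X_K$ gives $K^{9/10}|I_v|$, forcing $K=O_\tau(1)$. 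Your proposal has no analogue of this $K$-decay: applying Corollary~\ref{c:jump0} to $\{\mathbb{E}_J\text{Mod}_{-\theta}f\}_\theta$ controls the jumps of the $f$-coefficients, but the uniform bound $|\mathbb{E}_J\text{Mod}_{-\theta}g|\approx\delta$ does not control the jumps contributed by the $g$-coefficients, which can oscillate arbitrarily within the $\delta$-annulus.

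For the Return Times half, the exceptional set is also misidentified. The forest and branch structure are built from the fixed $g$ and do not depend on $a$; the bad $a$'s are not those ``where the forest step produces more than $V$ trees,'' but those in
\[ X(I_v) := \{a\in I_v : \exists\,\theta\in\Lambda_v(I_v),\ \mathcal{V}^r(\Omega_J(\theta):a\in J\subset I_v)\geq V^{2A_0}\}, \]
whose small measure follows from Lemma~\ref{l:var}, not from Proposition~\ref{p:tfa} or any translation-uniformity of the Bessel packing. Outside $X(I_v)$ the variation of the $g$-side is pointwise bounded, which is what lets the product-rule estimate close. In short: you need the $r$-variational product structure and the $L^{2,\infty}$ control of Lemma~\ref{l:var} to replace the jump-count/entropy toolkit, which by itself only delivers the maximal estimate, not the oscillation one.
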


In what follows, we will restrict to the case $b,L,U = 0$, as the other cases involve only notational changes.

\subsection{Reformulation}
To clean things up, we use the notation
\begin{align}\label{e:Q}
\mathcal{Q}_k := \{ I \in \mathcal{D} : M_{k-1} \leq |I| \leq M_k \} \end{align}
and
\[ \mathcal{F}^1(J;x,a) := \int \Omega_J'(\alpha \beta) \hat{f}(\beta) e(\beta(x+\alpha a)) \ d\beta \]
with
\[ \mathcal{F}^1(J;x) := \mathcal{F}^1(J;x,x),\]
and 
\[ \mathcal{F}^1_\delta(J;x,a) := \int \Omega_{J,\delta}'(\alpha \beta) \hat{f}(\beta) e(\beta(x+ \alpha a)) \ d\beta \]
with
\[ \mathcal{F}^1_\delta(J;x) := \mathcal{F}_\delta^1(J;x,x);\]
we will also need to keep track of differences of the $\{ \mathcal{F}_\delta^1 \}$; for $J \in \mathcal{Q}_k$ we define
\[ \widehat{\mathcal{F}}^1_\delta(J;x,a) := \mathcal{F}^1_\delta(J;x,a) - \mathcal{F}^1_\delta(\hat{J};x,a),\]
where $\hat{J} \supset J$ is the unique interval of length $M_k$.

With this language in mind, focusing on the representative (sparse) grid $\mathcal{D} := \mathcal{D}_0^{\Delta,0,0}$, by Lemma \ref{l:cont},
it suffices to prove the following.
\begin{proposition}\label{p:max0}[Maximal Estimates]
    The following estimates holds for $\log^{-1}(1/\delta) < \tau^4$:
\begin{align}\label{e:maxDR}
|\{ \overline{\mathcal{D}} \cap [H] : \sup_{x \in J \in \mathcal{D}([H])} |\mathcal{F}^1(J;x) | \gtrsim \log^{-10}(1/\delta) \}| \lesssim \delta^{\alpha_0} \cdot H
\end{align}
and, away from a set $E_\delta \subset \overline{\mathcal{D}} \cap [N] $ of size $|E_\delta| = o_{N \to \infty}(\delta^{\alpha_0} N)$
\begin{align}\label{e:maxRT}
|\{ [H] : \sup_{a \in J \in \mathcal{D}([N])} |\mathcal{F}^1(J;x,a) | \gtrsim \log^{-10}(1/\delta) \}| \lesssim \delta^{\alpha_0} \cdot H.
\end{align}
\end{proposition}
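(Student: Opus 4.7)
My plan is to apply the forest decomposition of Lemma~\ref{l:forest} and the branch decomposition of Lemma~\ref{l:branches} to reduce \eqref{e:maxDR} and \eqref{e:maxRT} to the bilinear model analysis of Section~\ref{s:ex}. In the regime $\log^{-1}(1/\delta) < \tau^4$ we have $t = \log^{-10}(1/\delta)$, $R = \delta^{-\alpha_0}$, and $\rho = (\alpha_0 t)^2$ by \eqref{e:RW}--\eqref{e:rho}; the target is a set of measure $\lesssim \delta^{\alpha_0} H$ outside of which $\sup_{J \ni x} |\mathcal{F}^1(J;x)| \lesssim t$. First I partition $\mathcal{D}([H])$ into an $(L, V)$-forest with $V \approx R^{O(1)} = \delta^{-O(\alpha_0)}$; the forest exceptional set has measure $\lesssim t^{-4}\delta^{-o(1)}V^{-1} H \lesssim \delta^{\alpha_0} H$ and can be absorbed into the bound. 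Inside each tree top $I_j \in T_j^{\max}$ I invoke Lemma~\ref{l:branches} with $U \approx \rho^{-2}$ to obtain at most $U$ branches $\mathcal{B}_s(I_j)$ carrying $2^R/|J|$-separated frequency sets $\Lambda_{j,s}(I_j)$, plus a boundary branch supported on only $O(RU)$ scales (handled by a direct square-function bound).

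On each non-boundary branch, the defining localization $\|\Omega_{J,\delta} \cdot \chi_{\Lambda_{j,s}(I_j), J}\|_{A(\mathbb{T})} \ll t$ and Lemma~\ref{l:mult1} allow me to approximate $\mathcal{F}^1_\delta(J;x)$, up to sup-norm error $O(t)$, by the model operator $A_I(f,g)(x)$ of the form \eqref{e:AI}, with the coefficients $c_\theta$ replaced by the local averages $\mathbb{E}_I \text{Mod}_{-\theta} g$ and $\mathbb{E}_I \text{Mod}_{-\theta} f$. The key inequality \eqref{e:key}, together with the bilinear entropy bound $\sum_P N_\epsilon^i(P) |P| \lesssim \epsilon^{-2}|I_j|$ supplied by Corollary~\ref{c:sep}, then yields, after optimizing $\epsilon = \delta^{1/3}$, the estimate
\[ \| \sup_{x \in I \in \mathcal{B}_s(I_j)} |A_I(f,g)| \|_{\ell^2(I_j)}^2 \lesssim \delta^{2/3 - o(1)} |I_j|. \]
Chebyshev's inequality at threshold $t$, summed over the $\lesssim V U$ many branches, produces the overall measure estimate
\[ t^{-2} \cdot V U \cdot \delta^{2/3 - o(1)} \cdot H \lesssim \log^{O(1)}(1/\delta) \cdot \delta^{-O(\alpha_0)} \cdot \delta^{2/3} \cdot H \lesssim \delta^{\alpha_0} H, \]
provided $\alpha_0$ is taken sufficiently small relative to $2/3$; this establishes \eqref{e:maxDR}.

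For the Return Times variant \eqref{e:maxRT}, the same decomposition applies, but because $g$ is fixed and the relevant intervals $I_M(a)$ are parametrized by the anchor $a \in \overline{\mathcal{D}} \cap [N]$, I introduce the exceptional set $E_\delta$ to absorb those $a$ whose enclosing dyadic interval lies in the forest exceptional set $X_V$ or in a boundary branch. Using $|X_V| \lesssim t^{-4} \delta^{-o(1)} V^{-1} N$ from Lemma~\ref{l:forest} together with the fact that the boundary branches support only $O(RU)$ scales, I obtain $|E_\delta| = o_{N \to \infty}(\delta^{\alpha_0} N)$, and away from $E_\delta$ the branch-level analysis proceeds identically. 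The main obstacle is guaranteeing uniformly across all branches that $\mathcal{F}^1_\delta(J;x)$ is approximated by the model \eqref{e:AI} to within $A(\mathbb{T})$-error $O(t)$; this depends crucially on the grid shift parameter $\Delta \approx t^{-10}$ of \eqref{e:Delta}, which controls the symmetric difference between a dyadic interval and a shifted interval $I_M(a)$, and on Lemma~\ref{l:cont} (via Lemma~\ref{l:mult1}) for comparing $\Omega_{I_M(a),\delta}$ to its dyadic counterpart $\Omega_{J,\delta}$.
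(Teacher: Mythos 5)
Your overall strategy — forest decomposition, branch decomposition, entropy reduction, Chebyshev — is indeed the strategy of \S\ref{s:DRmax}--\ref{s:RTmax}, and the Double Recurrence half \eqref{e:maxDR} is essentially the right plan, modulo one point: you write that Corollary~\ref{c:sep} supplies ``$\sum_P N_\epsilon^i(P)|P|\lesssim\epsilon^{-2}|I_j|$'' and that this, combined with \eqref{e:key}, finishes the argument after optimizing $\epsilon=\delta^{1/3}$. But \eqref{e:key} requires controlling the \emph{product} $N^1_\epsilon(P)\cdot N^2_\epsilon(P)$ summed against $|P|$, and knowing each factor separately has small average is not enough. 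The paper's resolution is an excision: one first removes a small set $Y_\epsilon$ (of measure $\lesssim\epsilon^2 H$) on which the $f$-jump-count $N^1_\epsilon(P)$ is anomalously large, and off $Y_\epsilon$ uses the pointwise bound $N^1_\epsilon(P)\lesssim\epsilon^{-4}$ together with the integrated bound on $N^2_\epsilon$; this is what produces the factor $\epsilon^{-6}$ and the final exponent $\delta^{1/10}$ (rather than your $\delta^{2/3}$). Your bound would still close — any fixed positive power of $\delta$ beats $\delta^{O(\alpha_0)}$ — so this is a genuine omission but a repairable one.

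The more serious gap is in your description of the Return Times exceptional set $E_\delta$. You propose to take $E_\delta$ to be ``those $a$ whose enclosing dyadic interval lies in the forest exceptional set $X_V$ or in a boundary branch.'' Those $a$ are indeed discarded at the outset, but their total measure is $O(\delta^{2\alpha_0-o(1)}N)$ and they are not the obstruction. The actual exceptional set, which is what makes the $g$-side work when $a$ is a fixed parameter rather than a varying one, is defined through the jump-counting statistics of $g$ at $a$:
\begin{align*}
E_\delta := \bigcup_{v,\,I_v,\,s} \Big\{ a \in I_v : \epsilon^2\cdot N_{\epsilon/20}\big( (\mathbb{E}_Q \operatorname{Mod}_{\theta} g)_{\theta\in\Lambda_{v,s}(I_v)}(a) : |Q|\gg 2^{-m_s+R}\big) \geq \delta^{-1/2}\Big\}.
\end{align*}
The point is that in the Double Recurrence case both $f$ and $g$ are evaluated near the running point $x$, so the $g$-jump-count averages over $[H]$ via Corollary~\ref{c:sep}; in the Return Times case $g$ is pinned to the anchor $a$ and only $f$ is averaged over $x$, so the $g$-jump-count must be controlled \emph{pointwise} at $a$. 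This pointwise control fails on a set of $a$'s of measure $\lesssim\delta^{1/10}N$ (by Corollary~\ref{c:sep} and Chebyshev over $a$), and that is $E_\delta$. Without this your proof of \eqref{e:maxRT} has no mechanism to make the entropy factor $N_\epsilon(\vec b_Q)$ finite uniformly in $a$, and the argument does not close. Finally, the single-step ``sup-norm approximation of $\mathcal{F}^1$ by \eqref{e:AI}'' you describe is actually a chain $\mathcal{F}^1\to\mathcal{F}^2\to\mathcal{F}^3$ in the paper, where the first step (Corollary~\ref{c:bdrybranch}) is a sup-norm containment from tree localization but the second (Corollary~\ref{c:F2F3}) is an $\ell^2$-square-function bound, and the final ``replace on the bubble'' step of \S\ref{ss:replace} is a separate derivative estimate on $G_J$; collapsing these into one sup-norm error is not accurate.
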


And, we may reformulate

\begin{align*}
    C'_{\tau,\delta, B,H} := \sup \Big\{ K &: \text{ there exists $1$-bounded } f,g , M_0 < M_1 < \dots < M_K \leq H/100, \text{ so that} \\
&  |\{ \overline{\mathcal{D}} \cap [H]: \max_{x \in J \in \mathcal{Q}_k \cap \mathcal{D}([H])} | \hat{\mathcal{F}}_\delta^1(J;x) | \gtrsim \tau^5 \}| \geq \tau^B H \Big\}
\end{align*} 
and for a $1$-bounded $g$ -- which we regard as fixed --
\begin{align*}
    C_{\tau,\delta,B,H}(a) := \sup \Big\{ K &: \text{ there exists $1$-bounded  } f, \ M_0 < M_1 < \dots < M_K \leq H/100  \text{ so that }\\
&   |\{[H]: \max_{a \in J \in \cap \mathcal{D}([N])} | \hat{F}_\delta^1(J;x,a) | \gtrsim \tau^5 \}| \geq \tau^B H \Big\}.
\end{align*}

Our strategy will be to carefully -- and generically -- replace
\[ \{ \mathcal{F}_\delta^1(J;x,a) \} \]
by two successive approximates, using our tree/branch foliation of 
\[ \mathcal{D}(I_0), \; \; \; I_0 = [H], [N], \]
with $U = W$ many branches, see \eqref{e:RW}.

Specifically, for 
\[ J \in \mathcal{B}_{s}(I_j), \ s \leq W, \ I_j \in T_j^{\text{max}}\]
we set
\[ \mathcal{F}_\delta^2(J;x,a) := \int \Omega_{J,\delta}'(\alpha \beta) \Xi^0_J(\alpha \beta) \hat{f}(\beta) e(\beta(x+\alpha a)) \ d\beta, \]
see \eqref{e:Xi0},
and
\[ \mathcal{F}_\delta^3(J;x,a) := \int \Omega_{J,\delta}'(\alpha \beta) \Xi_J(\alpha \beta) \hat{f}(\beta) e(\beta(x+\alpha a)) \ d\beta, \]
see \eqref{e:Xi}, with the obvious modifications for $\mathcal{F}_\delta^j(J;x)$; $\Xi_J^0, \ \Xi_J$ are as in Section \ref{ss:fol} above. We maintain the same notation for differences of $\{ \mathcal{F}_\delta^j \}$, thus for $J \in \mathcal{Q}_i$ we define
\[ \widehat{\mathcal{F}}^j_\delta(J;x,a) := \mathcal{F}^j_\delta(J;x,a) - \mathcal{F}^j_\delta(\hat{J};x,a),\]
and similarly 
\[ \widehat{\mathcal{F}}^j_\delta(J;x) := \widehat{\mathcal{F}}^j_\delta({J};x,x).\]

We will be concerned with the case where $a \in J$ (whether or not we specialize to $a=x$ as per the Double Recurrence formulation). 

We begin by recording the following lemma, which allows us to control the contribution from each individual
\[ \{ \mathcal{F}_\delta^j : 1 \leq j \leq 3\}. \]

\begin{lemma}\label{l:1scale}[Single Scale Estimate]
Under the hypotheses and notation of Lemma \ref{l:cont}, the following bounds hold for $N \in \{ 2^{k \Delta} : k \geq 1\}$, whenever 
\[ J_N(x) = \begin{cases} (x,x+N] & \text{ or } \\
J & \text{ where } x \in J \in \mathcal{D} \text{ has } |J| = N, \end{cases}\]
then
\begin{align}
    &|\{ x \in \overline{\mathcal{D}} \cap I: |\int \Omega_{J_N(x),\delta}'(\alpha \beta) \eta( \alpha \beta) \cdot \widehat{f}(\beta) e(\beta (x + \alpha a )) \ d\beta| \gg t \}| \lesssim t^{-10} U^4 \delta^2 \cdot |I|;
\end{align}
and
\begin{align}
&|\{ x \in \overline{\mathcal{D}} \cap I: |\int \Omega_{J_N(x),\delta}'(\alpha \beta) \eta( \alpha \beta) \cdot \widehat{f}(\beta) e(\beta (x + \alpha a)) \ d\beta| \gg t \}|  \\
& \qquad \lesssim t^{-6} \delta^2 U^2 \cdot \| \eta(\alpha \cdot) \widehat{f_{I'}} \|_{L^2(\mathbb{T})}^2, \; \; \;    I' := D^2 t^{-4} U^2 I,
\end{align}
see \eqref{e:Da}.
\end{lemma}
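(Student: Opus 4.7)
The plan is to reduce to dyadic scales via Lemma \ref{l:cont}, apply Chebyshev and Plancherel blockwise, and then aggregate using a spatial overlap estimate. The first (weaker) bound follows from the second by the trivial inequality $\|\eta(\alpha\cdot)\widehat{f_{I'}}\|_{L^2}^2 \leq |I'| \approx t^{-4} U^2 |I|$, so I focus on the second.

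First I would invoke Lemma \ref{l:cont} (whose hypothesis $\Delta \gg t^{-4} U^2$ is inherited) to replace the ``floating'' multiplier $\Omega_{J_N(x),\delta}'$ by the dyadic $\Omega_{J(x),\delta}'$, where $J(x) \in \mathcal{D}$ is the interval of length $N$ containing $x$, and simultaneously replace $\hat{f}$ by $\widehat{f_J}$ for some $J \supset D^2 t^{-4} U^2 J(x)$, at the cost of halving the threshold from $t$ to $t/2$. Since $x \mapsto J(x)$ is constant on each dyadic $J_0 \subset I$ of length $N$, I would then partition the exceptional set as $\sum_{J_0 \subset I} |\{x \in J_0 : |F_{J_0}(x)| \gg t\}|$, where
\[ F_{J_0}(x) := \int \Omega_{J_0,\delta}'(\alpha\beta)\,\eta(\alpha\beta)\,\widehat{f_{J(J_0)}}(\beta)\, e(\beta(x+\alpha a))\,d\beta. \]
On each $J_0$, Chebyshev combined with Plancherel on $\mathbb{Z}$ gives
\[ |\{x \in J_0 : |F_{J_0}(x)| \gg t\}| \lesssim t^{-2}\,\|\Omega_{J_0,\delta}'(\alpha\cdot)\,\eta(\alpha\cdot)\,\widehat{f_{J(J_0)}}\|_{L^2(\mathbb{T})}^2, \]
and the pointwise bound $|\Omega_{J_0,\delta}'| \lesssim \delta$, immediate from the definition of $\Psi_\delta$ (see \eqref{e:Psi}), pulls out a clean factor of $\delta^2$.

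The key final input is the spatial overlap bound $\sum_{J_0 \subset I}\mathbf{1}_{J(J_0)}(n) \lesssim D^2 t^{-4} U^2$ for every $n \in I'$, which holds because $|J(J_0)| \approx t^{-4} U^2 N$ and the $\{J_0\}$ partition $I$. Combined with a Plancherel manipulation on the space side (rewriting $\eta(\alpha\cdot)\widehat{h} = \widehat{K * h}$ where $K := (\eta(\alpha\cdot))^\vee$), this converts $\sum_{J_0}\|\eta(\alpha\cdot)\widehat{f_{J(J_0)}}\|_{L^2(\mathbb{T})}^2$ into $\lesssim t^{-4} U^2 \|\eta(\alpha\cdot)\widehat{f_{I'}}\|_{L^2(\mathbb{T})}^2$, producing the stated $t^{-6}\delta^2 U^2 \|\eta(\alpha\cdot)\widehat{f_{I'}}\|_{L^2}^2$. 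The main obstacle is precisely this last conversion: preserving the $\eta$-weighting when aggregating the local norms. A crude $|\eta| \leq 1$ drops $\eta$ and yields only the weaker bound in terms of $\|f_{I'}\|_{\ell^2}^2$; retaining $\eta$ requires transferring to the spatial side, where the overlap count translates directly into the $t^{-4} U^2$ factor without sacrificing the $\eta$-weight.
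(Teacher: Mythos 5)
Your derivation of the first bound from the second via $\|\eta(\alpha\cdot)\widehat{f_{I'}}\|_{L^2(\mathbb{T})}^2 \leq \|f_{I'}\|_{\ell^2}^2 \lesssim D^2 t^{-4}U^2 |I|$ is a clean shortcut (the paper in fact proves both bounds by running essentially the same argument twice), and your overall template --- Lemma \ref{l:cont} to pass to dyadic scales, Chebyshev plus Plancherel on each block, then an overlap count over the dilated intervals --- matches the paper's.

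However, the aggregation step for the second bound has a genuine gap. Writing $K:=(\eta(\alpha\cdot))^\vee$, you need
\[
\sum_{J_0\subset I,\,|J_0|=N}\bigl\|K*\bigl(f\,\mathbf{1}_{J(J_0)}\bigr)\bigr\|_{\ell^2}^2 \ \lesssim\ D^2 t^{-4}U^2\,\bigl\|K*f_{I'}\bigr\|_{\ell^2}^2,
\]
but convolution does not commute with spatial truncation, so the overlap bound $\sum_{J_0}\mathbf{1}_{J(J_0)}\lesssim D^2 t^{-4}U^2$ does not yield this directly: each restricted piece $K*(f\mathbf{1}_{J(J_0)})$ carries ``boundary'' mass near $\partial J(J_0)$ that does not appear in $K*f_{I'}$, and summing over the $\approx |I|/N$ blocks these contributions pile up rather than cancel. (A toy check: a difference-type kernel $K$ with $K*f_{I'}$ nearly zero away from $\partial I'$ still gives $\sum_{J_0}\|K*(f\mathbf{1}_{J(J_0)})\|^2\sim |I|/N$.) The fix is to \emph{reverse the order of operations}, exactly as the paper does: absorb $\eta$ into the input function \emph{before} spatially localizing, i.e.\ set $h:=\eta^\vee*f$ and apply the containment argument to $h$, replacing $\eta(\alpha\beta)\hat f(\beta)$ by $\widehat{(\eta^\vee*f)_{J'}}(\beta)=\widehat{h_{J'}}(\beta)$ rather than by $\eta(\alpha\beta)\widehat{f_{J'}}(\beta)$. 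Then Chebyshev--Plancherel on each $J_0$ produces $t^{-2}\delta^2\|h\|_{\ell^2(J')}^2$, and the overlap count aggregates this cleanly to $D^2 t^{-4}U^2\|h\|_{\ell^2}^2 = D^2 t^{-4}U^2\|\eta(\alpha\cdot)\hat f\|_{L^2(\mathbb{T})}^2$, which is the asserted bound (recalling $f$ has already been restricted to $I'$). You identify ``preserving the $\eta$-weight'' as the main obstacle, but the specific transfer-to-space-side manipulation you describe, with $f$ restricted first and $K$ applied second, is the wrong composition order and does not close the estimate.
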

\begin{proof}
 First, by Lemma \ref{l:mult1}, there is no loss of generality in restricting 
\[ f \longrightarrow f_{I'}, \; \; \; I' := D^2 t^{-4} U^2 I\]
at the cost of adjusting the implicit constants in the super-level sets; more generally, for each dyadic interval, $P$, set $P' := D^2 t^{-4} U^2 P$. Moreover, by Lemma \ref{l:cont}, after a further adjustment of implicit constants, 
there is no loss of generality in replacing
\[ \Omega_{J_N(x),\delta}' \longrightarrow \Omega_{J(x),\delta}'\]
where $x \in J(x) \in \mathcal{D}$ with $|J| = N$ (i.e.\ we may reduce to the second alternative in the definition of $J_N(x)$).

Now, the key observation is that whenever $a \in J$, $\Omega_{J(a),\delta} \equiv \Omega_{J,\delta}$ is independent of $a$ -- and just becomes a Fourier multiplier.
    
So, we may bound
\begin{align*}
    |\{ x \in \overline{\mathcal{D}} \cap J: \int \Omega_{J,\delta}'(\alpha \beta) \eta(\alpha \beta) \widehat{f_{J'}}(\beta) e(\beta (x + \alpha a )) \ d\beta \gg t \}| \lesssim t^{-2} \delta^2 \| \widehat{f_{J'}} \|_{L^2(\mathbb{T})}^2
\end{align*}
so that
\begin{align*}
 &   |\{ x \in \overline{\mathcal{D}} \cap I : \int \Omega_{J_N(x),\delta}'(\alpha \beta) \eta(\alpha \beta) \hat{f}(\beta) e(\beta (x + \alpha a )) \ d\beta \gg t \}| \\
 & \qquad \lesssim t^{-2} \delta^2 \sum_{J \subset I, \ |J| = N} \| \widehat{f_{J'}} \|_{L^2(\mathbb{T})}^2 \\
    & \qquad \qquad \lesssim t^{-6} U^2 \delta^2 \cdot |I'| \lesssim t^{-10} U^4 \delta^2 \cdot  |I|.
\end{align*}
For the second point, after relabeling
\[ \eta \longrightarrow \eta(\alpha \cdot) \]
(recalling that $\alpha = O(1)$ is fixed), we bound
\begin{align*}
&|\{ x \in \overline{ \mathcal{D} } \cap I : \int \Omega_{J_N(x),\delta}(\alpha \beta) \eta(\beta) \hat{f}(\beta) e(2 \beta x) \ d\beta \geq t \}| \\
    & \qquad \leq \sum_{J \subset I, \ |J| =N} | \{ x \in \overline{\mathcal{D}} \cap J : \int \Omega_{J,\delta}(\alpha \beta) \widehat{ (\eta^{\vee} * f)_{J'}}(\beta) e(2 \beta x)  \ d\beta \geq t/2 \} | \\
    & \qquad \qquad \lesssim  t^{-2} \delta^2 \sum_{J \subset I, \ |J| =N} \| \eta^{\vee} *f \|_{\ell^2(J')}^2 \\
    & \qquad \qquad \qquad \lesssim t^{-6} \delta^2 U^2  \cdot \| \eta \hat{f} \|_{L^2(\mathbb{T})}^2 \equiv t^{-6} \delta^2 U^2  \cdot \| \eta \widehat{f_{I'}} \|_{L^2(\mathbb{T})}^2
\end{align*}
by a final application of Plancherel.
\end{proof}

\begin{corollary}\label{c:bdrybranch}
    Suppose that $\{ \mathcal{D}_j(I_0) : j < V \} \cup \mathcal{D}_V(I_0)$ is a $V$-forest, and that $\mathcal{B}_\infty(I_j), \ I_j \in T_j^{\text{max}}$ is a boundary branch as in Lemma \ref{l:branches}.
    
    Then for any $j < V$, 
\begin{align}
&\{ I_j \smallsetminus X_{j+1} : \sup_{x \in J \in \mathcal{B}_s(I_j)} |\mathcal{F}_\delta^1(J;x)| \gg t \} \subset \{ I_j \smallsetminus X_{j+1} : \sup_{x \in J \in \mathcal{B}_s(I_j)} |\mathcal{F}_\delta^2(J;x)| \gg t \} 
   \end{align}
and
    \[ |\{ I_j \smallsetminus X_{j+1} : \sup_{x \in J \in \mathcal{B}_\infty(I_j)} |\mathcal{F}_\delta^2(J;x)| \gg t \}| \lesssim RU t^{-10} \delta^2  \cdot |I_j|. \]
\end{corollary}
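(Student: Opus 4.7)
The corollary splits into two independent assertions that I will address in turn.

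For the containment, my plan is to show that the difference $|\mathcal{F}_\delta^1(J;x) - \mathcal{F}_\delta^2(J;x)|$ is uniformly $o(t)$ for every $J \in \mathcal{B}_s(I_j)$. Writing this difference as the inverse Fourier transform of the multiplier $\Omega'_{J,\delta}(\alpha\beta)(1-\Xi_J^0(\alpha\beta))$ paired against $f$, the pointwise bound $\|f\|_\infty \leq 1$ reduces matters to controlling the $A(\mathbb{T})$-norm of this multiplier. Since $1-\Xi_J^0(\alpha\beta)$ is supported away from a $(10/|J|)$-neighborhood of $\alpha^{-1}\Lambda_{j,s}(I_j)$, I will choose a specific $\chi_{\Lambda_{j,s}(I_j),J}$ satisfying \eqref{e:chi0}--\eqref{e:chi} that agrees with $(1-\Xi_J^0(\alpha\beta))\widehat{\psi_D}(\beta)$ on the support of $1-\Xi_J^0(\alpha\cdot)$ up to a smooth transition, absorbing the harmless dilation by $\alpha$ using \eqref{e:Da}. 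Invoking the branch condition $\|\Omega_{J,\delta}\cdot\chi\|_{A(\mathbb{T})} \ll t$ then yields $|\mathcal{F}_\delta^1 - \mathcal{F}_\delta^2| \ll t$ pointwise, so any $x$ where $|\mathcal{F}_\delta^1(J;x)| \gg t$ must have $|\mathcal{F}_\delta^2(J;x)| \gg t$, establishing the containment.

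For the measure bound on the boundary branch, the key input is the first estimate of Lemma \ref{l:1scale}, applied with $\eta(\alpha\beta) := \Xi_J^0(\alpha\beta)$. For any $J\in\mathcal{B}_\infty(I_j)$ of scale $|J| = N$, the bumps comprising $\Xi_J^0$ are disjointly supported thanks to the branch separation condition and the enveloping full-forest bound $|\Lambda_{j,s}(I_j)| \lesssim V\delta^{-2}$ together with the $2^R/|J|$-separation, so $\Xi_J^0(\alpha\beta)$ is $1$-bounded with $\|\partial_\beta \Xi_J^0(\alpha\cdot)\|_\infty \lesssim |\alpha|\cdot |J| \lesssim N$; thus Lemma \ref{l:1scale} applies with $U = O(1)$ to give
\[ |\{x \in \overline{\mathcal{D}}\cap I_j : \sup_{x \in J \in \mathcal{B}_\infty(I_j),\, |J|=N} |\mathcal{F}_\delta^2(J;x)| \gg t\}| \lesssim t^{-10}\delta^2 \cdot |I_j|. \]
Summing this single-scale bound over the $\lesssim RU$ distinct scales in $\mathcal{B}_\infty(I_j)$ guaranteed by Lemma \ref{l:branches} yields the stated $RU\,t^{-10}\delta^2|I_j|$ estimate.

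The main obstacle will be the bookkeeping in part one: precisely matching the cutoff $1-\Xi_J^0(\alpha\cdot)$, which lives naturally on the $\alpha\beta$-torus, against an admissible $\chi_{\Lambda_{j,s}(I_j),J}(\beta)$ on the $\beta$-torus, so that the branch localization $\|\Omega_{J,\delta}\cdot\chi\|_{A(\mathbb{T})} \ll t$ genuinely dominates the multiplier of interest. The $\alpha$-dilation introduces only a harmless constant-factor change in the radii of the relevant balls (using $1/20 \leq |\alpha/D| \leq 1/10$ from \eqref{e:Da}), but verifying that both the pointwise bounds \eqref{e:chi} and the Lipschitz control \eqref{e:chi0} transfer without loss requires one to exhibit an explicit interpolating cutoff whose derivative is $O(|J|)$ and whose support matches the Fourier support of the reproducing factor $\widehat{\psi_D}$; aside from this, both parts are essentially direct applications of the tools already assembled.
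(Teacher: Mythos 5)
The overall structure matches the natural proof: for the containment, use the branch Wiener-norm condition to show the multiplier difference is $\ll t$; for the boundary bound, apply the single-scale Lemma~\ref{l:1scale} and count the $\lesssim RU$ scales in $\mathcal{B}_\infty(I_j)$ given by Lemma~\ref{l:branches}. However, both parts of your argument have a genuine gap.

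\textbf{Containment.} You propose to dominate $1-\Xi_J^0$ by an admissible $\chi_{\Lambda_{j,s}(I_j),J}$ and then cite the branch condition $\|\Omega_{J,\delta}\chi\|_{A(\mathbb{T})} \ll t$. This cannot work as stated: the condition~\eqref{e:chi} forces any admissible $\chi$ to vanish on $\Lambda_{j,s}(I_j)+B(20/|J|)$, whereas $1-\Xi_J^0$ is nonzero throughout the annulus $\Lambda_{j,s}(I_j)+B(20/|J|)\smallsetminus B(10/|J|)$. Thus one always has $\chi\leq 1-\Xi_J^0$ pointwise — the inequality goes the wrong way, and no admissible $\chi$ can majorize $1-\Xi_J^0$. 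What is true, and what you should use, is that the same estimate that establishes the branch condition (the $\rho$-localization of $J$ with respect to the nested frequency sets, combined with Lemma~\ref{l:H12}) is indifferent to whether the cut-off vanishes at radius $10/|J|$ or $20/|J|$; alternatively, split $1-\Xi_J^0 = \chi + (1-\Xi_J^0-\chi)$ and observe that the second piece is supported on an annulus strictly separated from $\Lambda_{j,s}(I_j)$ and has derivative $\lesssim |J|$, so its contribution to the Wiener norm is again $\lesssim\rho^{1/2}\ll t$ by Lemma~\ref{l:H12}. Either fix works; citing the branch property verbatim for a non-admissible cut-off does not.

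\textbf{Boundary branch.} You claim the bumps in $\Xi_J^0$ are disjointly supported for $J\in\mathcal{B}_\infty(I_j)$ ``thanks to the branch separation condition.'' But $\mathcal{B}_\infty(I_j)$ consists precisely of those scales $|J|^{-1}=2^{m_s\pm O(R)}$ near the jump points where the $2^{m_s}$-separation of $\Lambda_s$ is \emph{not} comparable to $2^R/|J|$ — the whole purpose of excising the boundary branch is that the separation fails there, so the disjointness you are invoking is unavailable. Moreover, for $J\in\mathcal{B}_\infty(I_j)$ there is no $s$ with $J\in\mathcal{B}_s(I_j)$ and hence no canonical $\Lambda_{j,s}(I_j)$ with which to define $\Xi_J^0$ at all. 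The correct reading — consistent with how Section~\ref{s:RTmax} handles the boundary via $\mathcal{F}_\delta^1$, and with the clean factor $RU\,t^{-10}\delta^2$ in the claimed bound rather than $RU\cdot U^4 t^{-10}\delta^2$ — is to take $\eta\equiv 1$ (equivalently, $\mathcal{F}_\delta^2=\mathcal{F}_\delta^1$ on the boundary branch) in Lemma~\ref{l:1scale}, giving $U=O(1)$ directly. Your final scale count over the $\lesssim RU$ boundary scales is then correct; it is the intermediate justification of the Lipschitz bound that fails.
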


We also record the following Corollary, which will give us the flexibility to cheaply pass from 
\[ \mathcal{F}_\delta^2(J;x) \longrightarrow \mathcal{F}_\delta^3(J;x). \]

Below, we set 
\[ \chi_t(x) := x \cdot \chi(A_0 t^{-1} |x|) \cdot \varphi(x)\]
where
\[ \mathbf{1}_{|x| \geq 1/2} \leq \chi \leq \mathbf{1}_{|x| \geq 1/4}, \; \; \;  \mathbf{1}_{|x| \leq 2} \leq \varphi \leq \mathbf{1}_{|x| \leq 4} \]
are smooth, so that 
\[ \sup_{t > 0} \| \chi_t \|_{\mathcal{C}^1(\mathbb{R})} \lesssim 1.\]

\begin{corollary}\label{c:F2F3}
    For any $I_j \in T_j^{\text{max}} \, \cap \, \mathcal{D}(I_0)$, and any $\mathcal{B}_{s}(I_j), \ j <V , s \leq U$ with $|I_j| \geq 10 \cdot \max_{J \in \mathcal{B}_{s}(I_j)} |J|$, 
\begin{align*}
    \sum_{J \in \mathcal{B}_s(I_j)} \|\chi_t\Big( \mathcal{F}_\delta^2(J;x) - \mathcal{F}_\delta^3(J;x) \Big) \|_{\ell^2(J)}^2 \lesssim R^3 \delta^2  \cdot |I_j|
\end{align*}
and in fact
    \begin{align*}
        |\{x \in \overline{\mathcal{D}} \cap I_j : \sup_{x \in J \in \mathcal{B}_{s}(I_j)} |\mathcal{F}_\delta^2(J;x) - \mathcal{F}_\delta^3(J;x)| \geq t \}| \lesssim   R^3 \delta^2  \cdot |I_j|. 
    \end{align*}
Moreover
\begin{align*}
    \sum_{ a\in J \in \mathcal{B}_s(I_j)} \| \mathcal{F}_\delta^2(J;x,a) - \mathcal{F}_\delta^3(J;x,a)\|_{\ell^2_x(\mathbb{Z})}^2 \lesssim \delta^2 \cdot \log R \cdot H.
\end{align*} 
\end{corollary}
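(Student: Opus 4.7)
The plan is to treat all three assertions uniformly by reducing to a Fourier-multiplier problem. Writing
\[
\mathcal{F}_\delta^2(J;x,a) - \mathcal{F}_\delta^3(J;x,a) = \int_{\mathbb{T}} m_J(\beta)\,\hat{f}(\beta)\,e\bigl(\beta(x+\alpha a)\bigr)\,d\beta,
\]
with $m_J(\beta) := \Omega_{J,\delta}'(\alpha\beta)\bigl(\Xi_J^0(\alpha\beta)-\Xi_J(\alpha\beta)\bigr)$, the key structural features to record are: $\|m_J\|_{L^\infty}\lesssim \delta$ by \eqref{e:C1}; the support of $m_J$ lies in a disjoint union of annuli $A_{J,\theta}:=\{\beta:|\alpha\beta-\theta|\lesssim 1/|J|\}$ over $\theta\in\Lambda_{j,s}(I_j)$, disjointness following from the branch separation $\min_{\theta\neq\theta'}|\theta-\theta'|\geq 2^R\max_{J\in\mathcal{B}_s(I_j)}|J|^{-1}$; and $\|(\Xi_J^0-\Xi_J)'\|_{L^\infty}\lesssim R|J|$, placing $\eta := \Xi_J^0-\Xi_J$ in the setting of Lemma~\ref{l:1scale} with $U=R$.

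For the first two bounds I would apply the second inequality of Lemma~\ref{l:1scale} scale-by-scale. At each scale $N$ appearing in $\mathcal{B}_s(I_j)$ and each altitude $s>0$, this yields
\[
\bigl|\{x\in I_j: \exists J\in\mathcal{B}_s(I_j),\,|J|=N,\,x\in J,\,|\mathcal{F}_\delta^2(J;x)-\mathcal{F}_\delta^3(J;x)|>s\}\bigr|\lesssim s^{-10}R^4\delta^2|I_j|,
\]
after the routine estimate $\|\eta(\alpha\cdot)\widehat{f_{I_j'}}\|_{L^2}^2\lesssim R^2 t^{-4}|I_j|$. Using disjointness within each branch, $\sum_J\|\chi_t(\cdot)\|_{\ell^2(J)}^2$ packages into a single $\ell^2(I_j)$-norm of an aggregator; a layer-cake integration against $\chi_t$ (which truncates values below $t/4$) combined with a summation over the $\lesssim \log R/\Delta$ scales for which the annular shell $A_{J,\theta}$ can be active at any given $\beta$ yields the target $R^3\delta^2|I_j|$. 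The pointwise super-level-set inclusion in the second statement is the same scale-wise bound applied at each $x$.

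For the third bound, Plancherel in $x$ shows that $\|\mathcal{F}_\delta^2(J;\cdot,a)-\mathcal{F}_\delta^3(J;\cdot,a)\|_{\ell^2_x(\mathbb{Z})}^2$ is (up to an $\alpha$-dependent constant) equal to $\|m_J\hat{f}\|_{L^2(\mathbb{T})}^2$, which is independent of $a$. The sum therefore collapses to $\sum_{J\in\mathcal{B}_s(I_j)}|J|\,\|m_J\hat{f}\|_{L^2}^2$. Decomposing $m_J=\sum_\theta W_{J,\theta}$ into the disjoint annular pieces gives $\|m_J\hat{f}\|_{L^2}^2\leq \delta^2\int_{A_J}|\hat{f}|^2$; the remaining task is a pointwise bound $\sum_{J\in\mathcal{B}_s(I_j)}|J|\,\mathbf{1}_{A_J}(\beta)\lesssim \log R$, which relies on the fact that for each $\beta$ only scales $|J|\in[1/|\alpha\beta-\theta|,\,R/|\alpha\beta-\theta|]$ contribute and the sparsity $|J|\in 2^{\Delta\mathbb{Z}}$ collapses the resulting geometric sum. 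Integration against $|\hat f|^2$ and Plancherel $\|\hat f\|_{L^2(\mathbb{T})}^2=\|f\|_{\ell^2(\mathbb{Z})}^2\lesssim H$ deliver the claimed $\delta^2\log R\cdot H$.

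The subtlest step is the pointwise bound $\sum_J|J|\,\mathbf{1}_{A_J}(\beta)\lesssim \log R$ in the third assertion: the crude $\sum_J|J|\leq |I_j|$ is insufficient, and one must genuinely exploit both the geometric narrowness of the admissible scale range (inside which only $\lesssim \log R/\Delta$ sparse scales appear) and branch-wise disjointness of the frequency annuli across $\theta\in\Lambda_{j,s}(I_j)$, rather than a naive volume argument.
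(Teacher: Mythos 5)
Your overall plan (reduce to a multiplier estimate and exploit the $\log R$-overlap of the pieces $\Xi^0_{[N]}-\Xi_{[N]}$) is the right idea, and matches what the paper does; but two of your three steps contain real gaps.

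\textbf{First and second bounds.} You propose to sum the per-scale super-level-set estimate of Lemma~\ref{l:1scale} over scales, invoking ``$\lesssim \log R$ active scales at a given $\beta$'' to control the number of terms. This conflates a frequency-side statement with a physical-side one: the Fourier-support claim that only $\lesssim\log R/\Delta$ values of $N$ have $\beta\in\supp(\Xi^0_{[N]}-\Xi_{[N]})(\alpha\cdot)$ does not bound the number of scales $N$ for which the physical set $\{x: |\mathcal F^2_\delta(J;x)-\mathcal F^3_\delta(J;x)|>s\}$ is nonempty. The number of scales occurring in a single branch $\mathcal B_s(I_j)$ is governed by $m_s-m_{s-1}$ and is not $O(\log R)$; summing the per-scale measure estimate crudely would therefore overshoot the target by the number of scales present. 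The correct way to harvest the $\log R$-overlap is through Plancherel, which is how the paper argues: after restricting $f\to f\cdot\mathbf 1_{D^2R^2t^{-4}I_j}$ (justified by the tail estimate in Lemma~\ref{l:mult1}, and needed so that $\|\hat f\|_{L^2(\mathbb T)}^2\lesssim R^2t^{-4}|I_j|$), one sets $f_J=\bigl(\hat f\,(\Xi^0_{[N]}-\Xi_{[N]})(\alpha\cdot)\bigr)^\vee\cdot\mathbf 1_{D^2R^2t^{-4}J}$, uses $\sum_N|\Xi^0_{[N]}-\Xi_{[N]}|^2\lesssim\log R$ to bound $\|(\sum_N|f_J|^2)^{1/2}\|^2_{\ell^2}\lesssim R^{O(1)}\log R\,t^{-O(1)}|I_j|$, and closes with $\sup|\Omega'_{J,\delta}|\lesssim\delta$. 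Your route has no analogue of this Plancherel step and would not yield $R^3\delta^2|I_j|$.

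\textbf{Third bound.} You parse $\sum_{a\in J\in\mathcal B_s(I_j)}$ as $\sum_J\sum_{a\in J}$, getting $\sum_J|J|\,\|m_J\hat f\|_{L^2}^2$ and then assert the pointwise bound $\sum_J|J|\,\mathbf 1_{A_J}(\beta)\lesssim\log R$. This is false: for a fixed scale $N$ with $\beta$ in the relevant annulus, there are $\approx|I_j|/N$ intervals $J$ at that scale, and the set $A_J$ depends only on $|J|$, so the contribution at that scale alone is already $\approx|I_j|$. The intended reading in the paper is that $a$ is \emph{fixed}, and the sum over $J$ with $a\in J$ picks at most one $J$ per scale; the sum then ranges over scales only, and the correct orthogonality input is the un-weighted $\sum_N|\Xi^0_{[N]}-\Xi_{[N]}|^2(\alpha\beta)\lesssim\log R$, which combined with Plancherel and $\|\hat f\|_{L^2}^2\lesssim H$ gives $\delta^2\log R\cdot H$. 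With your parsing the stated bound could not be correct.
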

\begin{proof}
We will apply the third point in Lemma \ref{l:1scale}, with
\[ \eta \in \{ \Xi_J^0(\alpha \cdot) - \Xi_J(\alpha \cdot) \}; \]
the key point is the since each $J \in \mathcal{B}_{s}(I_j)$, the distinguished frequencies, $\Lambda_{j,s}(I_j)$, in the definition of 
\[ \{ \Xi_J^0, \ \Xi_J :J \in \mathcal{B}_{s}(I_j) \} \]
do not change, and thus we may express
\[ \Xi_J^0 \equiv \Xi_{[N]}^0, \; \; \; \Xi_J \equiv \Xi_{[N]} \; \; \; |J| = N.\]

Since
\[ \sup_{x \in J \in \mathcal{B}_s(I_j),\  j =2,3} | \mathcal{F}_\delta^j(J;x) | \ll t \]
when $f$ is supported off $D^2 R^2 t^{-4} I_j$, see Lemma \ref{l:mult1}, in what follows we will implicitly restrict 
\[ f \longrightarrow f \cdot \mathbf{1}_{D^2 R^2 t^{-4} I_j}; \]
with this in mind, for notational ease, define 
\[ f_J := \Big( \hat{f} \cdot \big(\Xi_{[N]}^0(\alpha \cdot) - \Xi_{[N]}(\alpha \cdot)\big) \Big)^{\vee} \cdot \mathbf{1}_{D^2 R^2 t^{-4} J} \]
so that
\[ \| \big( \sum_N |f_J|^2 \big)^{1/2} \|_{\ell^2(\mathbb{Z})}^2 \lesssim R^2 \log R \cdot t^{-8} \cdot |I_j|.\]

But now, for $x \in J$
\begin{align*}
    |\chi_t\big( \mathcal{F}_\delta^2(J;x) - \mathcal{F}_\delta^3(J;x) \big)|& \leq |\chi_{t/2}\Big( \int \widehat{f_ J}( \beta) \cdot \Omega_{J,\delta}'(\alpha \beta) e( \beta (x + \alpha x)) \ d\beta \Big)| \\
    & \qquad \qquad \leq | \int \widehat{f_ J}(\beta) \cdot \Omega_{J,\delta}'(\alpha \beta) e( \beta (x + \alpha x)) \ d\beta |
\end{align*}
and so
\begin{align*}
    \sum_{J \in \mathcal{B}_s(I_j)} \| \chi_t\big( \mathcal{F}_\delta^2(J;x) - \mathcal{F}_\delta^3(J;x) \big) \|_{\ell^2(J)}^2 \lesssim \delta^2 \cdot  \sum_{J \in \mathcal{B}_s(I_j)} \| f_J  \|_{\ell^2}^2  \lesssim \delta^2 \cdot R^2 \log R \cdot  t^{-8} |I_j|,
\end{align*}   
as desired.

The final point is simpler: again using that $\Lambda_{j,s}(I_j)$ is fixed, we now set
\[ f_J := \Big( \hat{f} \cdot \big(\Xi_{[N]}^0(\alpha \cdot) - \Xi_{[N]}(\alpha \cdot) \big) \Big)^{\vee}, \; \; \; |J| = N\]
noting that there exists a unique $J$ containing $a$ with $|J| = N$; we then bound
\[ |\mathcal{F}_\delta^2(J;x,a) - \mathcal{F}_\delta^3(J;x,a)| \leq |(\Omega_{J,\delta}'(\alpha \cdot ) \widehat{f_J})^{\vee}(x)| \]
and square sum, using Plancherel and the orthogonality condition
\[ \sum_N |\Xi_{[N]}^0 - \Xi_{[N]}|^2 \lesssim \log R.\]
\end{proof}

With these preliminaries in mind, we are prepared to turn to the main argument.

\section{Proof of the Maximal Estimate, Double Recurrence}\label{s:DRmax}
In this section we establish \eqref{e:maxDR}, the first statement in Proposition \ref{p:max0}. With $I_0 = [H]$, we extract a $V = \delta^{-2 \alpha_0}$-forest, with $U=W$ branches, see \eqref{e:RW}, to bound
\begin{align*}
    &|\{ \overline{\mathcal{D}} \cap I_0 : \sup_{J \in \mathcal{D}} |\mathcal{F}_\delta^1(J;x)| \gg t \}| \\
& \qquad \leq \sum_{v <V} |\{ \overline{\mathcal{D}} \cap (X_v \smallsetminus X_{v+1}) : \sup_{J \in \mathcal{D}} |\mathcal{F}_\delta^1(J;x)| \gg t \}| + O(\delta^{2\alpha_0 - o(1)} |I_0|) \\
& \qquad \qquad \leq \sum_{v < V} \sum_{I_v \in T_v^{\text{max}}} |\{ \overline{\mathcal{D}} \cap (I_v \smallsetminus X_{v+1}) : \sup_{J \in \mathcal{D}_v(I_v)} |\mathcal{F}_\delta^2(J;x)| \gg t \}| + O(\delta^{\alpha_0} |I_0|).
\end{align*}
We drop the error term as acceptably small, and focus on each individual $V$, where we will prove that for each $v < V$, there exists an absolute constant,
\begin{align} 0 < c < 1
\end{align}
independent of $A_0$, so that
\begin{align*}
    \sum_{I_v \in T_v^{\text{max}}} |\{ \overline{\mathcal{D}} \cap (I_v \smallsetminus X_{v+1}) : \sup_{J \in \mathcal{D}_v(I_v)} |\mathcal{F}_\delta^2(J;x)| \gg t \}| \lesssim \delta^{c} \cdot |I_0|;
\end{align*}
below, we may allow $c$ to vary from line to line, but every such constant introduced will be bounded below by $\frac{1}{100}$ (say).

To do so, we pass to $W$-branches, and bound, for each $I_v \in T_v^{\text{max}}$
\begin{align*}
&    |\{ \overline{\mathcal{D}} \cap (I_v \smallsetminus X_{v+1}) : \sup_{J \in \mathcal{D}_v(I_v)} |\mathcal{F}_\delta^2(J;x)| \gg t \}| \\
& \qquad \leq \sum_{s \leq W} |\{ \overline{\mathcal{D}} \cap (I_v \smallsetminus X_{v+1}) : \sup_{J \in \mathcal{B}_s(I_v)} |\mathcal{F}_\delta^2(J;x)| \gg t \}| + O(RW t^{-10} \delta^2 |I_v|) \\
& \qquad \qquad \leq \sum_{s \leq W} |\{ \overline{\mathcal{D}} \cap (I_v \smallsetminus X_{v+1}) : \sup_{J \in \mathcal{B}_s(I_v)} |\mathcal{F}_\delta^3(J;x)| \gg t \}| + O(RW t^{-10} \delta^2 |I_v| ) + O(R^3 W \delta^2 |I_v|)
\end{align*}
by Corollaries \ref{c:bdrybranch} and \ref{c:F2F3} (in turn). Since the error term sums over $I_v \in T_v^{\text{max}}, \ v <V$ to (say)
\[ O(\delta |I_0|) \]
we discard this contribution, and focus on proving that for each $s \leq W$, 
\begin{align}\label{e:maxgoalDR} \sum_{I_v \in T_v^{\text{max}}} |\{ \overline{\mathcal{D}} \cap (I_v \smallsetminus X_{v+1}) : \sup_{J \in \mathcal{B}_s(I_v)} |\mathcal{F}_\delta^3(J;x)| \gg t \}| \lesssim \delta^{c} \cdot |I_0|;
\end{align}
this will be our focus for the remainder of this section.

\subsection{Establishing \eqref{e:maxgoalDR}}\label{ss:replace}
The first remark is that on the support of each ``bubble" in 
\[ \Xi_{J}(\beta) = \sum_{\theta \in \Lambda_{v,s}(I_v)} \varphi(R |J|(\beta - \theta)), \]
$|\beta - \theta|$ is so small relative to scale that we may replace
\[ \Omega_{J,\delta}(\beta) e(\beta x) = \Psi_\delta\big( \Omega_J(\beta) e(\beta x)\big) \]
with
\[ \Omega_{J,\delta}(\theta) e(\theta x) = \Psi_\delta\big( \Omega_J(\theta) e(\theta x)\big) \]
on the support of $\varphi(R |J| (\cdot - \theta))$; below, we implicitly restrict to $x \in J$.

Indeed, it suffices to show that, with 
\[ \Lambda := \alpha^{-1} \Lambda_{v,s}(I_v) \subset [0,1/10],\]
we have the bound
\[ \big( \sum_{\theta \in \Lambda} \int_{|\beta - \theta| \leq R^{-1} |J|^{-1}} |\Psi_\delta(\Omega_J'(\alpha \beta) e(\alpha \beta x) ) - \Psi_\delta(\Omega_J'(\alpha \theta) e(\alpha \theta x) )|^2 \ d\beta \big)^{1/2} \ll t^2 R^{-1} |J|^{-1/2} \]

Since $\| \Psi_\delta \|_{\mathcal{C}^1} \lesssim 1 $ is uniformly in $\delta > 0$, we reduce to showing 
\[ \big( \sum_{\theta \in \Lambda_{v,s}(I_v)} \int_{|\beta - \theta| \leq R^{-1} |J|^{-1}} |\Omega_J(\beta) e(\beta x)  - \Omega_J(\theta) e(\theta x) )|^2 \ d\beta \big)^{1/2} \ll t^2 R^{-1} |J|^{-1/2}; \]
if we consolidate
\[ G_J(t) := \frac{1}{|J|} \sum_{m \in J} g(m) e(-(m-x)t), \]
a trigonometric polynomial of degree $O(|J|)$,
then we are interested in showing that 
\begin{align*}
    \sum_{\theta \in \Lambda_{v,s}(I_v)} \int_{|\beta - \theta| \leq R^{-1} |J|^{-1}} |G_J(\beta) - G_J(\theta)|^2 \ d\beta \ll t^4 R^{-2} |J|^{-1}.
\end{align*}
But, if $V_J$ is a degree $O(|J|)$ trigonometric polynomial, see \eqref{e:rep}, with 
\begin{align}\label{e:VJ} \mathbf{1}_{5|J|} \leq V_J^{\vee} \leq \mathbf{1}_{10|J|},\end{align}
then we can split
\begin{align*}
    |G_J(\beta) - G_J(\theta)| &\lesssim \| (\partial V_J) * G_J(t) \|_{L^{\infty}(\theta + B(1/R|J|))} \cdot \frac{1}{R|J|} \\
    & \qquad \leq \| (\partial V_J) * (G_J \cdot \mathbf{1}_{\theta + B(R^{1/2}/|J|)}) \|_{L^{\infty}(\theta + B(1/R|J|))} \cdot \frac{1}{R|J|} \\
    & \qquad \qquad + \| (\partial V_J) * (G_J \cdot \mathbf{1}_{(\theta + B(R^{1/2}/|J|) )^c}) \|_{L^{\infty}(\theta + B(1/R|J|))}  \cdot \frac{1}{R|J|};
\end{align*}
we will prove
\begin{align*}
    |G_J(\beta) - G_J(\theta)| \lesssim_A \frac{|J|^{1/2}}{R}  \cdot \| G_J \cdot \mathbf{1}_{\theta + B(R^{1/2}/|J|)} \|_{L^2(\mathbb{T})} + R^{-A}.
\end{align*}
The local contribution is the main term, which we estimate
\begin{align*}
   & \| (\partial V_J) * (G_J \cdot \mathbf{1}_{\theta + B(R^{1/2}/|J|)}) \|_{L^{\infty}(\theta + B(1/R|J|))}  \cdot \frac{1}{R|J|} \\
& \qquad \lesssim  \| \partial V_J \|_{L^2(\mathbb{T})} \cdot \| G_J \cdot \mathbf{1}_{\theta + B(R^{1/2}/|J|)} \|_{L^2(\mathbb{T})}  \cdot \frac{1}{R|J|} \\
    & \qquad \qquad \qquad \lesssim \frac{|J|^{1/2}}{R}  \cdot \| G_J \cdot \mathbf{1}_{\theta + B(R^{1/2}/|J|)} \|_{L^2(\mathbb{T})}.
\end{align*}
For the global contribution, by \eqref{e:VJ}, we may bound
\[ \| \partial_t^{\gamma} V_J^{\vee}(t) \|_{L^{1}(\mathbb{R})} \lesssim_\gamma |J|^{1-\gamma} \]
for sufficiently many $\gamma \geq 0$, so that 
\[ |V_J(\beta)| \lesssim_A |J| \cdot ( 1 + |J| \cdot  |\beta| )^{-A} \]
by the Riemann Lebesgue Lemma, and thus whenever $\beta \in \theta + B(1/R|J|)$, we may bound
\begin{align*}
    |\int \partial V_J(\beta - t) \cdot (G_J \cdot \mathbf{1}_{(\theta + B(R^{1/2}/|J|) )^c})(t) \ dt| &\lesssim \int_{(\theta + B(R^{1/2}/|J|) )^c} |\partial V_J(\beta - t)| \ dt  \\
    & \qquad \lesssim_A |J| \cdot R^{-A},
\end{align*}
so that
\[ \| (\partial V_J) * (G_J \cdot \mathbf{1}_{(\theta + B(R^{1/2}/|J|) )^c}) \|_{L^{\infty}(\theta + B(1/R|J|))} \cdot \frac{1}{R|J|} \lesssim_A R^{-A}.\]
The total contribution, is therefore
\begin{align*}
    &\sum_{\theta \in \Lambda_{v,s}(I_v)} \int_{|\beta - \theta| \leq R^{-1} |J|^{-1}} |G_J(\beta) - G_J(\theta)|^2 \ d\beta \\
    & \qquad \lesssim \frac{|J|}{R^2}  \cdot  \sum_{\theta \in \Lambda_{v,s}(I_v)} \int_{|\beta - \theta|\leq R^{-1} |J|^{-1}} \| G_J \|_{L^2(\theta + B(R^{1/2}/|J|))}^2 \ d\beta  + \sum_{\theta \in\Lambda_{v,s}(I_v)} \int_{|\beta - \theta| \leq R^{-1} |J|^{-1}} O_A(R^{-A}) \ d\beta\\
    & \qquad \lesssim \frac{1}{R^3} \cdot \sum_{\theta \in \Lambda} \| G_J \|_{L^2(\theta + B(R^{1/2}/|J|))}^2 +O_A(R^{-A}  \cdot |J|^{-1})  \\
    & \qquad \qquad \lesssim R^{-3} \cdot \| G_J \|_{L^2(\mathbb{T})}^2 +O_A(R^{-A} \cdot |J|^{-1}) \\
    & \qquad \qquad \qquad \lesssim R^{-3} \cdot |J|^{-1}
\end{align*}
since 
\[ \min_{\theta \neq \theta' \in \Lambda} |\theta - \theta'| \gg 2^R  \cdot |J|^{-1} \]
for $J \in \mathcal{B}_s(I_v)$.

Thus, we have reduced \eqref{e:maxgoalDR} to establishing the following Proposition.
\begin{proposition}
    For any $s \leq W$, with $\Lambda := \alpha^{-1} \Lambda_{v,s}(I_v)$,
    \begin{align*}
        \sum_{I_v \in T_v^{\text{max}}} |\{ I_v : \sup_{\mathcal{B}_s(I_v)} |\sum_{\Lambda} e(\theta x + \theta \alpha x) \rho_{J}*f_\theta(x) \cdot \Omega_{J,\delta}(\theta) | \gg t \}| \lesssim \delta^{c} \cdot H
    \end{align*}
    where
    \[ \widehat{f_\theta}(\beta) := \phi(2^{-m_s-R} \beta) \cdot \hat{f}(\beta + \alpha \theta) \]
    with $\rho_J$ defined via the Fourier transform
    \[ \widehat{ \rho_{J} }(\beta) := \varphi(\alpha R|J|\beta).\]
\end{proposition}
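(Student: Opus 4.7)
The plan is to run the entropic bilinear argument from \S\ref{s:ex} (``Combining Scales, Take Two'') in the present setting, identifying the bilinear model there with the current sum over $\Lambda$. Concretely, I view $\Omega_{J,\delta}(\theta)$ (which is of magnitude $\approx \delta$ on the relevant level set) as playing the role of the coefficients $c_\theta$ and of the ``$\mathbb{E}_I\mathrm{Mod}_{-\theta}g$'' factor, while the smoothed piece $\rho_J * f_\theta(x)$ plays the role of ``$\mathbb{E}_I\mathrm{Mod}_{-\theta}f$.'' Since $\Lambda$ is $\geq 2^R\cdot\max_{J\in\mathcal{B}_s(I_v)}|J|^{-1}$-separated, Lemma \ref{l:rsum} gives the uniform vector-valued bound $\sum_{\theta\in\Lambda}|\rho_J*f_\theta(x)|^2\lesssim 1$ whenever $|f|\leq 1$, which is the workhorse behind the square-function reductions.

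First I would partition $I_v$ into dyadic pieces $P$ of length $L\approx 2^R\cdot\max_{J\in\mathcal{B}_s(I_v)}|J|^{-1}$, so that $\Lambda$ is $\gtrsim |P|^{-1}$-separated and every $J\in\mathcal{B}_s(I_v)$ hitting $P$ contains $P$. On each $P$, I would apply the $\ell^1(\Lambda)$-entropy estimate \eqref{e:ent00}--\eqref{e:ent01}: choose a minimal $\epsilon$-cover of
\[
X_P:=\Big\{\bigl(\rho_J*f_\theta(x_P)\cdot\Omega_{J,\delta}(\theta)\bigr)_{\theta\in\Lambda}:J\in\mathcal{B}_s(I_v),\ P\subset J\Big\}\subset\ell^1(\Lambda),
\]
so that
\[
\Big\|\sup_{P\subset J\in\mathcal{B}_s(I_v)}\Big|\sum_\Lambda e(\theta x+\theta\alpha x)\rho_J*f_\theta\cdot\Omega_{J,\delta}(\theta)\Big|\Big\|_{\ell^2(P)}^2\lesssim \epsilon^2|P|+\delta^2\cdot D_\epsilon(P)\cdot|P|.
\]
The Cauchy--Schwarz splitting \eqref{e:cover} then bounds $D_\epsilon(P)\leq N_\epsilon^1(P)\cdot N_\epsilon^2(P)$, where $N_\epsilon^i(P)$ are the $\ell^2(\Lambda)$-jump-counting numbers of the vector-valued ``$f$-averages'' $\{\rho_J*f_\theta\}$ and ``$g$-averages'' $\{\Omega_{J,\delta}(\theta)\}$ across the scales $P\subset J$.

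Next, I would apply Corollary \ref{c:sep}/Corollary \ref{c:jump0} to each of $N_\epsilon^1,N_\epsilon^2$. Since $\Lambda$ is $2^R|J|^{-1}$-separated at each scale, the vector-valued jump inequality gives $\sum_{P\subset I_v}N_\epsilon^i(P)\cdot|P|\lesssim \epsilon^{-2}\cdot|I_v|$ (the extra factor $|\Lambda|^{1/2}/A$ is harmless since $A=2^R\gg |\Lambda|^{1/2}\lesssim R\delta^{-1}$). Summing over $P\subset I_v$ yields
\[
\Big\|\sup_{J\in\mathcal{B}_s(I_v)}\Big|\sum_\Lambda e(\theta x+\theta\alpha x)\rho_J*f_\theta\cdot\Omega_{J,\delta}(\theta)\Big|\Big\|_{\ell^2(I_v)}^2\lesssim \bigl(\epsilon^2+\delta^2\epsilon^{-4}\bigr)\cdot|I_v|,
\]
and optimizing $\epsilon=\delta^{1/3}$ gives a bound of $O(\delta^{2/3}|I_v|)$. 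Chebyshev at level $t$ (recall $t$ is polynomial in $\log^{-1}(1/\delta)$ in the present regime, hence $\ll \delta^{-\alpha_0/10}$), followed by summing over $I_v\in T_v^{\max}$ using $\sum_v|I_v|\leq H$, delivers the desired $\delta^c\cdot H$ bound for some absolute $c>1/100$.

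The main obstacle will be checking that the entropic splitting \eqref{e:cover} goes through cleanly here: the ``$g$-coordinate'' $\{\Omega_{J,\delta}(\theta)\}_\theta$ is not an honest average of a $1$-bounded function on $J$, but Lemma \ref{l:rsum} applied to the polynomial $P_J(\theta)=|J|\Omega_J(\theta)$ (together with the $\mathcal{C}^1$-bound $\|\Psi_\delta\|_{\mathcal{C}^1}\lesssim 1$) recovers the uniform $\ell^2(\Lambda)$-control needed to make the Cauchy--Schwarz splitting and Corollary \ref{c:sep} applicable to the ``$g$-side.'' All other routine adjustments (boundary branches, error in replacing $\Omega'_{J,\delta}$ by $\Omega_{J,\delta}$ on the support of $\Xi_J$, etc.) have already been absorbed in the reduction of \S\ref{ss:replace}.
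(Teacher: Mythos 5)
Your plan is structurally the same as the paper's proof: localize to dyadic pieces $P$ of length $2^{-m_s}$, freeze $x\mapsto x_P$ in the $\rho_J*f_\theta$ factor, split the local supremum via $\ell^1(\Lambda)$-entropy, factor $D_\epsilon(P)\leq N^1_\epsilon(P)\cdot N^2_\epsilon(P)$ by Cauchy--Schwarz, then sum over $P$, apply jump-counting estimates, and Chebyshev at level $t$. However, your summation step contains a genuine gap. The jump-counting bounds (Lemma \ref{l:martjump}, Corollaries \ref{c:sep} and \ref{c:jump0}) give $\ell^1$-type control
\[ \sum_{P\subset I_v} N^i_\epsilon(P)\,|P|\lesssim\epsilon^{-2}\,|I_v|,\qquad i=1,2, \]
on each factor separately, but that does \emph{not} bound the product sum $\sum_P N^1_\epsilon(P)\,N^2_\epsilon(P)\,|P|$ by $\epsilon^{-4}|I_v|$, as you assert: two non-negative functions with bounded integral can have arbitrarily large product integral (concentrate both on a single small $P$), and Cauchy--Schwarz would require $\ell^2$-control on the $N^i_\epsilon$, which the jump-counting lemmas do not supply. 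The paper decouples the two factors by first excising an exceptional set $Y_\epsilon$ of measure $\lesssim\epsilon^2 H$ on which $N^1_\epsilon$ exceeds $\epsilon^{-4}$ (Chebyshev applied to the $\ell^1$-bound), using the resulting \emph{pointwise} bound $N^1_\epsilon\lesssim\epsilon^{-4}$ away from $Y_\epsilon$, and only then applying the $\ell^1$-bound to $N^2_\epsilon$. This yields the contribution $\delta^2\epsilon^{-6}|I_v|$ rather than the heuristic $\delta^2\epsilon^{-4}|I_v|$ from \S\ref{s:ex}; the optimal $\epsilon$ becomes $\delta^{1/4}$ rather than $\delta^{1/3}$, and one still ends with a power $\delta^c$, $c>0$, after absorbing the Chebyshev loss $t^{-2}\lesssim\log^{O(1)}(1/\delta)$, so the final conclusion is unaffected. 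That excision step is essential and needs to be added to your argument. (Minor: your choice ``$|P|\approx 2^R\cdot\max_{J\in\mathcal{B}_s(I_v)}|J|^{-1}$'' is not a length; the correct choice is $|P|=2^{-m_s}$, which both makes $\Lambda$ sufficiently separated relative to $|P|^{-1}$ and ensures every $J\in\mathcal{B}_s(I_v)$ meeting $P$ contains it.) The other items you flag --- the $g$-side normalization via Lemma \ref{l:rsum} and the uniform $\mathcal{C}^1$-bound on $\Psi_\delta$, the harmlessness of $|\Lambda|^{1/2}/2^R$, the sum over $T_v^{\text{max}}$ --- are handled in the paper just as you anticipate.
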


\begin{proof}
   We work locally, on intervals of length $2^{-m_s}$: indeed, whenever $x,x' \in D P$, an interval with $|P| = 2^{-m_s}$, 
\[ |\rho_Q*f_\theta(x) - \rho_Q*f_\theta(x')| \lesssim \frac{|P|}{|Q|R} \leq 2^{-R} \cdot (\frac{|P|}{|Q|})^{1/2} 
\]
so that 
\begin{align*}
&    \sum_{\theta \in \Lambda} e(\theta x + \theta \alpha x) \rho_Q*f_\theta(x) \cdot \Psi_\delta( \frac{1}{|Q|} \sum_{m \in Q} g(m) e(- m \alpha \theta)) \\
    & \qquad = 
\sum_{\theta \in \Lambda} e(\theta x + \theta \alpha x) \rho_Q*f_\theta(x_P) \cdot \Psi_\delta( \frac{1}{|Q|} \sum_{m \in Q} g(m) e(- m \alpha \theta)) + O(2^{-R} (\frac{|P|}{|Q|})^{1/2})
\end{align*}
for any $x_P \in D P$ that we wish; in particular, if, for any $\{x_P\}$, we expand
\begin{align*}
    &|\{ (I_j \smallsetminus X_{j+1} ) \cap \overline{\mathcal{D}} : \max_{x \in Q \in \mathcal{D}}   | \sum_{\theta \in \Lambda} \int \varphi(\alpha R|Q|(\beta - \theta)) \Omega_{Q,\delta}'(\alpha \theta) e(\theta \alpha x) \hat{f}(\beta) e(x  \beta) \ d\beta | \gg t \}| \\
    &  \leq \sum_{|P| = 2^{-m_s}, \ P \subset I_j} | \{ P \cap \overline{\mathcal{D}} : \max_{x _P \in Q \in \mathcal{D}}   | \sum_{\theta \in \Lambda} \int \varphi(R|Q| \alpha (\beta - \theta)) \Omega_{Q,\delta}'(\alpha \theta) e(\theta \alpha x) \hat{f}(\beta) e( x_P \beta) \ d\beta | \gg t \}|
\end{align*}
then for any choices of points $x_P \in D P$, we may bound the above
\begin{align*}
    \sum_{|P| = 2^{-m_s}, \ P \subset I_j} |\{ P \cap \overline{\mathcal{D}} : \max_{P \subset Q \in \mathcal{D}}   | \sum_{\theta \in \Lambda} \rho_Q*f_\theta(x_P) \Omega_{Q,\delta}(\alpha \theta) e(\theta x_P + \theta \alpha x) | \gg t \}|
\end{align*}
We now turn to $\ell^2$-methods to estimate, for each $P$
\begin{align*}
   & |\{ P \cap \overline{\mathcal{D}} : \max_{P \subset Q \in \mathcal{D}}   | \sum_{\theta \in \Lambda} \rho_Q*f_\theta(x_P) \Omega_{Q,\delta}'(\alpha \theta) e(\theta x_P + \theta \alpha x) | \gtrsim t \}| \\
   & \qquad \lesssim t^{-2} \cdot \| \eta_P(x) \max_{P \subset Q \in \mathcal{D}}    | \sum_{\theta \in \Lambda} \rho_Q*f_\theta(x_P) \Psi_\delta\big( \Omega_{Q}'(\alpha \theta) \big) e(\theta x_P + \theta \alpha x)\|_{\ell^2(\mathbb{Z})}^2,
\end{align*}
where 
\begin{align}\label{e:etaP} \mathbf{1}_{DP} \leq \eta_P \leq \mathbf{1}_{3DP} \end{align}
is smooth, with $|\partial^j \eta_P| \lesssim |P|^{-j}$ for sufficiently many $j$.

We rephrase the foregoing in the language of entropy:

For each $Q$, we can express
\begin{align*}
    \sum_{\theta \in \Lambda} \rho_Q*f_\theta(x_P) \Psi_\delta\big( \Omega_{Q}(\alpha \theta)  \big) e(\theta x_P + \alpha \theta x) =: \sum_{\theta \in \Lambda} a_{Q;x_P}(\theta) b_{Q}(\theta) \psi(\delta^{-1} |b_Q(\theta)|) e(\theta x_P + \alpha \theta x)
\end{align*}
for
\[ a_{Q;x}(\theta) := \rho_Q*f_\theta(x) = \sum_n \tilde{\rho_Q}(x-n) f(n) e(-n\alpha \theta) \]
with
\[ \widehat{ \tilde{\rho_Q} } := \widehat{\rho_Q} \cdot \phi(2^{-m_s-R} \cdot)\]
satisfies all the same estimates as does $\rho_Q$, e.g.\
\begin{align}\label{e:tildedecay} |\tilde{\rho_Q}(x)| \lesssim (R|Q|)^{-1} \cdot (1 + |x|/R|Q|)^{-100} \end{align}
so that
\begin{align}\label{e:estonaQ}
\| a_{Q;x} \|_{\ell^2(\Lambda)} &\leq \big( \sum_{\Lambda} |\sum_n \tilde{\rho_Q}(x-n) \cdot \mathbf{1}_{RQ}(x-n) f(n) e(-n\alpha \theta)|^2 \big)^{1/2} \\
& \qquad + \sum_{k \geq 1} \big( \sum_{\Lambda} |\sum_n \tilde{\rho_Q}(x-n) \cdot \mathbf{1}_{\text{dist}(\cdot, RQ) \approx 2^k R |Q|}(x-n) f(n) e(-n\alpha \theta)|^2 \big)^{1/2} \\
& \qquad \qquad \lesssim 1,
\end{align}
by Lemma \ref{l:rsum}, using the rapid decay of $\tilde{\rho_Q}$.
Similarly, 
\begin{align*} 
b_Q(\theta) &:= \frac{1}{|Q|} \sum_{m \in Q} g(m) e(-m \alpha \theta) \\
& \qquad = \frac{1}{|Q|} \sum_{m \leq |Q|} g(m + c_Q) e( - m \alpha \theta) e(- \alpha \theta c_Q)
\end{align*}
is a trigonometric polynomial in $\alpha \theta$ of degree $|Q| \gg 2^{-m_s + R}$, and $c_Q \in Q$ is the left end-point.

We abbreviate
\[ \vec{a_{Q;x}}, \ \vec{b_Q} \psi(\delta^{-1} |\vec{b_Q}|)\]
to denote
\[ \{ a_{Q;x}( \theta) : \theta \in \Lambda \}\]
and similarly
\[ \vec{b_Q} \psi(\delta^{-1} |\vec{b_Q}|) = \{ b_Q(\theta) \psi(\delta^{-1} |b_Q(\theta)|) : \theta \in \Lambda \}.\]

Note that
\[ \| \vec{a_{Q;x_P}} \eta_P \|_{\ell^2(\mathbb{Z})}^2 \lesssim |P| \cdot \max_{x \in 3P} \| \vec{a_{Q;x}} \|_{\ell^2(\Lambda)}^2 \lesssim |P| \]
by \eqref{e:estonaQ}, see \eqref{e:etaP}.
Similarly,
\begin{align}\label{e:bbd}
    \| \vec{b_Q} \|_{\ell^2(\Lambda)}^2 \lesssim |Q| \cdot \| \frac{1}{|Q|} \sum_{m \leq |Q|} g(m + c_Q) e( - m \beta) e(- c_Q \beta) \|_{L^2(\mathbb{T})}^2 \lesssim 1.
\end{align}

With this in mind, we bound, setting
\[ \Lambda := \alpha^{-1} \Lambda_s \]
\begin{align*}
&\| \eta_P(x) \max_{Q \in \mathcal{D}, \ Q \supset P}   | \sum_{\theta \in \Lambda} \rho_Q*f_\theta(x_P) \Psi_\delta\big( \Omega_{Q}'(\alpha \theta) \big) e(\theta x_P + \alpha \theta x)  | \|_{\ell^2(\mathbb{Z})}^2 \\
& \qquad \leq \| \eta_P(x) \max_{\vec{a_Q;x_P}, \vec{b_Q}} |\sum_{\theta \in \Lambda} a_{Q;x_P}(\theta) b_Q(\theta) \psi(\delta^{-1} |b_Q(\theta)|) e(\theta x_P + \alpha \theta x) | \|_{\ell^2(\mathbb{Z})}^2;
    \end{align*}
if we let
\[ D_\epsilon(P) := \text{Ent}_1( \vec{a_{Q;x_P}}(\theta) \vec{b_Q}(\theta) \psi(\delta^{-1} |b_Q(\theta)|) : Q \supset P ; \epsilon) \]
denote the $\ell^1(\Lambda)$-entropy at altitude $\epsilon > 0$, see \eqref{e:Ent}, then for any $Q$, we may bound
\begin{align*}
    &|\sum_{\theta \in \Lambda} a_{Q;x_P}( \theta) b_Q( \theta) \psi(\delta^{-1} |b_Q(\theta)|) e( \theta x_P  + \alpha \theta x)| \\
    & \qquad \leq O(\epsilon) + \big( \sum_{i \leq D_\epsilon} |\sum_{\theta \in \Lambda} a_{Q_i;x_P}(\theta) \cdot b_{Q_i}(\theta) \cdot  \psi(\delta^{-1} |b_{Q_i}(\theta)|) e( \theta x_P + \alpha \theta x) |^2 \big)^{1/2} 
\end{align*}
for $\{ Q_i \}$ which realize the $\ell^1(\Lambda)$-entropy at altitude $\epsilon$; see \eqref{e:ent00}.

In particular, we may estimate
\begin{align*}
    &\| \eta_P(x) \max_{\vec{a_{Q;x_P}}, \vec{b_Q}} |\sum_{\theta \in \Lambda} e( \theta x_P + \alpha \theta x) a_{Q;x_P}(\theta) \cdot b_Q(\theta) \cdot \psi(\delta^{-1} |b_Q(\theta)|)| \|_{\ell^2(\mathbb{Z})}^2 \\
    & \qquad \lesssim \epsilon^2 |P| + \delta^2 D_\epsilon(P) \cdot |P|
\end{align*}
as
\[ \max_{\vec{a_{Q;x_P}}, \vec{b_Q}} \| \eta_P(x) \cdot \sum_{\theta \in \Lambda} e(\theta x_P + \alpha \theta x) a_{Q_i;x_P}(\theta) b_{Q_i}(\theta) \psi(\delta^{-1} |b_{Q_i}(\theta)|) \|_{\ell^2(\mathbb{Z})}^2 \lesssim \delta^2 \cdot |P|
\]
by \eqref{e:estonaQ} and \eqref{e:bbd}.

But, if we let
\[ N_\epsilon( \vec{a_{Q;x_P}} : Q \supset P ), \; \; \;  N_\epsilon( \vec{b_Q} : Q \supset P ) \]
denote the $\ell^2(\Lambda)$-entropy of the pertaining vectors at altitude $\epsilon > 0$, then we may bound
\begin{align}
    D_\epsilon(P) \leq N_{\epsilon/10}( \vec{a_{Q;x_P}} : Q \supset P ) \cdot N_{\epsilon/10}( \vec{b_Q} : Q \supset P ) 
\end{align}
for a local contribution of
\begin{align*}
   & \| \eta_P(x) \max_{\vec{a_{Q;x_P}}, \vec{b_Q}} |\sum_{\theta \in \Lambda} e( \theta x_P+\alpha \theta x) a_{Q;x_P}(\theta) b_Q(\theta) \psi(\delta^{-1} |b_Q(\theta)|)| \|_{\ell^2(\mathbb{Z})}^2 \\
   & \qquad \lesssim \epsilon^2 \cdot |P| + \delta^2     \cdot N_{\epsilon/10 }( \vec{a_{Q;x_P}} : Q \supset P ) \cdot N_{\epsilon/10}( \vec{b_Q} : Q \supset P ) \cdot |P|,
\end{align*}
so summing over $\{|P| = 2^{-m_s}, \ P \subset I_j\}$ and $\{ I_v \in T_v^{\text{max}} \}$ yields a contribution of 
\begin{align*}
    \epsilon^2 \cdot H + \delta^2  \sum_{I_j \in T_j^{\text{max}}} \sum_{P \subset I_j} N_{\epsilon/10}( \vec{a_{Q;x_P}} : Q \supset P ) \cdot N_{\epsilon/10}( \vec{b_Q} : Q \supset P ).
\end{align*}
In particular, away from a union of dyadic intervals of side length $2^{-m_s}$, $Y_\epsilon \subset [H]$, of size
\[ |Y_\epsilon| \leq \epsilon^2 \cdot H \]
we may bound
\[ N_{\epsilon/10}( \vec{a_{Q;x_P}} : Q \supset P ) \lesssim  \epsilon^{-4} \]
by vector-valued jump-counting estimates, Corollary \ref{c:jump0}. We excise $Y_\epsilon$ from consideration (before passing to $\ell^2(\mathbb{Z})$-based estimates).

Specifically, we estimate
\begin{align*}
&  \sum_{I_j \in T_j^{\text{max}}} \sum_{|P| = 2^{-m_s}, \ P \subset I_j} |\{ P \cap \overline{\mathcal{D}} : \max_{P \subset Q \in \mathcal{D}}   | \sum_{\theta \in \Lambda} \rho_Q*f_\theta(x_P) \Omega_{Q,\delta}'(\alpha \theta) e(\theta x_P+\alpha \theta x) | \gtrsim t \}| \\
& \qquad \leq |Y_\epsilon| + \sum_{I_j \in T_j^{\text{max}}} \sum_{|P| = 2^{-m_s}, \ P \subset I_j \smallsetminus Y_\epsilon} |\{ P \cap \overline{\mathcal{D}} : \max_{P \subset Q \in \mathcal{D}}   | \sum_{\theta \in \Lambda} \rho_Q*f_\theta(x_P) \Omega_{Q,\delta}'(\alpha \theta) e(\theta x_P+\alpha \theta x) | \gtrsim t \}| \\
& \qquad \qquad \lesssim 
\epsilon^2 \cdot H + \delta^2  \cdot \epsilon^{-6} \cdot \sum_{I_j \in T_j^{\text{max}}} \sum_{P \subset I_j} \epsilon^2 \cdot  N_{\epsilon/10}( \vec{b_Q} : Q \supset P ) \cdot |P|
\end{align*}
But we may bound
\[ 
\sum_{P \subset I_j} \epsilon^2 \cdot  N_{\epsilon}( \vec{b_Q} : Q \supset P ) \cdot |P| \leq \sum_{x \in I_j} \epsilon^2 \cdot N_\epsilon( \big( \mathbb{E}_Q \text{Mod}_\theta g\big)_{\theta \in  \Lambda_s} : x \in Q, \ |Q| \gg 2^{-m_s + R} ) \]
so, upon summing over $I_v \in T_v^{\text{max}}$, we arrive at the bound
\[ \epsilon^2  \cdot H + \delta^2  \cdot  \epsilon^{-6}  \cdot \sum_{y \in [H]} \epsilon^2 N_{\epsilon} ( \big( \mathbb{E}_Q \text{Mod}_\theta g \big)_{\theta \in  \Lambda_s} : y \in Q, \ |Q| \gg 2^{-m_s + R} )\]
which is bounded by
\[ (\epsilon^2 + \delta^2 \epsilon^{-6} ) \cdot H \lesssim \delta^{1/10} \cdot H  \]
by Corollary \ref{c:sep}.
\end{proof}

\section{Proof of the Maximal Estimate, Return Times}\label{s:RTmax}
In this section we establish \eqref{e:maxRT}, the second statement in Proposition \ref{p:max0}. We maintain the numerology of $V = \delta^{-2 \alpha_0}, \epsilon = \delta^{1/10}$ etc.

So, with $I_0 = [N]$, we extract a $V$ forest, and discard
\[ |\bigcup_{\mathcal{D}_V(I_0)} I | \lesssim t^{-4} V^{-1} \delta^{-o(1)} N, \]
which is acceptably small.

We need to show that for each $v < V$
\begin{align}\label{e:step0}
    \Big|\{ a \in \overline{ \mathcal{D} } \cap I_v \smallsetminus X_{v+1} : |\{ [H] : \sup_{N} |\int \Omega_{J_N(a),\delta}'(\alpha \beta) \hat{f}(\beta) e(  \beta x + \alpha a \beta) \ d\beta | \gg t \}| \gg t^B H \}\Big| \ll \delta^{c} \cdot |I_v|,
\end{align} 
as this estimate is strong enough to absorb the loss of $V = \delta^{-2 \alpha_0}$; above
\[ X_v := \bigcup_{I \in \mathcal{D}_v(I_0)} I = \bigcup_{T_v^{\text{max}}} I_v \subset [N]. \]
Below, we will implicitly restrict all $a \in [N]$ to $\overline{\mathcal{D}}( [N] )$, with the understanding that all arguments are uniform over each choice of dyadic grid.

Now, by Lemma \ref{l:cont}, we reduce to \eqref{e:step0}
\begin{align}\label{e:step1}
    |\Big\{ a \in I_v \smallsetminus X_{v+1} : |\{ [H] : \sup_{a \in J \in \mathcal{D}_v(I_v)} |\int \Omega_{J,\delta}'(\alpha \beta) \hat{f}(\beta) e( (\alpha a+x) \beta) \ d\beta | \gg t \}| \gg t^B H \Big\}| \ll \delta^{c} |I_v|
\end{align} 
uniformly in $I_v \in T_v^{\text{max}} \subset \mathcal{D}([N])$.

We now reduce to $V$-branches, first discarding the boundary branch $\mathcal{B}_\infty(I_v)$; this estimate follows by the union bound:
if 
\[ E := \{ |I| : I \in \mathcal{B}_\infty(I_v) \} \subset 2^{k \Delta} \]
is a set of times of size $\lesssim RW$, then 
\[ |\{ [H] : \sup_{a \in J \in \mathcal{D}_v(I_v), \ |J| \in E} |\mathcal{F}_\delta^1(J;x,a) | \gg t \}| \lesssim \delta^2 RW  t^{-2} \cdot H \ll  \delta \cdot H; \]
in particular, we may replace \eqref{e:step1} with 
\begin{align}\label{e:step2}
    | \Big\{ a \in I_v \smallsetminus X_{v+1} : |\{ [H] : \sup_{a \in J \in \mathcal{B}_s(I_v)} |\mathcal{F}_\delta^1(J;x,a)  | \gg t \}| \gg t^B H \Big\}| \ll \delta^{c} \cdot |I_v|
\end{align} 
provided our bounds are uniform in $s \leq W$; and we may further replace
\[ \Omega_{J,\delta} \longrightarrow \Omega_{J,\delta} \Xi_J^0 \]
to reduce \eqref{e:step2} to
\begin{align}\label{e:step3}
    | \Big\{ a \in  I_v \smallsetminus X_{v+1} : |\{ [H] : \sup_{a \in J \in \mathcal{B}_s(I_v)} |\mathcal{F}_\delta^2(J;x,a) | \gg t \}| \gg t^B H \Big\}| \ll \delta^{c} |I_v|.
\end{align} 
Since
\[ \sum_{J : a \in J \in \mathcal{B}_s(I_v)} \|\mathcal{F}_\delta^2(J;x,a) - \mathcal{F}_\delta^3(J;x,a)\|_{\ell^2([H])}^2 \lesssim \delta^2 \log R \cdot H \ll \delta H \]
we may replace \eqref{e:step3} with 
\begin{align}\label{e:step4}
    | \Big\{ a \in I_v \smallsetminus X_{v+1} : |\{ [H] : \sup_{a \in J \in \mathcal{B}_s(I_v)} |\mathcal{F}_\delta^3(J;x,a) | \gg t \}| \gg t^B H \Big\}| \ll \delta^{c} |I_v|.
\end{align} 
By arguing as in Subsection \ref{ss:replace} -- formally substituting the variable $x \longrightarrow a$ but leaving the remainder of the argument untouched -- we further reduce to proving, with
\[ \Lambda := \alpha^{-1} \Lambda_{v,s}(I_v) \]
\begin{align}\label{e:step5}
    &| \Big\{ a \in I_v \smallsetminus X_{v+1} : |\{ [H] : \sup_{a \in J \in \mathcal{B}_s(I_v)} |\sum_{\theta \in \Lambda} e(x \theta) \cdot e( \alpha a \theta) \rho_{RJ}*f_\theta(x) \cdot \Omega_{J,\delta}'(\alpha \theta) | \gg t \}| \gg t^B H \Big\}|\\
    & \qquad \equiv  | \Big\{ a \in I_v \smallsetminus X_{v+1} : |\{ [H] : \sup_{a \in J \in \mathcal{B}_s(I_v)} |\sum_{\theta \in \Lambda} e(x \theta) \cdot e( \alpha a \theta) \rho_{RJ}*f_\theta(x) \cdot \Omega_{J,\delta}(\alpha \theta) | \gg t \}| \gg t^B H \Big\}|\\
    & \qquad \qquad \ll \delta^{c} \cdot |I_v|. \notag
\end{align} 
We now define the exceptional set
\begin{align}
    E_\delta := \bigcup_{v \leq V} \bigcup_{I_v \in T_v^{\text{max}}} \bigcup_{s \leq W} \{ a \in I_v : \epsilon^2 \cdot N_{\epsilon/20} ( \big( \mathbb{E}_Q \text{Mod}_\theta g\big)_{\theta \in \Lambda_{v,s}(I_v)}(a) : |Q| \gg 2^{-m_s + R} ) \geq \delta^{-1/2} \}
\end{align}
which has cardinality $\ll \delta^{1/10} \cdot N$. And, away from this exceptional set, we may bound
\begin{align*}
    &|\{ [H] : \sup_{a \in J \in \mathcal{B}_s(I_v)} |\sum_{\theta \in \Lambda} e(x \theta) \cdot e( \alpha a \theta) \rho_{RJ}*f_{\theta}(x) \cdot \Omega_{J,\delta}'(\alpha \theta) | \gg t \}| \\
    & \qquad \lesssim t^{-2} \sum_{P \subset [H] : |P| = 2^{-m_s}} \| \eta_P(x) \cdot \sup_{a \in J \in \mathcal{B}_s(I_v)} |\sum_{\theta \in \Lambda } e(x \theta) \cdot e(\alpha a \theta) \rho_{RJ}*f_{\theta}(x) \cdot \Omega_{J,\delta}'(\alpha \theta) | \|_{\ell^2(\mathbb{Z})}^2 \\
    & \qquad \qquad \lesssim t^{-2} \cdot \Big( \epsilon^2 \cdot H + \delta^2 \cdot \epsilon^{-4} \cdot \delta^{-1/2} \sum_{x \in [H]} \epsilon^2 N_{\epsilon/10}( \big( \rho_Q*f_{\theta}(x) \big)_{\theta \in \Lambda} : |Q| \gg 2^{-m_s+R} ) \Big) \\
    & \qquad \qquad \qquad \lesssim \delta^{1/10} \cdot H.
\end{align*}
In particular, we have shown that
  \begin{align*}
  &\{ a \in X_v \smallsetminus X_{v+1} : |\{ [H] : \sup_{N} |\int \Omega_{J_N(a),\delta}'(\alpha \beta) \hat{f}(\beta) e( (\alpha a+x) \beta) \ d\beta | \gg t \}| \gg t^B H \Big\} \\
    & \qquad \subset \bigcup_{I_v \in T_v^{\text{max}}} \bigcup_{s \leq W} E_s(I_v) 
    \\
    & \qquad \qquad \subset E_\delta,
    \end{align*}
where
\[ E_s(I_v) := \{ a \in I_v \smallsetminus X_{v+1} : |\{ [H] : \sup_{a \in J \in \mathcal{B}_s(I_v)} |\sum_{\theta \in \Lambda} e(x \theta)  e( \alpha a \theta) \rho_{RJ}*f_{\theta}(x)  \Omega_{J,\delta}(\alpha \theta) | \gg t \}| \gg t^B H \};\]
this completes the proof.

\section{Proof of the Oscillation Inequality, Double Recurrence}\label{s:DRosc}
The goal of this section is to prove the first statement in Proposition \ref{p:osc00}, namely \eqref{e:oscDR}.

So, let $K \lesssim_H 1$ be maximal subject to the constraint that there exist $M_0 < M_1 < \dots < M_K$ so that
\begin{align}
|\{ \overline{ \mathcal{D}} \cap [H] : \sup_{x \in J \in \mathcal{Q}_k} |\hat{\mathcal{F}}_\delta^1(J;x)| \gg t \}| \gg t^B H 
\end{align}
for each $k \leq K$, see \eqref{e:Q}. With $V = t^{-B^2}$, $L =K$, extract an $(L,V)$-forest, with tree tops
\[ \bigcup_{v < V} \bigcup_{T_v^{\text{max}}} I_v\]
and let $l(I_v)$ denote the unique set of intervals so that there exists $J \in Q_{l(I_v)} \cap \mathcal{B}_s(I_v)$ with
\[ J \subset I \subsetneq \hat{J} \]
for some $s < W$ (if it exists); in other words, $I_v \in T_v^{\text{max},l(I_v)}$.

Then observe that for each $s \in [W] \cup \{ \infty\}$ the function
\[ \sup_{x \in J \in (\mathcal{Q}_l \cap \mathcal{B}_s(I_v))} |\hat{\mathcal{F}}_\delta^j(J;x)| \leq 2 \cdot \mathbf{1}_{I_v} \]
is supported inside $I_v$, so that
\begin{align*}
    &\sum_{|l| \leq W}
    \sum_{v < V} \sum_{T_v^{\text{max}}} \sum_{s < W} \sup_{x \in J \in (\mathcal{Q}_{l(I_v) + l} \cap \mathcal{B}_s(I_v))} |\hat{\mathcal{F}}_\delta^j(J;x)|^2 \\
    & \qquad \lesssim W^2 \cdot \sum_{v < V} \sum_{T_v^{\text{max}}} \mathbf{1}_{I_v} \lesssim W^2 V \cdot \mathbf{1}_{I_0}.
\end{align*} 
In particular, for each $s \in [W] \cup \{ \infty\}$
\[ \sum_{x \in I_0} \Big( \sum_{|l| \leq W}
    \sum_{v < V} \sum_{T_v^{\text{max}}} \sum_{s < W} \sup_{x \in J \in (\mathcal{Q}_{l(I_v) + l} \cap \mathcal{B}_s(I_v))}|\hat{\mathcal{F}}_\delta^j(J;x)|^2 \Big) \lesssim W^2 V \cdot |I_0|, \]
independent of $K$.

As for the sum
\[ \sum_{v <V} \sum_{T_v^{\text{max}}} \sum_{|l-l(I_v)| \geq W} \sup_{x \in J \in \mathcal{Q}_l \cap \mathcal{B}_\infty(I_v)} |\hat{\mathcal{F}}^j_\delta(J;x)|^2, \]
we just bound
\[ \sum_{|l-l(I_v)| \geq W} \sup_{x \in J \in \mathcal{Q}_l \cap \mathcal{B}_\infty(I_v)} |\hat{\mathcal{F}}^j_\delta(J;x)|^2 \leq RW \cdot \sup_{x \in J \in \mathcal{B}_\infty(I_v)} |\hat{\mathcal{F}}^j_\delta(J;x)|^2 \leq RW \cdot \mathbf{1}_{I_v} \]
as there are only $O(RW)$ many scales that appear in $\mathcal{B}_\infty(I_j)$, and thus only $O(RW)$ many terms in the above sum that do not vanish identically.

In particular
\[ \sum_{v <V} \sum_{T_v^{\text{max}}} \sum_{|l-l(I_v)| \geq W} \sup_{x \in J \in \mathcal{Q}_l \cap \mathcal{B}_\infty(I_v)} |\hat{\mathcal{F}}^j_\delta(J;x)|^2 \leq RWV \cdot \mathbf{1}_{I_0},
\]
and since this last function has sum bounded by
\[ R WV |I_0|, \]
independent of $K$, we can discard this contribution.

In what follows, we will group scales as
\[ \{ \mathcal{Q}_l \cap \mathcal{B}_s(I_v) : I_v \in T_v^{\text{max}}, s \leq W, l \leq l(I_v) - W,\ v < V \}\]
so that when studying
\begin{align*}
    \{ \sup_{x \in J \in \mathcal{Q}_l \cap \mathcal{B}_s(I_v)} |\hat{\mathcal{F}}^j_\delta(J;x)| \gg t \}
\end{align*}
we may implicitly restrict
\begin{align}\label{e:rest} f \longrightarrow f \cdot \mathbf{1}_{3I_v}.
\end{align}

With $f$ so-restricted, we will prove that 
\begin{align*}
\sum_{x} \sum_{l \leq K, \ l(I_v) - W} \sup_{x \in J \in \mathcal{Q}_l \cap \mathcal{B}_s(I_v)} |\chi_t\big( \hat{\mathcal{F}}_\delta^2(J;x) \big)|^2 \lesssim K^{9/10} \cdot |I_v|. 
\end{align*}
By Corollary \ref{c:F2F3}, since
\[ \sum_x \sum_{x \in J \in \mathcal{B}_s(I_v)}|\chi_t\big( \mathcal{F}_\delta^2(J;x) - \mathcal{F}_\delta^3(J;x) \big)|^2 \lesssim R^2 \delta^2 |I_v| \]
is independent of $K$, it suffices to prove
\begin{align*}
\sum_{x} \sum_{l \leq K, \ l(I_v) - W} \sup_{x \in J \in \mathcal{Q}_l \cap \mathcal{B}_s(I_v)} |\hat{\mathcal{F}}_\delta^3(J;x)|^2 \lesssim K^{9/10}  \cdot |I_v| 
\end{align*}

There are two bounds that we will use: first
\[ \sum_{l \leq K, \ l(I_v) - W} \sup_{x \in J \in \mathcal{Q}_l \cap \mathcal{B}_s(I_v)} |\hat{\mathcal{F}}_\delta^3(J;x)|^2 \lesssim W K;\]
second, for $2 < r < \infty$, using H\"{o}lder's and then the triangle inequality to pull out the sum in 
\[ \{ \theta \in \Lambda := \alpha^{-1} \Lambda_{v,s}(I_v) \}, \]
we estimate
\begin{align*}
    & (\sum_{l \leq K, \ l(I_v) - W} \sup_{x \in J \in \mathcal{Q}_l \cap \mathcal{B}_s(I_v)} |\hat{\mathcal{F}}_\delta^3(J;x)|^2)^{1/2} \\
    & \qquad \lesssim K^{1/2-1/r} \sum_{\theta \in \Lambda} \mathcal{V}^r( \varphi_{R|J|}*f_{\theta}(x) \cdot \Omega_{J,\delta}(\alpha \theta) : x \in J \in \mathcal{Q}_l \cap \mathcal{B}_s(I_v)),
\end{align*} 
where $\widehat{f_\theta}(\beta) := \varphi(2^{m_{s}+R} \beta) \hat{f}(\beta+\alpha \theta)$.
Since $\Psi_\delta$ is uniformly $\mathcal{C}^1$, and thus uniformly Lipschitz, we may bound, for each $\theta$
\begin{align*}
    &\mathcal{V}^r( \varphi_{R|J|}*f_{\theta}(x) \cdot \Omega_{J,\delta} (\alpha \theta) : x \in J \in \mathcal{Q}_l \cap \mathcal{B}_s(I_v)) \\
    & \qquad \leq \mathcal{V}^r(\varphi_{R|J|}*f_{\theta}(x) : |J|) \cdot \mathcal{V}^r( \Omega_{J,\delta} (\alpha \theta) :  x \in J \in \mathcal{Q}_l \cap \mathcal{B}_s(I_v)) \\
& \qquad \qquad \lesssim \mathcal{V}^r(\varphi_{R|J|}*f_{\theta}(x) : |J|) \cdot \mathcal{V}^r( \Omega_{J}(\alpha \theta) :  x \in J \subset I_v) =: \mathcal{V}^r_{\theta,1}(x) \cdot \mathcal{V}^r_{\theta,2}(x),
\end{align*}
see \eqref{e:prod}.

By Lemma \ref{l:var}, we have the bound
\[ \sup_{\theta,i} \ \| \mathcal{V}^r_{\theta,i} \|_{\ell^{2,\infty}(\mathbb{Z})} \lesssim |I_v|^{1/2};
\]
in particular, if
\[ X_K := \bigcup_{\theta \in \Lambda_{v,s}(I_v), i } \{ x \in I_v : \mathcal{V}^r_{\theta,i} \geq K^{1/5} \} \] 
then
\[ |X_K| \lesssim K^{-1/5} \cdot |I_v| \]
for all $K$ sufficiently large; we here use the restriction \eqref{e:rest}. And, we just bound
\begin{align*}
    &\sum_{x} \sum_{l \leq K, \ l(I_v) - W} \sup_{x \in J \in \mathcal{Q}_l \cap \mathcal{B}_s(I_v)} |\hat{\mathcal{F}}_\delta^3(J;x)|^2 \\
    & \qquad \lesssim W K \cdot |X_K| + |\Lambda_{v,s}(I_v)|^2 \cdot (K^{1/2-1/r + 1/5})^2 \cdot |I_v| \\
    & \qquad \qquad \ll K^{9/10} \cdot |I_v|.
\end{align*}
provided that $2 < r \leq 3$ and $K \gg_\tau 1$ is sufficiently large.

\section{Proof of the Oscillation Inequality, Return Times}\label{s:RTosc}
We conclude the main body of the paper by proving the second statement in Proposition \ref{p:osc00}, namely \eqref{e:oscRT}

For $a \in [N]$, let $K \lesssim_H 1$ be maximal subject to the constraint that there exist $M_0 < M_1 < \dots < M_K$ so that, 
\begin{align*}
|\{ x \in [H] : \sup_{a \in J \in \mathcal{Q}_k} |\hat{\mathcal{F}}_\delta^1(J;x,a)| \gg t \}| \gg t^B H 
\end{align*}
for each $k \leq K$, see \eqref{e:Q}. With $V = R^{A_0}$, $L=K$, extract an $(L,V)$-forest, with tree tops
\[ \bigcup_{v < V} \bigcup_{T_v^{\text{max}}} I_v \subset [N]; \]
with $X_v := \bigcup_{T_v^{\text{max}}} I_v$ as above, it suffices to prove that
\begin{align*}
    |\{ (\overline{\mathcal{D}} \cap I_v) \smallsetminus X_{v+1} : C_{\tau,B,H}(a) \gg L_0 \}| = o_{L_0 \to \infty}( V^{-O(1)} |I_v| \Delta^{-O(1)});
\end{align*}
in this context, we may restrict $a \in J \in \mathcal{D}_v(I_v)$, and more precisely, to $a \in J \in \mathcal{B}_s(I_v)$ for some $s \leq W$, as
\begin{align*}
    K t^{B+2} H &\leq \sum_{k \leq K} t^2 \cdot |\{ x \in [H] : \sup_{a \in J \in \big( \mathcal{Q}_k \cap \mathcal{B}_{\infty}(I_v) \big) } |\hat{\mathcal{F}}_\delta^1(J;x,a)| \gg t \}| \\
    & \qquad \leq RW \cdot H
\end{align*}
which bounds $K = O_t(1)$ absolutely for the above contribution, as there are at most $O(RW)$ many scales $k \leq K$ so that
\[ \mathcal{Q}_k \cap \mathcal{B}_\infty(I_v) \neq \emptyset,\]
as above, and we just use the trivial estimate
\[ |\{ x \in [H] : \sup_{a \in J \in \big( \mathcal{Q}_k \cap \mathcal{B}_{\infty}(I_v) \big) } |\hat{\mathcal{F}}_\delta^1(J;x,a)| \gg t \}| \leq H.\]

So, if we define
\[ C_{\tau,B,H}^{v,s}(a) \]
to be the maximal $K$ so that there exist $M_0 < M_1 < \dots < M_K$ so that 
\begin{align*}
|\{ x \in [H] : \sup_{a \in J \in \big( \mathcal{Q}_k \cap \mathcal{B}_s(I_v) \big)} |\hat{\mathcal{F}}_\delta^1(J;x,a)| \gg t \}| \gg t^B H 
\end{align*}
we need to show that
\[ |\{ a \in (\overline{\mathcal{D}} \cap I_v) \smallsetminus X_{v+1} :  C_{\tau,B',J}^{v,s}(a) \geq L_0 \}| = o_{L_0 \to \infty}( (VW)^{-O(1)} \Delta^{-O(1)} |I_v|)
\]
where $B \ll B' = B'(B,t) = O_{\tau,B}(1)$ is an absolute constant.

We now define the following exceptional set
\begin{align*}
    X(I_v) := \{ a \in I_v : \text{ there exists } \theta \in \Lambda_{v}(I_v) :  \mathcal{V}^r(\Omega_{J}(\theta) : a \in J \subset I_v) \geq V^{2 A_0} \},
\end{align*}
which has size $\ll V^{- A_0} \cdot |I_v|$ as we may of course restrict the above-specified (and fixed) $g$ in the definition of $\Omega_{J}$ to be supported on $I_v$. In particular, since 
\[ |X(I_v)| = o( V^{-A_0} |I_v|),\]
in what follows, we will assume that 
\[ a \in (\overline{\mathcal{D}} \cap I_v) \smallsetminus X(I_v).\]

Now, after adjusting the implicit constant in the super-level set ($\gg t$), we may replace $\hat{\mathcal{F}}_\delta^1$ with $\hat{\mathcal{F}}_\delta^2$ in the definition of $C_{\tau,B',J}^{v,s}$, see Corollary \ref{c:bdrybranch}.

To replace $\hat{\mathcal{F}}_\delta^2 \longrightarrow \hat{\mathcal{F}}_\delta^3$, we just bound
\begin{align*}
    &\sum_{k \leq K} \sum_{a \in J \in \big( \mathcal{Q}_k \cap \mathcal{B}_s(I_v) \big)} \\
    & \qquad \|\int \sum_{\Lambda } (\varphi(\alpha |J|/R(\beta - \theta)) - \varphi(\alpha R|J|(\beta - \theta))) \Omega_{J,\delta}'(\alpha \beta) \hat{f}( \beta) e(\beta (x+\alpha a)) \ d\beta\|_{\ell^2([H])}^2 \\
    & \qquad \qquad \lesssim \delta^2 \log R \cdot H
\end{align*}
which forces an absolute upper bound on $K$ deriving from this contribution; above, we maintain the notation
\[\Lambda := \alpha^{-1} \Lambda_{v,s}(I_v).\]

So, possibly after excising a total of $O(W^{10})$ (say) many scales $k \in [K]$ which could possibly support the lower bound
\[ |\{ x \in [H] : \max_{M_{k-1} \leq M \leq M_k} |{A}_M(f,g)(x;a) - {A}_{M_k}(f,g)(x;a)| \geq \tau \}| \geq \tau^B H\]
and again adjusting implicit constants, we may replace $\hat{\mathcal{F}}_\delta^2$ with $\hat{\mathcal{F}}_\delta^3$ in the definition of $C_{\tau,B',H}^{v,s}$ and reduce to the case where $\mathcal{Q}_k$ satisfies $k \leq l(I_v) - W$; let $E_K = E_K(a) \subset [K]$ have $|E_K| = K - O(W^{10})$ be the remaining scales.

At last, after once again adjusting implicit constants, we may replace $\hat{\mathcal{F}}_\delta^3$ with
\begin{align}\label{e:supRT}
\sup_{a \in J \in \big( \mathcal{Q}_k \cap \mathcal{B}_s(I_v) \big)} &|\sum_{\theta \in \Lambda} e(\alpha \theta a) \cdot \int \big( \varphi(\alpha R|J|(\beta - \theta)) \cdot \Omega_{J,\delta} (\alpha \theta) - \varphi(\alpha R|\hat{J}|(\beta - \theta)) \cdot \Omega_{\hat{J},\delta} (\alpha \theta) \big) \\
& \qquad \qquad \times \hat{f}(\beta) e(\beta x) \ d\beta|,
\end{align}
and we bound
\begin{align}\label{e:prodVr0} 
&\big( \sum_{k \in E_K} \eqref{e:supRT}^2 \big)^{1/2} \\
& \qquad \lesssim K^{1/2- 1/r} \cdot \sum_{\theta \in \Lambda} \mathcal{V}^r( \varphi_{R|J|}*f_{\theta} : a \in J \subset I_v) \cdot \mathcal{V}^r(\Omega_{J}(\alpha \theta) : a \in J),
\end{align}
using \eqref{e:prod}, and setting
\[ \varphi_{R|J|} := (\alpha R |J|)^{-1} \varphi^{\vee}(\alpha R |J|^{-1} \cdot).\]
Square summing
\begin{align*}
&K t^{B'} H \lesssim \| \eqref{e:prodVr0} \|_{\ell^2([H])}^2 \\
& \qquad \lesssim K^{1-2/r} \cdot (V \delta^{-2}) \cdot  \sum_{\theta \in \Lambda} \sum_{[H]} \mathcal{V}^r( \varphi_{R|J|}*f_{\theta} : a \in J \subset I_v)^2 \cdot \mathcal{V}^r(\Omega_{J}(\alpha \theta) : a \in J)^2 \\
& \qquad \qquad \lesssim K^{1 - 2/r} \cdot V^{4A_0} \cdot (V \delta^{-2}) \sum_{\theta \in \Lambda} \sum_{[H]} \mathcal{V}^r( \varphi_{R|J|}*f_\theta : a \in J \subset I_v)^2 \\
& \qquad \qquad \qquad \lesssim K^{1-2/r} \cdot V^{4A_0} \cdot (V \delta^{-2})^2 \cdot H,
\end{align*}
which forces an upper bound $K = C_{\tau,B,H}^{v,s}(a) = O_t(1)$ for all $a \in I_v \smallsetminus X(I_v)$; above, the initial prefactor of $V \delta^{-2}$ comes from Cauchy-Schwartz and the upper bound
\[ |\Lambda| = |\Lambda_{v,s}(I_v)| \leq |\Lambda_{v}(I_v)| \lesssim V \delta^{-2}.\]
The proof is (at last) complete.


\section{Appendix: Measurability Issues} \label{s:appendix}
The goal of this section is to prove provide a proof that 
\begin{align}\label{e:bad0} &\{ \omega \in Y : \sup_{(X,\mu,T)} C_{\tau,B,H}^{(X,\mu,T)}(\omega) = + \infty \} \\
& \qquad = \bigcap_{L \geq 1} \{ \omega \in Y : \sup_{(X,\mu,T)} C_{\tau,B,H}^{(X,\mu,T)}(\omega) \geq L \} \notag
\end{align}
is $\nu$-measurable; we will also reduce to the case where $(Y,\nu,S)$ is an \emph{ergodic} Lebesgue space. We do so be containing
\[ \{ \omega \in Y : \sup_{(X,\mu,T)} C_{\tau,B,H}^{(X,\mu,T)}(\omega) \geq L \} 
 \subset E_{\tau,B,H,L} \subset Y, \]
where $E_{\tau,B,H,L}$ is $\nu$-measurable with
\[ \nu(E_{\tau,B,H,L}) = o_{L \to \infty;\tau,B}(1)
\]
so that we can apply downwards continuity of measure (recall that $Y$ is a probability space).
We do so in steps; these arguments are similar to those appearing in \cite[\S 4]{D}. We first reduce to the case of indicator functions, arguing as in \cite[Lemma 1.7]{BOOK}, decomposing a general $1$-bounded function into a linear combination of (at most) four $1$-bounded non-negative functions, and then each $1$-bounded non-negative function into a geometrically decaying sum of indicators,
\[ f(x) = \sum_{k =0}^{\infty} 2^{-k} \mathbf{1}_{F_k}(x)\]
where 
\[ F_k := \{ x \in X : f(x) = \sum_{k \geq 0} \frac{\omega_k(x)}{2^k} \text{ has } \omega_k(x) = 1 \}, \; \; \; \omega_k(x) \in \{0,1\} \]
and we stipulate $\liminf \omega_k(x) = 0$. And, similarly for each $g$ considered. So, in what follows we will assume that all $f,g$ introduced are indicators.

\begin{lemma}[Measurability of \eqref{e:bad0}, I]\label{l:I}
For each $(X,\mu,T)$ let $\Sigma(X)$ denote a countable subset of indicator functions that is dense in the class of indicator functions with respect to the $L^1(X)$ norm,\footnote{It is here that we require $(X,\mu)$ to have a countably generated $\sigma$-algebra} and let
\[ \Sigma_{\tau,B,H}^{(X,\mu,T)} \]
denote the restriction of $C_{\tau,B,H}^{(X,\mu,T)}$ to only indicators with $F \in \Sigma(X)$. Then
\[ C_{\tau,B,H}^{(X,\mu,T)} \leq \Sigma_{\tau/2,B,H}^{(X,\mu,T)}. \]
\end{lemma}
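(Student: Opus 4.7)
The plan is to fix a witness for $K < C_{\tau,B,H}^{(X,\mu,T)}$, namely a pair of indicator functions $(f,g)$ and lacunary times $M_0 < M_1 < \dots < M_K \leq H/100$, and then to approximate $(f,g)$ in $L^1(X) \times L^1(X)$ so tightly by $(f',g') \in \Sigma(X) \times \Sigma(X)$ that the \emph{same} sequence of times witnesses $\Sigma_{\tau/2,B,H}^{(X,\mu,T)} \geq K$. The crucial preliminary observation is that the restriction to times $M \in \{2^{n/B_\tau}\}$ with $M \leq H/100$ confines us to a finite set $\mathcal{M}$ with $|\mathcal{M}| = O_{\tau,H}(1)$, which will let us afford even an $\ell^1(\mathcal{M})$ estimate in place of an $\ell^\infty(\mathcal{M})$ estimate.

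First I would telescope the bilinear averages,
\begin{align*}
\Phi_M(f,g) - \Phi_M(f',g') = \Phi_M(f-f',g) + \Phi_M(f',g-g'),
\end{align*}
and invoke the measure-preserving nature of $T$ together with the bounds $|g|, |f'| \leq 1$ to estimate
\begin{align*}
\|\Phi_M(f,g) - \Phi_M(f',g')\|_{L^1(X)} \leq \|f - f'\|_{L^1(X)} + \|g - g'\|_{L^1(X)}.
\end{align*}
Summing (crudely, rather than maxing) over $M \in \mathcal{M}$, Markov's inequality yields
\begin{align*}
\mu\bigl( \bigl\{ x : \max_{M \in \mathcal{M}} |\Phi_M(f,g)(x) - \Phi_M(f',g')(x)| \geq \tau/8 \bigr\} \bigr) \leq \frac{16\, |\mathcal{M}|}{\tau} \bigl( \|f-f'\|_{L^1(X)} + \|g - g'\|_{L^1(X)} \bigr).
\end{align*}

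Next, I would choose $f', g' \in \Sigma(X)$ --- possible by the density hypothesis --- so that the right-hand side above is strictly less than $\tau^B/2$, calling the resulting exceptional set $E$. For each $k \leq K$, by the triangle inequality applied to the two telescoping summands (and noting that $M_k$ is itself one of the times in $\mathcal{M}$), whenever
\begin{align*}
\max_{M_{k-1} \leq M \leq M_k} |\Phi_M(f,g)(x) - \Phi_{M_k}(f,g)(x)| \geq \tau
\end{align*}
holds, either $x \in E$, or else the corresponding quantity for $(f',g')$ exceeds $\tau/2$. Consequently,
\begin{align*}
\mu\bigl( \bigl\{ x : \max_{M_{k-1} \leq M \leq M_k} |\Phi_M(f',g')(x) - \Phi_{M_k}(f',g')(x)| \geq \tau/2 \bigr\} \bigr) \geq \tau^B - \tau^B/2 = \tau^B/2 \geq (\tau/2)^B,
\end{align*}
where the last inequality uses $2^{-B} \leq 1/2$ for $B \geq 1$. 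Hence $M_0 < \dots < M_K$ together with $(f',g')$ witness $\Sigma_{\tau/2,B,H}^{(X,\mu,T)} \geq K$; letting $K \nearrow C_{\tau,B,H}^{(X,\mu,T)}$ gives the conclusion.

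There is no substantive obstacle here; the only point requiring attention is that the argument genuinely uses the finiteness of $|\mathcal{M}|$ (to convert a maximum into a crude sum without losing control of the thresholds), and this is available precisely because of the lacunary restriction on times. The same scheme will be reused in the sequel lemmas to handle dependence on $T$ and $\omega$ in turn.
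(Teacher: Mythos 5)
Your proof is correct, but it diverges from the paper's in two respects worth noting.

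First, the mechanism: you convert the maximum over the lacunary times $\mathcal{M}$ into a crude $\ell^1(\mathcal{M})$ sum and then apply Markov, at the price of a factor $|\mathcal{M}| \approx B_\tau \log H$. The paper instead bounds the pointwise defect by a \emph{single} ergodic maximal function,
\[
|\Phi_M(f,g)(x;\omega) - \Phi_M(\phi,g)(x;\omega)| \leq M_T(f-\phi)(x) \qquad \text{for all } M,
\]
and invokes the weak-$(1,1)$ inequality for $M_T$, which produces a bound that is uniform in $H$ (and in the number of times considered). For the purposes of this lemma the difference is immaterial — $\tau,B,H$ are all fixed, so both give a finite tolerance that $L^1$-density can beat — but the paper's estimate is structurally cleaner and is the same maximal function that underwrites Calder\'on transference elsewhere in the argument, so it is the more natural tool here.

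Second, you approximate \emph{both} $f$ and $g$ by elements of $\Sigma(X)\times\Sigma(X)$. In the paper's intended setting the quantity is $C_{\tau,B,H}^{(X,\mu,T)}(\omega)$: the function $g$ lives on $(Y,\nu,S)$ and is held fixed, and only the indicator $f = \mathbf{1}_F$ on $X$ varies. Accordingly, the paper's proof approximates only $f$, with the weight $S^m g(\omega)$ treated as a fixed $1$-bounded scalar sequence. Your extra telescoping term $\Phi_M(f',g-g')$ is therefore superfluous (and would in fact require a dense subset of $L^1(Y)$ rather than $L^1(X)$, so it is not literally provided by the hypothesis). The fix is trivial — just set $g'=g$ and delete that term — and once done your argument is a correct, if slightly lossier, variant of the paper's.
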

\begin{proof}
    For any $f,g$,
    \[ \limsup_M |\Phi_M(f,g)(x;\omega)| \leq M_Tf(x), \]
    where 
    \[ M_T f(x) := \sup_{N \geq 1} \, \frac{1}{N} \sum_{n \leq N} T^n |f|\]
    denotes the (one-sided) ergodic maximal function. In particular, for any $\phi = \mathbf{1}_V \in \Sigma(X)$, we are free to compare
    \[ |\Phi_M(f,g)(x;\omega) - \Phi_M(\phi,g)(x;\omega)| \leq M_T(f- \phi)(x),\]
so for each $k \leq K$ and each $\phi$, we have the containment
\begin{align*}
    &\{ x \in X : \max_{M_{k-1} \leq M \leq M_k} |\Phi_M(f,g)(x;\omega) - \Phi_{M_k}(f,g)(x;\omega)| \gg \tau \} \\
    & \qquad \subset \{ M_T(f - \phi ) \geq \tau/2 \} \cup \{ x \in X : \max_{M_{k-1} \leq M \leq M_k} |\Phi_M(\phi,g)(x;\omega) - \Phi_{M_k}(\phi,g)(x;\omega)| \geq \tau/2 \},
\end{align*}
    which leads to the estimate
\begin{align*}
    &\mu(\{ x \in X : \max_{M_{k-1} \leq M \leq M_k} |\Phi_M(f,g)(x;\omega) - \Phi_{M_k}(f,g)(x;\omega)| \gg \tau \}) \\
    & \qquad \leq O(\tau^{-1} \| f - \phi\|_{L^1(X)} ) + \mu(\{ x \in X : \max_{M_{k-1} \leq M \leq M_k} |\Phi_M(\phi,g)(x;\omega) - \Phi_{M_k}(\phi,g)(x;\omega)| \geq \tau/2 \}),
\end{align*}
by the weak-type $(1,1)$ boundedness of the ergodic maximal function.

In particular, if there exists an indicator function $f$, so that
\[ \mu( \{ x \in X : \max_{M_{k-1} \leq M \leq M_k} |\Phi_M(f,g)(x;\omega) - \Phi_{M_k}(f,g)(x;\omega)| \geq \tau \}) \geq \tau^B,\]
for each $\{ M_k = M_k(\omega) \}$, then by density there exists $\phi = \phi_{\tau,f} \in \Sigma(X)$ so that
\[ \mu( \{ x \in X : \max_{M_{k-1} \leq M \leq M_k} |\Phi_M(\phi,g)(x;\omega) - \Phi_{M_k}(\phi,g)(x;\omega)| \geq \tau/2 \}) \geq (\tau/2)^B\]
for each $\{ M_k = M_k(\omega) \}$.
\end{proof}

So, our task reduces to showing that
\begin{align}\label{e:RTgoal}
    &\{ \omega \in Y: \sup_{(X,\mu,T)} \Sigma_{\tau,B,H}^{(X,\mu,T)}(\omega) = +\infty \}) \\
    & \qquad = \bigcap_{L \geq 1} \{ \omega \in Y: \sup_{(X,\mu,T)} \Sigma_{\tau,B,H}^{(X,\mu,T)}(\omega) \geq L \} \notag
\end{align}
is $\nu$-null.

The next lemma allows us to restrict our attention from
\[ \{ \omega \in Y: \sup_{(X,\mu,T)} \Sigma_{\tau,B,H}^{(X,\mu,T)}(\omega) \geq L\} \]
to any given ergodic measure-preserving system $(X_0,\mu_0,T_0)$,
\[ \{ \omega \in Y: \Sigma_{\tau,B,H}^{(X_0,\mu_0,T_0)}(\omega) \geq L\},\]
which is a measureable set.
\begin{lemma}[Measurability of \eqref{e:bad0}, II]\label{l:II}
Suppose that $(X_0,\mu_0,T_0)$ is an ergodic measure-preserving system. Then for any other measure-preserving system $(X,\mu,T)$
\[ \Sigma_{\tau,B,H}^{(X,\mu,T)} \leq \Sigma_{\tau/2,B,H}^{(X_0,\mu_0,T_0)}
\]
and thus to prove \eqref{e:RTgoal}, it suffices to prove that the following measurable set
\[ \nu(\{ \omega \in Y : \Sigma_{\tau,B,H}^{(X_0,\mu_0,T_0)}(\omega) \geq L\}) = o_{L \to \infty;\tau,B}(1) \]
has small $\nu$-measure (possibly depending on $\tau,B$, but independent of $H$) as $L \to \infty$.
\end{lemma}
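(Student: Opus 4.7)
The plan is to transfer the witnesses of $\Sigma_{\tau,B,H}^{(X,\mu,T)}(\omega) \geq K$ from $(X,\mu,T)$ to the ergodic system $(X_0,\mu_0,T_0)$ via a Rokhlin-tower construction, then verify the measurability of $\{\omega : \Sigma_{\tau/2,B,H}^{(X_0,\mu_0,T_0)}(\omega) \geq L\}$ directly from the countability of $\Sigma(X_0)$. The main point is that since $f,g$ are indicators and $M \leq H/100$, the super-level sets appearing in the definition of $\Sigma_{\tau,B,H}$ depend only on the finite trajectory $(T^j f(x))_{|j| \leq C_\alpha H}$ (for some $C_\alpha = O(|\alpha|+1)$) via a fixed measurable rule $\Psi_k^\omega \colon \{0,1\}^{2 C_\alpha H +1} \to \{0,1\}$; realizing the right finite-dimensional distribution in $X_0$ is then all that is required.

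For the first inequality, fix $\omega$ and a witness $f \in \Sigma(X)$ with times $M_0 < \cdots < M_K \leq H/100$. Because $T_0$ is ergodic on a non-atomic Lebesgue space (hence aperiodic), Rokhlin's lemma yields, for any $N \gg C_\alpha H$ and $\epsilon \ll \tau^{B+100}$, a base $B_0 \subset X_0$ with $T_0^i B_0$ pairwise disjoint for $0 \leq i < N$ and $\mu_0(\bigsqcup_i T_0^i B_0) \geq 1-\epsilon$. I would partition $B_0 = \bigsqcup_{\sigma \in \{0,1\}^N} B_0^\sigma$ so that $\mu_0(B_0^\sigma)/\mu_0(B_0) = p_\sigma^N := \mu(\{x : (T^j f(x))_{0 \leq j <N} = \sigma\})$ (possible since $\mu_0$ is non-atomic) and set $f_0(T_0^j b) := \sigma(j)$ for $b \in B_0^\sigma$, extending by $0$ off the tower. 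By the $T$-invariance of $\mu$, the distribution $p^N$ is shift-stationary, so for every $j$ in the interior $[C_\alpha H, N - C_\alpha H]$, the $\mu_0$-distribution of $(f_0(T_0^i y))_{|i| \leq C_\alpha H}$ on $T_0^j B_0$ coincides exactly with the $\mu$-distribution of $(T^i f(x))_{|i| \leq C_\alpha H}$. Consequently, the analogue $E_k^0 \subset X_0$ of the super-level set $E_k$ satisfies
\[ \mu_0(E_k^0) \geq \bigl(1 - \epsilon - 2C_\alpha H/N\bigr) \mu(E_k) \geq \tau^B/2. \]
A density argument as in Lemma~\ref{l:I} replaces $f_0$ by $f_0' \in \Sigma(X_0)$ with $\|f_0 - f_0'\|_{L^1(X_0)} \leq \tau^{B+10}$; by weak-$(1,1)$ boundedness of $M_{T_0}$, the super-level sets at altitude $\tau/2$ for $f_0'$ still have measure $\geq (\tau/2)^B$. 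The same times $\{M_k\}$ thus witness $\Sigma_{\tau/2,B,H}^{(X_0,\mu_0,T_0)}(\omega) \geq K$.

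For the measurability claim, fix $f_0 \in \Sigma(X_0)$ and a finite lacunary tuple $M_0 < \cdots < M_K \leq H/100$ from $\{2^{n/B_\tau}\}$. The quantity
\[ \mu_0\Bigl(\Bigl\{ x : \max_{M_{k-1} \leq M \leq M_k} \bigl|\Phi_M(f_0,g)(x;\omega) - \Phi_{M_k}(f_0,g)(x;\omega)\bigr| \geq \tau/2 \Bigr\}\Bigr) \]
depends only on the finite sequence $(S^m g(\omega))_{m \leq H/100}$ and is a Borel-measurable function of those coordinates; hence it is $\nu$-measurable in $\omega$. Since $\Sigma(X_0)$ is countable and there are finitely many admissible tuples, $\{\Sigma_{\tau/2,B,H}^{(X_0,\mu_0,T_0)}(\omega) \geq L\}$ is a countable union of measurable sets, hence measurable. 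The main obstacle is really the transference step: one must ensure the trajectory matching on the tower interior is \emph{exact}, not just approximate, so that the only losses come from the controllable parameters $\epsilon$, $C_\alpha H/N$, and the $L^1$-density approximation --- all of which the $\tau \mapsto \tau/2$ and $\tau^B \mapsto (\tau/2)^B$ shrinkages are precisely designed to absorb.
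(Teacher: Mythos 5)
Your proposal is correct, but it takes a genuinely different route from the paper. The paper appeals to Halmos' theorem (weak density, inside the group of measure-preserving automorphisms of a standard Lebesgue space, of the conjugacy class of an ergodic transformation), finding $\sigma_0 \in \mathbf{M}(X_0,X)$ so that $\sigma_0 T_0^m \sigma_0^{-1}$ agrees with $T^m$ on all the relevant iterates $T^j F$, $|j| \leq H$, up to a symmetric-difference error of size $\epsilon 2^{-H}$; the oscillation super-level sets are then pushed across $\sigma_0$ directly. You instead build the witness on $X_0$ by hand: take a tall Rokhlin tower, partition the base according to the empirical law $p^N$ of the length-$N$ orbit $(T^jf(x))_{0\le j<N}$, and transplant that finite-dimensional marginal into $X_0$. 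Shift-stationarity then gives \emph{exact} agreement of the window laws in the tower interior, so the super-level sets transfer with losses only from $\epsilon$, the $O(C_\alpha H/N)$ boundary, and the $L^1$-density passage to $\Sigma(X_0)$ --- all of which the $\tau \to \tau/2$ shrinkage absorbs. Both arguments implicitly need $(X,\mu)$ and $(X_0,\mu_0)$ to be standard non-atomic probability spaces (Halmos requires a measure isomorphism $\sigma_0 : X_0 \to X$; your Rokhlin step needs aperiodicity of $T_0$ and the ability to cut $B_0$ into pieces of prescribed proportions). Your version is arguably more elementary and self-contained --- Halmos is typically proved via Rokhlin anyway --- and it isolates the only fact really used, namely that the witness set depends only on a finite trajectory window; the paper's version is shorter to state once one has Halmos' theorem in hand but conceals this locality. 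Your measurability argument (countable union over $\Sigma(X_0)$ and finitely many admissible lacunary tuples, with Borel dependence on $(S^m g(\omega))_{m\le H/100}$) is sound and matches the spirit of the paper's reduction.
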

\begin{proof}
Let $(X,\mu,T)$ be arbitrary but fixed, and let $(X_0,\mu_0,T_0)$ be ergodic; our job is to prove
\[ \Sigma_{\tau,B,H}^{(X,\mu,T)}(\omega) \leq \Sigma_{\tau/2,B,H}^{(X_0,\mu_0,T_0)}(\omega).
\]
    If we let 
    \[ \mathbf{M}(X_0,X) := \{ \sigma : X_0 \to X \text{ invertible}: \mu\circ \sigma = \mu_0 \} \]
denote the set of invertible, measure-preserving transformations which send the measure $\mu_0$ to $\mu$,
then by Halmos' Theorem, \cite[p. 77-78]{H},
\[ \{ \sigma T_0 \sigma^{-1} : \sigma \in \mathbf{M}(X_0,X) \}\]
is weakly dense inside the class of measure-preserving transformations $X$. 

In particular, for each measurable subset $E$, and each $\tau \gg \epsilon >0$ sufficiently small, there exists $\sigma_0 = \sigma_{E,\epsilon} \in \mathbf{M}(X_0,X)$ so that
\[ \mu\big( (\sigma_0 T_0 \sigma_0^{-1} E_j) \triangle (T E_j) \big) \leq \epsilon \cdot 2^{- H}, \; \; \; E \triangle F := (E \smallsetminus F) \cup (F \smallsetminus E) \]
for each $0 \leq j \leq H$, where
\[ E_j := T^j E.\]
By induction, for any $m \leq H$,
\[ \max_{0 \leq j \leq J} \mu\big( (\sigma_0 T_0^m \sigma_0^{-1} E_j) \triangle (\sigma^m E_j) \big) \leq \epsilon \cdot m \cdot 2^{- H}  \leq \epsilon \cdot H^{-100} \]

But now for any $E \subset X$, if $\sigma_0 = \sigma_{E,\epsilon}$ is as above
\begin{align*} &\mu(\{ x \in X : \sup_{m \leq H} |\mathbf{1}_E(\sigma_0 T_0^m \sigma_0^{-1} x) - \mathbf{1}_E(T^m x)| \geq \tau/100 \}) \\
& \qquad \leq H \cdot \sup_{m \leq H} \mu(\{ x \in X : |\mathbf{1}_E(\sigma_0 T_0^m \sigma_0^{-1}x) - \mathbf{1}_E(T^mx)| \geq \tau/100 \}) \\
& \qquad \qquad \leq \tau^{-1} \cdot \epsilon \cdot H^{-10}.
\end{align*}

Turning to the problem at hand, let $(X,\mu,T)$ be arbitrary, and choose an appropriate 
\[ f = \mathbf{1}_F:X \to \{0,1\}\]
and sequence $M_0(\omega)\ll M_1(\omega) \ll \dots \ll M_K(\omega) \leq H/100$ so that for each $1 \leq k \leq K$ (with $K \lesssim_H 1$ maximal)
\[ \mu( \{ \sup_{M_{k-1}(\omega) \leq M \leq M_{k}(\omega)} |\Phi_M(f,g)(x;\omega) - \Phi_{M_k}(f,g)(x;\omega)| \geq \tau \}) > \tau^B \]
Now, let $\epsilon = o(\tau^{B+1})$ be a small parameter, and for $\sigma_0=\sigma_{F,\epsilon}$ as above, define
\[ \Phi_M'(f,g)(x;\omega) := \frac{1}{M} \sum_{m \leq M} (\sigma_0 T_0^m \sigma_0^{-1}) f(x) \cdot S^m g(\omega) \]
and observe that for each $1 \leq k \leq K$
\begin{align*} &\{ \sup_{M_{k-1}(\omega) \leq M \leq M_{k}(\omega)} |\Phi_M(f,g)(x;\omega) - \Phi_{M_k}(f,g)(x;\omega)| \geq \tau \} \\
& \qquad \subset 
\{ \sup_{M_{k-1}(\omega) \leq M \leq M_{k}(\omega)} |\Phi_M'(f,g)(x;\omega) - \Phi'_{M_k}(f,g)(x;\omega)| \geq \tau/2 \} \cup E  
\end{align*}
where
\[ E \subset \{ x \in X : \sup_{m \leq H} |\mathbf{1}_E(\sigma_0 T_0^m \sigma_0^{-1} x) - \mathbf{1}_E(\sigma^m x)| \geq \tau/100 \} \]
and so has $\mu(E) \leq \tau^{-1} \cdot \epsilon \cdot H^{-10}$, and thus for each $1 \leq k \leq K$
\begin{align*}
     \mu( \{ \sup_{M_{k-1}(\omega) \leq M \leq M_{k}(\omega)} |\Phi_M'(f,g)(x;\omega) - \Phi'_{M_k}(f,g)(x;\omega)| \geq \tau/2 \} ) &\geq \tau^B - o_{H \to \infty}(\tau^{-1} \epsilon) \\
     & \qquad = \tau^B - o_{H \to \infty}(\tau^B) \geq (\tau/2)^B.
\end{align*}

But now
\begin{align*}
&\mu( \{ X: \sup_{M_{k-1}(\omega) \leq M \leq M_{k}(\omega)} |\Phi_M'(f,g)(x;\omega) - \Phi'_{M_k}(f,g)(x;\omega)| \geq \tau/2 \} )  \\
& \qquad = 
\mu_0( \{ X_0 : \sup_{M_{k-1}(\omega) \leq M \leq M_{k}(\omega)} |\Phi_M^0(f \circ \sigma_0,g)(x;\omega) - \Phi^0_{M_k}(f \circ \sigma_0,g)(x;\omega)| \geq \tau/2 \} )  \end{align*}
where
\[ \Phi_M^0(f,g)(x;\omega) := \frac{1}{M} \sum_{m \leq M} T_0^m f(x) \cdot S^m g(\omega), \]
completing the proof.
\end{proof}

We introduce one final reduction, which will be crucial in allowing us to transfer the problem to a discrete harmonic-analytic estimate; the details are similar to the proof of Lemma \ref{l:II}.

\begin{lemma}[Reduction to Ergodic $(Y,\nu,S)$]\label{l:serg}
    It suffices to assume that $(Y,\nu,S)$ is ergodic.
\end{lemma}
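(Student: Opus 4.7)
The plan is to invoke the ergodic decomposition theorem to deduce the general aperiodic case from the ergodic case. Since $(Y,\nu)$ is a countably generated probability space and $S$ is a measure-preserving transformation, standard ergodic decomposition yields a measurable family $\{\nu_\omega\}_{\omega \in Y}$ of $S$-invariant probability measures on $Y$ (obtained as regular conditional probabilities with respect to the $\sigma$-algebra of $S$-invariant sets), such that $\nu_\omega$ is $S$-ergodic for $\nu$-a.e.\ $\omega$ and
\[
\nu(F) \;=\; \int_Y \nu_\omega(F) \, d\nu(\omega)
\]
for every measurable $F \subset Y$.

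I would next verify that the remaining structural hypotheses descend to the components. Aperiodicity is inherited: for each fixed $n \geq 1$, the assumption $\nu(\{y : S^n y = y\}) = 0$ forces $\nu_\omega(\{y : S^n y = y\}) = 0$ for $\nu$-a.e.\ $\omega$, and taking a countable union over $n$ shows $S$ is aperiodic with respect to $\nu_\omega$ for $\nu$-a.e.\ $\omega$. Countable generation is inherited automatically. The key conceptual observation is that after Lemmas \ref{l:I} and \ref{l:II} the quantity $\sup_{(X,\mu,T)} C_{\tau,B,H}^{(X,\mu,T)}(\omega)$ may be replaced (after an acceptable adjustment of the parameter $\tau$) by $\Sigma_{\tau,B,H}^{(X_0,\mu_0,T_0)}(\omega)$ for a single ergodic model, and this latter quantity is a pointwise measurable function of $\omega$ depending only on the orbit values $\{g(S^m \omega)\}_{m}$ -- it does not depend on any choice of measure placed on $Y$. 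In particular the bad set
\[
F_H \;:=\; \bigcap_{L \geq 1} \big\{\omega \in Y : \Sigma_{\tau,B,H}^{(X_0,\mu_0,T_0)}(\omega) \geq L\big\}
\]
is an intrinsically defined measurable subset of $Y$.

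Granting the ergodic case of Proposition \ref{p:QC}, it applies to each ergodic system $(Y, \nu_\omega, S)$, yielding $\nu_\omega(F_H) = 0$ for $\nu$-a.e.\ $\omega$. Integrating against $\nu$,
\[
\nu(F_H) \;=\; \int_Y \nu_\omega(F_H) \, d\nu(\omega) \;=\; 0,
\]
which is the desired conclusion for $(Y,\nu,S)$. The only real obstacle is measurability bookkeeping: one must ensure a genuinely measurable version of the ergodic decomposition exists (guaranteed by countable generation of the ambient $\sigma$-algebra) and that $F_H$ is measurable (this is precisely the content of Lemmas \ref{l:I} and \ref{l:II}, which express $F_H$ up to a null set via countably many events depending measurably on the orbit of $\omega$). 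Because $F_H$ is defined purely pointwise, no transfer or comparison argument is needed to pass between $\nu$ and $\nu_\omega$.
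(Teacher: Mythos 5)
Your proof is correct but takes a genuinely different route from the paper. The paper's proof, continuing the method of Lemma \ref{l:II}, invokes the weak density of conjugates of ergodic transformations (Halmos' theorem) to approximate $S$ by $\sigma_0 S_0 \sigma_0^{-1}$ for an ergodic $S_0$ on $Y$, then runs a quantitative comparison: the $\nu$-measure of the set where the oscillation count with respect to $S$ exceeds that with respect to $\sigma_0 S_0 \sigma_0^{-1}$ is $o_{H\to\infty}(\tau^B)$, and the argument concludes via the change of variable $g \mapsto g \circ \sigma_0$ and $\sigma_0$-invariance of $\nu$. Your argument bypasses the quantitative approximation bookkeeping entirely: once one observes -- as you do -- that the bad set $F_H$ depends on $\omega$ only through the orbit data $(g(S^m\omega))_{m\leq H}$ and is therefore a measure-independent measurable subset of $Y$, the disintegration $\nu(F_H) = \int_Y \nu_\omega(F_H)\,d\nu(\omega)$, together with the Fubini-type argument showing aperiodicity descends to almost every ergodic component, reduces the claim directly to the ergodic case. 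Your route is cleaner and more conceptual; the paper's route has the advantage of being stylistically uniform with the Halmos-density argument in Lemma \ref{l:II} and avoids invoking a separate structure theorem. One caveat you should flag explicitly: both routes implicitly require $(Y,\nu)$ to be, up to isomorphism, a Lebesgue (standard) probability space -- countable generation of the $\sigma$-algebra alone is necessary but not sufficient for either the ergodic decomposition theorem or Halmos' density theorem -- so this should be reconciled with the paper's ambient hypotheses, though the same issue is latent in the paper's own proof.
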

\begin{proof}[Sketch]
One uses the weak density of conjugates of ergodic actions on $Y$ to approximate $S$ by conjugates of an ergodic action $S_0:Y\to Y$ so
\[ \max_{j,k \leq H} \mu\big( (\sigma_0 S_0^k \sigma_0^{-1} G_j ) \triangle (S^k G_j) \big) \leq \epsilon \cdot H^{-100}, \; \; \; G_j := S^j G\]
for an appropriate $\sigma_0 : Y \to Y$.

The argument is then concluded as above; for $(X_0,\mu_0,T_0)$ a fixed ergodic measure-preserving system, the $\nu$-measure of the set where
\[ \Sigma_{\tau,B,H}^{(X_0,\mu_0,T_0)}(\omega) \text{ with respect to $S$} \; \; \; > \; \; \;   \Sigma_{\tau/2,B,H}^{(X_0,\mu_0,T_0)}(\omega) \text{ with respect to $\sigma_0 S_0 \sigma_0^{-1}$ }
\]
is bounded above by
\begin{align*}
    &\nu(\{ \omega \in Y : \sup_{M \leq H/100, \ f: [H] \to \{0,1\}} |\frac{1}{M} \sum_{m \in [M]} f(m) \cdot \big( (\sigma_0 S_0^m \sigma_0^{-1}) - S^m\big) g(\omega)| \geq \tau/100 \}) \\
    & \qquad \leq \nu(\{ \omega \in Y : \sup_{m \leq H} |\big( (\sigma_0 S_0^m \sigma_0^{-1}) - S^m\big) g(\omega)| \geq \tau/100 \}) \\
    & \qquad = o_{H \to \infty}(\tau^{-1} \epsilon)
\end{align*} 
so that 
\begin{align*} 
&\nu(\{ \omega \in Y : \Sigma_{\tau,B,H}^{(X_0,\mu_0,T_0)}(\omega) \text{ with respect to $S$ } \geq L \})\\
& \qquad \leq \nu(\{ \omega \in Y : \Sigma_{\tau/2,B,H}^{(X_0,\mu_0,T_0)}(\omega) \text{ with respect to $\sigma_0 S_0 \sigma_0^{-1}$ } \geq L \}) + o_{H \to \infty}(\tau^{B}),
\end{align*}
at which point the argument is concluded using composition rules involving $\sigma_0$, namely, replacing $g$ with $g \circ \sigma_0$ and using the $\sigma_0$-invariance of $\nu$, as above.
\end{proof}

\end{document}